\theoremstyle{plain}
\newtheorem{theorem}{Theorem}[section]
\newtheorem{corollary}[theorem]{Corollary}
\newtheorem{induction hypothesis}[theorem]{Induction Hypothesis}
\newtheorem{lemma}[theorem]{Lemma}
\newtheorem{proposition}[theorem]{Proposition}
\numberwithin{equation}{section}
\theoremstyle{definition}
  \newtheorem*{data availability}{Data availability}
  \newtheorem*{conflicts of interest}{Conflicts of interest}
\theoremstyle{remark}
\newtheorem{remark}[theorem]{Remark}
\newtheorem*{acknowledgements}{Acknowledgments}
\newcommand{\mytag}[2]{%
  \text{#1}%
  \@bsphack
  \begingroup
    \@onelevel@sanitize\@currentlabelname
    \edef\@currentlabelname{%
      \expandafter\strip@period\@currentlabelname\relax.\relax\@@@%
    }%
    \protected@write\@auxout{}{%
      \string\newlabel{#2}{%
        {#1}%
        {\thepage}%
        {\@currentlabelname}%
        {\@currentHref}{}%
      }%
    }%
  \endgroup
  \@esphack
}
\newcommand{\cComplex}{\mathbb{C}}
\newcommand{\zIntegers}{\mathbb{Z}}
\newcommand{\multiplicativegroup}[1]{#1^{\times}}
\newcommand{\Hom}{\mathrm{Hom}}
\newcommand{\sizeof}[1]{\left|#1\right|}
\newcommand{\abs}[1]{\left|#1\right|}
\newcommand{\fFiniteField}{\mathbb{F}}
\newcommand{\eFiniteField}{\mathbb{E}}
\newcommand{\fLocalField}{F}
\newcommand{\eLocalField}{E}
\newcommand{\bilinearPairing}[2]{\left\langle#1,#2\right\rangle}
\newcommand{\fieldCharacter}{\psi}
\newcommand{\whittaker}{\mathcal{W}}
\newcommand{\GL}{\operatorname{GL}}
\newcommand{\weyllong}{w}
\newcommand{\repBesselFunction}[1]{\mathcal{B}_{#1,\fieldCharacter}}
\newcommand{\grepBesselFunction}[2]{\mathcal{B}_{#1,#2}}
\newcommand{\contragredient}[1]{\check{#1}}
\newcommand{\fourierTransform}{\mathcal{F}_{\fieldCharacter}}
\newcommand{\ifourierTransform}{\mathcal{F}_{\fieldCharacter^{-1}}}
\newcommand{\gfourierTransform}[1]{\mathcal{F}_{#1}}
\newcommand{\Trace}{\operatorname{Tr}}
\newcommand{\qfFieldCharacter}{\fieldCharacter_{\eFiniteField \slash \fFiniteField}}
\newcommand{\qFieldCharacter}{\fieldCharacter_{\eLocalField \slash \fLocalField}}
\newcommand{\integersRing}{\mathfrak{o}}
\newcommand{\matrixRing}{\mathcal{M}}
\newcommand{\complexConjugate}[1]{\overline{#1}}
\newcommand{\Schwartz}{\mathcal{S}}
\newcommand{\nilpotentMatrices}{\mathcal{N}}
\newcommand{\multiplicativeMeasure}{d^{\times}}
\newcommand{\projection}{\operatorname{pr}}
\newcommand{\BFgammaFactor}[2]{\gamma(#1,\mathrm{BF}, #2)}
\newcommand{\BFpigammaFactor}{\BFgammaFactor{\pi}{\fieldCharacter}}
\newcommand{\BFDualpigammaFactor}{\BFgammaFactor{\contragredient{\pi}}{\fieldCharacter^{-1}}}
\newcommand{\BFlocalgammaFactor}[3]{\Gamma(#1, #2,\mathrm{BF}, #3)}
\newcommand{\BFLocalRhogammaFactor}{\BFlocalgammaFactor{s,t}{\rho}{\fieldCharacter_\fLocalField}}
\newcommand{\Volume}{\operatorname{Vol}}
\newcommand{\AsaiPiGammaFactor}{\gamma(\pi,\mathrm{As},\fieldCharacter)}
\newcommand{\AsaiDualPiGammaFactor}{\gamma(\contragredient{\pi},\mathrm{As},\fieldCharacter^{-1})}
\newcommand{\AsaiLocalRhoGammaFactor}{\Gamma(s,\rho,\mathrm{As},\fieldCharacter_\fLocalField)}
\newcommand{\TrivialRepresentation}{{\bf 1}}
\newcommand{\GaloisGroup}{\operatorname{Gal}}
\newcommand{\maximalCompactTimesCenter}{\mathcal{K}}
\newcommand{\realPart}{\operatorname{Re}}
\newcommand{\Support}{\operatorname{Supp}}
\newcommand{\liftedBesselFunction}[2]{\mathrm{B}_{#1,#2}}
\newcommand{\UnipotentSubgroup}{N}
\newcommand{\MirabolicSubgroup}{P}
\begin{document}

\title{Finite period vectors and Gauss sums}

\author{Yeongseong Jo}
\address{Department of Mathematics Education, Ewha Womans University, Seoul 03760, Republic of Korea}
\email{\href{mailto:jo.59@buckeyemail.osu.edu}{jo.59@buckeyemail.osu.edu};\href{mailto:yeongseong.jo@ewha.ac.kr}{yeongseong.jo@ewha.ac.kr}}

\subjclass[2020]{Primary : 11F70; Secondary: 11F66, 20C33, 22E50}
\keywords{Close field theory, Gamma and epsilon factors, Gauss sums, Integral representations, Level zero representations, Period vectors and integrals}

\begin{abstract}
We study four sums including the Jacquet--Piatetski-Shapiro--Shalika, Flicker, Bump--Friedberg, and Jacquet--Shalika sums associated to irreducible cuspidal representations
of general linear groups over finite fields. By computing explicitly, we relate Asai and Bump--Friedberg gamma factors over finite fields to those over nonarchimedean local fields through
level zero supercuspidal representation. Via Deligne--Kazhdan close field theory, we prove that exterior square and Bump--Friedberg gamma factors agree with corresponding Artin gamma factors of their associated tamely ramified representations through local Langlands correspondence. We also deduce product formul$\text{\ae}$ for Asai, Bump--Friedberg, and exterior square gamma factors in terms of Gauss sums. By combining these results, we examine Jacquet--Piatetski-Shapiro--Shalika, Flicker--Rallis, Jacquet--Shalika, and Friedberg--Jacquet periods and vectors and their connections to
Rankin-Selberg, Asai, exterior square, and Bump-Friedberg gamma factors, respectively.
\end{abstract}

\maketitle

\section{Introduction}
In a classical analytic number theory and related branches of mathematics, one of the main themes is to analyze a complex valued arithmetic function called a {\it Dirichlet character}.
One of its prominent property is what is known as the {\it functional equation} establishing the symmetry across the critical strip. Viewing the Euler's $\Gamma$-function as the $L$-factor in the archimedean context, the symmetric version of this global functional equation involves the epsilon function which can be presented as the product of the {\it classical Gauss sum} and the conductor. In the 1960's, the analytic paradigm for understanding Dirichlet characters shifted from real or complex analytic functions to the study of automorphic forms on ${\rm GL}_n$ and automorphic representations of ${\rm GL}_n$.
This naturally leads us to ask ourselves whether Gauss sums that appear in the global epsilon function have a representation theoretic interpretation.

\par
Since the representation of nonarchimedean local fields $F$ occurs as sub-quotients of representations inducing from tensor product of cuspidal representations, it is not so surprising for us to notice the imitation of such a interpretation for a pair of supercuspidal 
representations $\rho_1$ and $\rho_2$ of  ${\rm GL}_n(F)$ and ${\rm GL}_r(F)$. In particular, when $\rho=\rho_1$ and $\rho_2={\bf 1}_{F^{\times}}$ becomes the trivial representation of ${\rm GL}_1(F)$, the formula given in \cite{BF85,Bus87} defines
the {\it Godement--Jacquet gamma factor} $\Gamma(s,\rho,\psi_F)$ \cite{GJ72} in terms of {\it non-abelian Gauss sums}, where $\psi_F$ is  a fixed non-trivial additive character of $\fLocalField$. The identical Gauss sum emerges in {\it Tate's local gamma factor} for $n=1$, and in the seminal book of Bushnell and Henniart \cite{BH06}, albeit for $n=2$. In the twisted case, we find the explicit formula for {\it Rankin--Selberg gamma factor} $\Gamma(s,\rho_1 \times \rho_2,\psi_F)$ only adhering to
the conductor of the local constant \cite{BHK98}.

\par
Regarding many questions about representations of nonarchimedean local fields, oftentimes insights can be gained by inspecting the analogue question over a finite field  $\fFiniteField_q$.
 Before the Rankin--Selberg gamma factor $\Gamma(s,\rho_1 \times \rho_2,\psi_F)$ was established in the pioneering work of Jacquet--Piatetski-Shapiro--Shalika (cf. \cite[Theorem 1.2]{Nien19}), the parallel gamma factors
 $\gamma(\pi_1 \times \pi_2,\psi)$ associated to 
a pair of irreducible generic representations $\pi_1$ and $\pi_2$ of ${\rm GL}_n(\fFiniteField_q)$ and ${\rm GL}_r(\fFiniteField_q)$ had already been investigated in Piatetski-Shapiro's unpublished note \cite{PS76}. The finite gamma factor $\gamma(\pi_1 \times \pi_2,\psi)$, where now $\pi_2$ is a multiplicative character of $\fFiniteField_q^{\times}$, is revisited by Nien \cite{Nie17,Nien19} in the hope of resolving local converse theorem and distinction problems \cite{Nie19}, following the lead from nonarchimedean local fields situation. As a by-product, Nien does something more intriguing, namely that  $\gamma(\pi_1 \times \pi_2,\psi)$ is associated to the {\it abelian Gauss sum} \cite[Theorem 1.1]{Nie17}.

\par
In this article, we put the Asai, Bump--Friedberg, and exterior square setting on an equal footing with the Rankin--Selberg setting by expressing such finite gamma factors in terms of abelian Gauss sums.
We summarize our main results concerning irreducible cupsidal representation $\pi$ of ${\rm GL}_n(\fFiniteField_q)$, or ${\rm GL}_n(\fFiniteField_{q^2})$ if necessary, as follows;

\begin{itemize}[leftmargin=*]
\item In \Cref{Asai-Multiplicity}, the {\it Asai gamma factor} $\AsaiPiGammaFactor$ is defined as a proportionality of bilinear forms arising from the {\it Flicker sums} given by \eqref{Def-Flicker-Sum}.
We prove the prodcut formula  for $\AsaiPiGammaFactor$ in terms of abelian Gauss sums (\Cref{Asai-Gamma-Gauss}).
\item The {\it exterior square gamma factor} $\gamma(\pi,\wedge^2,\psi)$ is defined as a proportionality of bilinear forms arising from the {\it Jacquet--Shalika sums} given in \eqref{Jacquet-Shalika-even} and \eqref{Jacquet-Shalika-odd}. We prove the product formula for $\gamma(\pi,\wedge^2,\psi)$ in terms of abelian Gauss sums (\Cref{Exterior-Gamma-Gauss}).
\item In \Cref{BF-Multiplicity}, the {\it Bump--Friedberg gamma factor} $\BFpigammaFactor$ is defined as a proportionality of bilinear forms arising from the {\it Bump--Friedberg sums} given by \eqref{Def-BF-Even-Sum}
and \eqref{Def-BF-Odd-Sum}. This represents $\varepsilon_0(\varphi,\fieldCharacter)\varepsilon_0(\wedge^2 \circ \varphi,\fieldCharacter)$, the product of Deligne's arithmetic standard $\varepsilon_0$-factor and Deligne's arithmetic exterior square $\varepsilon_0$-factor  (\Cref{Deligne-factorazation}). We prove the product formula for $\BFpigammaFactor$ in terms of abelian Gauss sums (\Cref{Exterior-Gamma-Gauss}).
\end{itemize}
Returning to the motivating question over the finite field  $\fFiniteField_q$, Nien and Zhang \cite[Conjecture 2.2]{NZ18} subsequently propose the conjectural formula with regard to abelian Gauss sums
for Rankin--Selberg gamma factor $\gamma(\pi_1 \times \pi_2,\psi)$ for a pair of irreducible cuspidal representations $\pi_1$ and $\pi_2$ of ${\rm GL}_n(\fFiniteField_q)$ and ${\rm GL}_r(\fFiniteField_q)$ with different ranks $n \neq r$. A slightly modified formula is settled by Yang \cite[Theorem 0.1]{Yan19} and, independently, by Ye--Zelingher \cite[Corollary 4.5]{YZ21}. Afterwords, Zelingher generalizes the method to  $\gamma(\pi_1 \times \pi_2,\psi)$ with the same rank $n=r$ \cite[Theorem 2.18]{Ze22}.

\par
A key strategy of establishing the explicit formula boils down to computing gamma factors for level zero supercuspidal representations.
When it comes to an irreducible cuspidal representation $\pi$ (or a pair of irreducible cuspidal representations $\pi_1$ and $\pi_2$ on occasion), which does not possess suitable period vectors, we can further relate them with their counterparts for irreducible cuspidal representations over finite fields. The calculation of Rankin--Selberg gammas factors $\Gamma(s,\rho_1 \times \rho_2,\psi_F)$
for a pair of irreducible cuspidal representations $\rho_1$ and $\rho_2$ is attributed to Ye \cite{Ye19}, and later Ye and Zelingher  \cite{YZ20} performed exterior square gamma factors $\Gamma(s,\rho,\wedge^2,\fieldCharacter_\fLocalField)$. However the computation for the Asai gamma factor and the Bump--Friedberg gamma factor is newly explored in this paper.
Let us give a precise statement pertaining to level zero supercuspidal representations $\rho$ of ${\rm GL}_n(F)$, or ${\rm GL}_n(E)$ with $E$ an unramified quadratic extension over $F$ if necessary, here.

\begin{itemize}[leftmargin=*]
\item The {\it Asai gamma factor} $\AsaiLocalRhoGammaFactor $ satisfies the local functional equation given in \eqref{local-asai-func}. We prove in \Cref{levelzero-Asai} that this is equal to 
the rational function
\[
q^{n\left(s-1/2\right)}\omega^{-1}_{\rho}(\varpi) L(n(1-s),\omega^{-1}_{\rho}\restriction_{\multiplicativegroup{\fLocalField}})/L(ns,\omega_{\rho}\restriction_{\multiplicativegroup{\fLocalField}})
\]
 in $\mathbb{C}(q^{-s})$ if $n=2m+1$ and $\pi$ has a {\it Flicker--Rallis vector}, and to a complex number $\AsaiPiGammaFactor$ otherwise.
\item The {\it Bump--Friedberg factor} $\BFLocalRhogammaFactor$ satisfies the local functional equation given in \eqref{Local-BF-Func}. We prove in \Cref{levelzero-BF}
that  this equals to the rational function
\[
 \varepsilon(s,\rho,\fieldCharacter_\fLocalField)q^{m\left(2s-\frac{1}{2}\right)}\omega^{-1}_{\rho}(\varpi) L(m(1-2s),\omega^{-1}_{\rho})/L(2ms,\omega_{\rho})
 \]
 in $\mathbb{C}(q^{-2s})$ if $n=2m$ and $\pi$ has a {\it Friedberg--Jacquet vector}, and to a complex number $\BFpigammaFactor$ otherwise.
\end{itemize}

Another main ingredient toward the product formula is to associate analytic gamma factors over finite fields with the corresponding Deligne's arithmetic $\varepsilon_0$-factor. 
An instant benefit from such definition is that arithmetic $\varepsilon_0$-factors inherit multiplicative property which in turn make it feasible to express as product of Gauss sums \cite[Theorem 2.4]{YZ21}.
Part of reasons that Ye and Zelingher \cite[\S 5]{YZ21} primarily considered the product formula for Rankin--Selberg factor $\gamma(\pi_1 \times \pi_2,\psi)$ is 
that the matching between Jacquet--Shalika gamma factor $\Gamma(s,\rho(\varphi),\wedge^2,\fieldCharacter_\fLocalField)$ and Artin exterior square gamma factor $\Gamma(s,\wedge^2 ( \varphi),\psi_F)$
under the local Langlands correspondence was not available at that time. Another aim of this paper is to take up this issue and remove the constant ambiguity $``c_f"$ lingering in \cite[\S 1]{YZ21}.
We present an accurate statement regarding the identity in the following way:

\begin{itemize}[leftmargin=*]
\item Let $\varphi$ be a $n$-dimensional tamely ramified representation of $W_\fLocalField$ corresponding to the level zero supercuspidal representation $\rho(\varphi)$ of $\GL_n(\fLocalField)$ via local Langlands correspondence. We prove in \Cref{equal-zero-exterior} and \Cref{BF-JS equal} that
\begin{itemize}[leftmargin=*]
\item $ \Gamma(s,\rho(\varphi),\wedge^2,\fieldCharacter_\fLocalField)=\Gamma(s,\wedge^2( \varphi),\fieldCharacter_\fLocalField)$.
\item $\BFlocalgammaFactor{s,t}{\rho(\varphi)}{\fieldCharacter_\fLocalField} =\varepsilon(s+t+1/2,\varphi,\fieldCharacter_\fLocalField)\Gamma(2s,\wedge^2( \varphi),\fieldCharacter_\fLocalField)$.
\end{itemize}
\end{itemize}

In late 1980's, Bump and Friedberg predicted that $\BFlocalgammaFactor{s,t}{\rho(\varphi)}{\fieldCharacter_\fLocalField}$
is a product of the exterior square $\gamma$-factor and the standard $\gamma$-factor, based on the pattern in a spherical situation.
We partially confirm their conjecture \cite[Conjecture 4]{BF89} for level zero supercuspidal reprsentations. 
Our important tactic here is to utilize a globalization of level zero supercuspidal representations over local function fields equipped with a close field theory. 
In a series of versions of globalizing supercuspidal representations over number fields, one typically loses control of the local component of 
a cuspidal automorphic representation exactly at one place, an archimedean place. While the Langlands--Shahidi
theory over archimedean local fields has been fairly well navigated since the seminal work of Shahidi \cite{Sha85}, there has been little progress on the desired archimedean input for
Bump--Friedberg and Jacquet--Shalika integrals. However the globalization of level zero supercuspidal representations in positive characteristic gives rather good control at all places.
 Although one may sacrifice a few places, the necessary equalities of exterior square $\gamma$-factors for irreducible constituents of spherical representations at those bad places (Lemmata \ref{spherical-rep-JS}, \ref{spherical-rep-BF})
 do not appear to be insurmountable.

\par
Having a solid matching of $\gamma$-factors for level zero supercuspidal representations over positive characteristic (Theorems \ref{exter-positive}, \ref{main-BF-factor}) in hand, we then incorporate $\gamma$-factors arising from Langlands--Shahidi methods and integral representations with Deligne--Kazhdan theory over close (nonarchimedean local) fields. 
 Deligne proved that Artin local factors remain the same for parallel representations over close local fields via  Deligne isomorphisms \cite[Proposition 3.7.1]{Del84}. An analogous result has been studied by Ganapathy and Lomel\'{\i} \cite{Gan15,GL15} for Langlands--Shahidi local factors on analytic sides, though that time they only considered Kazhdan isomorphisms over sufficiently closed fields.
So far, there seems to have been no previous discourse in the literature about the very basic case of ``1-close fields". To be precise, we will show that local exterior square $\gamma$-factors for level zero supercuspidal representations via Jacquet-Shalika integrals are compatible with 
 the Kazhdan correspondence over $1$-close fields (Propositions \ref{Kaz-corres}, \ref{close-field-BF}). 
 In doing so, we are allowed to transport the identity of $\gamma$-factors over positive characteristic to characteristic zero. 

\par
The fact that the pole of local $L$-functions characterizes the existence of linear functionals for supercuspidal representations
has been used to great impetus in recent years on constructing multiplicative relations of $L$-factors \cite{Jo20,Jo24,Mat15,Mat17}. In particular, for a supercuspidal representation $\rho_1$ of ${\rm GL}_n(F)$ and an irreducible admissible generic representation $\rho_2$ of ${\rm GL}_n(F)$, the local Rankin--Selberg $L$-function $L(s,\rho_1 \times \rho_2)$ is an important tool to detect 
whether $\rho_1$ appears in the standard module that defies $\check{\rho}_2$. This comes down to the observation that a twist of $\rho_1$ occurs in the standard module for  $\check{\rho}_2$
if and only if local $L$-functions $L(s,\rho_1 \times \rho_2)$ has a pole of which the location determines that unramified twist \cite{CPS17}.
Lately, Soudry and Zelingher \cite[Theorem 1.3]{SZ23} suggest that the absolute value of the normalized finite Rankin--Selberg gamma factor $\gamma^{\star}(\pi_1 \times \pi_2,\fieldCharacter)$ might serve as an alternative for the order of the pole of local $L$-functions $L(s,\rho_1 \times \rho_2)$. These results parallel analogous results of Cogdell and Piatetski-Shapiro in the nonarchimedean local field setting  \cite{CPS17}. As long expected, for a pair of level zero supercuspidal representations $\rho_1$ and $\rho_2$ of  ${\rm GL}_n(F)$, we are able to show that the existence of poles of $L(s,\rho_1 \times \rho_2)$ indeed contributes to the absolute value of $\gamma^{\star}(\pi_1 \times \pi_2,\fieldCharacter)$ other than one, and vice versa.
More precisely, we list below the periods and vectors of interest provided in \Cref{Period-Vector-Integral}, which naturally arise from certain local $L$-functions and particular finite gamma factors.

\begin{itemize}[leftmargin=*]
\item The occurrence of {\it Jacquet--Piatetski-Shapiro--Shalika periods} and {\it vectors} is equivalent to having a pole of the Rankin--Selberg $L$-factor $L(s,\rho_1 \times \rho_2)$, 
or the absolute value of the {\it normalized} Rankin-Selberg gamma factor $\gamma^{\star}(\pi_1 \times \pi_2,\fieldCharacter)$ different to one.
\item The occurrence of {\it Flicker--Rallis periods} and {\it vectors}  is equivalent to having a pole of the Asai $L$-factor $L(s,\rho,{\rm As})$,
or the absolute value of the Asai gamma factor $\AsaiPiGammaFactor$ different to one.
\item The occurrence of {\it Jacquet--Shalika periods} and {\it vectors} is equivalent to have a pole of the exterior square $L$-factor $L(s,\rho,\wedge^2)$, or the absolute value of
the exterior square gamma factor $\gamma(\pi,\wedge^2,\psi)$ different to one.
\item The occurrence of {\it Friedberg--Jacquet periods} and {\it vectors} is equivalent to having a pole of the Bump--Friedberg $L$-factor $L(s,\rho,{\rm BF})$, or the absolute value of
the Bump--Friedberg gamma factor $\BFpigammaFactor$ different to one.
\end{itemize}

Via the theory of new forms for ${\GL}_n$, the parallel analogue results in the nonarchimedean setting is completed by the author \cite{Jo23}, and the method is further generalized to
archimedean setting by the joint work with Humphries \cite{HJ23}. 
We can further explorer the absolute value of gamma factors to irreducible generic representations $\pi$ of ${\rm GL}_n(\fFiniteField_q)$, not just limited to irreducible cuspidal ones.
Soudry and Zelingher recently worked it out for Rankin--Selberg gamma factor $\gamma(\pi_1 \times \pi_2,\fieldCharacter)$ over finite fields \cite[Theorem 1.3]{SZ23}.
Along the line of this philosophy, it is likely that so-called ``multiplicativity" of gamma over finite fields needs to be well-understood ahead of time. 
In the case of $p$-adic fields, it was the Langlands--Shahidi method that came to fruition first (cf. \cite{Lom16,Lom17}) and enabled Zelingher \cite{Ze22} to realize the goal of establishing multiplicativity via Shahidi gamma factors.
It turns out that long before the computation was investigated by Soudry, when he was a graduate student in 1979 \cite{SZ23}.
In a joint work of Soudry and Zelingher, the finite field analogue of Shahidi gamma factors for pairs $(\pi_1,\pi_2)$ is completed very recently \cite{SZ23}.
However it is still required to show that Shahidi gamma factors agrees with Rankin--Selberg gamma factor $\gamma(\pi_1 \times \pi_2,\fieldCharacter)$ to validate this robust argument. 
The author has carried out a similar matching question for the Asai gamma factor $\AsaiPiGammaFactor$, which will appear elsewhere. 
In conjunction with computing Rankin--Selberg sums for classical groups, the author currently pursues this topic in depth with Zelingher \cite{Ze22}. Although all these analysis seem to be doable, if somewhat involved, we think that our current formulation keeps our exposition a reasonable length and we plane to do so in near future.

\par
The structure of this article is the following. \Cref{RS factor} contains a brief review of the theory of Jacquet--Piatetski-Shapiro--Shalika sums, and Rankin--Selberg gamma factors. 
We continue to survey Deligne--Kazhdan close field theory and give its application to Rankin-Selberg gamma factors.
\Cref{AS factor}, and \Cref{BF factor} are devoted to presenting an analogue theory for Asai, and Bump--Friedberg gamma factors, orderly. We concern with exterior square gamma factor
in \Cref{Exterior factor}, where these results are all essentially analogue, although several of the results regarding close field theory are mostly addressed.
We discuss the relation between period vectors, integrals, absolute values of gamma factors, and poles of $L$-factors in \Cref{period}.

\section{The Rankin-Selberg Gamma Factor}
\label{RS factor}

We now detail the theory of Jacquet--Piatetski-Shapiro--Shalika sums as well as the relation between Jacquet--Piatetski-Shapiro--Shalika vectors and Rankin--Selberg $\gamma$-factors.
The results herein are all well-known. However, the normalization of Haar measure \cite{Ze22}, the choice of subspace of Schwartz--Bruhat functions, and the Fourier transformation  \cite{SZ23,Ye19,YZ20}
in the current literature are slightly different from this paper. We recall them as motivation for Sections \ref{AS factor} and \ref{BF factor},
in which we discuss analogous yet new results for Asai and exterior square $\gamma$-factors. Section \ref{RS factor} serves to
overview the breakdown of our computation that is repeated throughout the paper.

\subsection{The Jacquet--Piatetski-Shapiro--Shalika sum}

We let $N_n$ be the unipotent radical of the standard Borel subgroup $B_n$ of ${\rm GL}_n$ and $A_n$ the Levi subgroup of $B_n$, consisting of diagonal matrices in ${\rm GL}_n$.
We denote by $P_n$ the mirabolic subgroup of ${\rm GL}_n$, consisting of matrices in ${\rm GL}_n$ with last row equal to $(0,\dotsm,0,1)$.
We write $1_n$ to denote the $n \times n$ identity matrix.
Let $\fFiniteField_q$ be a finite field of $q=p^k$ elements with characteristic $p$.
Let $\fFiniteField=\fFiniteField_q$. We fix a non-trivial additive character $\fieldCharacter:=\fieldCharacter_\fFiniteField$ of $\fFiniteField$, and extend it to the character of $N_n(\fFiniteField)$
by setting $\psi(n)=\psi(n_{1,2}+\dotsm+n_{n-1,n})$ with $n \in N_n(\fFiniteField)$. Let 
$\Schwartz(\fFiniteField^n)=\{ \phi \,|\, \phi:  \fFiniteField^n \rightarrow \cComplex \}$
be the set of complex valued functions on $\fFiniteField^n$. 
Let $\{e_i \,|\, 1 \leq i \leq n \}$ be the standard row basis of $\fFiniteField^n$.  
We let $\bilinearPairing{x}{y} :=x \cdot {^ty}$ be the standard bilinear form on $\fFiniteField^n$.
The Fourier transform of $\phi \in \Schwartz(\fFiniteField^n)$ with respect to $\fieldCharacter$ is given by (cf. \citelist{\cite{Nie19}*{(2.3)} \cite{YZ20}*{\S 2.1.1}}
\[
\fourierTransform(\phi)(y)=q^{-\frac{n}{2}} \sum_{x \in \fFiniteField_q} \phi(x) \fieldCharacter(\bilinearPairing{x}{y}).
\]
The Fourier inversion formula takes the form 
\[
(\fourierTransform \circ \fourierTransform)(\phi)(x)=\phi(-x) \quad \text{and} \quad (\ifourierTransform \circ \fourierTransform)(\phi)(x)=\phi(x).
\]

\par
Given an irreducible cuspidal representation $\pi$, we fix a non-trivial $\GL_n(\fFiniteField)$-invariant unitary form $(\cdot,\cdot)$ on $V_{\pi} \times V_{\pi}$. Then there exists a non-trivial vector $v_0 \in V_{\pi}$ satisfying
$\pi(n)v_0=\psi(n)v_0$ for all $n \in N_n(\fFiniteField)$. Such a vector $v_0$ is called a {\it Whittaker vector}.
A {\it Whittaker function} of $\pi$ is a matrix coefficient of the form $W(g)=(\pi(g)v,v_0)$ for all $g \in \GL_n(\fFiniteField)$ and $v \in V_{\pi}$.
Whittaker functions satisfy $W(ng)=\psi(n)W(g)$ for every $n \in N_n(\fFiniteField)$.
The subspace generated by all Whittaker function $W(g)$ is unique \cite[Theorem 0.5]{Gel70}, and will be denoted by $\mathcal{W}(\pi,\psi)$ with $\GL_n(\fFiniteField)$ acting by right translations.
This space is called the {\it Whittaker model} of $\pi$. 
For an irreducible cuspidal representation $\pi$ of $\GL_n(\fFiniteField)$, we have its contragredient $\check{\pi}$ that is isomorphic to $\pi^{\iota}$,
where $\pi^{\iota}$ is the representation acting on the same underlying space $V_{\pi}$ of $\pi$ by $\pi^{\iota}(g)=\pi(^tg^{-1})$ for $g \in \GL_n(\fFiniteField)$.
Under $\check{\pi} \cong \pi^{\iota}$, we achieve an isomorphism of vector spaces $\whittaker(\pi,\fieldCharacter) \rightarrow \whittaker(\check{\pi},\fieldCharacter^{-1})$,
given by $W_{\pi}  \mapsto  \check{W}_{\pi}$, where
\begin{equation}
\label{dual-Whittaker-def}
 \check{W}_{\pi}(g)=W_{\pi}(\weyllong_n\, {^tg^{-1}}), \quad g \in \GL_n(\fFiniteField),
\end{equation}
and where 
$w_n=\begin{pmatrix}  \vspace{-2ex} && 1 \\ & \iddots& \vspace{-2ex} \\ 1 && \end{pmatrix}$
 is the longest Weyl element of $\GL_n(\fFiniteField)$. 
The {\it Bessel function} $\mathcal{B}_{\pi,\psi}$ of $\pi$ is the Whittaker function of the normalized Whittaker vector by setting
\[
  \mathcal{B}_{\pi,\psi}(g)=\frac{(\pi(g)v,v_0)}{(v_0,v_0)}=W(1_2)^{-1}W(g).
\]
Let $\omega_{\pi}$ denote the central character of $\pi$ on $\multiplicativegroup{\fFiniteField}$. Some of the elementary properties of the Bessel function $\mathcal{B}_{\pi,\psi}$ are (cf. \cite[Proposition 2.8]{Nie19}):
\begin{enumerate}[label=$(\mathrm{\arabic*})$]
\item $\mathcal{B}_{\pi,\psi}(n_1gn_2)=\psi(n_1)\psi(n_2)\mathcal{B}_{\pi,\psi}(g)$ for all $n_1,n_2 \in N_n(\fFiniteField)$.
\item  $\mathcal{B}_{\pi,\psi}(1_n)=1$ and $\mathcal{B}_{\pi,\psi}(a 1_n)=\omega_{\pi}(a)$ for all $a \in \multiplicativegroup{\fFiniteField}$.
\item $\mathcal{B}_{\pi,\psi}(g^{-1})=\overline{\mathcal{B}_{\pi,\psi}(g)}=\mathcal{B}_{\check{\pi},\psi^{-1}}(g)$ for all $g \in \GL_n(\fFiniteField)$.
\end{enumerate}

\par
Let $\pi_1$ and $\pi_2$ be irreducible cuspidal representations of $\GL_n(\fFiniteField)$. 
Let $\Schwartz_0(\fFiniteField^n)$ denote the set of $\cComplex$-valued functions on $\fFiniteField^n$ such that $\phi(0)=0$. For every $W_{\pi_1} \in \whittaker(\pi_1,\fieldCharacter)$ and $W_{\pi_2} \in \whittaker(\pi_2,\fieldCharacter^{-1})$,
and for any $\phi \in \Schwartz_0(\fFiniteField^n)$, there exists a complex number $\gamma^{\star}(\pi_1 \times \pi_2,\fieldCharacter)$ such that
\begin{equation}
\label{Finite-J-PS-S-Func}
\gamma^{\star}(\pi_1 \times \pi_2,\fieldCharacter) \sum_{g \in \UnipotentSubgroup_n(\fFiniteField) \backslash \GL_n(\fFiniteField) } W_{\pi_1}(g) W_{\pi_2}(g) \phi(e_ng)=
\sum_{g \in \UnipotentSubgroup_n(\fFiniteField) \backslash \GL_n(\fFiniteField) } W_{\pi_1}(g) W_{\pi_2}(g) \fourierTransform(\phi)(e_1 {^tg^{-1}}).
\end{equation}

\begin{remark}
As explained in \cite[p.23 footnote]{Nie19}, we normalize the Fourier transform and gamma factors, which are different from what is commonly adopted at least since Roddity-Gershon’s master thesis (cf. \citelist{\cite{Ye19}*{\S 2.2}\cite{SZ23}*{\S 2.3.1}}). In order to distinguish the normalized gamma factor from unnormalized one $\gamma(\pi_1 \times \pi_2,\fieldCharacter)$ in \cite[\S 2.2]{Ye19}, we add the superscript $\star$ to emphasize the normalization. In doing so, the absolute value of $ \gamma^{\star}(\pi_1 \times \pi_2,\fieldCharacter) $ becomes $1$, as shown in  \Cref{RS-Func}. 
\end{remark}

For $W_{\pi_1} \in \mathcal{W}(\pi_1,\psi)$ and $W_{\pi_2} \in \mathcal{W}(\pi_2,\psi^{-1})$, and $\phi \in \Schwartz(\fFiniteField^n)$, we define the {\it Jacquet--Piatetski-Shapiro--Shalika sum} $\Psi(W_{\pi_1},W_{\pi_2},\phi)$ by
\[
  \Psi(W_{\pi_1},W_{\pi_2},\phi):=\sum_{g \in \UnipotentSubgroup_n(\fFiniteField) \backslash \GL_n(\fFiniteField) } W_{\pi_1}(g) W_{\pi_2}(g) \phi(e_ng).
\]
In a similar manner, the  {\it dual Jacquet--Piatetski-Shapiro--Shalika sum} $\check{\Psi}(W_{\pi_1},W_{\pi_2},\phi)$ is defined by
\[
\check{\Psi}(W_{\pi_1},W_{\pi_2},\phi):=\sum_{g \in \UnipotentSubgroup_n(\fFiniteField) \backslash \GL_n(\fFiniteField) } \check{W}_{\pi_1}(g)\check{W}_{\pi_2}(g) \fourierTransform(\phi)(e_ng).
\]
A non-zero vector $v_1 \otimes v_2 \in V_{\pi_1} \otimes V_{\pi_2}$ is called a {\it Jacquet--Piatetski-Shapiro--Shalika vector}, if for every $g \in \GL_n(\fFiniteField)$,
we have $(\pi_1 \otimes \pi_2)(g)(v_1 \otimes v_2):= \pi_1(g) v_1 \otimes \pi_2(g)v_2 =v_1 \otimes v_2$. This condition is equivalent to $\pi_1 \cong \check{\pi}_2$.

\begin{remark} The definition of $\gamma^{\star}(\pi \times \contragredient{\pi},\fieldCharacter)$ taken in \cite{YZ20} differs slightly from herein by the inclusion of all Schwartz-Bruhat functions $\phi \in \Schwartz(\fFiniteField^n)$ in the Jacquet--Piatetski-Shapiro--Shalika sum. As stressed in \cite[Corollary 2.27]{YZ20}, the functional equation in \eqref{Finite-J-PS-S-Func} does not hold when $\pi_1 \times \pi_2$ has the Jacquet--Piatetski-Shapiro--Shalika vector. In a pioneering work \cite{PS76} (cf. \cite[\S 2.2]{Ye19}), Piatetski-Shapiro has already realized that it is advantageous to restrict the space $\Schwartz(\fFiniteField^n)$ to $\Schwartz_0(\fFiniteField^n)$. In order to define gamma factors over the finite field uniformly for every irreducible cuspidal representations, Schwartz-Bruhat functions $\phi$ are only taken over  
$\Schwartz_0(\fFiniteField^n)$ throughout the paper.
\end{remark}

We express $\gamma^{\star}(\pi_1 \times \pi_2,\fieldCharacter)$ in terms of the Bessel functions associated with $\pi_1$ and $\pi_2$.

\begin{theorem}\citelist{\cite{Ye19}*{Equation (16)}\cite{SZ23}*{Corollary 3.4}} 
\label{J-PS-S-Besel}
Let $\pi_1$ and $\pi_2$ be irreducible cuspidal representations of $\GL_n(\fFiniteField)$. Then
\begin{equation}
\label{J-PS-S-Bessel-sum}
 \gamma^{\star}(\pi_1 \times \pi_2,\fieldCharacter)= q^{-\frac{n}{2}} \sum_{g \in \UnipotentSubgroup_n(\fFiniteField)\backslash \GL_n(\fFiniteField)}   \grepBesselFunction{\pi_1}{\fieldCharacter}(g)   \grepBesselFunction{\pi_2}{\fieldCharacter^{-1}}(g)  \fieldCharacter(e_1 {^tg^{-1}}\;{^te_n}).
\end{equation}
In particular, we have 
$ \gamma^{\star}(\contragredient{\pi}_1 \times \contragredient{\pi}_2,\fieldCharacter^{-1})=\complexConjugate{ \gamma^{\star}(\pi_1 \times \pi_2,\fieldCharacter)}$.
\end{theorem}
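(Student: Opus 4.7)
The plan is to specialize the defining functional equation \eqref{Finite-J-PS-S-Func} with judicious choices of data. Take $W_{\pi_1} = \grepBesselFunction{\pi_1}{\fieldCharacter}$ and $W_{\pi_2} = \grepBesselFunction{\pi_2}{\fieldCharacter^{-1}}$ to be the normalized Bessel functions, and choose $\phi = \delta_{e_n}$, the indicator of the point $e_n \in \fFiniteField^n$; since $e_n \neq 0$ we have $\phi \in \Schwartz_0(\fFiniteField^n)$. A direct computation of the Fourier transform yields $\fourierTransform(\delta_{e_n})(y) = q^{-n/2}\fieldCharacter(\bilinearPairing{e_n}{y})$. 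Substituting $y = e_1\,{^tg^{-1}}$ into the right-hand side of \eqref{Finite-J-PS-S-Func} reproduces exactly $q^{-n/2}$ times the sum appearing in \eqref{J-PS-S-Bessel-sum}.

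Turning to the left-hand side of \eqref{Finite-J-PS-S-Func}, the relation $\phi(e_ng) \neq 0$ forces $g$ to lie in the stabilizer of $e_n$, which is the mirabolic $\MirabolicSubgroup_n(\fFiniteField)$. Hence the identity reduces to proving
\[
\gamma^{\star}(\pi_1 \times \pi_2, \fieldCharacter) \cdot S = q^{-\frac{n}{2}} \sum_{g \in \UnipotentSubgroup_n(\fFiniteField) \backslash \GL_n(\fFiniteField)} \grepBesselFunction{\pi_1}{\fieldCharacter}(g) \grepBesselFunction{\pi_2}{\fieldCharacter^{-1}}(g) \fieldCharacter(e_1\,{^tg^{-1}}\,{^te_n}),
\]
where $S := \sum_{g \in \UnipotentSubgroup_n(\fFiniteField) \backslash \MirabolicSubgroup_n(\fFiniteField)} \grepBesselFunction{\pi_1}{\fieldCharacter}(g) \grepBesselFunction{\pi_2}{\fieldCharacter^{-1}}(g)$. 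The crucial step is to verify $S = 1$. Since $\pi_i$ is cuspidal, a finite-field Gelfand--Kazhdan type theorem (cf.\ \cite{Gel70}) gives an isomorphism of $\MirabolicSubgroup_n(\fFiniteField)$-modules $\whittaker(\pi_i, \fieldCharacter^{\pm 1})|_{\MirabolicSubgroup_n(\fFiniteField)} \cong \Ind_{\UnipotentSubgroup_n(\fFiniteField)}^{\MirabolicSubgroup_n(\fFiniteField)} \fieldCharacter^{\pm 1}$. Combined with the normalization $\grepBesselFunction{\pi}{\fieldCharacter}(1_n) = 1$ and property $(1)$, this forces $\grepBesselFunction{\pi}{\fieldCharacter}|_{\MirabolicSubgroup_n(\fFiniteField)}$ to be supported on $\UnipotentSubgroup_n(\fFiniteField)$ and to agree with $\fieldCharacter$ there. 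Therefore the product $\grepBesselFunction{\pi_1}{\fieldCharacter} \cdot \grepBesselFunction{\pi_2}{\fieldCharacter^{-1}}$ restricted to $\MirabolicSubgroup_n(\fFiniteField)$ is the indicator of $\UnipotentSubgroup_n(\fFiniteField)$, which contributes exactly one non-zero term in the coset sum $\UnipotentSubgroup_n \backslash \MirabolicSubgroup_n$, giving $S = 1$.

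For the conjugation identity $\gamma^{\star}(\contragredient{\pi}_1 \times \contragredient{\pi}_2, \fieldCharacter^{-1}) = \complexConjugate{\gamma^{\star}(\pi_1 \times \pi_2, \fieldCharacter)}$, I would take the complex conjugate of \eqref{J-PS-S-Bessel-sum} term by term, invoke property $(3)$ of the Bessel function to replace $\complexConjugate{\grepBesselFunction{\pi_i}{\fieldCharacter^{\pm 1}}}$ by $\grepBesselFunction{\contragredient{\pi}_i}{\fieldCharacter^{\mp 1}}$, and use $\complexConjugate{\fieldCharacter} = \fieldCharacter^{-1}$ on the character factor; the resulting expression is precisely the Bessel formula for $\gamma^{\star}(\contragredient{\pi}_1 \times \contragredient{\pi}_2, \fieldCharacter^{-1})$.

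The main obstacle I foresee is the support identity $\grepBesselFunction{\pi}{\fieldCharacter}|_{\MirabolicSubgroup_n(\fFiniteField)} = \fieldCharacter \cdot \mathbf{1}_{\UnipotentSubgroup_n(\fFiniteField)}$ for cuspidal $\pi$: although classical, it hinges on the uniqueness of the Whittaker model together with the cuspidality hypothesis ensuring the restricted model is irreducible and isomorphic to the compact induction. Once this is granted, the remainder of the argument is a direct bookkeeping calculation.
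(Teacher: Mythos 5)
Your proof is correct and follows exactly the strategy the paper itself uses for the analogous Asai and Bump--Friedberg statements (\Cref{flicker-one-side-thm} and \Cref{BF-one-side-thm}): substitute the normalized Bessel functions and the indicator $\delta_{e_n}$ into the defining functional equation, observe that $\fourierTransform(\delta_{e_n})(y)=q^{-n/2}\fieldCharacter(e_n\,{}^ty)$, and use the vanishing of the Bessel function on $\MirabolicSubgroup_n(\fFiniteField)\setminus\UnipotentSubgroup_n(\fFiniteField)$ to reduce the left side to the single unit term. The one step you should tighten is your justification of that support fact: the $P_n$-module isomorphism $\whittaker(\pi,\fieldCharacter)|_{\MirabolicSubgroup_n}\cong\Ind_{\UnipotentSubgroup_n}^{\MirabolicSubgroup_n}\fieldCharacter$ does not by itself tell you where the Bessel vector lands inside the induced model; the cleanest route is the direct equivariance argument using property $(1)$ of $\repBesselFunction{\pi}$ (for $p\in\MirabolicSubgroup_n\setminus\UnipotentSubgroup_n$ one produces $n_1,n_2\in\UnipotentSubgroup_n$ with $n_1pn_2=p$ and $\fieldCharacter(n_1)\fieldCharacter(n_2)\neq1$), and this is precisely what the paper invokes as \cite[Lemma 2.7]{Nie19} in those analogous proofs.
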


We can precisely evaluate the sum \eqref{J-PS-S-Bessel-sum} in two different ways, when $\pi_1 \times \pi_2$ has the Jacquet--Piatetski-Shapiro--Shalika vector.
 The method of Ye \cite[Corollary 4.3]{Ye19} uses the system of linear equations and the theory of level zero supercuspidal representations, whereas
Soudry and Zelingher \cite[Theorem A.1]{SZ23} explicitly compute $\gamma^{\star}(\pi \times \contragredient{\pi},\fieldCharacter)$
within the context of the representation theory of groups over finite fields. We transport the former approach to the Bump--Friedberg setting in \Cref{BF-distinct-Gamma},
while the latter path is adapted to the Asai setting in \Cref{Asai-gamma-distinction} and the exterior square setting in \Cref{JS-distinct-Gamma}.

\begin{theorem}\citelist{\cite{SZ23}*{Theorem A.1}\cite{Ye19}*{Corollary 4.3}} 
\label{self-contra-comp}
Let $\pi$ be an irreducible cuspidal representation of $\GL_n(\fFiniteField)$.
Then we have
\[
   \gamma^{\star}(\pi \times \contragredient{\pi},\fieldCharacter)=-q^{-\frac{n}{2}}.
\]
\end{theorem}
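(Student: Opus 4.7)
The plan is to exploit the defining functional equation \eqref{Finite-J-PS-S-Func} directly, with $\pi_1=\pi$, $\pi_2=\contragredient{\pi}$, the Whittaker functions taken to be the Bessel functions themselves, and a single test function $\phi_0\in\Schwartz_0(\fFiniteField^n)$ engineered so that both sides of \eqref{Finite-J-PS-S-Func} reduce to the same nonzero real scalar. Because the Jacquet--Piatetski-Shapiro--Shalika vector is present (since $\pi_1\cong\contragredient{\pi}_2$), restricting the test function to $\Schwartz_0$ is exactly what makes $\gamma^{\star}(\pi\times\contragredient{\pi},\fieldCharacter)$ well-defined, so any such $\phi_0$ is admissible input.

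First I would set $W_{\pi_1}=\grepBesselFunction{\pi}{\fieldCharacter}$ and $W_{\pi_2}=\grepBesselFunction{\contragredient{\pi}}{\fieldCharacter^{-1}}$. By property $(3)$ of the Bessel function listed above, $\grepBesselFunction{\contragredient{\pi}}{\fieldCharacter^{-1}}(g)=\overline{\grepBesselFunction{\pi}{\fieldCharacter}(g)}$, so that $W_{\pi_1}(g)W_{\pi_2}(g)=\abs{\grepBesselFunction{\pi}{\fieldCharacter}(g)}^2\ge 0$; by property $(2)$ the sum includes the strictly positive term $\abs{\grepBesselFunction{\pi}{\fieldCharacter}(1_n)}^2=1$. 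Hence
\[
S \;:=\; \sum_{g\in \UnipotentSubgroup_n(\fFiniteField)\backslash \GL_n(\fFiniteField)} \abs{\grepBesselFunction{\pi}{\fieldCharacter}(g)}^2
\]
is a strictly positive real number, and this is the quantity by which I intend to divide.

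Next I would take $\phi_0:=\mathbbm{1}_{\fFiniteField^n}-\mathbbm{1}_{\{0\}}$, which vanishes at the origin and therefore lies in $\Schwartz_0(\fFiniteField^n)$. Since $e_n g$ is the last row of $g\in\GL_n(\fFiniteField)$, it is nonzero, so $\phi_0(e_n g)=1$ identically in $g$ and the left-hand side of \eqref{Finite-J-PS-S-Func} simplifies to $\gamma^{\star}(\pi\times\contragredient{\pi},\fieldCharacter)\cdot S$. A direct Plancherel computation on the additive group $\fFiniteField^n$ gives
\[
\fourierTransform(\phi_0)(y) \;=\; q^{n/2}\mathbbm{1}_{\{y=0\}} - q^{-n/2}.
\]
Because $e_1{\,^tg^{-1}}\ne 0$ for $g\in\GL_n(\fFiniteField)$, only the constant $-q^{-n/2}$ survives when $y=e_1{\,^tg^{-1}}$, and the right-hand side of \eqref{Finite-J-PS-S-Func} collapses to $-q^{-n/2}\cdot S$. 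Equating the two sides and dividing by $S>0$ yields $\gamma^{\star}(\pi\times\contragredient{\pi},\fieldCharacter) = -q^{-n/2}$.

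The main conceptual obstacle, and the key insight of the argument, is recognizing that the right test function is $\phi_0=\mathbbm{1}_{\fFiniteField^n}-\mathbbm{1}_{\{0\}}$: up to scalar it is the unique element of $\Schwartz_0(\fFiniteField^n)$ whose pullbacks $g\mapsto\phi_0(e_n g)$ and $g\mapsto\fourierTransform(\phi_0)(e_1{\,^tg^{-1}})$ are both constant on $\GL_n(\fFiniteField)$, which is precisely what allows the gamma factor to be peeled off the Bessel sum without invoking any further structural information about $\pi$ beyond $\grepBesselFunction{\pi}{\fieldCharacter}(1_n)=1$.
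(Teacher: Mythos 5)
Your proof is correct, and it is genuinely different from, and considerably more elementary than, the two sources the paper cites. Ye's argument \cite[Corollary 4.3]{Ye19} passes to level zero supercuspidal representations over a $p$-adic field and solves a system of linear equations coming from the modified functional equation; Soudry--Zelingher \cite[Theorem A.1]{SZ23} evaluate the Bessel sum directly via the distinction machinery that the paper reproduces as \Cref{key-distinction}. You instead notice that $\phi_0=\mathbbm{1}_{\fFiniteField^n}-\mathbbm{1}_{\{0\}}\in\Schwartz_0(\fFiniteField^n)$ has the special property that both $g\mapsto\phi_0(e_n g)$ and $g\mapsto\fourierTransform(\phi_0)(e_1\,{}^t g^{-1})$ are \emph{constant} on $\GL_n(\fFiniteField)$ (equal to $1$ and $-q^{-n/2}$ respectively), since $e_n g$ and $e_1\,{}^t g^{-1}$ never vanish for $g\in\GL_n(\fFiniteField)$ and $\fourierTransform(\phi_0)=q^{n/2}\mathbbm{1}_{\{0\}}-q^{-n/2}$. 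Pairing this with $W_{\pi_1}=\grepBesselFunction{\pi}{\fieldCharacter}$, $W_{\pi_2}=\grepBesselFunction{\contragredient{\pi}}{\fieldCharacter^{-1}}=\overline{\grepBesselFunction{\pi}{\fieldCharacter}}$ makes the common Bessel sum $S=\sum_{g}|\grepBesselFunction{\pi}{\fieldCharacter}(g)|^2$, which is visibly $\geq 1$ because the $1_n$-coset contributes $1$, so it can be cancelled from both sides of \eqref{Finite-J-PS-S-Func} to read off $\gamma^{\star}(\pi\times\contragredient{\pi},\fieldCharacter)=-q^{-n/2}$. What this buys you is a one-line, self-contained evaluation that entirely bypasses Green's formula, level zero supercuspidals, and the $\GL_n(\fFiniteField)$-module-theoretic input of \Cref{key-distinction}; what the Soudry--Zelingher approach buys, by contrast, is reusability — the same distinction lemma is redeployed in the paper's \Cref{Asai-gamma-distinction} and \Cref{JS-distinct-Gamma}, where the analogous test-function trick does not carry over as cleanly because the period integrand is no longer $|\cdot|^2$ of a single Bessel function.
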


We end this section by summarizing functional equations for $\gamma^{\star}(\pi_1 \times \pi_2,\fieldCharacter)$ over finite fields.
The result is a direct consequence of \Cref{J-PS-S-Besel} and \Cref{self-contra-comp}.

\begin{theorem}$(${\rm cf.} \cite[Corollary 2.2 and Proposition 2.5]{SZ23}$)$
\label{RS-Func}
 Let $\pi_1$ and $\pi_2$ be irreducible cuspidal representations of $\GL_n(\fFiniteField)$.
\begin{enumerate}[label=$(\mathrm{\arabic*})$]
\item If $\pi_1 \ncong \check{\pi}_2$,  we have $ \gamma^{\star}(\pi_1 \times \pi_2,\fieldCharacter) \gamma^{\star}(\contragredient{\pi}_1 \times \contragredient{\pi}_2,\fieldCharacter^{-1})=1$ and $\abs{\gamma^{\star}(\pi_1 \times \pi_2,\fieldCharacter)}=1$.
\item  If $\pi_1 \cong \check{\pi}_2$, we have $ \gamma^{\star}(\pi_1 \times \pi_2,\fieldCharacter) \gamma^{\star}(\contragredient{\pi}_1 \times \contragredient{\pi}_2,\fieldCharacter^{-1})=q^{-n}$ and $\abs{\gamma^{\star}(\pi_1 \times \pi_2,\fieldCharacter)}=q^{-\frac{n}{2}}$.
 \end{enumerate}
\end{theorem}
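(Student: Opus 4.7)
The plan is to derive both assertions from one complex-conjugation identity together with an iteration of the local functional equation \eqref{Finite-J-PS-S-Func}. The last sentence of \Cref{J-PS-S-Besel} already gives $\gamma^{\star}(\contragredient{\pi}_1 \times \contragredient{\pi}_2,\fieldCharacter^{-1}) = \overline{\gamma^{\star}(\pi_1 \times \pi_2,\fieldCharacter)}$, so in either case
\[
\gamma^{\star}(\pi_1 \times \pi_2,\fieldCharacter)\, \gamma^{\star}(\contragredient{\pi}_1 \times \contragredient{\pi}_2,\fieldCharacter^{-1}) = \abs{\gamma^{\star}(\pi_1 \times \pi_2,\fieldCharacter)}^2,
\]
so it remains to pin down this squared absolute value.

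Case (2) is immediate: if $\pi_1 \cong \contragredient{\pi}_2$, then \Cref{self-contra-comp} yields $\gamma^{\star}(\pi_1 \times \pi_2,\fieldCharacter) = -q^{-n/2}$, a real number, and squaring gives $q^{-n}$, establishing both equalities.

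For Case (1) the plan is to iterate \eqref{Finite-J-PS-S-Func}. First apply the functional equation to the pair $(\contragredient{\pi}_1, \contragredient{\pi}_2)$ with additive character $\fieldCharacter^{-1}$, Whittaker data $\check{W}_{\pi_1} \in \whittaker(\contragredient{\pi}_1, \fieldCharacter^{-1})$ and $\check{W}_{\pi_2} \in \whittaker(\contragredient{\pi}_2, \fieldCharacter)$, and test function $\fourierTransform(\phi)$ for $\phi \in \Schwartz_0(\fFiniteField^n)$ chosen so that $\sum_x \phi(x) = 0$ (equivalently $\fourierTransform(\phi) \in \Schwartz_0$). The Fourier inversion formula $\ifourierTransform \circ \fourierTransform = \mathrm{id}$ converts the right-hand side to a sum over $\phi$, and then the change of variables $h = \weyllong_n\, {^tg^{-1}}$ (which is a measure-preserving bijection of $\UnipotentSubgroup_n(\fFiniteField) \backslash \GL_n(\fFiniteField)$ because $\weyllong_n$ conjugates the transpose-inverse of an upper unipotent matrix back into $\UnipotentSubgroup_n$) together with the relations $\weyllong_n^2 = 1_n$, $e_n \weyllong_n = e_1$, and $e_1 \weyllong_n = e_n$ turn $\check{W}_{\pi_i}(g)$ into $W_{\pi_i}(h)$ and replace $e_n g, e_1\,{^tg^{-1}}$ by $e_1\,{^th^{-1}}, e_n h$. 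The resulting identity reads
\[
\gamma^{\star}(\contragredient{\pi}_1 \times \contragredient{\pi}_2,\fieldCharacter^{-1}) \sum_{h} W_{\pi_1}(h) W_{\pi_2}(h) \fourierTransform(\phi)(e_1\,{^th^{-1}}) = \Psi(W_{\pi_1}, W_{\pi_2}, \phi);
\]
applying \eqref{Finite-J-PS-S-Func} once more to the left side gives
\[
\gamma^{\star}(\pi_1 \times \pi_2,\fieldCharacter)\, \gamma^{\star}(\contragredient{\pi}_1 \times \contragredient{\pi}_2,\fieldCharacter^{-1})\, \Psi(W_{\pi_1}, W_{\pi_2}, \phi) = \Psi(W_{\pi_1}, W_{\pi_2}, \phi).
\]
The conclusion $\gamma^{\star}(\pi_1 \times \pi_2,\fieldCharacter)\, \gamma^{\star}(\contragredient{\pi}_1 \times \contragredient{\pi}_2,\fieldCharacter^{-1}) = 1$ then follows by producing admissible $(W_{\pi_1}, W_{\pi_2}, \phi)$ with $\Psi \ne 0$. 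Since $\pi_1 \ncong \contragredient{\pi}_2$ the bilinear pairing $(W_1, W_2) \mapsto \sum_g W_1(g) W_2(g)$ vanishes, so $\Psi(W_1, W_2, \phi)$ only depends on $\phi$ through its restriction to $\fFiniteField^n \setminus \{0\}$, and \eqref{Finite-J-PS-S-Func} extends from $\Schwartz_0$ to all of $\Schwartz(\fFiniteField^n)$; a linear-algebra count (the non-zero functional $\phi \mapsto \Psi$ cannot vanish identically on the codimension-one subspace cut out by $\fourierTransform(\phi)(0) = 0$) supplies the desired $\phi$.

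The main technical obstacle is the bookkeeping of the substitution $h = \weyllong_n\,{^tg^{-1}}$: verifying that it descends to a measure-preserving bijection on $\UnipotentSubgroup_n(\fFiniteField) \backslash \GL_n(\fFiniteField)$ and tracking how it transforms $\check{W}_{\pi_i}$, $e_n g$, and $e_1\,{^tg^{-1}}$ correctly. A minor but necessary subtlety is the need to stay inside $\Schwartz_0$ under both $\phi$ and $\fourierTransform(\phi)$, which is handled by the observation above that $\pi_1 \ncong \contragredient{\pi}_2$ allows the functional equation to be extended past $\Schwartz_0$ and by the codimension count.
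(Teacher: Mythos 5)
Your argument is correct and follows the same method the paper intends: the conjugation identity $\gamma^{\star}(\contragredient{\pi}_1 \times \contragredient{\pi}_2,\fieldCharacter^{-1}) = \overline{\gamma^{\star}(\pi_1 \times \pi_2,\fieldCharacter)}$ from \Cref{J-PS-S-Besel} reduces the product to a squared modulus, \Cref{self-contra-comp} handles case (2), and a double application of \eqref{Finite-J-PS-S-Func} (Fourier-inverted, i.e.\ the double-duality $\check{\Psi}(\check{W}_{\pi_1},\check{W}_{\pi_2},\mathcal{F}_{\psi}(\phi))=\Psi(W_{\pi_1},W_{\pi_2},\phi)$, exactly parallel to the Asai step used in \Cref{Asai-Func}) gives case (1). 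Your bookkeeping of the change of variables $h=\weyllong_n\,{^tg^{-1}}$ and the extension of the functional equation from $\Schwartz_0$ to $\Schwartz(\fFiniteField^n)$ when $\pi_1 \ncong \check\pi_2$ are both handled correctly. The only inefficiency is the final ``codimension one'' count to produce a datum with $\Psi \ne 0$: since you have already extended the functional equation past $\Schwartz_0$, you can simply take $W_{\pi_i}=\grepBesselFunction{\pi_i}{\fieldCharacter^{\pm 1}}$ and $\phi=\delta_{e_n}$, for which $\Psi(\grepBesselFunction{\pi_1}{\fieldCharacter},\grepBesselFunction{\pi_2}{\fieldCharacter^{-1}},\delta_{e_n})=1$ by \cite[Lemma~2.7]{Nie19}, exactly as the paper does in its other non-vanishing arguments.
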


\subsection{The Jacquet--Piatetski-Shapiro--Shalika period and level zero supercuspidal representations}
Let $F$ be a non-archimedean local field with its residual finite field $\integersRing / \mathfrak{p} \cong \mathbb{F}_q$ of order $q=q_F$.
The base field $F$ is a finite extension of $\mathbb{Q}_p$ or $\mathbb{F}_p((t))$, called a {\it $p$-adic field} in characteristic 0,
or a {\it local function field} in characteristic $p > 0$. 
We write  $\integersRing:=\integersRing_F$ and $\mathfrak{p}:=\mathfrak{p}_F$ for the ring of its integers and the maximal ideal, respectively. 
We fix a generator $\varpi:=\varpi_F$ of $\mathfrak{p}$ and normalize the absolute value $|\cdot|$ of $F$ so that
$|\varpi|=q^{-1}$.
Let $\fieldCharacter_\fLocalField$ be a fixed non-trivial additive character of $\fLocalField$ such that $\fieldCharacter_\fLocalField$ is trivial on $\mathfrak{p}$ and nontrivial on $\integersRing$.
The self-dual Haar measure for $\fieldCharacter_\fLocalField$ \cite[\S 23]{BH06} then satisfies
\[
 \int_{\integersRing} dx=q^{\frac{1}{2}}.
\]
For the purpose of calculation, it will be convenient to choose the Haar measure $\multiplicativeMeasure x$ on $\multiplicativegroup{\fLocalField}$ such that
\[
  \int_{\multiplicativegroup{\integersRing}} \multiplicativeMeasure x=1.
\]
We denote by $\projection : \integersRing \rightarrow \integersRing/\mathfrak{p} \cong \fFiniteField$ the quotient map.
We define $\fieldCharacter(\projection(k))=\fieldCharacter_\fLocalField(k)$ for $k \in \integersRing$. 
Let $\mathcal{S}(F^n)$ be the space of locally constant and compactly supported functions $\Phi : F^n \rightarrow \mathbb{C}$.
We denote its Fourier transform
\[
  \gfourierTransform{\fieldCharacter_\fLocalField}(\Phi)(y)=\int_{\fLocalField^n} \Phi(x) \fieldCharacter_\fLocalField(\bilinearPairing{x}{y}) \, dx,
\]
for $\Phi \in \Schwartz(\fLocalField^n)$.
The Fourier inversion formulas are given by
\[
(\gfourierTransform{\fieldCharacter_\fLocalField} \circ \gfourierTransform{\fieldCharacter_\fLocalField})(\Phi)(x)=\Phi(-x) \quad \text{and} \quad (\gfourierTransform{\fieldCharacter_\fLocalField^{-1}} \circ \gfourierTransform{\fieldCharacter_\fLocalField})(\Phi)(x)=\Phi(x).
\] 
For $\phi \in \Schwartz(\fFiniteField^n)$, we define a lift $\Phi_{\circ} \in \Schwartz(\fLocalField^n)$ of $\phi$ by
\[
  \Phi_{\circ}(x)=
  \begin{cases}
  \phi(\projection(x)), & \text{if $x \in \integersRing^n$,}\\
  0, & \text{otherwise.}\\
  \end{cases}
\]
Then $\gfourierTransform{\fieldCharacter_\fLocalField}(\Phi_{\circ})$ is a lift of $\fourierTransform(\phi)$ \cite[Proposition 3.6]{YZ20} in the sense of
\[
 \gfourierTransform{\fieldCharacter_\fLocalField}(\Phi_{\circ})(x)=
  \begin{cases}
 \fourierTransform( \phi)(\projection(x)), & \text{if $x \in \integersRing^n$,}\\
  0, & \text{otherwise.}\\
  \end{cases}
\]

\par
 We let $K_n=\GL_n(\mathfrak{o})$ be the standard maximal compact subgroup of $\GL_n(F)$. A {\it level zero supercuspidal representation} of  $\GL_{n}(\fLocalField)$ is given by
\[
 \rho \cong {\rm c\text{-}Ind}^{{\rm GL}_n(F)}_{F^{\times}{\rm GL}_n(\mathfrak{o})} \widetilde{\mu},
\]
where a representation $\mu:=\widetilde{\mu}\restriction_{{\rm GL}_n(\mathfrak{o})}$ is inflated from an irreducible cuspidal representation $\pi$ of $\GL_n(\fFiniteField)$
and the central character $\omega_{\pi}$ of $\pi$ satisfies $\widetilde{\mu}\restriction_{\mathfrak{o}^{\times}=F^{\times} \cap {\rm GL}_n(\mathfrak{o})}(a)=\omega_{\pi}(\projection(a))$.
We let $\mathcal{A}_0({\rm GL}_n(F))$ be the set of isomorphism classes of level zero supercuspidal representations of ${\rm GL}_n(F)$ and let $\mathcal{A}_0({\rm GL}_n(\fFiniteField))$
denote the set of equivalence classes of irreducible cuspidal representations of ${\rm GL}_n(\fFiniteField)$. Then \cite[\S 3.3]{Yan19} and \cite[Theorem 3.5]{YZ20}  give rise to a bijection
\begin{equation}
\label{level-zero-corresp}
\begin{split}
 \mathcal{A}_0({\rm GL}_n(F)) &\longleftrightarrow \mathbb{C}^{\times} \times \mathcal{A}_0({\rm GL}_n(\fFiniteField)) \\
\rho & \longleftrightarrow (\omega_{\rho}(\varpi),\pi).
\end{split}
\end{equation}
The contragredient representation $\check{\rho}$ of ${\rm GL}_n(F)$ is again a level zero supercuspidal representation
constructed from an irreducible cuspidal representation $\check{\pi}$ of ${\rm GL}_n(\fFiniteField)$ \cite[Lemma 2.1]{Ye19}.
If $W_{\rho} \in \whittaker(\rho,\psi_F)$, then $\check{W}_{\rho}(g):=W_{\rho}(\weyllong_n\, {^tg^{-1}}) \in \whittaker(\check{\rho},\psi^{-1}_F)$.
We denote by $\langle \cdot, \cdot \rangle$ the pairing $V_{\contragredient{\rho}} \times V_{\rho}$ and $V_{\contragredient{\pi}} \times V_{\pi}$ given by the evaluation.
Let $\lambda \in \Hom_{\UnipotentSubgroup_n(\fFiniteField)}(\pi,\fieldCharacter)$ be a non-zero Whittaker functional of $\pi$.
We define the linear functional $\lambda_{\circ} : V_{\rho} \rightarrow \cComplex$ by
\[
  \langle \lambda_{\circ}, f \rangle:=\int_{\UnipotentSubgroup_n(\fLocalField) \cap K_n  \backslash \UnipotentSubgroup_n(\fLocalField)}\langle \lambda, f(u)  \rangle \fieldCharacter^{-1}(u) \,\multiplicativeMeasure u,
\]
where $f \in V_{\rho}$. We view $f$ as a function $f : \GL_n(\fLocalField) \rightarrow V_{\pi}$. Then $\lambda_{\circ} \in  \Hom_{\UnipotentSubgroup_n(\fLocalField)}(\rho,\fieldCharacter_\fLocalField)$ is a non-zero Whittaker function of $\GL_n(\fLocalField)$,
which is a lift of $\lambda$ \cite[Proposition 3.7]{YZ20}. Let $W_{\pi} \in \whittaker(\pi,\fieldCharacter)$ and $v_{W_{\pi}} \in V_{\pi}$ a unique vector such that $W_{\pi}(g)=\langle \lambda, \pi(g)v_{W_{\pi}} \rangle$ for every $g \in \GL_n(\fFiniteField)$. Let $\maximalCompactTimesCenter:=\multiplicativegroup{\fLocalField}K_n$. We define $f_{W_{\pi}}$ to be
\[
f_{W_{\pi}}(g)=
 \begin{cases}
 \omega_{\rho}(a) \pi(\projection(k))v_{W_{\pi}}, & \text{if $g=ak \in \maximalCompactTimesCenter$ with $a \in \multiplicativegroup{\fLocalField}, k \in K_n$}  \\
 0, & \text{otherwise}.
 \end{cases}
\]
We define $W^{\circ}_{\rho} \in \whittaker(\rho,\fieldCharacter_\fLocalField)$ by $W^{\circ}_{\rho}(g)=\langle \lambda_{\circ}, \rho(g) f_{W_{\pi}} \rangle$ for $g \in \GL_n(\fLocalField)$.
Then the support of $W^{\circ}_{\rho}$ is contained in $\UnipotentSubgroup_n(\fLocalField)\multiplicativegroup{\fLocalField}K_n$. The two Whittaker functions $W^{\circ}_{\rho}$ and $W_{\pi}$ are related by 
\[
 W^{\circ}_{\rho}(g)=\omega_{\rho}(a) W_{\pi}(\projection(k)),
\]
for any $g=ak \in \maximalCompactTimesCenter$ with $a \in \multiplicativegroup{\fLocalField}$ and $k \in K_n$ \cite[Proposition 3.9]{YZ20}. In particular, if $W_{\pi}$ is a Bessel function $\repBesselFunction{\pi}$ over finite fields $\fFiniteField$,
the lift $W^{\circ}_{\rho}$ is nothing but the {\it Paskunas--Stevens partial Bessel function} $\liftedBesselFunction{\rho}{\fieldCharacter_\fLocalField}$ in  \cite[\S 3.4]{NZ18} and \cite[Theorem 5.8]{PSt08}.

\par
Let $G$ be a group and $L$ a subgroup of $G$. A representation $\rho$ of $G$ is called $(L,\xi)$-{\it distinguished} if 
\begin{equation}
\label{distinction}
  {\rm Hom}_L(\rho,\xi) \neq 0.
\end{equation}
If $\xi={\bf 1}_L$ is a trivial character of $L$, we simply say that $\rho$ is $L$-{\it distinguished}. In particular, let $G=\GL_{n}(\fLocalField) \times \GL_{n}(\fLocalField)$ and $L=\GL_{n}(\fLocalField)$ embedded in $G$ diagonally. We also say that  $\rho_1 \times \rho_2$ has a {\it Jacquet--Piatetski-Shapiro--Shalika period} if the representation $\rho_1 \times \rho_2$ of $G$ is
$(\GL_{n}(\fLocalField),{\bf 1}_{\GL_{n}(\fLocalField)})$-distinguished, and it is equivalent to the condition that $\rho_1 \cong \check{\rho}_2$. Because of this property, these distinguished representations appears naturally in the theory of Rankin-Selberg $L$-functions that we describe in a moment.

\par
Let $\rho_1$ and $\rho_2$ be level zero supercuspidal representations of $\GL_n(F)$ constructed from irreducible cuspidal representations $\pi_1$ and $\pi_2$ of  $\GL_n(\fFiniteField)$ with attached Whittaker models, $\mathcal{W}(\rho_1,\psi_F)$ and $\mathcal{W}(\rho_2,\psi^{-1}_F)$, respectively.  We take each pair of Whittaker functions $W_{\rho_1} \in \mathcal{W}(\rho_1,\psi_F)$, $W_{\rho_2} \in \mathcal{W}(\rho_2,\psi^{-1}_F)$, and a Schwartz-Bruhat function  $\Phi \in \Schwartz(\fLocalField^n)$,
and form the {\it Jacquet--Piatetski-Shapiro--Shalika integral} defined by 
\[
\Psi(s,W_{\rho_1},W_{\rho_2},\Phi)=\int_{\UnipotentSubgroup_n(F) \backslash \GL_n(F)} W_{\rho_1}(g) W_{\rho_2}(g) \Phi(e_ng)|\det g|^s \,dg.
\]
The integral converges absolutely for ${\rm Re}(s)$ sufficiently large, and extend meromorphically to the entire complex plane. Moreover there exists a rational function $ \Gamma(s,\rho_1 \times \rho_2,\fieldCharacter_\fLocalField) \in \mathbb{C}(q^{-s})$ satisfying the functional equation \cite[Proposition 4.2]{Mat15}:
\[
\Psi(1-s,\check{W}_{\rho_1},\check{W}_{\rho_2},\gfourierTransform{\fieldCharacter_\fLocalField}(\Phi))=  \Gamma(s,\rho_1 \times \rho_2,\fieldCharacter_\fLocalField)  \Psi(s,W_{\rho_1},W_{\rho_2},\Phi).
\]
It is worthwhile to emphasize that the gamma factor $\Gamma(s,\rho_1 \times \rho_2,\fieldCharacter_\fLocalField)$ defined above differs by a sign from the traditional one defined by Jacquet--Piatetski-Shapiro--Shalika  in \cite[\S 7.1]{PSt08}. The {\it local Rankin-Selberg $L$-function} $L(s,\rho_1 \times \rho_2)$ is the generator of the $\mathbb{C}[q^{\pm s}]$-fractional ideal of the Jacquet--Piatetski-Shapiro--Shalika integrals
$ \Psi(s,W_{\rho_1},W_{\rho_2},\Phi)$ with $W_{\rho_1} \in \mathcal{W}(\rho_1,\psi_F)$, $W_{\rho_2} \in \mathcal{W}(\rho_2,\psi^{-1}_F)$, and $\Phi \in \Schwartz(\fLocalField^n)$
normalized to be of the form $P(q^{-s})^{-1}$ for some $P(X) \in \mathbb{C}[X]$ with $P(0)=1$.

\par
We take pairs of Whittaker functions $W_{\rho_1}=W^{\circ}_{\rho_1}$ and $W_{\rho_2}=W^{\circ}_{\rho_2}$, which are lifts of corresponding pairs of Whittaker functions $W_{\pi_1}$ and $W_{\pi_2}$ 
 over finite fields, and we insert certain test functions $\Phi_{\circ}$, the lift of $\phi$, for Schwartz–Bruhat functions $\Phi$. With lifting datum $(W^{\circ}_{\rho_1},W^{\circ}_{\rho_2},\Phi_{\circ})$, 
 Jacquet--Piatetski-Shapiro--Shalika integrals can be reduced to Jacquet--Piatetski-Shapiro--Shalika sums, we obtain so-called {\it modified functional equations} just as in \cite[\S 3.3]{YZ20}.
\begin{multline*}
 \check{\Psi}(W_{\pi_1},W_{\pi_2},\phi)+q^{-n(1-s)}(\omega_{\rho_1}\omega_{\rho_2})^{-1}(\varpi)\fourierTransform(\phi)(0)L(n(1-s),(\omega_{\rho_1}\omega_{\rho_2})^{-1})\Psi(W_{\pi},\TrivialRepresentation_{\fFiniteField^n})\\
 = \Gamma(s,\rho_1 \times \rho_2,\fieldCharacter_\fLocalField)(\Psi(W_{\pi_1},W_{\pi_2},\phi)+q^{-ns}\omega_{\rho_1}\omega_{\rho_2}(\varpi)\phi(0)L(ns,\omega_{\rho_1}\omega_{\rho_2})\Psi(W_{\pi_1},W_{\pi_2},\TrivialRepresentation_{\fFiniteField^n})).
\end{multline*}
As a result of the modified function equation, we recover the following main theorem of \cite{Ye19}.

\begin{theorem}
\label{levelzero-RS}
Let $\rho_1$ and $\rho_2$ be level zero supercuspidal representations of $\GL_{n}(\fLocalField)$.
\begin{enumerate}[label=$(\mathrm{\arabic*})$]
\item \cite[Theorem 4.1]{Ye19} If $\pi_1 \ncong \check{\pi}_2$, we have
\[
 \Gamma(s,\rho_1 \times \rho_2,\fieldCharacter_\fLocalField)=\gamma^{\star}(\pi_1 \times \pi_2,\fieldCharacter).
\]
\item \cite[\S4 Right after Corollary 4.3]{Ye19} If $\pi_1 \cong \check{\pi}_2$, we have
\[
 \Gamma(s,\rho_1 \times \rho_2,\fieldCharacter_\fLocalField)=q^{n\left(s-\frac{1}{2}\right)}(\omega_{\rho_1}\omega_{\rho_2})^{-1}(\varpi) \frac{L(n(1-s),(\omega_{\rho_1}\omega_{\rho_2})^{-1})}{L(ns,\omega_{\rho_1}\omega_{\rho_2})}.
\]
\end{enumerate}
\end{theorem}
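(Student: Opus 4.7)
The plan is to exploit the modified functional equation stated immediately before the theorem; this identity is obtained by plugging the lifting data $(W^{\circ}_{\rho_1}, W^{\circ}_{\rho_2}, \Phi_{\circ})$ into the local functional equation for the Jacquet--Piatetski-Shapiro--Shalika integral and unfolding via Iwasawa. Since $W^{\circ}_{\rho}$ is supported on $\UnipotentSubgroup_n(\fLocalField)\maximalCompactTimesCenter$ and $\Phi_{\circ}$ on $\integersRing^n$, the integral collapses to a JPSS sum over $\fFiniteField$ plus a geometric series over $v(a)>0$; the latter contributes the $L$-factor $L(ns,\omega_{\rho_1}\omega_{\rho_2})$ times $\phi(0)\,\Psi(W_{\pi_1}, W_{\pi_2}, \TrivialRepresentation_{\fFiniteField^n})$, with a symmetric story on the dual side. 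Each case of the theorem will follow by choosing $\phi\in\Schwartz_0(\fFiniteField^n)$ adapted to that case.

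For case (1), since $\pi_1\not\cong\check{\pi}_2$ one has $\Hom_{\GL_n(\fFiniteField)}(\pi_1\otimes\pi_2,\TrivialRepresentation)=0$, and translated to Whittaker models this forces $\Psi(W_{\pi_1},W_{\pi_2},\TrivialRepresentation_{\fFiniteField^n})=0$. Both boundary contributions in the modified functional equation therefore vanish, leaving the clean identity $\check{\Psi}(W_{\pi_1},W_{\pi_2},\phi)=\Gamma(s,\rho_1\times\rho_2,\fieldCharacter_\fLocalField)\,\Psi(W_{\pi_1},W_{\pi_2},\phi)$. After the change of variables $g\mapsto\weyllong_n{}^tg^{-1}$ identifying $\check{\Psi}$ with the right-hand side of \eqref{Finite-J-PS-S-Func}, selecting any $\phi$ with $\Psi(W_{\pi_1},W_{\pi_2},\phi)\neq 0$ yields $\Gamma(s,\rho_1\times\rho_2,\fieldCharacter_\fLocalField)=\gamma^{\star}(\pi_1\times\pi_2,\fieldCharacter)$.

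For case (2), $\Psi(W_{\pi_1},W_{\pi_2},\TrivialRepresentation_{\fFiniteField^n})$ is generically nonzero, and the boundary terms must assemble into the prescribed rational function. I would test with $\phi=\TrivialRepresentation_{\fFiniteField^n\setminus\{0\}}\in\Schwartz_0(\fFiniteField^n)$, so that $\phi(0)=0$, $\fourierTransform(\phi)(0)=q^{-n/2}(q^n-1)$, and $\fourierTransform(\phi)(y)=-q^{-n/2}$ for every $y\neq 0$. Because $e_n g\neq 0$ for all $g\in\GL_n(\fFiniteField)$, both $\Psi(W_{\pi_1},W_{\pi_2},\phi)$ and $\check{\Psi}(W_{\pi_1},W_{\pi_2},\phi)$ collapse to explicit scalar multiples of $\Psi(W_{\pi_1},W_{\pi_2},\TrivialRepresentation_{\fFiniteField^n})$. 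Cancelling this common nonzero factor from the modified functional equation, and using that $\omega_{\rho_1}\omega_{\rho_2}$ is unramified (since $\omega_{\pi_1}\omega_{\pi_2}=\TrivialRepresentation$) so that $L(ns,\omega_{\rho_1}\omega_{\rho_2})=(1-\omega_{\rho_1}\omega_{\rho_2}(\varpi)q^{-ns})^{-1}$, a short algebraic manipulation consolidates the two pieces into $q^{n(s-1/2)}(\omega_{\rho_1}\omega_{\rho_2})^{-1}(\varpi)L(n(1-s),(\omega_{\rho_1}\omega_{\rho_2})^{-1})/L(ns,\omega_{\rho_1}\omega_{\rho_2})$.

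The principal obstacle lies in the careful derivation of the modified functional equation itself: stratifying the Iwasawa integration over $\multiplicativegroup{\fLocalField}$ according to $v(a)=0$ versus $v(a)>0$, evaluating the latter range as a geometric series to extract $L(ns,\omega_{\rho_1}\omega_{\rho_2})$, and verifying that $\gfourierTransform{\fieldCharacter_\fLocalField}(\Phi_\circ)$ is genuinely a lift of $\fourierTransform(\phi)$. Once that equation is in place, the remaining algebra in case (2) is routine but demands vigilant tracking of the normalization factor $q^{-n/2}$ built into the normalized Fourier transform and of the compatibility $\omega_{\rho_i}\!\restriction_{\multiplicativegroup{\integersRing}}=\omega_{\pi_i}\circ\projection$.
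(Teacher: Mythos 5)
Your proposal is correct and follows essentially the same route the paper gestures at: derive the modified functional equation by plugging the lifting data $(W^{\circ}_{\rho_1},W^{\circ}_{\rho_2},\Phi_{\circ})$ into the JPSS local functional equation, then read off $\Gamma(s,\rho_1\times\rho_2,\fieldCharacter_\fLocalField)$. In case (1) the vanishing of $\Psi(W_{\pi_1},W_{\pi_2},\TrivialRepresentation_{\fFiniteField^n})$ kills the boundary terms, and in case (2) the test function $\phi=\TrivialRepresentation_{\fFiniteField^n\setminus\{0\}}$ isolates $\Gamma$ directly — a clean instance of the linear-equations method Ye uses, and the algebra with $\fourierTransform(\phi)(0)=q^{-n/2}(q^n-1)$ does consolidate to the stated rational function (the only implicit step is that for $\pi_1\cong\check{\pi}_2$ one can pick $W_{\pi_i}$ with $\Psi(W_{\pi_1},W_{\pi_2},\TrivialRepresentation_{\fFiniteField^n})\neq 0$, e.g. taking Bessel functions).
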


\subsection{The Rankin-Selberg epsilon factor and the Gauss sum}
Let $\overline{\fFiniteField}$ be an algebraic closure of $\fFiniteField$ and $\widehat{\fFiniteField}^{\times}_{q^n}$ the multiplicative group of $\multiplicativegroup{\fFiniteField_{q^n}}$.
A multiplicative character $\alpha \in \widehat{\fFiniteField}^{\times}_{q^d}$ is called {\it regular} 
if $\{ \alpha, \alpha^q, \dotsm, \alpha^{q^{n-1}} \}$ is of size $n$. Two characters $\alpha$ and $\beta$ are called {\it equivalent} if $\alpha=\beta^{q^d}$ for some integer $d$. In next paragraph, we see that 
this amounts to saying that $\alpha$ and $\beta$ are in the same Frobenius orbit.
Let $\mathcal{R}_n(\fFiniteField_q)$ denote the set of equivalence classes of regular characters of $\fFiniteField^{\times}_{q^n}$. Green's parameterization \citelist{\cite{Yan19}*{3.2}\cite{Nie17}*{3.1}}
gives a bijection 
\[
\begin{split}
 \mathcal{A}_0({\rm GL}_n(\fFiniteField)  &\longleftrightarrow \mathcal{R}_n(\fFiniteField_q) \\
\pi & \longleftrightarrow \alpha.
\end{split}
\]
 For each $d\, |\, n$, we have a norm map ${\rm Nr}_{n:d}: \multiplicativegroup{\fFiniteField_{q^n}} \rightarrow \multiplicativegroup{\fFiniteField_{q^d}}$, which induces a dual (embedding) map $\widehat{\rm Nr}_{n:d} : \multiplicativegroup{\widehat{\fFiniteField}_{q^d}} \rightarrow  \multiplicativegroup{\widehat{\fFiniteField}_{q^n}}$
by assigning $\beta \in \multiplicativegroup{\widehat{\fFiniteField}_{q^d}}$ to $\beta \circ {\rm Nr}_{n:d} \in \multiplicativegroup{\fFiniteField_{q^n}}$. In this way, $({\multiplicativegroup{\widehat{\fFiniteField}_{q^n}})_{n \in \mathbb{N}}}$ with the embedding map $(\widehat{\rm Nr}_{n:d})_{d\, |\, n}$ forms a direct system. We denote by $\Omega:=\varinjlim  \multiplicativegroup{\fFiniteField_{q^n}}$ its direct limit.
\par

Let $W_F$ be a Weil group of $F$, $I_F$ the inertia subgroup, and $P_F$ the wild inertia subgroup. Then $W_F \cong I_F \rtimes \langle {\rm Fr} \rangle$,
where  $ {\rm Fr}  \in {\rm Gal}(\overline{\fFiniteField} / \fFiniteField)$ is the geometric Frobenius automorphism given by ${\rm Fr}(x^q)=x$ for every $x \in \overline{\fFiniteField}$.
The Frobenius map ${\rm Fr}$ acts on $\Omega$ via ${\rm Fr} \cdot \beta=\beta^q$. We identify $\multiplicativegroup{\fFiniteField_{q^n}}$ with the subgroup $\Omega_n:=\{\beta \in \Omega \,|\, {\rm Fr}^n \cdot \beta=\beta \}$ of $\Omega$. A Galois orbit is a set of the form $\mathcal{O}=\mathcal{O}(\beta):=\{ {\rm Fr}^i \cdot \beta \, |\, i \in \mathbb{Z} \}$ for $\beta \in \Omega$.
Given a Galois orbit $\mathcal{O}$, we define its {\it degree} ${\rm deg}(\mathcal{O})$ to be the cardinality of $\mathcal{O}$. Then for $\beta \in \mathcal{O}$, we have $\beta \in \Gamma_{{\rm deg}(\mathcal{O})}$. We denote by ${\rm Fr}\backslash \Omega$ the set of Galois orbits.
 Let $\fieldCharacter_n:=\fieldCharacter \circ {\Trace}_{\fFiniteField_{q^n} \slash \fFiniteField}$. We define the {\it Gauss sum} $\tau(\alpha,\fieldCharacter_n)$ by
\[
  \tau(\alpha,\fieldCharacter_n):=-\sum_{x \in \multiplicativegroup{\fFiniteField}_{q^n}} \alpha^{-1}(x) \fieldCharacter_n(x).
\]

\par
Let $\varphi : W_F \rightarrow {\rm GL}(V)$ be a $n$-dimensional Frobenius semisimple representation of the Weil group $W_F$. The representation $\varphi$ is said to be {\it unramified} (resp. {\it tamely ramified}),  
if  $\ker \varphi$ contains $I_F$ (resp. $P_F$). We let $r$ be an operation on the Frobenius semisimple representation of $W_F$ that preserves tame ramification. 
In particular, we take $r$ to be an identity operation, $id$, a tensor product, $\otimes$, a twisted tensor product, ${\rm As}$ (known as an {\it Asai} representation), and an exterior square, $\wedge^2$.
In a spirit of Deligne \cite[\S 4 and \S 5]{Del73}, $\varepsilon(s,r(\varphi),\fieldCharacter_\fLocalField)$ and $\varepsilon_0(r(\varphi),\fieldCharacter_\fLocalField)$ are related by
\begin{equation}
\label{rel-epsilon-epsilon0}
 \varepsilon(s,r(\varphi),\fieldCharacter_\fLocalField)=\varepsilon_0(r(\varphi),\fieldCharacter_\fLocalField)\det\left(-{\rm Fr},r(V)^{I_F}\right)^{-1} q^{\left(\dim r(V)^{I_F}\right)s}.
\end{equation}

Following the literature in \cite{Gan15,Yan19}, we set $\mathcal{G}^t({\rm GL}_n(F))$ to be the isomorphism classes of tamely ramified representations of $W_F$
of degree $n$. Since the local Langlands reciprocity map preserves the conductor and the depth of the representation \cite[Theorem 7.3]{Gan15},
the correspondence induces a natural bijective map ${\rm LLC} : \mathcal{A}_0({\rm GL}_n(F)) \rightarrow \mathcal{G}^t({\rm GL}_n(F))$ \cite[Appendix A]{SZ08} (cf. \cite[\S 7]{Gan15}).
Two Weil representations are called $I_F$-{\it equivalent} if their restrictions to $I_F$ are equivalent, and we write  $\mathcal{G}^t_{I}({\rm GL}_n(F))$ for the set of $I_F$-equivalence classes of 
tamely ramified $n$-dimensional representations of $W_F$. Long before the local Langlands correspondence is established, Macdonald \cite[Theorem 3.1]{YZ21} has already obtained
a canonical bijection $\mathcal{M}$ from $\mathcal{A}_0({\rm GL}_n(\fFiniteField)$ to  $\mathcal{G}^t_{I}({\rm GL}_n(F))$. Hence we get a diagram
\[
\xymatrixcolsep{3.5pc}
\xymatrixrowsep{1.5pc}
\xymatrix{
&\mathcal{A}_0({\rm GL}_n(F)) \ar[d]_{p_1} \ar[r]^{\rm LLC}_{\cong} & \mathcal{G}^t({\rm GL}_n(F)) \ar[d]^{p_2} \\
&\mathcal{A}_0({\rm GL}_n(\fFiniteField)) \ar[r]^{\mathcal{M}}_{\cong} &\mathcal{G}_I^t({\rm GL}_n(F))}
\]
where $p_1$ is a projection map induced from \eqref{level-zero-corresp}, and $p_2$ is a canonical projection map sending a representation to its $I_F$-equivalence class.
In particular, it is a consequence of \cite[Appendix A]{SZ08} that the above diagram is commutative. Composing the Macdonald correspondence $\mathcal{M}$ with the Green's parameterization then yields a bijection between $\mathcal{G}_I^t({\rm GL}_n(F))$ and $\mathcal{R}_n(\fFiniteField_q)$, which by abuse of teminology we again refer to as Green's parametrization.

\begin{theorem}\cite[Theorem 4.2]{YZ21}
\label{Rankin-Selberg-Gauss}
 Let $\varphi_1$ and $\varphi_2$ be $n$-dimensional tamely ramified representations of $W_F$. Then
\[
 \varepsilon_0(\varphi_1 \otimes \varphi_2,\fieldCharacter_\fLocalField)=(-1)^{n}q^{-\frac{n^2}{2}} \prod_{i=0}^{n-1} \tau(\alpha\beta^{q^{i}},\psi_n),
\]
where $\alpha$ and $\beta$ are regular characters of $\fFiniteField^{\times}_{q^n}$ corresponding to $\varphi_1$ and $\varphi_2$, respectively, via Green's parametrization.
\end{theorem}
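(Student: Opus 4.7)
The plan is to reduce the computation to the case of one-dimensional characters via inductivity of $\varepsilon_0$-factors in degree zero, and then invoke the classical Tate formula expressing $\varepsilon_0$ of a tamely ramified character as a Gauss sum. Under the Macdonald correspondence, an $n$-dimensional tamely ramified representation $\varphi$ of $W_\fLocalField$ corresponds to an equivalence class of regular characters $\alpha$ of $\multiplicativegroup{\fFiniteField_{q^n}}$; concretely, $\varphi \cong \Ind_{W_{\fLocalField_n}}^{W_\fLocalField} \widetilde{\alpha}$, where $\fLocalField_n$ is the unramified extension of $\fLocalField$ of degree $n$ and $\widetilde{\alpha}$ is the tame lift of $\alpha$ to a character of $W_{\fLocalField_n}$ trivial on wild inertia. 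So I would first record this explicit description, which is the substance of Macdonald's construction.

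Next, I would decompose the tensor product of two induced representations. Using the Mackey double-coset formula applied to $W_\fLocalField / W_{\fLocalField_n}$, whose nontrivial coset representatives are the Frobenius powers $\mathrm{Fr}^i$ ($0 \le i \le n-1$), one obtains
\[
\varphi_1 \otimes \varphi_2 \;\cong\; \Ind_{W_{\fLocalField_n}}^{W_\fLocalField}\!\left(\widetilde{\alpha}\right) \otimes \Ind_{W_{\fLocalField_n}}^{W_\fLocalField}\!\left(\widetilde{\beta}\right) \;\cong\; \bigoplus_{i=0}^{n-1} \Ind_{W_{\fLocalField_n}}^{W_\fLocalField}\!\left(\widetilde{\alpha}\cdot \widetilde{\beta}^{\mathrm{Fr}^i}\right),
\]
where $\widetilde{\beta}^{\mathrm{Fr}^i}$ corresponds to $\beta^{q^i}$ via the Frobenius action on characters. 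By additivity of $\varepsilon_0$ under direct sums, this reduces the problem to computing $\varepsilon_0(\Ind_{W_{\fLocalField_n}}^{W_\fLocalField}\widetilde{\alpha\beta^{q^i}}, \fieldCharacter_\fLocalField)$ for each $i$.

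Now I would apply the inductivity of $\varepsilon_0$-factors in degree zero (Deligne--Langlands). For the tower $\fLocalField_n/\fLocalField$, which is unramified, and for our character $\fieldCharacter_\fLocalField$ of conductor exponent $0$, one has $\fieldCharacter_\fLocalField \circ \Trace_{\fLocalField_n/\fLocalField} = \fieldCharacter_{\fLocalField_n}$ again of conductor exponent $0$, and the Langlands--Deligne constant $\lambda(\fLocalField_n/\fLocalField, \fieldCharacter_\fLocalField)$ equals $1$ in this unramified situation with the self-dual normalization. Hence
\[
\varepsilon_0\!\left(\Ind_{W_{\fLocalField_n}}^{W_\fLocalField}\widetilde{\alpha\beta^{q^i}}, \fieldCharacter_\fLocalField\right) \;=\; \varepsilon_0\!\left(\widetilde{\alpha\beta^{q^i}}, \fieldCharacter_{\fLocalField_n}\right).
\]
Finally, the classical Tate computation of $\varepsilon_0$ for a tamely ramified character $\chi$ of $\multiplicativegroup{\fLocalField_n}$, attached to the character $\gamma$ of $\multiplicativegroup{\fFiniteField_{q^n}}$ via reduction modulo $\mathfrak{p}_{\fLocalField_n}$, gives
\[
\varepsilon_0(\chi, \fieldCharacter_{\fLocalField_n}) \;=\; -q^{-n/2}\,\tau(\gamma, \fieldCharacter_n),
\]
with the sign coming from the convention that $\tau$ carries a minus sign. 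Multiplying these $n$ contributions yields the factor $(-1)^n q^{-n^2/2} \prod_{i=0}^{n-1} \tau(\alpha\beta^{q^i}, \fieldCharacter_n)$, as claimed.

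The main obstacle in executing this plan cleanly is keeping track of all the normalizations: the self-dual Haar measure attached to $\fieldCharacter_\fLocalField$ (which forces $\int_{\integersRing} dx = q^{1/2}$ and hence contributes half-integral powers of $q$), the Langlands constant for the unramified extension with this particular measure, and the sign convention in the definition of $\tau$. One must verify that none of these conventions introduces an unwanted unit, and that the trace $\Trace_{\fFiniteField_{q^n}/\fFiniteField}$ used to form $\fieldCharacter_n$ is indeed compatible with the reduction of $\fieldCharacter_\fLocalField \circ \Trace_{\fLocalField_n/\fLocalField}$ modulo $\mathfrak{p}_{\fLocalField_n}$; this is precisely where the choice of $\fieldCharacter$ of conductor exponent $0$ is essential.
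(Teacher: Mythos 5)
The paper does not prove this theorem; it is quoted verbatim from \cite[Theorem 4.2]{YZ21}. The closest argument actually carried out in the paper is the proof of \Cref{Asai-Gauss}, and that proof takes a different route from yours: it identifies $\alpha$ with a character of $I_E/P_E$, diagonalizes $\varphi\restriction_{I_E}$ explicitly, computes the eigenvalues of the relevant operation (there, $\mathrm{As}(\varphi)\restriction_{I_F}$) on a standard basis, groups the eigencharacters into Galois (Frobenius) orbits, and then invokes the master product formula $[\text{YZ21, Theorem 2.4}]$ expressing $\varepsilon_0$ of a tamely ramified representation as a product of Gauss sums over the orbits of its inertia eigenvalues. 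Your argument instead realizes $\varphi_i \cong \Ind_{W_{\fLocalField_n}}^{W_\fLocalField}\widetilde\alpha$ using Macdonald/Green, applies the Mackey projection formula to split $\varphi_1\otimes\varphi_2$ into $n$ inductions of abelian characters, and then uses additivity plus inductivity in degree zero together with the one-variable Tate--Gauss-sum formula. Both are legitimate; the orbit method of $[\text{YZ21}]$ is more robust when the operation $r$ (Asai, $\wedge^2$) does not naturally commute with induction, whereas your Mackey decomposition is cleaner for the genuine tensor product.

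There is, however, a concrete error in the middle of your argument: your claim that the Langlands constant $\lambda(\fLocalField_n/\fLocalField,\fieldCharacter_\fLocalField)$ equals $1$ is false in the normalization used in this paper. Here $\fieldCharacter_\fLocalField$ has conductor $\mathfrak{p}_\fLocalField$ (level one), and with the self-dual measure the paper itself records in \Cref{sec-Asai-Gauss} that $\lambda_{\eLocalField/\fLocalField}(\fieldCharacter_\fLocalField) = -1$ for the quadratic unramified extension; for the degree-$n$ unramified extension the value is $(-1)^{n-1}$. Your derivation nevertheless arrives at the correct final formula, but only because of an accidental cancellation: each of your $n$ Mackey summands is a one-dimensional induction and so picks up exactly one factor of $\lambda$, giving a total correction $\bigl((-1)^{n-1}\bigr)^n = (-1)^{n(n-1)} = 1$ for every $n$. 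You should make this cancellation explicit rather than asserting $\lambda = 1$, since as written the step is wrong (already visibly for $n = 2$, by the paper's own remark). You would also want to say a word about the $i$ for which $\alpha\beta^{q^i}$ fails to be regular, where $\Ind_{W_{\fLocalField_n}}^{W_\fLocalField}\widetilde{\alpha\beta^{q^i}}$ is reducible and may have non-trivial inertia invariants; the Gauss sum of the relevant trivial character equals $1$ with the paper's sign convention for $\tau$, so the formula survives, but it is precisely the sort of convention-tracking you flagged as the main obstacle, and it deserves a line of justification.
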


Rankin-Selberg $\gamma$-factors and tensor product $\varepsilon_0$-factors over finite fields are compatible with the Macdonald correspondence.

\begin{proposition}\cite[Theorem 2.18]{Ze22} 
\label{TensorProduct-GaussProductFormula}
 Let $\pi_1(\varphi_1) $ and $\pi_2(\varphi_2)$ be irreducible cuspidal representations of $\GL_n(\fFiniteField)$ associated to $n$-dimensional tamely ramified representations $\varphi_1$ and $\varphi_2$ of $W_F$ via  Macdonald correspondence. Then we have
\[
 \gamma^{\star}(\pi_1(\varphi_1) \times \pi_2(\varphi_2),\fieldCharacter)=\omega^{n-1} _{\pi_2}(-1) \varepsilon_0(\varphi_1 \otimes \varphi_2,\fieldCharacter_\fLocalField).
\]
\end{proposition}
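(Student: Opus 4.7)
The plan is to deduce the identity by lifting to the local-field setting and invoking the compatibility of Rankin--Selberg factors with the local Langlands correspondence, which takes a particularly clean form for level zero supercuspidals. Let $\rho_1$ and $\rho_2$ be the level zero supercuspidal representations of $\GL_n(\fLocalField)$ with $\omega_{\rho_i}(\varpi)=1$ built from $\pi_1(\varphi_1)$ and $\pi_2(\varphi_2)$ via \eqref{level-zero-corresp}. The commutative square relating the Macdonald correspondence and LLC in the introduction then forces $\rho_i \longleftrightarrow \varphi_i$ under LLC: the $I_F$-equivalence class is fixed by $\pi_i$, and the normalization of $\omega_{\rho_i}(\varpi)$ pins down the correct Frobenius lift on the Galois side. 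The idea is to compute $\gamma^\star(\pi_1 \times \pi_2,\fieldCharacter)$ by routing it through $\Gamma(s,\rho_1 \times \rho_2,\fieldCharacter_\fLocalField)$ and then through $\Gamma(s,\varphi_1 \otimes \varphi_2,\fieldCharacter_\fLocalField)$.

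For the generic case $\pi_1(\varphi_1) \not\cong \contragredient{\pi}_2(\varphi_2)$, \Cref{levelzero-RS}(1) on the analytic side yields $\Gamma(s,\rho_1 \times \rho_2,\fieldCharacter_\fLocalField)=\gamma^{\star}(\pi_1 \times \pi_2,\fieldCharacter)$. On the Galois side, Henniart's identification of Rankin--Selberg and Artin $\gamma$-factors under LLC reads
\[
 \Gamma(s,\rho_1 \times \rho_2,\fieldCharacter_\fLocalField)=\omega_{\pi_2}^{n-1}(-1)\,\Gamma(s,\varphi_1 \otimes \varphi_2,\fieldCharacter_\fLocalField),
\]
where the sign $\omega_{\pi_2}^{n-1}(-1)$ records the fact---emphasized just after the definition of $\Gamma(s,\rho_1 \times \rho_2,\fieldCharacter_\fLocalField)$ in the paper---that our functional equation absorbs the Weyl translation $\weyllong_n\,{}^tg^{-1}$ into the dual Whittaker function rather than carrying $\omega_{\pi_2}(-1)^{n-1}$ explicitly, as in the Jacquet--Piatetski-Shapiro--Shalika convention preserved by LLC. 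Because $\pi_1 \not\cong \contragredient{\pi}_2$ forces $(\varphi_1 \otimes \varphi_2)^{I_F}=0$, the $L$-factors on the Galois side are trivial and \eqref{rel-epsilon-epsilon0} collapses $\Gamma(s,\varphi_1 \otimes \varphi_2,\fieldCharacter_\fLocalField)$ to $\varepsilon_0(\varphi_1 \otimes \varphi_2,\fieldCharacter_\fLocalField)$. Chaining the two equalities produces the desired identity.

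For the self-dual case $\pi_1 \cong \contragredient{\pi}_2$, the analytic side equals $-q^{-n/2}$ by \Cref{self-contra-comp}. The Galois side I evaluate by plugging directly into \Cref{Rankin-Selberg-Gauss}: the hypothesis forces $\beta$ and $\alpha^{-1}$ to lie in the same Frobenius orbit, so exactly one factor in $\prod_{i=0}^{n-1} \tau(\alpha\beta^{q^i},\fieldCharacter_n)$ equals $\tau(\TrivialRepresentation,\fieldCharacter_n)=1$, while the remaining $n-1$ nontrivial Gauss sums each have modulus $q^{n/2}$, so the absolute values match immediately. A careful bookkeeping of $\chi(-1)$ signs using $\tau(\chi,\fieldCharacter_n)\tau(\chi^{-1},\fieldCharacter_n)=\chi(-1)q^n$ for nontrivial $\chi$, together with the Macdonald dictionary $\omega_{\pi_2}(-1)=\beta\restriction_{\multiplicativegroup{\fFiniteField}}(-1)$, pins down the missing sign as exactly $\omega_{\pi_2}^{n-1}(-1)$ after combining with the $(-1)^n q^{-n^2/2}$ prefactor in \Cref{Rankin-Selberg-Gauss}.

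The principal technical difficulty is tracking the sign $\omega_{\pi_2}^{n-1}(-1)$ consistently across three independent sources: the normalization mismatch between the paper's $\Gamma$ and the Jacquet--Piatetski-Shapiro--Shalika convention preserved by LLC; the product of $\chi_i(-1)$ signs arising when pairing Gauss sums in the self-dual case; and the central-character compatibility $\omega_{\pi_2}(-1)=\beta\restriction_{\multiplicativegroup{\fFiniteField}}(-1)$ under the Macdonald correspondence. A secondary concern is ensuring that, after the normalization $\omega_{\rho}(\varpi)=1$, the LLC genuinely assigns to $\rho(\varphi)$ the Frobenius-normalized lift $\varphi$ rather than an unramified twist; this is the content of the Silberger--Zink appendix cited before \Cref{Rankin-Selberg-Gauss}, but warrants a short verification to fix conventions before the sign calculation can be made unambiguous.
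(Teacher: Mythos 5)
The paper does not actually prove this proposition: it is cited verbatim from \cite[Theorem 2.18]{Ze22}, where (judging from the discussion in the introduction and the proof of \Cref{Deligne-factorazation}) Zelingher deduces it from multiplicativity of Shahidi $\gamma$-factors over finite fields. Your proposal instead lifts $\pi_i$ to level zero supercuspidals, splits by whether $\pi_1 \cong \contragredient{\pi}_2$, routes the generic case through \Cref{levelzero-RS}(1), Henniart's LLC compatibility, and \eqref{rel-epsilon-epsilon0}, and handles the self-dual case by a direct Gauss-sum calculation against \Cref{Rankin-Selberg-Gauss}. That is a genuinely different route, and as far as the generic case goes it is sound: the sign $\omega_{\pi_2}^{n-1}(-1)$ is correctly attributed to the factor $\omega_{\rho_2}(-1)^{n-1}$ separating the paper's $\Gamma(s,\rho_1\times\rho_2,\psi_F)$ from the Jacquet--Piatetski-Shapiro--Shalika $\gamma$-factor preserved by LLC, and $\pi_1\not\cong\contragredient{\pi}_2$ does indeed force $(\varphi_1\otimes\varphi_2)^{I_F}=0$, collapsing $\Gamma(s,\varphi_1\otimes\varphi_2,\psi_F)$ to $\varepsilon_0(\varphi_1\otimes\varphi_2,\psi_F)$.

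The gap is in the self-dual case for $n$ even. Write $n=2m$. After reindexing so that $\alpha\beta^{q^{i_0}}=\mathbf 1$, the factors in \Cref{Rankin-Selberg-Gauss} are $\tau(\alpha^{1-q^j},\psi_n)$, $j=0,\dots,n-1$. The term $j=0$ gives $1$, and the pairs $\{j,n-j\}$ for $1 \le j \le m-1$ can be collected via $\tau(\chi)\tau(\chi^{-1})=\chi(-1)q^n$ after observing that $\alpha^{1-q^{n-j}}$ is Frobenius-conjugate to $(\alpha^{1-q^j})^{-1}$. However, the middle term $\chi:=\alpha^{1-q^m}$ is Frobenius-conjugate to its own inverse (since $\chi^{q^m}=\chi^{-1}$), so the pairing identity applied to it yields only $\tau(\chi)^2=\chi(-1)q^n=q^n$; it determines $\tau(\chi)$ up to sign, not its actual value. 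This is exactly where your ``careful bookkeeping'' breaks down: the sign correlated with $\omega_{\pi_2}(-1)$ is not visible from the quadratic relation alone. What is actually needed is a Hasse--Davenport-type evaluation of the form $\tau(\chi,\psi_n)=-q^{m}\,\chi(\delta)^{-1}$ for $\chi$ nontrivial with $\chi^{1+q^m}=\mathbf 1$ and $\delta$ an element of $\fFiniteField_{q^{n}}$ of trace zero over $\fFiniteField_{q^m}$ (this is the content of \cite[Proposition 2.6]{YZ21} and \cite[Proposition A.2]{Ze22}, invoked elsewhere in the paper), together with the elementary observation that $\delta^{q^m-1}=-1$, from which $\chi(\delta)=\alpha(-1)$ and hence $\tau(\chi)=-q^m\alpha(-1)$. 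Only with this extra input does the self-dual even case close; without it the sign of the middle Gauss sum, and therefore the constant $\omega_{\pi_2}^{n-1}(-1)$, is left dangling. (For $n$ odd there is no fixed point in the pairing and your bookkeeping does suffice.)
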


As a corollary of \Cref{Rankin-Selberg-Gauss} and \Cref{TensorProduct-GaussProductFormula}, we gain a product formula for $\gamma^{\star}(\pi_1 \times \pi_2,\fieldCharacter)$ with regard to Gauss sums.

\begin{theorem}[Gauss sum] Let $\pi_1$ and $\pi_2$ be irreducible cuspidal representations of $\GL_n(\fFiniteField)$. 
We let $\alpha$ and $\beta$ are regular characters of $\fFiniteField^{\times}_{q^n}$ corresponding to $\pi_1$ and $\pi_2$, respectively, via Green's parametrization.
Then we have
\[
 \gamma^{\star}(\pi_1 \times \pi_2,\fieldCharacter)=\omega^{n-1}_{\pi_2}(-1) \cdot (-1)^{n}   q^{-\frac{n^2}{2}} \prod_{i=0}^{n-1} \tau(\alpha\beta^{q^{i}},\psi_n).
 \]
\end{theorem}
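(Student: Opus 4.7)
The statement is a direct corollary of the two results cited immediately before it, so the plan is simply to combine them and to verify that Green's parametrization used on each side matches up consistently.

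First, I would invoke the Macdonald correspondence to pass from the irreducible cuspidal representations $\pi_1, \pi_2$ of $\GL_n(\fFiniteField)$ to $n$-dimensional tamely ramified Weil representations $\varphi_1, \varphi_2$ of $W_F$ with $\pi_i = \pi_i(\varphi_i)$. Because Green's parametrization of $\mathcal{A}_0(\GL_n(\fFiniteField))$ by $\mathcal{R}_n(\fFiniteField_q)$ was defined as the composition of $\mathcal{M}$ with the parametrization of $\mathcal{G}^t_I(\GL_n(F))$ by regular characters, the regular character of $\fFiniteField_{q^n}^\times$ attached to $\pi_i$ in the statement of the theorem is the same as the one attached to $\varphi_i$ in the statement of \Cref{Rankin-Selberg-Gauss}. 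Thus $\alpha$ and $\beta$ correspond respectively to $\varphi_1$ and $\varphi_2$ under Green's parametrization in both senses.

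Next, I apply \Cref{TensorProduct-GaussProductFormula} to rewrite the finite Rankin--Selberg gamma factor in terms of Deligne's arithmetic $\varepsilon_0$-factor:
\[
\gamma^{\star}(\pi_1 \times \pi_2,\fieldCharacter) = \omega_{\pi_2}^{n-1}(-1)\,\varepsilon_0(\varphi_1 \otimes \varphi_2,\fieldCharacter_\fLocalField).
\]
Then I substitute the explicit product formula furnished by \Cref{Rankin-Selberg-Gauss},
\[
\varepsilon_0(\varphi_1 \otimes \varphi_2,\fieldCharacter_\fLocalField) = (-1)^{n} q^{-n^2/2} \prod_{i=0}^{n-1} \tau(\alpha\beta^{q^{i}},\psi_n),
\]
and the claimed identity drops out.

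Because both ingredients are already established in the paper, there is no genuine obstacle; the only thing to be careful about is the bookkeeping that ensures the character $\beta$ appearing in the Gauss sum product corresponds to $\pi_2$ (and not to $\check\pi_2$) under the same version of Green's parametrization used in \Cref{TensorProduct-GaussProductFormula}, so that the sign $\omega_{\pi_2}^{n-1}(-1)$ is attached to the correct representation. Once this compatibility is noted, the proof is a one-line composition of the two previous results.
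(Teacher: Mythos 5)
Your proof is correct and is exactly the route the paper takes: the paper explicitly states the theorem as a corollary of \Cref{Rankin-Selberg-Gauss} and \Cref{TensorProduct-GaussProductFormula}, obtained by substituting the latter into the former. Your extra remark about tracking that $\beta$ is attached to $\pi_2$ itself (not $\check\pi_2$) under the same parametrization used in \Cref{TensorProduct-GaussProductFormula} is the only bookkeeping point worth noting, and it is indeed the one place where a sign could slip.
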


\subsection{Deligne--Kazhdan close field theory}

We turn our attention to Deligne-Kazhdan close local field theory. Two non-archimedean local fields $F$ and $F'$ are {\it m-close} if $\mathfrak{o}_F/\mathfrak{p}_F^m \cong \mathfrak{o}_{F'}/\mathfrak{p}_{F'}^m$. For example, the fields $\mathbb{F}_p((t))$ and $\mathbb{Q}_p(p^{1/m})$ are $m$-close. We follow the elaboration about Deligne's theory in \cite[\S 2.1]{Gan15}
and \cite[\S 6.3]{GL15}.
If $F$ and $F'$ is $1$-close, Deligne (cf. \cite[\S 2.1]{Gan15}) gave a bijection:
\begin{multline}
\label{Del-corr}
\{ \text{Isomorphism classes of Frobenius semisimple representations $\varphi$ of $W_F$ trivial on $P_F$} \}\\
 \xleftrightarrow{ \rm Del}
 \{ \text{Isomorphism classes of Frobenius semisimple representations $\varphi'$ of $W_{F'}$ trivial on $P_{F'}$} \}.
\end{multline}
Elements $\varphi$ and $\varphi'$ are nothing but tamely ramified representations. The triplet $(F,\varphi,\psi_F)$ is said to be {\rm Del}-{\it associated to} $(F',\varphi',\psi'_{F'})$ if
 \begin{enumerate}[label=$(\mathrm{\alph*})$]
\item\label{Del-1} $F$ and $F'$ are $1$-close;
\item $\varphi'= {\rm Del}(\varphi)$;
\item an character $\psi'_{F'}$ of $F'$ satisfies ${\rm cond}(\psi'_{F'})=\mathfrak{p}_{F'}$ and the character induced by $\psi'_{F'}$ on $\mathfrak{o}_{F'} / \mathfrak{p}_{F'}$ 
coincides with that induced by $\psi_F$ on $\mathfrak{o}_F / \mathfrak{p}_F$ under the isomorphism implicit in \ref{Del-1}.
 \end{enumerate}

The analogous isomorphism of Deligne on the analytic side over close local fields has been studied by  Kazhdan \cite[\S 2.3]{Gan15}. 
We provide a revamped version of the Kazhdan isomorphism \cite[\S 6.2]{GL15}, which can be directly verified from \cite[Theorem 3.5]{YZ20}:
\begin{equation}
\label{Kaz-corr}
  \left\{ \begin{split} &\text{\quad\quad\; Level zero supercuspidal} \\ &\text{ representations $(\rho,V_{\rho})$ of ${\rm GL}_n(F)$} \end{split} \right\} 
  \xleftrightarrow{ \rm Kaz}   \left\{ \begin{split} &\quad\quad\quad \text{Level zero supercuspidal} \\ &\text{ representations $(\rho',V'_{\rho'})$ of ${\rm GL}_n(F')$} \end{split} \right\},
\end{equation}
where $\rho \cong {\rm c\text{-}Ind}^{{\rm GL}_n(F)}_{F^{\times}{\rm GL}_n(\mathfrak{o}_F)} \widetilde{\mu}$ and $\rho' \cong {\rm c\text{-}Ind}^{{\rm GL}_n(F')}_{{F'}^{\times}{\rm GL}_n(\mathfrak{o}_{F'})} \widetilde{\mu}'$ under the isomorphism $``\rm Kaz"$ satisfy
 \begin{enumerate}[label=$(\mathrm{\roman*})$]
  \item $\omega_{\rho}(\varpi_F)=\omega_{\rho'}(\varpi_{F'})$;
\item $\mu:=\widetilde{\mu}\restriction_{{\rm GL}_n(\mathfrak{o}_F)}$ and $\mu':=\widetilde{\mu}'\restriction_{{\rm GL}_n(\mathfrak{o}_{F'})}$
are an inflation of a common irreducible cuspidal representation $\tau$ via the canonical projections: 
\[
\xymatrixcolsep{4pc}
 \xymatrix{
 ({\rm GL}_n(\mathfrak{o}_F),\mu) \ar[r]^{{\rm   mod}\,\, \mathfrak{p}_F} & ({\rm GL}_n(\mathbb{F}_q),\pi) &\ar[l]_{\quad {\rm  mod}\,\, \mathfrak{p}_{F'}}
( {\rm GL}_n(\mathfrak{o}_{F'}),\mu').
 }
\]
 \end{enumerate}

 We say that the triplet $(F,\rho,\psi_F)$ is {\rm Kaz}-{\it associated to} $(F',\rho',\psi'_{F'})$ if
 \begin{enumerate}[label=$(\mathrm{\alph*})$]
\item\label{Kaz-1}  $F$ and $F'$ are $1$-close;
\item $\rho'= {\rm Kaz}(\rho)$;
\item an character $\psi'_{F'}$ of $F'$ satisfies ${\rm cond}(\psi'_{F'})=\mathfrak{p}_{F'}$ and the character induced by $\psi'_{F'}$ on $\mathfrak{o}_{F'} / \mathfrak{p}_{F'}$ 
coincides with that induced by $\psi_F$ on $\mathfrak{o}_F / \mathfrak{p}_F$ under the isomorphism implicit in \ref{Kaz-1}.
 \end{enumerate}
 
 Let $\pi_1$ be an irreducible cuspidal representation of ${\rm GL}_n(\mathbb{F})$ and $\pi_2$ an irreducible cuspidal representation of ${\rm GL}_r(\mathbb{F})$ 
 with $n > r$.  Then there exists a complex number $\gamma(\pi_1 \times \pi_2,\psi) \in \mathbb{C}$ such that
 \[
  \gamma(\pi_1 \times \pi_2,\psi) \sum_{g \in {N}_r(\mathbb{F}) \backslash {\rm GL}_r(\mathbb{F})} W_{\pi_1} \begin{pmatrix} g & 0 \\ 0 & 1_{n-r} \end{pmatrix} W_{\pi_2}(g)
  =\sum_{g \in {N}_r(\mathbb{F}) \backslash {\rm GL}_r(\mathbb{F})} W_{\pi_1} \begin{pmatrix} 0 & 1_{n-r} \\ g & 0 \end{pmatrix} W_{\pi_2}(g),
 \]
 for all $W_{\pi_1} \in \mathcal{W}(\pi_1,\psi)$ and $W_{\pi_2} \in \mathcal{W}(\pi_2,\psi^{-1})$ \cite[Theorem 3.4]{NZ18}. 
 Let $\rho_1$ be a level zero supercuspidal representation of ${\rm GL}_n(F)$ associated to $\pi_1$ and $\rho_2$ a level zero supercuspidal representation of ${\rm GL}_r(F)$
 associated to $\pi_2$ , with $n > r$. Let $\Gamma(s,\rho_1 \times \rho_2,\psi_F)$ denote the Rankin-Selberg $\gamma$-factor defined by Jacquet, Piatetski-Shapiro, and Shalika (cf. \cite[Theorem 3.8]{NZ18}).

 \begin{lemma}
 \label{level-zero-RS}
 For $(F,\rho_i,\psi_F)$ that is {\rm Kaz}-{\it associated to} $(F',\rho'_i,\psi'_{F'})$ with $i=1,2$, we have
 \[
  \Gamma(s,\rho_1 \times \rho_2,\psi_F)= \Gamma(s,\rho'_1 \times \rho'_2,\psi'_{F'}).
 \]
\end{lemma}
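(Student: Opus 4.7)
The plan is to reduce both Rankin-Selberg gamma factors to the same explicit expression in purely finite-field data, all of which is preserved by the Kazhdan correspondence. The strategy exactly mirrors that of \Cref{levelzero-RS}, but adapted to the unequal-rank setting $n>r$.

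First, I would specialize the Whittaker functions in the defining functional equation of $\Gamma(s,\rho_1 \times \rho_2,\fieldCharacter_\fLocalField)$ to the canonical lifts $W^{\circ}_{\rho_i} \in \whittaker(\rho_i, \fieldCharacter_\fLocalField)$, and correspondingly $W^{\circ}_{\rho'_i} \in \whittaker(\rho'_i, \fieldCharacter'_{F'})$, built from the finite-field Bessel functions $\grepBesselFunction{\pi_i}{\fieldCharacter}$ via the recipe of the previous subsection. By definition of Kaz-association, $\rho_i$ and $\rho'_i$ are both compactly induced from inflations of the same cuspidal $\pi_i$ of $\GL_n(\mathbb{F}_q)$; the central characters satisfy $\omega_{\rho_i}(\varpi_F) = \omega_{\rho'_i}(\varpi_{F'})$; and $\fieldCharacter_\fLocalField, \fieldCharacter'_{F'}$ induce the identical character $\fieldCharacter$ on the common residue field $\mathbb{F}_q$. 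Hence the finite-field ingredients feeding into the lifts are the same on both sides.

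Next, I would unfold the JPSS integral using the Iwasawa decomposition $\GL_r(F) = \UnipotentSubgroup_r(F) A_r(F) K_r$. The support property $\Support(W^{\circ}_{\rho_i}) \subset \UnipotentSubgroup_n(F) \multiplicativegroup{\fLocalField} K_n$ forces the diagonal integration variable to lie in $\varpi^{\mathbb{Z}} \cdot \multiplicativegroup{\integersRing}$, and the integral collapses to a geometric series in $q^{-s}$ (whose coefficients depend only on $(\omega_{\rho_1} \omega_{\rho_2})(\varpi)$ and $q$) multiplied by the finite sum
\[
\mathcal{S}(\pi_1, \pi_2, \fieldCharacter) = \sum_{g \in \UnipotentSubgroup_r(\fFiniteField) \backslash \GL_r(\fFiniteField)} \grepBesselFunction{\pi_1}{\fieldCharacter}\begin{pmatrix} g & 0 \\ 0 & 1_{n-r} \end{pmatrix} \grepBesselFunction{\pi_2}{\fieldCharacter^{-1}}(g).
\]
A parallel reduction applies to the dual integral via the identity $\contragredient{W^{\circ}_{\rho_i}}(g) = W^{\circ}_{\rho_i}(\weyllong_n \, {}^tg^{-1})$, producing the analogous finite sum that appears on the right-hand side of the finite-field functional equation for $\gamma(\pi_1 \times \pi_2,\fieldCharacter)$. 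Taking the ratio yields an explicit formula $\Gamma(s, \rho_1 \times \rho_2, \fieldCharacter_\fLocalField) = c(q, s, \omega_{\rho_1}(\varpi), \omega_{\rho_2}(\varpi)) \cdot \gamma(\pi_1 \times \pi_2, \fieldCharacter)$ for an explicit rational function $c$, and the same identity over $F'$. Since $q$, the central-character values on uniformizers, $\pi_i$, and $\fieldCharacter$ all coincide for Kaz-associated data, the two gamma factors are equal.

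The principal technical obstacle I anticipate is ruling out the degenerate case in which $\mathcal{S}(\pi_1, \pi_2, \fieldCharacter)$ vanishes, which would make ratio-taking formally illegitimate. This can be sidestepped by twisting each $\rho_i$ (and correspondingly $\rho'_i$) by unramified characters $\abs{\cdot}^{u_i}$, running the argument in the larger family where both sides are rational functions in $q^{-s}$ and $q^{-u_i}$ and the relevant sums are generically nonzero, and then specializing $u_i \to 0$ by analytic continuation to obtain the desired equality unconditionally.
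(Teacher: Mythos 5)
Your plan and the paper's argument share the same backbone: express $\Gamma(s,\rho_1\times\rho_2,\psi_F)$ as an explicit quantity built entirely from (a) the finite-field Bessel functions of $\pi_1,\pi_2$, (b) the central-character values $\omega_{\rho_i}(\varpi_F)$, and (c) the residue cardinality $q$ and the measure normalization, and then observe that all of these are preserved under Kaz-association. The difference is that the paper accomplishes this in one line by citing \cite[Theorem 3.11]{NZ18}, which already gives
\[
\omega^{n-1}_{\rho_2}(-1)\,\Gamma(s,\rho_1 \times \rho_2,\psi_F)=\mathrm{Vol}(\mathfrak{p}_F)^{r(n-r-1)}\, \gamma(\pi_1 \times \pi_2,\psi),
\]
and then notes $\mathrm{Vol}(\mathfrak{p}_F)=\mathrm{Vol}(\mathfrak{p}_{F'})$ under the chosen self-dual normalization. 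You instead propose to re-derive an explicit formula of this kind by unfolding the integral against the canonical lift $W^\circ_{\rho_i}$.

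Two imprecisions in your sketch are worth noting. First, in the unequal-rank setting $n>r$ the collapse is stronger than you predict: the support condition $\Support(W^\circ_{\rho_1})\subset \UnipotentSubgroup_n(F)F^{\times}K_n$ intersected with the image of $\GL_r(F)\hookrightarrow\GL_n(F)$, $g\mapsto\mathrm{diag}(g,1_{n-r})$, confines the diagonal variable to the identity coset (the lower-right $1$'s force the central scalar to lie in $\integersRing^\times$), so there is no geometric series in $q^{-s}$ at all; the Jacquet--Piatetski-Shapiro--Shalika integral and its dual both reduce to a single $s$-independent constant times a finite-field sum. The resulting proportionality $c$ ends up being a power of $q$ coming from the volume of a congruence subgroup, not a rational function in $q^{-s}$. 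The argument still works, but the mechanism you describe is the one for $n=r$, not $n>r$. Second, the twisting trick to avoid a degenerate denominator is not needed here: for $n>r$ the Bessel-function specialization gives a visibly nonvanishing finite sum (e.g.\ the contribution at $g=1_r$), so the ratio is always well-defined. Neither issue is fatal, but as written your outline would not survive a careful fill-in without these corrections.
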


\begin{proof}
With aid of \cite[Theorem 3.11]{NZ18}, we can relate gamma factors for a pair of level zero supercuspidal representations with
those for the corresponding cuspidal representations over finite fields: 
\[
 \omega^{n-1}_{\rho_2}(-1)\Gamma(s,\rho_1 \times \rho_2,\psi_F)={\rm Vol}(\mathfrak{p}_F)^{r(n-r-1)} \gamma(\pi_1 \times \pi_2,\psi).
\]
Since we have normalized Haar measures on $F$ so that the volume of $\mathfrak{o}_F$ is $q^{1/2}$ (and similarly for $\mathfrak{o}_{F'}$), we have
${\rm Vol}(\mathfrak{p}_F)={\rm Vol}(\mathfrak{p}_{F'})$ and the result follows.
\end{proof}

The assignment $``{\rm LLC}"$ is now reconciled with the Deligne-Kazhdan theory \eqref{Del-corr} and \eqref{Kaz-corr}.

\begin{proposition} 
\label{commu-LLC}
We assume that non-archimedean local fields $F$ and $F'$ are $1$-close. Then the following diagram commutes:
\[
\xymatrixcolsep{3.5pc}
\xymatrixrowsep{2pc}
\xymatrix{
&\mathcal{A}_0({\rm GL}_n(F)) \ar[d]^{\cong}_{\rm Kaz} \ar[r]^{\rm LLC}_{\cong} & \mathcal{G}^t({\rm GL}_n(F)) \ar[d]^{\rm Del}_{\cong} \\
&\mathcal{A}_0({\rm GL}_n(F')) \ar[r]^{\rm LLC}_{\cong} &\mathcal{G}^t({\rm GL}_n(F'))}
\]
\end{proposition}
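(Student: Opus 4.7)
The plan is to match the two composites by separately tracking, on one hand, the cuspidal representation $\pi \in \mathcal{A}_0(\GL_n(\mathbb{F}))$ that underlies $\rho$ and, on the other hand, the action of a lifted Frobenius / the value of the central character at the uniformizer. By \eqref{level-zero-corresp} a level zero supercuspidal $\rho$ is determined by the pair $(\omega_\rho(\varpi_F),\pi)$; by conditions (i)--(ii) of the construction of $\mathrm{Kaz}$ recalled around \eqref{Kaz-corr}, $\rho':=\mathrm{Kaz}(\rho)$ corresponds to the \emph{same} pair, since $F$ and $F'$ share a common residue field and the central character values on the uniformizers are forced to agree. Dually, a tamely ramified $\varphi \in \mathcal{G}^t(\GL_n(F))$ is specified up to isomorphism by its $I_F$-equivalence class (which factors through the tame quotient $I_F/P_F$) together with the action of a chosen Frobenius lift, and the Deligne bijection \eqref{Del-corr} is designed precisely to preserve both of these under the canonical identification of tame quotients of Weil groups afforded by the $1$-close hypothesis.

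Next I would invoke the commutative square of \cite[Appendix A]{SZ08} already displayed earlier in the paper, applied to both $F$ and $F'$. Since $p_2 \circ \mathrm{LLC} = \mathcal{M} \circ p_1$, the $I_F$-equivalence class of $\mathrm{LLC}(\rho)$ is exactly Macdonald's image of $\pi$, and similarly for $\rho'$. Because $\mathrm{Del}$ preserves restriction to inertia under the identification $I_F/P_F \cong I_{F'}/P_{F'}$, while $\mathrm{Kaz}$ preserves the underlying $\pi$, the $I_{F'}$-equivalence classes of $\mathrm{Del}(\mathrm{LLC}(\rho))$ and $\mathrm{LLC}(\mathrm{Kaz}(\rho))$ already coincide. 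It then remains only to match the action of a lifted Frobenius on these two tamely ramified Weil representations of $W_{F'}$.

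For the Frobenius piece I would appeal to the explicit induced description of tame LLC, namely $\mathrm{LLC}(\rho) \cong \mathrm{Ind}_{W_{F_n}}^{W_F}(\chi_\rho)$, where $F_n$ is the unramified degree $n$ extension, and $\chi_\rho$ is a character of $W_{F_n}$ whose restriction to $\mathfrak{o}_{F_n}^{\times}$ is inflated from the regular Green parameter $\alpha \in \widehat{\mathbb{F}}_{q^n}^{\times}$ of $\pi$ and whose value at $\varpi_{F_n}$ is pinned down (up to a known normalizing unramified twist) by $\omega_\rho(\varpi_F)$. The analogous description holds for $\rho'$ with the same $\alpha$ and with the value at $\varpi_{F_n'}$ determined by $\omega_{\rho'}(\varpi_{F'})=\omega_\rho(\varpi_F)$. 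Since $\mathrm{Del}$ is compatible with induction from unramified extensions and sends an unramified character of $F_n^{\times}$ with a prescribed value at a uniformizer to its $F_n'$-counterpart, the two Weil representations of $W_{F'}$ obtained by the two paths are isomorphic, yielding commutativity of the diagram.

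The main obstacle is the bookkeeping for the normalizing unramified twist (the so-called ``rectifier'') that appears in the explicit formula for tame LLC: one has to verify that it transports cleanly across $\mathrm{Del}$. This reduces to the observation that the unramified part of tame LLC is encoded by local class field theory for $F_n^{\times}$, which is itself transparent with respect to the $1$-close identification of fields, since the relevant quotient $F_n^{\times}/(1+\mathfrak{p}_{F_n})$ depends only on $\mathfrak{o}_{F_n}/\mathfrak{p}_{F_n}$ together with the valuation $\varpi_{F_n} \mapsto 1$.
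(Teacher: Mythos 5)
Your argument is essentially correct, but it takes a genuinely different route from the paper's. The paper proceeds by induction on $n$: it sets $\rho_2 := \mathrm{LLC}^{-1}\circ\mathrm{Del}^{-1}(\varphi_{\rho_1'})$, uses Lemma~\ref{level-zero-RS} and Deligne's preservation of Artin $\gamma$-factors to produce a chain of identities $\Gamma(s,\rho_1\times\sigma,\psi_F)=\Gamma(s,\rho_2\times\sigma,\psi_F)$ for all $\sigma\in\mathcal{A}_0(\GL_d(F))$ with $1\le d\le n-1$, and then invokes the local converse theorem for level zero supercuspidals to conclude $\rho_1\cong\rho_2$. This is a \emph{soft} argument: it needs no explicit description of tame LLC, only the compatibility of gamma factors across the close-field isomorphism and the converse theorem. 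Your proposal instead works \emph{explicitly}: you track the two pieces of data $(\omega_\rho(\varpi),\pi)$ through both composites, use the SZ08 square to match inertial behavior via Macdonald, and then pin down the Frobenius action by unraveling the induced model $\mathrm{Ind}_{W_{F_n}}^{W_F}(\chi_\rho)$ of tame LLC. That buys a direct, diagram-chasing proof and avoids induction and the converse theorem entirely, but it imports two substantial external ingredients the paper does not lean on: the precise form of the rectifier in the tame parametrization (which, although ultimately an unramified character of $F_n^\times$ determined by signs and hence transparent under the $1$-close identification, as you correctly note), and the compatibility of the Deligne isomorphism with induction along unramified extensions. Both facts are true and available in the Bushnell--Henniart / Ganapathy literature, so your proof is sound; it is simply a heavier-input, more computational version, whereas the paper deliberately sidesteps the explicit correspondence by characterizing the representation through its gamma factors.
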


\begin{proof}
We will prove this theorem by induction on $n$. When  $n=1$, the Deligne-Kazhdan philosophy is compatible with local class field theory \cite[Property (i) of \S 2.1]{Gan15}.
Now we assume that \Cref{commu-LLC} holds for $1 \leq d \leq n-1$. Let $\rho_1 \in \mathcal{A}_0({\rm GL}_n(F))$ and $\sigma \in \mathcal{A}_0({\rm GL}_d(F))$. Let $\varphi_{\rho_1}$ and $\varphi_{\sigma}$
denote the local Langlands parameter attached to $\rho_1$ and $\sigma$, respectively. We put $\rho'_1={\rm Kaz}(\rho_1)$ and $\sigma'={\rm Kaz}(\sigma)$.
Writing $\rho_2={\rm LLC}^{-1} \circ {\rm Del}^{-1}(\varphi_{\rho'_1})$, the corresponding local Langlands parameter $\varphi_{\rho_2}$ is Del-associated to $\varphi_{\rho'_1}$.
In view of \cite[$({\rm d})$ of Theorem 7.1]{Gan15} along with \cite[Property (i) of \S 2.1]{Gan15} again, $\rho_1$ and $\rho_2$ share the same central character $\omega_{\rho_1}=\omega_{\rho_2}$.
By induction hypothesis, we have 
\[
\sigma={\rm LLC}^{-1} \circ {\rm Del}^{-1}(\varphi_{\sigma'}).
\]
This leads us to a chain of identities:
\begin{multline*}
  \Gamma(s,\rho_1 \times \sigma,\psi_F)= \Gamma(s,\rho'_1 \times \sigma',\psi'_{F'})=\Gamma(s,\varphi_{\rho'_1}\otimes\varphi_{\sigma'},\psi'_{F'})\\
  =\Gamma(s,\varphi_{\rho_2}\otimes {\rm Del}^{-1}(\varphi_{\sigma'}),\psi_F) 
  =\Gamma(s,\rho_2 \times {\rm LLC}^{-1} \circ {\rm Del}^{-1}(\varphi_{\sigma'}),\psi_F)
  =\Gamma(s,\rho_2 \times \sigma,\psi_F)
\end{multline*}
for all $\sigma \in \mathcal{A}_0({\rm GL}_d(F))$ and $1 \leq d \leq n-1$. Here, the second and fourth equalities are a part of local Langlands correspondence \cite[$({\rm b})$ of Theorem 7.1]{Gan15},
the third equality follows from  \cite[Property (iii) of \S 2.1]{Gan15} due to Deligne, and the first equality is clear from Lemma \ref{level-zero-RS}.
Then by the local converse theorem for  level zero supercuspidal representations \cite[Theorem 5.3]{Ye19}, we conclude that $\rho_1 \cong \rho_2$ from which the desired commutative diagram follows.
\end{proof}

\section{The Asai Gamma Factor}
\label{AS factor}

\subsection{The Flicker sum}
Let $\eFiniteField=\fFiniteField_{q^2}$. 
We fix a non-trivial additive character $\qfFieldCharacter$ of $\eFiniteField$ such that
$\qfFieldCharacter\restriction_{\fFiniteField}=\textbf{1}_{\fFiniteField}$.
It is worth pointing out that $\qfFieldCharacter$ can be constructed starting from a non-trivial additive character $\fieldCharacter$ (cf.\citelist{\cite{Nie19}*{\S 1} \cite{AM20}*{\S 2} }).
We define the character of $\qfFieldCharacter$ to be $\qfFieldCharacter(x)=\fieldCharacter({\Trace}_{\eFiniteField \slash \fFiniteField}(\Delta x))$, where $\Delta \in \multiplicativegroup{\eFiniteField}$
is of trace zero. Let $c: x \mapsto \complexConjugate{x}$ be the nontrivial Galois element in $\GaloisGroup(\eFiniteField \slash \fFiniteField)$.
Let $\pi$ be an irreducible cuspidal representation of $\GL_n(\eFiniteField)$ with its associated Whittaker model $\whittaker(\pi,\qfFieldCharacter)$. For  $W_{\pi} \in \whittaker(\pi,\qfFieldCharacter)$ and $\phi \in \Schwartz(\fFiniteField^n)$, we define the {\it Flicker sum} 
\begin{equation}
\label{Def-Flicker-Sum}
  I(W_{\pi},\phi):=\sum_{g \in \UnipotentSubgroup_n(\fFiniteField) \backslash \GL_n(\fFiniteField) } W_{\pi}(g)  \phi(e_ng).
\end{equation}
Similarly, we define the {\it dual Flicker sum} 
\[
  \check{I}(W_{\pi},\phi)
  :=\sum_{g \in \UnipotentSubgroup_n(\fFiniteField) \backslash \GL_n(\fFiniteField) } \check{W}_{\pi}(g) \fourierTransform(\phi)(e_ng).
\]

\begin{lemma}
\label{Dual-Flicker-variant}
Let $\pi$ be an irreducible cuspidal representation of $\GL_{n}(\fFiniteField)$. Then we have
 \[
   \check{I}(W_{\pi},\phi)=\sum_{g \in \UnipotentSubgroup_n(\fFiniteField) \backslash \GL_n(\fFiniteField) }  W_{\pi}(g)   \fourierTransform(\phi)(e_1 {^tg^{-1}}).
 \]
\end{lemma}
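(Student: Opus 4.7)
The plan is to insert \eqref{dual-Whittaker-def} into the definition of $\check I(W_\pi,\phi)$ and then carry out the involutive change of variables $g \mapsto h = w_n \, {^tg^{-1}}$. Since $w_n$ is symmetric with $w_n^2 = 1_n$, this map is its own inverse on $\GL_n(\fFiniteField)$, so the reverse substitution reads $g = w_n \, {^th^{-1}}$.

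Under this substitution, two elementary simplifications apply. The Whittaker factor becomes $\check W_\pi(g) = W_\pi(w_n \, {^tg^{-1}}) = W_\pi(h)$. The Schwartz factor becomes $\fourierTransform(\phi)(e_n g) = \fourierTransform(\phi)(e_n w_n \, {^th^{-1}}) = \fourierTransform(\phi)(e_1 \, {^th^{-1}})$, since $e_n w_n = e_1$ by the anti-diagonal structure of $w_n$. The only remaining task is to confirm that after the substitution the sum still runs over $N_n(\fFiniteField) \backslash \GL_n(\fFiniteField)$. I would handle this by first rewriting the original quotient sum as $|N_n(\fFiniteField)|^{-1}$ times an unrestricted sum over $g \in \GL_n(\fFiniteField)$, performing the involution freely on $\GL_n(\fFiniteField)$, and then descending by left $N_n(\fFiniteField)$-invariance of the new summand. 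For that descent, $W_\pi(h)$ is left $N_n(\fFiniteField)$-invariant because $\qfFieldCharacter\restriction_{\fFiniteField}$ is trivial, and for any $u \in N_n(\fFiniteField)$ we have $e_1 \, {^t(uh)^{-1}} = e_1 \, {^tu^{-1}} \, {^th^{-1}} = e_1 \, {^th^{-1}}$, since the first row of the lower unipotent matrix ${^tu^{-1}}$ equals $e_1$.

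The main step requiring attention is precisely this bookkeeping: the involution $g \mapsto w_n \, {^tg^{-1}}$ does not send left $N_n$-cosets to left $N_n$-cosets (it maps them onto right cosets for the opposite unipotent group), and the observed left $N_n(\fFiniteField)$-invariance of the transformed summand is what permits us to land back in the quotient $N_n(\fFiniteField) \backslash \GL_n(\fFiniteField)$ on the right-hand side.
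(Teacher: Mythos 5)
Your proof is correct and follows essentially the same route as the paper's: insert \eqref{dual-Whittaker-def} and change variables. The paper factors the substitution into $g \mapsto \weyllong_n\,{^tg^{-1}}\weyllong_n$ (an automorphism of $\GL_n$ preserving $N_n(\fFiniteField)$, hence descending directly to the coset space) followed by the right translation $g \mapsto g\weyllong_n$, which sidesteps the unrestrict--restrict bookkeeping that your single involution $g \mapsto \weyllong_n\,{^tg^{-1}}$ requires; apart from this packaging the two computations coincide, and your discussion of why the transformed summand is left $N_n(\fFiniteField)$-invariant (using $\qfFieldCharacter\restriction_{\fFiniteField}=\TrivialRepresentation$ and the first row of ${^tu^{-1}}$) is exactly the right way to justify returning to the quotient.
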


\begin{proof}
We insert the definition \eqref{dual-Whittaker-def}. Performing the change of variables $g \mapsto \weyllong_n\, {^tg^{-1}} \weyllong_n$ and then $g \mapsto g \weyllong_n$
yields the result.
\end{proof}

We now aim to show that the space of $\GL_{n}(\fFiniteField)$-equivariant bilinear forms
\[
 L : \whittaker(\pi,\qfFieldCharacter) \times \Schwartz_0(\fFiniteField^n) \rightarrow \mathbb{C}
\]
is at most one-dimensional. Since Flicker sums $I(W_{\pi},\phi)$ and dual Flicker sums $\check{I}(W_{\pi},\phi)$ define non-zero elements of this space (see Theorem \ref{flicker-one-side-thm}),
there is a proportionality constant called the {\it Asai gamma factor} $ \AsaiPiGammaFactor$
of an irreducible cuspidal representation $\pi$ of ${\GL}_n(\eFiniteField)$. The Asai gamma factor $ \AsaiPiGammaFactor$ satisfies
the following functional equation 
\[
 \AsaiPiGammaFactor I(W_{\pi},\phi)=\check{I}(W_{\pi},\phi)
 \]
for any  $W_{\pi} \in \whittaker(\pi,\qfFieldCharacter)$ and $\phi \in \Schwartz_0(\fFiniteField^n)$.

\begin{theorem}
\label{Asai-Multiplicity}
Let $\pi$ be an irreducible cuspidal representation of $\GL_{n}(\fFiniteField)$. For every $W_{\pi} \in \whittaker(\pi,\qfFieldCharacter)$ and for any $\phi \in \Schwartz_0(\fFiniteField^n)$,
 there exists a complex number $\AsaiPiGammaFactor$ satisfying
\[
  \AsaiPiGammaFactor \sum_{g \in \UnipotentSubgroup_n(\fFiniteField) \backslash \GL_n(\fFiniteField) } W_{\pi}(g)  \phi(e_ng)
  =\sum_{g \in \UnipotentSubgroup_n(\fFiniteField) \backslash \GL_n(\fFiniteField) } W_{\pi}(g)  \fourierTransform(\phi)(e_1 {^tg^{-1}}).
\]
\end{theorem}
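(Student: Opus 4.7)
The plan is to realize both sides of the asserted identity as non-zero elements of a single at most one-dimensional space of $\GL_n(\fFiniteField)$-equivariant bilinear forms on $\whittaker(\pi,\qfFieldCharacter)\times\Schwartz_0(\fFiniteField^n)$, imitating the proof of the Rankin--Selberg analogue in \Cref{RS factor}. The group $\GL_n(\fFiniteField)$ acts by right translation on the Whittaker model and by $(h\cdot\phi)(x)=\phi(xh)$ on $\Schwartz_0(\fFiniteField^n)$; both sums descend to $\UnipotentSubgroup_n(\fFiniteField)\backslash\GL_n(\fFiniteField)$ because $\qfFieldCharacter\restriction_{\fFiniteField}=\TrivialRepresentation_{\fFiniteField}$. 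Equivariance of the left-hand side $I(W_\pi,\phi)$ is a direct change of variables $g\mapsto gh^{-1}$; equivariance of the right-hand side, which by \Cref{Dual-Flicker-variant} equals the dual Flicker sum $\check{I}(W_\pi,\phi)$, additionally uses $\fourierTransform(h\cdot\phi)(y)=\fourierTransform(\phi)(y\,{}^th^{-1})$ combined with the same change of variables.

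To establish uniqueness I would reduce to a mirabolic multiplicity-one. Since $\GL_n(\fFiniteField)$ acts transitively on $\fFiniteField^n\setminus\{0\}$ with stabilizer at $e_n$ equal to $\MirabolicSubgroup_n(\fFiniteField)$, the natural isomorphism $\Schwartz_0(\fFiniteField^n)\cong \Ind_{\MirabolicSubgroup_n(\fFiniteField)}^{\GL_n(\fFiniteField)}\TrivialRepresentation$ combined with Frobenius reciprocity identifies the space of equivariant bilinear forms with
\[
\Hom_{\MirabolicSubgroup_n(\fFiniteField)}\bigl(\whittaker(\pi,\qfFieldCharacter)\restriction_{\MirabolicSubgroup_n(\fFiniteField)},\TrivialRepresentation\bigr).
\]
A functional in this Hom-space is in particular $\UnipotentSubgroup_n(\fFiniteField)$-invariant; combined with cuspidality of $\pi$, a Mackey-type analysis along the chain of mirabolic subgroups (using the Bernstein--Zelevinsky filtration of $\pi\restriction_{\MirabolicSubgroup_n(\eFiniteField)}$ restricted further to $\MirabolicSubgroup_n(\fFiniteField)$) shows the functional is determined by its value on a single Bessel-type vector, bounding the dimension by one.

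For existence of $\AsaiPiGammaFactor$ it suffices to exhibit a non-zero member of this space. Taking $\phi=\mathbbm{1}_{\{e_n\}}$ and $W_\pi$ chosen so that $\sum_{g\in\UnipotentSubgroup_n(\fFiniteField)\backslash\MirabolicSubgroup_n(\fFiniteField)}W_\pi(g)\neq 0$, which is possible since Whittaker functions separate $\MirabolicSubgroup_n(\fFiniteField)$-conjugacy data by the genericity of $\pi$, yields $I(W_\pi,\phi)\neq 0$, so the two bilinear forms are proportional and the scalar $\AsaiPiGammaFactor$ is uniquely determined. The main obstacle is the mirabolic multiplicity-one statement: it is genuinely finer than the standard Whittaker uniqueness on $\UnipotentSubgroup_n(\eFiniteField)$, and its proof requires a careful orbit analysis of $\MirabolicSubgroup_n(\eFiniteField)$ with respect to $\MirabolicSubgroup_n(\fFiniteField)$ together with cuspidality to kill off all non-generic Bernstein--Zelevinsky layers; if this becomes cumbersome one can fall back on the finite-field Flicker uniqueness result recorded in \cite{AM20} as a cleaner shortcut.
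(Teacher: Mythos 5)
Your proposal follows essentially the same route as the paper's proof: both sides are interpreted as $\GL_n(\fFiniteField)$-equivariant bilinear forms, the transitive action on $\fFiniteField^n\setminus\{0\}$ together with Frobenius reciprocity reduces uniqueness to showing $\dim\Hom_{\MirabolicSubgroup_n(\fFiniteField)}(\pi\restriction_{\MirabolicSubgroup_n(\fFiniteField)},\TrivialRepresentation)\leq 1$, and that mirabolic multiplicity-one is settled by a Bernstein--Zelevinsky derivative analysis (the paper cites the local argument of \cite{AKT04} adapted via \cite{GGP12}, while you indicate the same filtration argument or the shortcut via \cite{AM20}). Your choice of test data for non-vanishing (the indicator of $e_n$ and a Whittaker function supported on the identity mirabolic coset) is precisely the Bessel-function computation the paper invokes in \Cref{flicker-one-side-thm}.
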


\begin{proof}
It can be verified from \Cref{Dual-Flicker-variant} that $L_1 : (W_{\pi},\phi) \mapsto   I(W_{\pi},\phi)$ and $L_2 : (W_{\pi},\phi) \mapsto   \contragredient{I}(W_{\pi},\phi)$ correspond to
elements of ${\rm Hom}_{\GL_n(\fFiniteField)}(\pi \otimes \Schwartz_0(\fFiniteField^n),\TrivialRepresentation_{\GL_n(\fFiniteField)})$. 
It is then enough to show that such forms are unique up to scalars, that is to say,  $\dim {\rm Hom}_{\GL_n(\fFiniteField)}(\pi \otimes \Schwartz_0(\fFiniteField^n),\TrivialRepresentation_{\GL_n(\fFiniteField)}) \leq 1$.
We identify $\MirabolicSubgroup_n(\fFiniteField) \backslash \GL_n(\fFiniteField) $
with $\fFiniteField^n - \{ 0 \}$, and then employ the Frobenius reciprocity law to find isomorphisms
\[
\begin{split}
 {\rm Hom}_{\GL_n(\fFiniteField)}(\pi \otimes \Schwartz_0(\fFiniteField^n),\TrivialRepresentation_{\GL_n(\fFiniteField)}) &\cong  {\rm Hom}_{\GL_n(\fFiniteField)}(\pi\restriction_{\GL_n(\fFiniteField)} \otimes {\rm Ind}^{{\GL_n(\fFiniteField)}}_{\MirabolicSubgroup_n(\fFiniteField)}(\TrivialRepresentation),\TrivialRepresentation_{\GL_n(\fFiniteField)}) \\
  &\cong  {\rm Hom}_{\MirabolicSubgroup_n(\fFiniteField)}(\pi \restriction_{\MirabolicSubgroup_n(\fFiniteField)},\TrivialRepresentation_{\MirabolicSubgroup_n(\fFiniteField)}).
\end{split}
\]
The proof of at most one dimension of the space $ {\rm Hom}_{\MirabolicSubgroup_n(\fFiniteField)}(\pi \restriction_{\MirabolicSubgroup_n(\fFiniteField)},\TrivialRepresentation_{\MirabolicSubgroup_n(\fFiniteField)})$ 
is then parallel to that for nonarchimedean local fields \cite[Theorem 1.1]{AKT04}, relying on the theory of Bernstein and Zelevinsky's derivatives for finite fields established in \cite[\S 4]{GGP12}.
\end{proof}

In the course of the proof of proceeding theorem, we get the following multiplicity one result as a byproduct, which is used repeatedly in the proof of \Cref{Asai-gamma-distinction} and \Cref{FR-sum-characterization}.

\begin{corollary} [Multiplicity one result]
\label{mirabolic-Flicker-mulone}
Let $\pi$ be an irreducible cuspidal representation of $\GL_{n}(\fFiniteField)$. Then we have
\[
\dim {\rm Hom}_{\MirabolicSubgroup_n(\fFiniteField)}(\pi \restriction_{\MirabolicSubgroup_n(\fFiniteField)},\TrivialRepresentation_{\MirabolicSubgroup_n(\fFiniteField)}) \leq 1.
\]
\end{corollary}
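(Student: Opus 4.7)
The plan is to transport the argument of Anandavardhanan, Kable, and Tandon \cite[Theorem 1.1]{AKT04}, which establishes the analogous multiplicity-one bound in the nonarchimedean Flicker-distinction setting, into the finite field context. The $p$-adic proof hinges on the Bernstein--Zelevinsky derivative formalism, and the corresponding finite-field machinery is supplied by \cite[\S 4]{GGP12}; as indicated in the proof of \Cref{Asai-Multiplicity}, this is exactly the ingredient that lets the $p$-adic strategy run verbatim over $\fFiniteField$.

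First, I would install on $\pi \restriction_{\MirabolicSubgroup_n(\fFiniteField)}$ the finite-field Bernstein--Zelevinsky filtration: a decreasing chain of $\MirabolicSubgroup_n(\fFiniteField)$-stable subspaces whose successive subquotients are constructed, via the parabolic-induction/derivative functors of \cite[\S 4]{GGP12}, from the derivatives $\pi^{(k)}$ of $\pi$ for $k=1,\dots,n$. This is the direct analogue of the Bernstein--Zelevinsky filtration of a smooth $\GL_n$-module restricted to its mirabolic, now for representations of a finite general linear group.

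Second, I would apply $\Hom_{\MirabolicSubgroup_n(\fFiniteField)}(-, \TrivialRepresentation_{\MirabolicSubgroup_n(\fFiniteField)})$ to this filtration and invoke Frobenius reciprocity at each stage. The contribution of each layer then reduces to a hom-space controlled by a single derivative $\pi^{(k)}$. Cuspidality of $\pi$ forces every intermediate derivative to vanish, so at most one layer of the filtration can survive, and that surviving layer contributes dimension at most one by uniqueness of the Whittaker functional on the top derivative.

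The main obstacle is to certify that the finite-field Bernstein--Zelevinsky functors enjoy the same exactness properties (and vanishing on cuspidals) as their $p$-adic counterparts; this is precisely what is established in \cite[\S 4]{GGP12}. Once this dictionary is in hand, the remainder of the argument is a direct translation of the Jordan--H\"older-type bookkeeping in \cite{AKT04}, handling the passage from the full restriction $\pi \restriction_{\MirabolicSubgroup_n(\fFiniteField)}$ to the trivial character of $\MirabolicSubgroup_n(\fFiniteField)$ one layer at a time.
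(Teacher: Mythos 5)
Your proposal takes exactly the route the paper itself takes: both reduce the bound to the nonarchimedean argument of Anandavardhanan--Kable--Tandon \cite[Theorem 1.1]{AKT04}, transported to the finite field via the Bernstein--Zelevinsky derivative formalism developed in \cite[\S 4]{GGP12}, and the paper's own proof of \Cref{Asai-Multiplicity} is precisely the two-line pointer you have fleshed out. One small imprecision worth flagging: once cuspidality of $\pi$ kills the intermediate derivatives $\pi^{(k)}$ for $1 \leq k \leq n-1$, the surviving layer is the Kirillov model $\operatorname{Ind}_{N_n(\eFiniteField)}^{P_n(\eFiniteField)}\qfFieldCharacter$, and the remaining bound $\dim \Hom_{P_n(\fFiniteField)}\bigl(\operatorname{Ind}_{N_n(\eFiniteField)}^{P_n(\eFiniteField)}\qfFieldCharacter,\TrivialRepresentation\bigr)\leq 1$ is not literally ``uniqueness of the Whittaker functional on the top derivative'' (that uniqueness only tells you $\dim\pi^{(n)}=1$, so the layer is a \emph{single} Kirillov model) but the Mackey orbit analysis on the double coset space $P_n(\fFiniteField)\backslash P_n(\eFiniteField)/N_n(\eFiniteField)$, which is the real content of the AKT04 argument you invoke. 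Since you are explicitly transporting that argument rather than reproving it, this is a matter of phrasing rather than a gap, and your proof matches the paper's.
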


We express $ \AsaiPiGammaFactor$ in terms of the Bessel functions associated with $\pi$.

\begin{theorem} 
\label{flicker-one-side-thm}
Let $\pi$ be an irreducible cuspidal representation of $\GL_n(\eFiniteField)$. Then we have
\begin{equation}
\label{flicker-one-side}
 \AsaiPiGammaFactor=q^{-\frac{n}{2}} \sum_{g \in \UnipotentSubgroup_n(\fFiniteField) \backslash \GL_n(\fFiniteField) } \grepBesselFunction{\pi}{\qfFieldCharacter}(g)
 \fieldCharacter(e_1 {^tg^{-1}}\;{^te_n}).
\end{equation}
In particular, we have $\AsaiDualPiGammaFactor=\complexConjugate{\AsaiPiGammaFactor}$. 
\end{theorem}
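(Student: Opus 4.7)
The plan is to mimic the derivation of Theorem~\ref{J-PS-S-Besel} by plugging a well-chosen test pair into the functional equation of \Cref{Asai-Multiplicity}. Concretely, I will take $W_{\pi}:=\grepBesselFunction{\pi}{\qfFieldCharacter}$ and $\phi:=\delta_{e_n}\in\Schwartz_0(\fFiniteField^n)$, the characteristic function of $\{e_n\}$ (which lies in $\Schwartz_0$ since $e_n\neq 0$).

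The right-hand side is a direct computation: $\fourierTransform(\delta_{e_n})(y)=q^{-n/2}\fieldCharacter(\bilinearPairing{e_n}{y})=q^{-n/2}\fieldCharacter(y_n)$, and specializing at $y=e_1\,{}^tg^{-1}$ yields exactly the weight $q^{-n/2}\fieldCharacter(e_1\,{}^tg^{-1}\,{}^te_n)$ appearing in \eqref{flicker-one-side}. For the left-hand side, $\delta_{e_n}(e_ng)$ vanishes unless $g\in\MirabolicSubgroup_n(\fFiniteField)$, and the block-diagonal identification $\UnipotentSubgroup_n(\fFiniteField)\backslash\MirabolicSubgroup_n(\fFiniteField)\cong\UnipotentSubgroup_{n-1}(\fFiniteField)\backslash\GL_{n-1}(\fFiniteField)$ via $h\mapsto\left(\begin{smallmatrix}h&\\&1\end{smallmatrix}\right)$ reduces the left-hand side to $\AsaiPiGammaFactor$ multiplied by the Flicker--Rallis period
\[
P(\pi)\;:=\;\sum_{h\in\UnipotentSubgroup_{n-1}(\fFiniteField)\backslash\GL_{n-1}(\fFiniteField)}\grepBesselFunction{\pi}{\qfFieldCharacter}\begin{pmatrix}h&\\&1\end{pmatrix}.
\]

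The central task, and the main obstacle, is the Bessel identity $P(\pi)=1$. My plan is to exploit the fact that $P(\,\cdot\,)$ defines a $\MirabolicSubgroup_n(\fFiniteField)$-invariant linear functional on $V_\pi$, which by \Cref{mirabolic-Flicker-mulone} belongs to an at most one-dimensional space; combined with the normalization $\grepBesselFunction{\pi}{\qfFieldCharacter}(1_n)=1$ and a support analysis of the Bessel function along the mirabolic, this pins the constant down to be $1$. This identity is the Asai counterpart of the Rankin--Selberg Bessel orthogonality $\sum_h \repBesselFunction{\pi_1}(\mathrm{diag}(h,1))\repBesselFunction{\pi_2}(\mathrm{diag}(h,1))=\delta_{\pi_1\cong\contragredient{\pi}_2}$ implicit in Theorem~\ref{J-PS-S-Besel}, and its verification is expected to occupy the bulk of the argument.

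For the final assertion $\AsaiDualPiGammaFactor=\overline{\AsaiPiGammaFactor}$, I apply the formula \eqref{flicker-one-side} to the pair $(\contragredient{\pi},\fieldCharacter^{-1})$, then invoke property $(3)$ of the preamble, $\grepBesselFunction{\contragredient{\pi}}{\qfFieldCharacter^{-1}}(g)=\overline{\grepBesselFunction{\pi}{\qfFieldCharacter}(g)}$, together with $\fieldCharacter^{-1}(x)=\overline{\fieldCharacter(x)}$. Since $q^{-n/2}\in\rReal$, taking complex conjugates term by term identifies the resulting sum with the complex conjugate of the one defining $\AsaiPiGammaFactor$, giving the stated relation.
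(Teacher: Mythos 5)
Your proof is correct and takes essentially the same route as the paper: both plug the test pair $W_{\pi}=\grepBesselFunction{\pi}{\qfFieldCharacter}$, $\phi=\delta_{e_n}$ into the functional equation from \Cref{Asai-Multiplicity}, compute $\fourierTransform(\delta_{e_n})$, and conclude. The one point of difference is the treatment of the normalization $I(\grepBesselFunction{\pi}{\qfFieldCharacter},\delta_{e_n})=1$: the paper simply cites \cite[Lemma 2.7]{Nie19} (the Bessel function restricted to $\MirabolicSubgroup_n(\fFiniteField)$ is supported on $\UnipotentSubgroup_n(\fFiniteField)$, so only the identity coset contributes); your appeal to \Cref{mirabolic-Flicker-mulone} is superfluous here, since one-dimensionality of the invariant functional space does not by itself pin down the value of $P(\pi)$ on a given Whittaker function -- the ``support analysis'' you mention at the end is the actual content and suffices on its own.
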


\begin{proof}
We take $W_{\pi}=\grepBesselFunction{\pi}{\qfFieldCharacter}$ and $\phi$ to be an indicator function $\delta_{e_n}$ on $e_n$. It can be seen from \cite[Lemma 2.7]{Nie19}
that $ I(\grepBesselFunction{\pi}{\qfFieldCharacter},\delta_{e_n})=1$ and $\fourierTransform(\delta_{e_n})(y)=q^{-\frac{n}{2}}\fieldCharacter(e_n {^ty})$ from which \eqref{flicker-one-side} shall follow. We now take the complex conjugate to reach
\[
\complexConjugate{ \AsaiPiGammaFactor}
 =q^{-\frac{n}{2}} \sum_{g \in \UnipotentSubgroup_n(\fFiniteField) \backslash \GL_n(\fFiniteField) } \grepBesselFunction{\contragredient{\pi}}{\qfFieldCharacter^{-1}}(g) \fieldCharacter^{-1}(e_1 {^tg^{-1}}\;{^te_n})
 =\AsaiDualPiGammaFactor.
 \qedhere
\]
\end{proof}

The following general lemma plays a crucial role to evaluate the sums of Bessel functions against additive characters for later purpose.

\begin{lemma}
\label{key-distinction}
\cite[Lemma A.2]{SZ23} Let $G$ be a finite group and $L$ a subgroup of $G$. Suppose that $L$ is a semidirect of the form $L=Z \rtimes \GL_n(\fFiniteField)$. Let $\Xi : L \rightarrow \multiplicativegroup{\cComplex}$ be a character which is trivial on $\GL_n(\fFiniteField)$.
Let $\Pi$ be an irreducible representation of $G$ satisfying
\begin{enumerate}[label=$(\mathrm{\roman*})$]
\item $\dim \Hom_L(\Pi\restriction_L,\Xi)=1$.
\item $\dim \Hom_{Z \rtimes \MirabolicSubgroup_n(\fFiniteField)}(\Pi\restriction_{Z \rtimes \MirabolicSubgroup_n(\fFiniteField)},\Xi)=1$.
\item There exists a linear functional $\Lambda \in \Hom_{\UnipotentSubgroup_n(\fFiniteField)}(\Pi\restriction_{\UnipotentSubgroup_n(\fFiniteField)},\TrivialRepresentation_{\UnipotentSubgroup_n(\fFiniteField)})$ and a vector $v_0 \in V_{\Pi}$ such that
\[
  \sum_{p \in \UnipotentSubgroup_n(\fFiniteField) \backslash \MirabolicSubgroup_n(\fFiniteField)  } \sum_{z \in Z} \Lambda(\Pi(zp)v_0) \Xi^{-1}(z)=1.
\]
Then we have
\[
 \sum_{g \in \UnipotentSubgroup_n(\fFiniteField) \backslash \GL_n(\fFiniteField)} \sum_{z \in Z} \Lambda(\Pi(zg) v_0) \Xi^{-1}(z)  \fieldCharacter^{-1}(e_n g {\;^te_1})=-1.
\]
\end{enumerate}
\end{lemma}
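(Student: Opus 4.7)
The plan is to introduce the auxiliary functional
\[
\ell(v) := \sum_{p \in \UnipotentSubgroup_n(\fFiniteField) \backslash \MirabolicSubgroup_n(\fFiniteField)} \sum_{z \in Z} \Lambda(\Pi(zp)v)\, \Xi^{-1}(z) \qquad (v \in V_{\Pi}),
\]
use the multiplicity-one hypotheses $(i)$--$(iii)$ to upgrade it to an $L$-equivariant form, and then reduce the desired identity to an elementary character sum.

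First I would verify that $\ell$ is well-defined (using that $\Lambda$ is $\UnipotentSubgroup_n(\fFiniteField)$-invariant and that $\Xi$, being trivial on $\GL_n(\fFiniteField)$, is conjugation-invariant on the normal subgroup $Z$) and that $\ell \in \Hom_{Z \rtimes \MirabolicSubgroup_n(\fFiniteField)}(\Pi|_{Z \rtimes \MirabolicSubgroup_n(\fFiniteField)}, \Xi)$. The $\MirabolicSubgroup_n$-equivariance is the immediate reindexing $p \mapsto p p_0^{-1}$, while the $Z$-equivariance follows from the identity $zpz_0 = z(pz_0p^{-1})p$ together with a shift in the $z$-variable. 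Hypothesis $(iii)$ then yields $\ell(v_0) = 1$, and in particular $\ell \neq 0$.

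The crucial step is that $\ell$ is automatically $L$-equivariant: the natural restriction
\[
\Hom_L(\Pi|_L, \Xi) \hookrightarrow \Hom_{Z \rtimes \MirabolicSubgroup_n(\fFiniteField)}(\Pi|_{Z \rtimes \MirabolicSubgroup_n(\fFiniteField)}, \Xi)
\]
is injective and identifies two nonzero one-dimensional spaces by $(i)$ and $(ii)$, so $\ell$ lies in its image. Since $\Xi$ is trivial on $\GL_n(\fFiniteField)$, it follows that $\ell(\Pi(h)v_0) = \Xi(h)\ell(v_0) = 1$ for every $h \in \GL_n(\fFiniteField)$.

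To finish, decompose $\GL_n(\fFiniteField)$ via the bijection $\MirabolicSubgroup_n \backslash \GL_n \to \fFiniteField^n \setminus \{0\}$ given by $g \mapsto e_n g$. Choosing representatives $g_y$ with $e_n g_y = y$, one has $\UnipotentSubgroup_n \backslash \GL_n = \bigsqcup_{y \neq 0}(\UnipotentSubgroup_n \backslash \MirabolicSubgroup_n)\, g_y$, and on each such coset the phase $\fieldCharacter^{-1}(e_n g\,{}^t e_1)$ reduces to $\fieldCharacter^{-1}(y_1)$. Grouping terms,
\[
\sum_{g \in \UnipotentSubgroup_n \backslash \GL_n}\sum_{z \in Z} \Lambda(\Pi(zg)v_0)\,\Xi^{-1}(z)\,\fieldCharacter^{-1}(e_n g\,{}^t e_1) = \sum_{y \neq 0}\fieldCharacter^{-1}(y_1)\,\ell(\Pi(g_y)v_0) = \sum_{y \neq 0}\fieldCharacter^{-1}(y_1),
\]
and the right-hand side equals $-1$ since $\sum_{y \in \fFiniteField^n}\fieldCharacter^{-1}(y_1) = q^{n-1}\sum_{y_1 \in \fFiniteField}\fieldCharacter^{-1}(y_1) = 0$. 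The only real bookkeeping lies in the semidirect-product equivariance of $\ell$ in the first step; once that is in place, the identity is a clean combination of multiplicity one and a standard character-sum collapse.
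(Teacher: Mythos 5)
Your argument is correct, and it is the natural one: build the $(Z\rtimes P_n)$-equivariant form $\ell$ from the hypothesis, use the two multiplicity-one assumptions $(\mathrm{i})$--$(\mathrm{ii})$ together with injectivity of restriction to upgrade $\ell$ to an $L$-equivariant (hence $\GL_n(\fFiniteField)$-invariant) form, and then collapse the sum over $\MirabolicSubgroup_n(\fFiniteField)\backslash\GL_n(\fFiniteField)\cong\fFiniteField^n\setminus\{0\}$ to the elementary character sum $\sum_{y\neq 0}\psi^{-1}(y_1)=-1$. The paper itself gives no proof --- it cites the result from Soudry--Zelingher, Lemma A.2 --- and your derivation is essentially the standard argument underlying that lemma, so there is no substantive divergence to report. (For full rigor in the equivariance check one also needs that the double sum defining $\ell$ is well-posed under the $N_n$-coset ambiguity: after commuting $n\in N_n(\fFiniteField)$ past $z\in Z$ using normality of $Z$, the shift $z\mapsto n z n^{-1}$ is absorbed by the conjugation-invariance of $\Xi$ on $Z$, exactly the point you flag as the ``real bookkeeping.'')
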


A non-zero vector $v \in V_{\pi}$ is said to be a {\it Flicker--Rallis vector} if  $\pi(g)v=v$ for all $g \in {\rm GL}_n(\fFiniteField)$.
Using  \cite[Proposition 5.1]{Pra20} in combination with  \cite[Corollary 2.4]{Pra20}, it is noteworthy that $\pi$ does not have the Flicker--Rallis vector whenever $n=2m$ is even. (Refer to \cite[Theorem 3.9]{Nie19} for $n=2$).

\begin{proposition}
\label{Asai-gamma-distinction}
 Let $\pi$ be an irreducible cuspidal representation of $\GL_{n}(\eFiniteField)$. Suppose that $n=2m+1$ and $\pi$ admits a Flicker--Rallis vector.
Then we have
\[
 \AsaiPiGammaFactor= \AsaiDualPiGammaFactor=-q^{-\frac{n}{2}}.
\]
\end{proposition}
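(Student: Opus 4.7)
The plan is to apply \Cref{key-distinction} to $\Pi = \pi$ with $G = \GL_n(\eFiniteField)$, $L = \GL_n(\fFiniteField)$, trivial $Z$, and trivial character $\Xi = \TrivialRepresentation$, paralleling the Soudry--Zelingher treatment of \Cref{self-contra-comp} in the Rankin--Selberg case. Condition $(i)$ of the lemma, $\dim \Hom_{\GL_n(\fFiniteField)}(\pi, \TrivialRepresentation) = 1$, follows from the Flicker--Rallis hypothesis together with the standard multiplicity one for Flicker--Rallis models on $\GL_n$ of a quadratic extension. For $(ii)$, the Flicker--Rallis vector is a fortiori $\MirabolicSubgroup_n(\fFiniteField)$-invariant, and the upper bound is the direct analogue of \Cref{mirabolic-Flicker-mulone}, proved via Bernstein--Zelevinsky derivatives for finite groups as in the proof of \Cref{Asai-Multiplicity}.

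I take $\Lambda$ to be the $\qfFieldCharacter$-Whittaker functional of $\pi$. Because $\Delta$ has trace zero, $\qfFieldCharacter$ restricts trivially to $\UnipotentSubgroup_n(\fFiniteField)$, so $\Lambda \in \Hom_{\UnipotentSubgroup_n(\fFiniteField)}(\pi, \TrivialRepresentation)$ as required. I then scale the Flicker--Rallis vector to obtain $v_0$ realizing condition $(iii)$; this is possible because the $\MirabolicSubgroup_n(\fFiniteField)$-invariant functional $v \mapsto \sum_p \Lambda(\pi(p)v)$ is, by uniqueness in $(ii)$, a scalar multiple of the Flicker--Rallis functional and must be non-zero (otherwise the Whittaker model of $\pi$ would be killed by the mirabolic average, contradicting the genericity of $\pi$). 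Setting $W_{v_0}(g) = \Lambda(\pi(g)v_0) \in \whittaker(\pi, \qfFieldCharacter)$, \Cref{key-distinction} then yields
\[
\sum_{g \in \UnipotentSubgroup_n(\fFiniteField) \backslash \GL_n(\fFiniteField)} W_{v_0}(g) \fieldCharacter^{-1}(g_{n,1}) = -1,
\]
where $g_{n,1} = e_n g \, {}^t e_1$.

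On the other hand, the functional equation of \Cref{Asai-Multiplicity} applied to $W_\pi = W_{v_0}$ and $\phi = \delta_{e_n}$, combined with \Cref{Dual-Flicker-variant} and the identity $\fourierTransform(\delta_{e_n})(y) = q^{-n/2} \fieldCharacter(y_n)$, gives
\[
\AsaiPiGammaFactor = q^{-n/2} \sum_{g \in \UnipotentSubgroup_n(\fFiniteField) \backslash \GL_n(\fFiniteField)} W_{v_0}(g) \fieldCharacter((g^{-1})_{n,1}),
\]
where $I(W_{v_0}, \delta_{e_n}) = \sum_{\UnipotentSubgroup_n \backslash \MirabolicSubgroup_n} W_{v_0}(p) = 1$ is precisely $(iii)$. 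The main technical point is to identify the two character sums above. Since $v_0$ is a scalar multiple of the Flicker--Rallis vector, $\pi(g) v_0 = v_0$ for $g \in \GL_n(\fFiniteField)$, so $W_{v_0}$ is constant on $\GL_n(\fFiniteField)$ with value $\Lambda(v_0)$, reducing the comparison to a purely character-theoretic identity. The substitution $g \mapsto g^{-1}$ on $\GL_n(\fFiniteField)$ (legitimate because both integrands descend to $\UnipotentSubgroup_n(\fFiniteField)$-cosets) produces $\sum_g \fieldCharacter((g^{-1})_{n,1}) = \sum_g \fieldCharacter(g_{n,1})$, and conjugation by $\mathrm{diag}(1, \ldots, 1, -1)$, which normalizes $\UnipotentSubgroup_n(\fFiniteField)$, shows $\sum_g \fieldCharacter(g_{n,1}) = \sum_g \fieldCharacter(-g_{n,1}) = \overline{\sum_g \fieldCharacter(g_{n,1})}$, so this sum is real and equals $\sum_g \fieldCharacter^{-1}(g_{n,1})$. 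Consequently $\AsaiPiGammaFactor = -q^{-n/2}$, and because the value is real, $\AsaiDualPiGammaFactor = \overline{\AsaiPiGammaFactor} = -q^{-n/2}$ via \Cref{flicker-one-side-thm}. The main obstacle throughout is exactly this reconciliation of the two differently-shaped character sums; the Flicker--Rallis symmetry collapses $W_{v_0}$ to a constant on $\GL_n(\fFiniteField)$, after which the identity is settled by elementary symmetries of $\GL_n(\fFiniteField)$.
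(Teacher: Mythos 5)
Your proof is correct and rests on the same central lemma (\Cref{key-distinction}) and the same verification of its hypotheses $(i)$--$(iii)$ as the paper's argument, but it differs in the choice of test vector $v_0$: you feed the (scaled) Flicker--Rallis vector into the lemma, whereas the paper takes $v_0$ to be the Whittaker vector, so that $\Lambda(\pi(g)v_0)$ is literally the Bessel function $\grepBesselFunction{\pi}{\qfFieldCharacter}(g)$. That choice lets the paper invoke \Cref{flicker-one-side-thm} together with the Bessel identity $\grepBesselFunction{\contragredient\pi}{\qfFieldCharacter^{-1}}(g)=\grepBesselFunction{\pi}{\qfFieldCharacter}(g^{-1})$ and a single change of variables $g\mapsto g^{-1}$; nothing further is needed. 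Your choice pays for itself by collapsing $W_{v_0}$ to the constant $\Lambda(v_0)$ on $\GL_n(\fFiniteField)$, reducing the comparison to the elementary character identity $\sum_{g}\fieldCharacter\bigl((g^{-1})_{n,1}\bigr)=\sum_{g}\fieldCharacter^{-1}(g_{n,1})$, which you handle correctly (rewriting over the full group or using bi-$\UnipotentSubgroup_n(\fFiniteField)$-invariance, then conjugating by $\operatorname{diag}(1,\dotsc,1,-1)$). The price you pay is the extra non-vanishing step $\Lambda(v_0)\neq 0$: your justification (``otherwise the Whittaker model would be killed by the mirabolic average'') is a bit loose as phrased---what is really going on is that $v\mapsto\sum_{p}\Lambda(\pi(p)v)$ is a non-zero element of the one-dimensional space $\Hom_{\MirabolicSubgroup_n(\fFiniteField)}(\pi,\TrivialRepresentation)$ (non-zero because evaluation on the Bessel vector gives $1$ by \cite[Lemma 2.7]{Nie19}), hence is a non-zero multiple of the $\GL_n(\fFiniteField)$-invariant functional, which pairs non-degenerately with the $\GL_n(\fFiniteField)$-fixed line containing $v_0$. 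With that step spelled out, your argument is a valid alternative route; the paper's Bessel-function route avoids both the non-vanishing lemma and the explicit character-sum identity, which is why it is the one chosen there.
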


\begin{proof}
Thanks to  \Cref{mirabolic-Flicker-mulone}, we apply \Cref{key-distinction} to $V_{\Pi}=\whittaker(\pi,\qfFieldCharacter)$, $G=\GL_{n}(\eFiniteField)$, $L=\GL_{n}(\fFiniteField)$, $Z=\{ 1_{n} \}$, and $\Xi=\TrivialRepresentation_L$ a trivial character. We define a non-trivial  linear functional 
$\Lambda \in \Hom_{\UnipotentSubgroup_n(\fFiniteField)}(\pi\restriction_{\UnipotentSubgroup_n(\fFiniteField)},\TrivialRepresentation_{\UnipotentSubgroup_n(\fFiniteField)})$
on $\whittaker(\pi,\qfFieldCharacter)$ by
$  \Lambda(W_{\pi})=W_{\pi}(1_{n})$.
By choosing $W_{\pi}= \grepBesselFunction{\pi}{\qfFieldCharacter}$, it is clear from \cite[Lemma 2.7]{Nie19} that
\[
  \sum_{p \in  \UnipotentSubgroup_n(\fFiniteField) \backslash \MirabolicSubgroup_n(\fFiniteField)  } \Lambda(\pi(p)\grepBesselFunction{\pi}{\qfFieldCharacter})= \sum_{p \in \UnipotentSubgroup_n(\fFiniteField) \backslash \MirabolicSubgroup_n(\fFiniteField) }\grepBesselFunction{\pi}{\qfFieldCharacter}(p)=1.
\]
With aid of \Cref{flicker-one-side-thm} coupled with \Cref{key-distinction} again, and then making the change of variables $g \mapsto g^{-1}$, we find that
\begin{multline*}
  \AsaiDualPiGammaFactor=q^{-\frac{n}{2}} \sum_{g \in \UnipotentSubgroup_n(\fFiniteField) \backslash \GL_n(\fFiniteField) } \grepBesselFunction{\contragredient{\pi}}{\qfFieldCharacter^{-1}}(g)
 \fieldCharacter^{-1}(e_1 {^tg^{-1}}\;{^te_n})\\
 = q^{-\frac{n}{2}}\sum_{g \in  \UnipotentSubgroup_n(\fFiniteField) \backslash \GL_n(\fFiniteField)  } \Lambda(\pi(g)\grepBesselFunction{\pi}{\qfFieldCharacter})\fieldCharacter^{-1}(e_n g {^te_1})
 =-q^{-\frac{n}{2}}. 
\end{multline*}
All that remains is to take the complex conjugate to conclude from \Cref{flicker-one-side-thm} that
\[
  \AsaiPiGammaFactor=\complexConjugate{ \AsaiDualPiGammaFactor}=\complexConjugate{-q^{-\frac{2m+1}{2}}}=-q^{-\frac{2m+1}{2}}. \qedhere
\]
\end{proof}

We end this section with functional equations for $\AsaiPiGammaFactor$ of an irreducible cuspidal representation $\pi$ of ${\GL}_n(\eFiniteField)$.

\begin{theorem}[Functional equation] 
\label{Asai-Func}
Let $\pi$ be an irreducible cuspidal representation of $\GL_n(\eFiniteField)$.
 \begin{enumerate}[label=$(\mathrm{\arabic*})$]
\item
If $\pi$ does not admit a Flicker--Rallis vector, then we have
\[ 
\AsaiPiGammaFactor   \AsaiDualPiGammaFactor =1
\quad \text{and} \quad \abs{\AsaiPiGammaFactor}=1.
  \]
\item If $n=2m+1$ and $\pi$ admits a Flicker--Rallis vector, then we have
 \[
 \AsaiPiGammaFactor   \AsaiDualPiGammaFactor =q^{-n} \quad \text{and} \quad
 \abs{\AsaiPiGammaFactor}=q^{-\frac{n}{2}}. 
 \]
 \end{enumerate}
\end{theorem}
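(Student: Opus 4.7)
The plan is to reduce each part to the corresponding product identity $\AsaiPiGammaFactor \AsaiDualPiGammaFactor = 1$ (resp.\ $q^{-n}$). The identity $\AsaiDualPiGammaFactor = \complexConjugate{\AsaiPiGammaFactor}$ from \Cref{flicker-one-side-thm} yields $\AsaiPiGammaFactor \AsaiDualPiGammaFactor = \abs{\AsaiPiGammaFactor}^2$, so in each case the stated absolute value is the positive square root of the product identity and will follow automatically. Case (2) is then immediate from \Cref{Asai-gamma-distinction}, which provides $\AsaiPiGammaFactor = -q^{-n/2}$ and hence $\AsaiPiGammaFactor \AsaiDualPiGammaFactor = q^{-n}$.

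The substance of the argument lies in case (1). My strategy is to apply the defining functional equation of \Cref{Asai-Multiplicity} twice, using Fourier inversion $\ifourierTransform \circ \fourierTransform = \mathrm{id}$ to return to the original Flicker sum. The principal obstacle is that the functional equation is stated only for test functions $\phi \in \Schwartz_0(\fFiniteField^n)$, whereas the natural second-iteration input $\fourierTransform(\phi)$ need not lie in $\Schwartz_0(\fFiniteField^n)$: its value at the origin is $q^{-n/2}\sum_x \phi(x)$, typically non-zero. The first step is thus to extend the functional equation to all of $\Schwartz(\fFiniteField^n)$ under the no-Flicker--Rallis hypothesis. Decomposing $\Schwartz(\fFiniteField^n) = \Schwartz_0(\fFiniteField^n) \oplus \cComplex \cdot \delta_0$ where $\delta_0$ is the indicator of the origin, the extension reduces to showing that both Flicker sums vanish on $\delta_0$. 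One has $I(W_{\pi}, \delta_0) = 0$ because $e_n g \neq 0$ for invertible $g$; moreover $\fourierTransform(\delta_0) = q^{-n/2}\TrivialRepresentation_{\fFiniteField^n}$ gives
\[
\check I(W_{\pi}, \delta_0) = q^{-n/2} \sum_{g \in \UnipotentSubgroup_n(\fFiniteField) \backslash \GL_n(\fFiniteField)} \check W_{\pi}(g),
\]
which defines a $\GL_n(\fFiniteField)$-invariant functional on $\whittaker(\check\pi, \qfFieldCharacter^{-1})$ (right translation by $h \in \GL_n(\fFiniteField)$ merely permutes the cosets). By uniqueness of Flicker--Rallis functionals, this sum vanishes precisely when $\check\pi$, equivalently $\pi$, admits no Flicker--Rallis vector, as is assumed.

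With the functional equation now valid on all of $\Schwartz(\fFiniteField^n)$, I perform the change of variables $g \mapsto \weyllong_n \cdot {^tg^{-1}}$ on $\UnipotentSubgroup_n(\fFiniteField) \backslash \GL_n(\fFiniteField)$, which descends to a bijection since $\weyllong_n \cdot {^tu^{-1}} \cdot \weyllong_n \in \UnipotentSubgroup_n(\fFiniteField)$ for every $u \in \UnipotentSubgroup_n(\fFiniteField)$. Using $\weyllong_n^2 = 1_n$ together with the defining identity $W_{\pi}(\weyllong_n \cdot {^tg^{-1}}) = \check W_{\pi}(g)$ and the computation $e_1 \cdot {^t(\weyllong_n \cdot {^th^{-1}})^{-1}} = e_n h$, the equation $\AsaiPiGammaFactor I(W_{\pi}, \phi) = \check I(W_{\pi}, \phi)$ is converted into $\AsaiPiGammaFactor I(W_{\pi}, \phi) = \sum_h \check W_{\pi}(h) \fourierTransform(\phi)(e_n h)$, i.e.\ the Flicker sum for $\check\pi$ evaluated at $\fourierTransform(\phi)$. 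Applying the extended functional equation for $\check\pi$ with additive character $\fieldCharacter^{-1}$ to $\fourierTransform(\phi)$, then repeating the same change of variables and invoking Fourier inversion $\ifourierTransform \circ \fourierTransform = \mathrm{id}$, produces $\sum_h W_{\pi}(h) \phi(e_n h) = I(W_{\pi}, \phi)$ on the right. The upshot is $\AsaiPiGammaFactor \AsaiDualPiGammaFactor I(W_{\pi}, \phi) = I(W_{\pi}, \phi)$ for every admissible pair. Selecting $W_{\pi} = \grepBesselFunction{\pi}{\qfFieldCharacter}$ and $\phi = \delta_{e_n}$, for which $I(W_{\pi}, \phi) = 1$ as already used in the proof of \Cref{flicker-one-side-thm}, concludes $\AsaiPiGammaFactor \AsaiDualPiGammaFactor = 1$, completing case (1).
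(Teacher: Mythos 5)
Your proposal is correct and follows essentially the same route as the paper's proof: Part (2) and the absolute values fall out of Proposition \ref{Asai-gamma-distinction} and Theorem \ref{flicker-one-side-thm}, and Part (1) is the iterated functional equation via Fourier double-duality $\check{I}(\check{W}_{\pi},\ifourierTransform(\phi))=I(W_{\pi},\phi)$. The one place the paper is terse — it delegates to \cite[Proposition 2.12]{YZ20} — is precisely where you carefully spell out the needed step: extending the functional equation from $\Schwartz_0(\fFiniteField^n)$ to all of $\Schwartz(\fFiniteField^n)$ by using the no–Flicker–Rallis hypothesis to see that $\check{I}(W_\pi,\delta_0)=0$, which the paper leaves implicit.
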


\begin{proof}
Appealing to \Cref{Dual-Flicker-variant}, when $\pi$ does not admit a Flicker--Rallis vector, the functional equation is a direct consequence of the double-duality
\[
 \check{I}( \check{W}_{\pi},\ifourierTransform(\phi))=I(W_{\pi},\phi).
\]
just like \cite[Proposition 2.12]{YZ20}. The rest of assertions can be verified from \Cref{flicker-one-side-thm} along with \Cref{Asai-gamma-distinction}.
\end{proof}

\subsection{The Flicker--Rallis period and level zero supercuspidal representations}

We set out to investigate the existence of Flicker--Rallis vectors which characterizes the non-vanishing sum.

\begin{lemma}  
\label{FR-sum-characterization}
Let $\pi$ be an irreducible cuspidal representation of $\GL_{n}(\eFiniteField)$ with $n=2m+1$ odd. Then $\pi$ admits a Flicker--Rallis vector if and only if there exists 
$W_{\pi}  \in \whittaker(\pi,\qfFieldCharacter)$ such that
\[
 \sum_{g \in \UnipotentSubgroup_{n}(\fFiniteField) \backslash \GL_{n}(\fFiniteField) } W_{\pi}(g) \neq 0.
\]
\end{lemma}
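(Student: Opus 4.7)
The plan is to realize the sum $\Lambda(W_{\pi}) := \sum_{g \in \UnipotentSubgroup_n(\fFiniteField) \backslash \GL_n(\fFiniteField)} W_{\pi}(g)$ as a canonical element of $\Hom_{\GL_n(\fFiniteField)}(\pi, \TrivialRepresentation_{\GL_n(\fFiniteField)})$, and to pin down exactly when it is nonzero by playing it off against the mirabolic period, invoking the multiplicity one result in \Cref{mirabolic-Flicker-mulone}. Note first that the sum is well-defined: because $\qfFieldCharacter$ is trivial on $\fFiniteField$, we have $\qfFieldCharacter\restriction_{\UnipotentSubgroup_n(\fFiniteField)} \equiv \TrivialRepresentation$, so $W_{\pi}$ descends to a function on $\UnipotentSubgroup_n(\fFiniteField) \backslash \GL_n(\fFiniteField)$. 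The change of variables $g \mapsto gh^{-1}$ shows that $\Lambda$ is invariant under right translation by any $h \in \GL_n(\fFiniteField)$, so indeed $\Lambda \in \Hom_{\GL_n(\fFiniteField)}(\whittaker(\pi,\qfFieldCharacter), \TrivialRepresentation_{\GL_n(\fFiniteField)})$.

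The direction $(\Leftarrow)$ is then immediate: if $\Lambda(W_{\pi}) \neq 0$ for some $W_{\pi}$, then $\Hom_{\GL_n(\fFiniteField)}(\pi, \TrivialRepresentation_{\GL_n(\fFiniteField)}) \neq 0$, and semisimplicity of representations of the finite group $\GL_n(\fFiniteField)$ forces $\TrivialRepresentation_{\GL_n(\fFiniteField)}$ to split off as a direct summand of $\pi\restriction_{\GL_n(\fFiniteField)}$, yielding a Flicker--Rallis vector.

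For $(\Rightarrow)$, suppose $\pi$ admits a Flicker--Rallis vector, so that $\Hom_{\GL_n(\fFiniteField)}(\pi, \TrivialRepresentation_{\GL_n(\fFiniteField)}) \neq 0$. Introduce the partial (mirabolic) period $\Lambda'(W_{\pi}) := \sum_{p \in \UnipotentSubgroup_n(\fFiniteField) \backslash \MirabolicSubgroup_n(\fFiniteField)} W_{\pi}(p)$, which is $\MirabolicSubgroup_n(\fFiniteField)$-invariant by the same change of variable, and which satisfies $\Lambda'(\grepBesselFunction{\pi}{\qfFieldCharacter}) = 1$ as used in the proof of \Cref{Asai-gamma-distinction}. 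In particular $\Lambda' \neq 0$, and \Cref{mirabolic-Flicker-mulone} then yields $\Hom_{\MirabolicSubgroup_n(\fFiniteField)}(\pi, \TrivialRepresentation_{\MirabolicSubgroup_n(\fFiniteField)}) = \cComplex \cdot \Lambda'$. Since $\Hom_{\GL_n(\fFiniteField)}(\pi, \TrivialRepresentation_{\GL_n(\fFiniteField)})$ is a nonzero subspace of this one-dimensional space, the inclusion is an equality, so $\Lambda'$ itself must be $\GL_n(\fFiniteField)$-invariant.

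Finally, decomposing $\GL_n(\fFiniteField) = \bigsqcup_i \MirabolicSubgroup_n(\fFiniteField) g_i$ into right cosets and using the resulting bijection $\UnipotentSubgroup_n(\fFiniteField) \backslash \GL_n(\fFiniteField) = \bigsqcup_i \bigl(\UnipotentSubgroup_n(\fFiniteField) \backslash \MirabolicSubgroup_n(\fFiniteField)\bigr) g_i$, we unfold
\[
\Lambda(W_{\pi}) \;=\; \sum_i \Lambda'\bigl(\pi(g_i) W_{\pi}\bigr) \;=\; \bigl|\MirabolicSubgroup_n(\fFiniteField) \backslash \GL_n(\fFiniteField)\bigr| \cdot \Lambda'(W_{\pi}),
\]
where the last step uses the freshly established $\GL_n(\fFiniteField)$-invariance of $\Lambda'$. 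Evaluating at $W_{\pi} = \grepBesselFunction{\pi}{\qfFieldCharacter}$ produces a nonzero sum, completing the argument. The only substantive ingredient is the mirabolic multiplicity one (\Cref{mirabolic-Flicker-mulone}); the hypothesis $n = 2m+1$ is imposed only to ensure that the Flicker--Rallis condition can be met at all (cf.\ the remark preceding \Cref{Asai-gamma-distinction}).
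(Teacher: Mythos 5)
Your proof is correct, and it takes a genuinely different route from the paper's. The paper argues on the level of Whittaker \emph{vectors}: for the forward direction it builds an averaged Whittaker function $W_{\rm FR}(g)=|{\GL}_n(\fFiniteField)|^{-1}\sum_{p\in \MirabolicSubgroup_n(\fFiniteField)}\grepBesselFunction{\pi}{\qfFieldCharacter}(gp)$, uses the equality $\Hom_{\GL_n(\fFiniteField)}=\Hom_{\MirabolicSubgroup_n(\fFiniteField)}$ (via \Cref{mirabolic-Flicker-mulone}) and the inner product to promote its $\MirabolicSubgroup_n$-invariance to full $\GL_n(\fFiniteField)$-invariance, and then evaluates the period sum directly at $W_{\rm FR}$; for the converse it constructs a second averaged vector $W^\sharp_{\rm FR}$ and checks it is fixed. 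You instead argue on the level of \emph{functionals}: you observe $\Lambda'(\grepBesselFunction{\pi}{\qfFieldCharacter})=1$, so $\Hom_{\MirabolicSubgroup_n(\fFiniteField)}=\cComplex\cdot\Lambda'$, and the nonvanishing of $\Hom_{\GL_n(\fFiniteField)}$ then forces $\Lambda'$ itself to be $\GL_n(\fFiniteField)$-invariant, after which the coset unfolding $\Lambda=\sum_i\Lambda'\circ\pi(g_i)$ produces the nonvanishing; and your converse is dispatched in one line by Maschke semisimplicity rather than by constructing a vector. Both proofs hinge on exactly the same multiplicity-one input, but yours is leaner in that it avoids constructing any explicit Whittaker functions and never invokes the inner product; the paper's version buys an explicit invariant Whittaker function $W_{\rm FR}$ as a by-product, which is occasionally useful elsewhere but is not needed for the lemma itself.
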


\begin{proof}
We assume that $\pi$ has a Flicker--Rallis vector. We endow $\whittaker(\pi,\qfFieldCharacter)$ with
an inner product $(\cdot,\cdot)$ in which $\pi$ is unitary. We define $W_{\rm FR} \in \whittaker(\pi,\qfFieldCharacter)$ by
\[
 W_{\rm FR}(g)=\frac{1}{\sizeof{  \GL_{n}(\fFiniteField)}} \sum_{p \in \MirabolicSubgroup_n(\fFiniteField) } \grepBesselFunction{\pi}{\qfFieldCharacter}(gp).
\]
for $g \in \GL_{n}(\eFiniteField)$. Benefited from the average, we find that
$  W_{\rm FR}(gh)=  W_{\rm FR}(g)$ for all $h \in \MirabolicSubgroup_n(\fFiniteField)$. Using the containment $\Hom_{\GL_{n}(\fFiniteField)}(\pi\restriction_{\GL_{n}(\fFiniteField)},\TrivialRepresentation)
\subseteq \Hom_{ \MirabolicSubgroup_n(\fFiniteField) }(\pi\restriction_{ \MirabolicSubgroup_n(\fFiniteField) },\TrivialRepresentation)$,
we deduce the equality  $\Hom_{\GL_{n}(\fFiniteField)}(\pi\restriction_{\GL_{n}(\fFiniteField)},\TrivialRepresentation)
=\Hom_{ \MirabolicSubgroup_n(\fFiniteField) }(\pi\restriction_{ \MirabolicSubgroup_n(\fFiniteField) },\TrivialRepresentation)$ by the one-dimensionality of both spaces, \Cref{mirabolic-Flicker-mulone}. 
In the same fashion, $W_{\rm FR}$ produces an element $T_{W_{\rm FR}} \in \Hom_{\GL_{n}(\fFiniteField)}(\pi\restriction_{\GL_{n}(\fFiniteField)},\TrivialRepresentation)$
stated by $T_{W_{\rm FR}}(W')=(W',W_{\rm FR})$ for $W' \in \whittaker(\pi,\qfFieldCharacter)$, from which it follows that $W_{\rm FR}$ is a Flicker--Rallis vector.
Furthermore, a non-trivialness of the given summation can be verified, because \cite[Lemma 2.7]{Nie19} yields
\[
 \sum_{g \in \UnipotentSubgroup_{n}(\fFiniteField) \backslash \GL_{n}(\fFiniteField) }W_{\rm FR}(g)=\frac{1}{\sizeof{\UnipotentSubgroup_{n}(\fFiniteField)} }
 \sum_{p \in \MirabolicSubgroup_n(\fFiniteField) } \grepBesselFunction{\pi}{\qfFieldCharacter}(p)=1.
\]

\par
Conversely, we assume that there exists $W_{\pi}  \in \whittaker(\pi,\qfFieldCharacter)$ such that
\[
 \sum_{g \in \UnipotentSubgroup_{n}(\fFiniteField) \backslash \GL_{n}(\fFiniteField) } W_{\pi}(g) \neq 0.
\]
We define $W^{\sharp}_{\rm FR}   \in \whittaker(\pi,\qfFieldCharacter)$ by
\[
W^{\sharp}_{\rm FR}(h)=\frac{1}{\sizeof{\UnipotentSubgroup_{n}(\fFiniteField)} }\sum_{g \in  \GL_{n}(\fFiniteField) }  W_{\pi}(hg).
\]
for $h \in  \GL_{n}(\eFiniteField)$. Combining
\[
 W^{\sharp}_{\rm FR}(1_n)=\sum_{g \in \UnipotentSubgroup_{n}(\fFiniteField) \backslash \GL_{n}(\fFiniteField) } W_{\pi}(g) \neq 0.
\]
along with the quasi-invariance property that $W^{\sharp}_{\rm FR}(hh')=W^{\sharp}_{\rm FR}(h)$ for all $h' \in  \GL_{n}(\fFiniteField)$, $ W^{\sharp}_{\rm FR}$ is indeed a
Flicker--Rallis vector that we seek for.
\end{proof}

Let $\eLocalField$ be a quadratic unramified extension of nonarchimedean local fields $\fLocalField$.
Let $\qFieldCharacter$ be a fixed non-trivial character of $E$ that is trivial on $F$. Then $\qFieldCharacter$ will be of the form $\qFieldCharacter(x)=\fieldCharacter_\fLocalField({\Trace}_{\eLocalField \slash \fLocalField}(\delta x))$, where $\delta \in \multiplicativegroup{\eLocalField}$ is an element of trace zero. According to \cite[Remark 6]{AM17}, $\delta$ is in fact a unit in $\multiplicativegroup{\integersRing}_{\eLocalField}$  of trace zero. For the purpose of relating $\qFieldCharacter$ to $\qfFieldCharacter$, we take $\delta$ to be $\projection^{-1}(\Delta)$, so that
\[
\qfFieldCharacter(\projection(k))=\qFieldCharacter(k)
\]
for $k \in \integersRing_E$. 
Let $\rho$ be level zero supercuspidal representations of $\GL_n(E)$ constructed from irreducible cuspidal representations $\pi$ of  $\GL_n(\eFiniteField)$ with its attached Whittaker models $\whittaker(\rho,\psi_{E \slash F})$.  
We take a Whittaker function $W_{\rho} \in \mathcal{W}(\rho,\psi_{E \slash F})$ and a Schwartz--Bruhat function  $\Phi \in \Schwartz(\fLocalField^n)$,
and form the {\it Flicker integral} defined by
\[
 I(s,W_{\rho},\Phi)=\int_{\UnipotentSubgroup_n(F) \backslash \GL_n(F)} W_{\rho}(g) \Phi(e_ng) |\det g|^s \, dg.
\]
The integral converges absolutely for ${\rm Re}(s)$ sufficiently large, and extend meromorphically to the entire complex plane. 
Furthermore there exists a rational function $\AsaiLocalRhoGammaFactor \in \mathbb{C}(q^{-s})$ satisfying the functional equation \cite[\S 8]{AKMSS21}:
\begin{equation}
\label{local-asai-func}
  I(1-s,\check{W}_{\rho},\gfourierTransform{\fieldCharacter_\fLocalField}(\Phi))=\AsaiLocalRhoGammaFactor I(s,W_{\rho},\Phi)
\end{equation}
As before, interested reader may notice that the gamma factor $\AsaiLocalRhoGammaFactor$ defined above differs by a sign from the conventional one defined by Flicker in \cite[Theorem 1.1]{Mat10}.
The {\it local Asai $L$-function} $L(s,\rho, {\rm As})$ is the generator of  the $\mathbb{C}[q^{\pm s}]$-fractional ideal of $\mathbb{C}(q^{-s})$ generated by the family of Flicker integrals $I(s,W_{\rho},\Phi)$
with $W_{\rho} \in \mathcal{W}(\rho,\psi_{E \slash F})$ and $\Phi \in \Schwartz(\fLocalField^n)$, which is normalized to be of the form $P(q^{-s})^{-1}$ for some $P(X) \in \mathbb{C}[X]$ with $P(0)=1$.

\begin{proposition}[The modified functional equation]
\label{modified-Flicker}
Let $\pi$ be an irreducible cuspidal representation of $\GL_{n}(\eFiniteField)$. Then for every $W_{\pi} \in \whittaker(\pi,\qfFieldCharacter)$, $\phi \in \Schwartz(\fFiniteField^n)$, and $s \in \cComplex$,
there exists $\AsaiLocalRhoGammaFactor$ such that
\begin{multline*}
 \check{I}(W_{\pi},\phi)+q^{-n(1-s)}\omega^{-1}_{\rho}(\varpi)\fourierTransform(\phi)(0)L(n(1-s),\omega^{-1}_{\rho}\restriction_{\multiplicativegroup{\fLocalField}})I(W_{\pi},\TrivialRepresentation_{\fFiniteField^n})\\
 =\AsaiLocalRhoGammaFactor(I(W_{\pi},\phi)+q^{-ns}\omega_{\rho}(\varpi)\phi(0)L(ns,\omega_{\rho}\restriction_{\multiplicativegroup{\fLocalField}})I(W_{\pi},\TrivialRepresentation_{\fFiniteField^n})).
\end{multline*}
\end{proposition}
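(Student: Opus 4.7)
The plan is to insert the canonical lifts $W_\rho = W^\circ_\rho$ and $\Phi = \Phi_\circ$ into the local functional equation \eqref{local-asai-func} and to unfold both sides explicitly, following the recipe of \cite[\S 3.3]{YZ20} adapted from the Rankin--Selberg to the Asai setting. Each integral will reduce to its finite-field shadow (a Flicker sum) plus a single correction term coming from the ``positive-valuation'' piece of the support of $W^\circ_\rho$, and the two reduced expressions will be proportional via the Asai gamma factor.

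For the left-hand side $I(s, W^\circ_\rho, \Phi_\circ)$, I would first intersect the support $\UnipotentSubgroup_n(\eLocalField)\,\multiplicativegroup{\eLocalField}\,\GL_n(\integersRing_\eLocalField)$ of $W^\circ_\rho$ with $\GL_n(\fLocalField)$. The Iwasawa decomposition $\GL_n(\fLocalField) = \UnipotentSubgroup_n(\fLocalField) A_n(\fLocalField) K_n$ identifies this intersection with $\UnipotentSubgroup_n(\fLocalField) \maximalCompactTimesCenter$: an Iwasawa element $nak$ lies in the support iff the diagonal entries of $a$ share a common valuation, forcing $a \in \multiplicativegroup{\fLocalField}\cdot A_n(\multiplicativegroup{\integersRing}_\fLocalField)$. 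Decomposing $\maximalCompactTimesCenter = \bigsqcup_{i \in \zIntegers} \varpi^i K_n$ and using $W^\circ_\rho(\varpi^i k) = \omega_\rho(\varpi)^i W_\pi(\projection(k))$, $|\det(\varpi^i k)|_\fLocalField = q^{-in}$, together with $\Phi_\circ(\varpi^i e_n k) = \phi(\projection(\varpi^i e_n k))$ when $\varpi^i e_n k \in \integersRing_\fLocalField^n$ and zero otherwise, the integral splits as a sum over $i \in \zIntegers$ of an integral over $\UnipotentSubgroup_n(\integersRing_\fLocalField) \backslash K_n$. The $i<0$ pieces vanish; the $i=0$ piece descends via $\projection$ to the Flicker sum $I(W_\pi, \phi)$; and for $i \geq 1$ the factor $\Phi_\circ(\varpi^i e_n k) = \phi(0)$ combined with the geometric series $\sum_{i \geq 1} \omega_\rho(\varpi)^i q^{-ins} = \omega_\rho(\varpi) q^{-ns} L(ns, \omega_\rho\restriction_{\multiplicativegroup{\fLocalField}})$ furnishes exactly the correction displayed on the right of the claimed identity.

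The unfolding of the dual integral $I(1-s, \check W^\circ_\rho, \gfourierTransform{\fieldCharacter_\fLocalField}(\Phi_\circ))$ is entirely parallel: the $i=0$ piece reproduces $\check I(W_\pi, \phi)$, while the $i \geq 1$ pieces assemble into $\fourierTransform(\phi)(0)\,\omega_\rho^{-1}(\varpi)\,q^{-n(1-s)}\,L(n(1-s),\omega_\rho^{-1}\restriction_{\multiplicativegroup{\fLocalField}})$ times $\sum_{g \in \UnipotentSubgroup_n(\fFiniteField)\backslash\GL_n(\fFiniteField)} \check W_\pi(g)$. Because $\qfFieldCharacter$ is trivial on $\fFiniteField$ (the trace-zero factor $\Delta$ kills the $\fFiniteField$-linear part), the restriction of $\check W_\pi$ to $\GL_n(\fFiniteField)$ is genuinely $\UnipotentSubgroup_n(\fFiniteField)$-invariant so this sum is well defined; the involution $g \mapsto \weyllong_n\,{}^tg^{-1}$ of $\GL_n(\fFiniteField)$ together with the identity $\check W_\pi(\weyllong_n\,{}^tg^{-1}) = W_\pi(g)$ then rewrites it as $I(W_\pi, \TrivialRepresentation_{\fFiniteField^n})$. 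Substituting both expressions into \eqref{local-asai-func} and letting the common Haar-measure normalizations cancel yields the stated modified functional equation. The principal obstacle is the careful bookkeeping of these normalizations---the self-dual Haar measure on $\fLocalField$, the multiplicative measure on $\multiplicativegroup{\fLocalField}$, the quotient measure on $\UnipotentSubgroup_n(\fLocalField)\backslash\GL_n(\fLocalField)$, and the counting measures emerging from $\projection$---which must combine to produce precisely the coefficients $\omega_\rho(\varpi) q^{-ns}$ and $\omega_\rho^{-1}(\varpi) q^{-n(1-s)}$ without any spurious overall constant surviving on either side.
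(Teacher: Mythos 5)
Your overall strategy coincides with the paper's: insert the canonical lifts $W^\circ_\rho$, $\Phi_\circ$ into the local functional equation, unfold the Flicker integral along the shells $\varpi^l K_n$ contained in the support of $W^\circ_\rho$, and match the resulting expressions. The support analysis, the identification of the $l=0$ shell with the finite Flicker sum, the evaluation of $\Phi_\circ$ on the $l\geq 1$ shells, and the rewriting of the dual correction term via $g\mapsto \weyllong_n\,{}^tg^{-1}$ to recover $I(W_\pi,\TrivialRepresentation_{\fFiniteField^n})$ are all sound and essentially reproduce the paper's argument.

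However, there is a genuine gap in your handling of the geometric series. You assert
\[
\sum_{i\geq 1}\omega_\rho(\varpi)^i q^{-ins}=\omega_\rho(\varpi)q^{-ns}\,L\bigl(ns,\omega_\rho\restriction_{\multiplicativegroup{\fLocalField}}\bigr)
\]
as an identity, but the left side is $\omega_\rho(\varpi)q^{-ns}\bigl(1-\omega_\rho(\varpi)q^{-ns}\bigr)^{-1}$ independently of whether $\omega_\rho\restriction_{\multiplicativegroup{\fLocalField}}$ is ramified, whereas the $L$-factor on the right collapses to $1$ precisely when $\omega_\rho\restriction_{\multiplicativegroup{\fLocalField}}$ is ramified. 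The two sides disagree in that case, so the claimed correction term is not produced by the computation as you wrote it. The missing ingredient is that a ramified $\omega_\rho\restriction_{\multiplicativegroup{\fLocalField}}$ is equivalent to a non-trivial $\omega_\pi\restriction_{\multiplicativegroup{\fFiniteField}}$, which by Lemma~\ref{FR-sum-characterization} rules out a Flicker--Rallis vector and hence forces $I(W_\pi,\TrivialRepresentation_{\fFiniteField^n})=0$. Only with that vanishing do both sides of your false identity become irrelevant (they multiply a zero sum), so that the modified functional equation remains valid in the stated form. The paper makes this deduction explicit (and the over-parameterized unfolding there, carrying the extra $\int_{\multiplicativegroup{\integersRing}}\omega_\rho\,d^\times x$ factor, makes the vanishing visible term by term); your proof needs to invoke Lemma~\ref{FR-sum-characterization} at exactly this point to be correct.
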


\begin{proof}
Since $\Support (W^{\circ}_{\rho})\subseteq \UnipotentSubgroup_n(\eLocalField)\multiplicativegroup{\eLocalField}\GL_n(\integersRing_\eLocalField)=\amalg_{l \in \zIntegers} \varpi^{l}_\eLocalField \UnipotentSubgroup_n(\eLocalField)\GL_n(\integersRing_\eLocalField)$, for $\realPart(s) \gg 0$, 
our integral can be decomposed as an infinite series
\[
I(s,W^{\circ}_{\rho},\Phi_{\circ})
=\sum_{l \in \zIntegers} q^{-nls}\int_{\multiplicativegroup{\integersRing}}\omega_{\rho}(x\varpi^l)\int_{\UnipotentSubgroup_n(\fLocalField)\cap K_n \backslash K_n} W^{\circ}_{\rho}(k)\Phi_{\circ}(e_nkx\varpi^{l})\,dk\multiplicativeMeasure x.
\]
With $\Phi_{\circ}$ being a lift of $\phi$, $\Phi_{\circ}(e_nkx\varpi^{l})=0$ for $l < 0$, whereas $\Phi_{\circ}(e_nkx\varpi^{l})=\phi(0)$ for $l > 0$.
When $l=0$, we make the change of variables $k \mapsto kx^{-1}$ to obtain
\begin{multline*}
I(s,W^{\circ}_{\rho},\Phi_{\circ})
=\sum_{l=1}^{\infty} q^{-nls} \omega_{\rho}(\varpi^l)\phi(0)\int_{\multiplicativegroup{\integersRing}} \omega_{\rho}\,(x) \multiplicativeMeasure x \int_{\UnipotentSubgroup_n(\fLocalField)\cap K_n \backslash K_n} W^{\circ}_{\rho}(k) \, dk\\
+ \int_{\UnipotentSubgroup_n(\fLocalField)\cap K_n \backslash K_n} W^{\circ}_{\rho}(k) \Phi_{\circ}(e_nk)\, dk.
\end{multline*}
Just as in the proof of \cite[Theorem 3.1]{NZ18}, we express the integrals as the sum
\begin{multline*}
I(s,W^{\circ}_{\rho},\Phi_{\circ})
=\Volume(\UnipotentSubgroup_n(\integersRing)(1_n+\matrixRing_n(\mathfrak{p})))\\
\times\left(\sum_{l=1}^{\infty} q^{-nls} \omega_{\rho}(\varpi^l)\phi(0)\cdot\int_{\multiplicativegroup{\integersRing}} \omega_{\rho}\,(x) \multiplicativeMeasure x\cdot I(W_{\pi},\TrivialRepresentation_{\fFiniteField^n})+I(W_{\pi},\phi)\right).
\end{multline*}
The first sum becomes $q^{-ns}\omega_{\rho}(\varpi)\phi(0)L(ns,\omega_{\rho}\restriction_{\multiplicativegroup{\fLocalField}})I(W_{\pi},\TrivialRepresentation_{\fFiniteField^n})$ if $\omega_{\rho}$ is unramified, while the first term vanishes  if $\omega_{\rho}$ is ramified, but it is still equal to $q^{-ns}\omega_{\rho}(\varpi)\phi(0)L(ns,\omega_{\rho}\restriction_{\multiplicativegroup{\fLocalField}})I(W_{\pi},\TrivialRepresentation_{\fFiniteField^n})$, since $\omega_{\pi}$ is non-trivial so that $\pi$ dose not possess a non-zero Flicker--Rallis vector. This is equivalent to saying that $I(W_{\pi},\TrivialRepresentation_{\fFiniteField^n})=0$ in virtue of \Cref{FR-sum-characterization}. Combining all together, we find
\[
 I(s,W^{\circ}_{\rho},\Phi_{\circ})
=\Volume(\UnipotentSubgroup_n(\integersRing)(1_n+\matrixRing_n(\mathfrak{p})))
(I(W_{\pi},\phi)+q^{-ns}\omega_{\rho}(\varpi)\phi(0)L(ns,\omega_{\rho}\restriction_{\multiplicativegroup{\fLocalField}})I(W_{\pi},\TrivialRepresentation_{\fFiniteField^n})).
\]
\par
Regarding the dual side, we analogously follow the proof of \Cref{Dual-Flicker-variant} to write it as
\[
I(1-s,\check{W}^{\circ}_{\rho},\gfourierTransform{\fieldCharacter_\fLocalField}(\Phi_{\circ}))=\int_{\UnipotentSubgroup_n(\fLocalField) \backslash \GL_n(\fLocalField) }  W^{\circ}_{\rho}(g)   \gfourierTransform{\fieldCharacter_\fLocalField}(\Phi_{\circ})(e_1 {^tg^{-1}})\abs{\det g}^{s-1} \,dg.
\]
We iterate the process for $I(1-s,\check{W}^{\circ}_{\rho},\gfourierTransform{\fieldCharacter_\fLocalField}(\Phi_{\circ}))$ in order to produce
\begin{multline*}
 I(1-s,\check{W}^{\circ}_{\rho},\gfourierTransform{\fieldCharacter_\fLocalField}(\Phi_{\circ}))
 =\Volume(\UnipotentSubgroup_m(\integersRing)(1_m+\matrixRing_m(\mathfrak{p})))\\
  \times(\check{I}(W_{\pi},\phi)+q^{-n(1-s)}\omega^{-1}_{\rho}(\varpi)\fourierTransform(\phi)(0)L(n(1-s),\omega^{-1}_{\rho}\restriction_{\multiplicativegroup{\fLocalField}})I(W_{\pi},\TrivialRepresentation_{\fFiniteField^n})).
\end{multline*}
It remains to use the functional equation \eqref{local-asai-func} as well as to cross out the common volume term.
\end{proof}

Let $G={\rm GL}_n(E)$ and $L={\rm GL}_n(F)$ in \eqref{distinction}. Analogously to the finite field case, we say that a representation $\rho$ of ${\rm GL}_n(E)$ admits a {\it Flicker--Rallis period} if
$\rho$ is ${\rm GL}_n(F)$-distinguished. According to  \cite[Theorem 6.4]{LMS22}, which is based on \cite[Theorems 1.1]{HM02}, a level zero supercuspidal representation $\rho$ does not have the Flicker--Rallis period as long as $n=2m$ is even. The keen reader may notice that this property is completely analogously to the finite field case, but it is not so surprising to see it in \Cref{Period-Vector-Integral} that both conditions are equivalent. 

\begin{theorem}
\label{levelzero-Asai}
Let $\rho$ be a level zero supercuspidal representation of $\GL_{n}(\fLocalField)$.
 \begin{enumerate}[label=$(\mathrm{\arabic*})$]
 \item If $\pi$ does not admit a Flicker--Rallis vector, then we have
 \[
  \AsaiLocalRhoGammaFactor= \AsaiPiGammaFactor.
 \]
\item If $n=2m+1$ and $\pi$ admits a Flicker--Rallis vector, then we have
\[
  \AsaiLocalRhoGammaFactor=q^{n\left(s-\frac{1}{2}\right)}\omega^{-1}_{\rho}(\varpi) \frac{L(n(1-s),\omega^{-1}_{\rho}\restriction_{\multiplicativegroup{\fLocalField}})}{L(ns,\omega_{\rho}\restriction_{\multiplicativegroup{\fLocalField}})}.
\]
\end{enumerate}
\end{theorem}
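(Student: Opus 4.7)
The plan is to apply the modified functional equation of Proposition \ref{modified-Flicker} to judiciously chosen test data $(W_\pi, \phi)$ so as to isolate the desired constants. The proof bifurcates according to whether the two correction terms carrying $\phi(0)$ and $\fourierTransform(\phi)(0)$ contribute nontrivially, and this is governed precisely by the existence of a Flicker--Rallis vector.

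For case (1), since $\pi$ does not admit a Flicker--Rallis vector, Lemma \ref{FR-sum-characterization} (together with the observation that no Flicker--Rallis vector exists when $n$ is even, via \cite[Proposition 5.1]{Pra20} and \cite[Corollary 2.4]{Pra20}) forces $I(W_\pi, \TrivialRepresentation_{\fFiniteField^n}) = 0$ for every $W_\pi \in \whittaker(\pi, \qfFieldCharacter)$. Both correction summands in the modified functional equation therefore vanish, yielding
\[
\check{I}(W_\pi, \phi) = \AsaiLocalRhoGammaFactor \cdot I(W_\pi, \phi)
\]
for every $\phi \in \Schwartz(\fFiniteField^n)$. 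Restricting to $\phi \in \Schwartz_0(\fFiniteField^n)$ and comparing with the defining relation of $\AsaiPiGammaFactor$ in Theorem \ref{Asai-Multiplicity} gives $\AsaiLocalRhoGammaFactor = \AsaiPiGammaFactor$; nondegeneracy of the comparison is witnessed by the pair $(W_\pi, \phi) = (\grepBesselFunction{\pi}{\qfFieldCharacter}, \delta_{e_n})$, for which $I(W_\pi, \phi) = 1$ by Theorem \ref{flicker-one-side-thm}.

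For case (2), the key is to substitute $\phi = \TrivialRepresentation_{\fFiniteField^n}$. Two simplifications occur in tandem: $\fourierTransform(\TrivialRepresentation_{\fFiniteField^n})$ is supported at the origin with value $q^{n/2}$, so $\check{I}(W_\pi, \TrivialRepresentation_{\fFiniteField^n}) = 0$ because $e_n g \neq 0$ for $g \in \GL_n(\fFiniteField)$; and the Flicker--Rallis vector forces $\omega_\pi$ trivial on $\multiplicativegroup{\fFiniteField}$, hence $\omega_\rho \restriction_{\multiplicativegroup{\fLocalField}}$ is unramified and $L(ns, \omega_\rho\restriction_{\multiplicativegroup{\fLocalField}})$ is a genuine geometric series. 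A one-line telescoping
\[
1 + q^{-ns}\omega_\rho(\varpi) L(ns, \omega_\rho \restriction_{\multiplicativegroup{\fLocalField}}) = L(ns, \omega_\rho \restriction_{\multiplicativegroup{\fLocalField}})
\]
collapses the right-hand side. Lemma \ref{FR-sum-characterization} supplies some $W_\pi$ with $I(W_\pi, \TrivialRepresentation_{\fFiniteField^n}) \neq 0$; canceling this common factor and gathering the exponent $-n(1-s) + n/2 = n(s - 1/2)$ yields the claimed rational function.

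The main obstacle is already absorbed into Proposition \ref{modified-Flicker}: unfolding the Flicker integral against $\Support(W^{\circ}_\rho) \subseteq \UnipotentSubgroup_n(\eLocalField) \multiplicativegroup{\eLocalField} K_n$, identifying the geometric series in $\varpi^{\mathbb{Z}}$ with the Tate $L$-factor, and verifying that the would-be ramified-central-character branch vanishes precisely because $I(W_\pi, \TrivialRepresentation_{\fFiniteField^n}) = 0$ in that regime. Given that proposition, the theorem reduces to the two clean specializations above.
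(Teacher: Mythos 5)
Your proposal is correct and follows essentially the same route as the paper: in both cases you invoke Proposition \ref{modified-Flicker}, then specialize — killing both $\phi(0)$ and $\fourierTransform(\phi)(0)$ correction terms via Lemma \ref{FR-sum-characterization} in the non-distinguished case, and choosing $\phi = \TrivialRepresentation_{\fFiniteField^n}$ together with a Whittaker function normalized to $I(W_\pi,\TrivialRepresentation_{\fFiniteField^n})=1$ in the distinguished case, then telescoping against the Tate $L$-factor. The only cosmetic difference is that you explicitly flag the Bessel-pair witness $(\grepBesselFunction{\pi}{\qfFieldCharacter}, \delta_{e_n})$ for nondegeneracy and the even-rank nonexistence of Flicker--Rallis vectors, both of which are implicit in the paper's shorter write-up.
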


\begin{proof}
We begin with the case when $\pi$ does not admit a Flicker--Rallis vector. We apply \Cref{FR-sum-characterization} to see that
$I(W_{\pi},\TrivialRepresentation_{\fFiniteField^n})=0$ for any $W_{\pi}  \in \whittaker(\pi,\qfFieldCharacter)$.
Therefore, \Cref{modified-Flicker} boils down to the equality $\check{I}(W_{\pi},\phi)=\AsaiLocalRhoGammaFactor I(W_{\pi},\phi)$ for any $W_{\pi}  \in \whittaker(\pi,\qfFieldCharacter)$ and $\phi \in  \Schwartz_0(\fFiniteField^n)$. This relation tells us that $\AsaiLocalRhoGammaFactor$ is exactly $\AsaiPiGammaFactor$.

\par
In what follows, we focus on the case when $n=2m+1$ and $\pi$ admits a Flicker--Rallis vector. We take $\phi$ to be $\TrivialRepresentation_{\fFiniteField^n}$ an indicator function on $\TrivialRepresentation_{\fFiniteField}^n$.
The relation $\fourierTransform(\TrivialRepresentation_{\fFiniteField^n})=q^{\frac{n}{2}}\delta_{0}$ implies that $\check{I}(W_{\pi},\TrivialRepresentation_{\fFiniteField^n})=0$. 
In addition, \Cref{FR-sum-characterization} allows us to choose $W_{\pi} \in \whittaker(\pi,\qfFieldCharacter)$ such that $I(W_{\pi},\TrivialRepresentation_{\fFiniteField^n})=1$,
and consequently we obtain from \Cref{modified-Flicker} that
\begin{multline*}
q^{-n(1-s)+\frac{n}{2}}\omega^{-1}_{\rho}(\varpi)L(n(1-s),\omega^{-1}_{\rho}\restriction_{\multiplicativegroup{\fLocalField}})
 =\AsaiLocalRhoGammaFactor(1+q^{-ns}\omega_{\rho}(\varpi)\phi(0)L(ns,\omega_{\rho}\restriction_{\multiplicativegroup{\fLocalField}}))\\
 =\AsaiLocalRhoGammaFactor L(ns,\omega_{\rho}).
\end{multline*}
We are left with solving it for $\AsaiLocalRhoGammaFactor$.
\end{proof}

When $E=F \times F$ and $\rho \cong \rho_1 \times \rho_2$ is a representation of $\GL_n(\fLocalField) \times \GL_n(\fLocalField)$, \Cref{levelzero-Asai} coincides with \Cref{levelzero-RS}.

\subsection{The Asai epsilon factor and the Gauss sum}
\label{sec-Asai-Gauss}
Let $V$ be a $n$-dimensional vector space over $\eFiniteField$. We consider the semi-direct product
\[
({\rm GL}_n(\mathbb{C}) \times {\rm GL}_n(\mathbb{C})) \rtimes {\rm Gal}(\eFiniteField / \fFiniteField),
\]
where the non-trivial Galois element $c$ in $\GaloisGroup(\eFiniteField \slash \fFiniteField)$ acts on ${\rm GL}_n(\mathbb{C}) \times {\rm GL}_n(\mathbb{C})$ by
$(g_1,g_2) \rtimes c:=(g_2,g_1)$. This is the Langlands dual group of ${\rm Res}_{\eFiniteField \slash \fFiniteField}(V / \eFiniteField)$.
Let $s$ be an element of $W_F$ which generates the quotient group $W_F / W_E \cong {\rm Gal}(E/F)$.
Let $\varphi : W_E \rightarrow {\rm GL}_n(\mathbb{C})$ be an $n$-dimensional representation of the Weil group $W_E$. 
We obtain the {\it Asai representation}, $``\rm As"$,
\[
  {\rm As}  ({\varphi}) :  W_F \rightarrow ({\rm GL}_n(\mathbb{C}) \times {\rm GL}_n(\mathbb{C})) \rtimes {\rm Gal}(\eFiniteField / \fFiniteField)
\]
by setting 
\[
 {\rm As}  ({\varphi})(\tau)=(\varphi(\tau),\varphi(s\tau s^{-1})) \in {\rm GL}_n(\mathbb{C}) \times {\rm GL}_n(\mathbb{C})
\]
 for $\tau \in W_E$, and
\[
 {\rm As}  ({\varphi})(s)=(1_n,\varphi(s^2)) \rtimes c \in  ({\rm GL}_n(\mathbb{C}) \times {\rm GL}_n(\mathbb{C})) \rtimes {\rm Gal}(\eFiniteField / \fFiniteField)  \setminus  ({\rm GL}_n(\mathbb{C}) \times {\rm GL}_n(\mathbb{C})).
\]
For $v_1,v_2 \in \mathbb{C}^n$, we have $ {\rm As}  ({\varphi})(\tau)(v_1\otimes v_2)=\varphi(\tau)v_1\otimes\varphi(s\tau s^{-1})v_2$ and
$ {\rm As}  ({\varphi})(s)(v_1\otimes v_2)=\varphi(s^2)v_2\otimes v_1$. For $x$ a real number, let $\lfloor x \rfloor$ be the greatest integer less than or equal to $x$.

\begin{theorem}[E. Zelingher]
\label{Asai-Gauss}
 Let $\varphi$ be $n$-dimensional tamely ramified representations of $W_E$. Let $\alpha$ be a regular character of $\multiplicativegroup{\widehat{\fFiniteField}_{q^{2n}}}$
 corresponding to $\varphi$ via Green's parametrization and $m=\lfloor \frac{n-1}{2} \rfloor$.
 Then we have
\[
  \varepsilon_0( {\rm As}  ({\varphi}),\fieldCharacter_\fLocalField)=(-1)^n q^{-\frac{n^2}{2}}  \tau(\alpha^{1+q^{2m+1}},\psi_d) \prod_{i=0}^{m-1}  \tau(\alpha^{1+q^{2i+1}},\psi_{2n}),
\]
where $d=n$ if $n$ is odd, and $d=2n$ if $n$ is even.
\end{theorem}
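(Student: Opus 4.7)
The plan is to reduce $\varepsilon_0(\mathrm{As}(\varphi),\fieldCharacter_\fLocalField)$ to Gauss sums by two successive Mackey-type decompositions combined with Deligne's inductivity of $\varepsilon_0$ in degree zero. Using the Macdonald--Green parametrization reconciled with Deligne--Kazhdan close field theory (\Cref{commu-LLC}) together with local class field theory, write $\varphi \cong \mathrm{Ind}_{W_{\eLocalField_n}}^{W_{\eLocalField}} \widetilde{\alpha}$, where $\eLocalField_n/\eLocalField$ is the unramified extension of degree $n$ and $\widetilde{\alpha}$ is the character of $W_{\eLocalField_n}$ corresponding to $\alpha$. The residual Frobenius of $\eLocalField$ acts on $\widetilde{\alpha}$ as the $q^2$-power map while the nontrivial $\sigma \in \mathrm{Gal}(\eLocalField/\fLocalField)$ introduces an extra $q$, so the projection formula applied over $W_{\eLocalField}$ yields
\[
\varphi \otimes \varphi^\sigma \;\cong\; \bigoplus_{i=0}^{n-1} \mathrm{Ind}_{W_{\eLocalField_n}}^{W_{\eLocalField}} \widetilde{\alpha}^{1+q^{2i+1}}.
\]

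Next, combine the standard splitting $\mathrm{Ind}_{W_{\eLocalField}}^{W_{\fLocalField}}(\mathrm{As}(\varphi)|_{W_{\eLocalField}}) \cong \mathrm{As}(\varphi) \oplus \mathrm{As}(\varphi) \otimes \omega_{\eLocalField/\fLocalField}$, with $\omega_{\eLocalField/\fLocalField}$ the unramified quadratic character of $W_{\fLocalField}$ attached to $\eLocalField/\fLocalField$, and transitivity of induction to obtain
\[
\mathrm{As}(\varphi) \oplus \mathrm{As}(\varphi) \otimes \omega_{\eLocalField/\fLocalField} \;\cong\; \bigoplus_{i=0}^{n-1} \mathrm{Ind}_{W_{\eLocalField_n}}^{W_{\fLocalField}} \widetilde{\alpha}^{1+q^{2i+1}}.
\]
The congruence $\alpha^{q^{2n}} = \alpha$ shows that $\alpha^{1+q^{2i+1}}$ and $\alpha^{1+q^{2(n-1-i)+1}}$ lie in a common $q$-Frobenius orbit, so their induced summands are isomorphic $W_{\fLocalField}$-modules. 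The indices therefore pair up, giving $m$ proper pairs together with a single self-paired index $i=m$ precisely when $n$ is odd. At that self-paired index, $\alpha^{1+q^n} = \beta \circ \mathrm{Nr}_{\fFiniteField_{q^{2n}}/\fFiniteField_{q^n}}$ for a uniquely determined character $\beta$ of $\multiplicativegroup{\fFiniteField_{q^n}}$, and the corresponding summand refines as $\mathrm{Ind}_{W_{\fLocalField_n}}^{W_{\fLocalField}}\widetilde{\beta} \oplus (\mathrm{Ind}_{W_{\fLocalField_n}}^{W_{\fLocalField}}\widetilde{\beta}) \otimes \omega_{\eLocalField/\fLocalField}$.

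Isolating $\mathrm{As}(\varphi)$ inside this virtual identity, we see that it receives one copy of $\mathrm{Ind}_{W_{\eLocalField_n}}^{W_{\fLocalField}}\widetilde{\alpha}^{1+q^{2i+1}}$ from each proper pair, indexed by $i = 0, \dotsc, m-1$, together with (when $n$ is odd) exactly one of the two summands $\mathrm{Ind}_{W_{\fLocalField_n}}^{W_{\fLocalField}}\widetilde{\beta}$ or its $\omega_{\eLocalField/\fLocalField}$-twist. Deligne's inductivity of $\varepsilon_0$ in degree zero, with the Langlands $\lambda$-constants for unramified towers computed via the one-dimensional case, converts each $\varepsilon_0(\mathrm{Ind}_{W_{\eLocalField_n}}^{W_{\fLocalField}}\widetilde{\alpha}^{1+q^{2i+1}},\fieldCharacter_\fLocalField)$ into an explicit constant times $\tau(\alpha^{1+q^{2i+1}},\fieldCharacter_{2n})$, and similarly for the singleton term in terms of $\tau(\beta,\fieldCharacter_n)$. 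Collecting powers of $q$ across $m$ pairs and the singleton yields precisely $q^{-n^2/2}$, and the Gauss sums assemble into the displayed product.

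The main obstacle is pinning down the overall sign $(-1)^n$, equivalently determining which of the two candidate summands $\mathrm{Ind}_{W_{\fLocalField_n}}^{W_{\fLocalField}}\widetilde{\beta}$ or $\mathrm{Ind}_{W_{\fLocalField_n}}^{W_{\fLocalField}}\widetilde{\beta}\otimes\omega_{\eLocalField/\fLocalField}$ appears in $\mathrm{As}(\varphi)$ when $n$ is odd. Their $\varepsilon_0$-factors differ by a sign, since $\omega_{\eLocalField/\fLocalField}|_{W_{\fLocalField_n}} = \omega_{\eLocalField_n/\fLocalField_n}$ is unramified with value $-1$ on the Frobenius of $\fLocalField_n$ while $\widetilde{\beta}$ has tame conductor one. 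I would resolve this either through an explicit analysis of the tensor-induction module $\varphi \otimes \varphi^\sigma$ extended by the $\sigma$-swap on a Frobenius-stabilized vector, or by bootstrapping from the base case $n=1$, where $\mathrm{As}(\varphi)$ is one-dimensional and its $\varepsilon_0$-factor is read off directly from LCFT. Either route fixes the sign, after which multiplying the Gauss-sum factors completes the proof.
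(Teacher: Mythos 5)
The approach you take is genuinely different from the paper's. Where the paper restricts $\mathrm{As}(\varphi)$ to the inertia subgroup $I_F$, explicitly diagonalizes the resulting action to read off the eigenvalues $\alpha^{q^{2i}+q^{2j+1}}$, groups them into inertia orbits, and then applies the Gauss-sum formula for $\varepsilon_0$ of a tame representation directly, you instead induce $\varphi\otimes\varphi^{\sigma}$ from $W_E$ to $W_F$, decompose it via Mackey theory into a sum of monomial representations $\mathrm{Ind}_{W_{E_n}}^{W_F}\widetilde{\alpha}^{1+q^{2i+1}}$, and try to read off $\varepsilon_0$ via Deligne's inductivity in degree zero. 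Your Mackey computation of $\varphi\otimes\varphi^\sigma$, the relation $\mathrm{Ind}_{W_E}^{W_F}(\mathrm{As}(\varphi)|_{W_E})\cong\mathrm{As}(\varphi)\oplus\mathrm{As}(\varphi)\otimes\omega_{E/F}$, the $q$-Frobenius pairing of indices $i\leftrightarrow n-1-i$, and the recognition that $\alpha^{1+q^{n}}$ descends through the norm map to a character $\beta$ of $\fFiniteField_{q^n}^\times$ for $n$ odd, are all correct.

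However, there is a genuine gap at the step where you ``isolate $\mathrm{As}(\varphi)$ inside this virtual identity'' and assert it receives exactly one copy of $\mathrm{Ind}_{W_{E_n}}^{W_F}\widetilde{\alpha}^{1+q^{2i+1}}$ from each proper pair. An isomorphism of direct sums does not canonically match summands, and the obvious counting argument fails here precisely because each $V_i:=\mathrm{Ind}_{W_{E_n}}^{W_F}\widetilde{\alpha}^{1+q^{2i+1}}$ is fixed by the $\omega_{E/F}$-twist: since $W_{E_n}\subset W_E$, we have $\omega_{E/F}|_{W_{E_n}}=\mathbf{1}$, so by the projection formula $V_i\otimes\omega_{E/F}\cong V_i$. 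When $V_i$ is reducible — as the paper points out is typical, since $\alpha^{1+q^{2i+1}}$ need not have a full $q$-Frobenius orbit — the relation $\mathrm{As}(\varphi)\oplus\mathrm{As}(\varphi)\otimes\omega_{E/F}\cong 2V_i\oplus(\text{other pairs})$ does not force $\mathrm{As}(\varphi)$ to contain exactly one copy of $V_i$; it could instead contain $2B_i$ where $V_i=B_i\oplus B_i\otimes\omega_{E/F}$, and $\varepsilon_0(V_i)\neq\varepsilon_0(2B_i)$ in general. Relatedly, since with the paper's level-one normalization of $\fieldCharacter_\fLocalField$ the factor $\varepsilon_0$ of a tame representation is determined by the restriction to $I_F$ alone, one has $\varepsilon_0(\mathrm{As}(\varphi))=\varepsilon_0(\mathrm{As}(\varphi)\otimes\omega_{E/F})$, so what the inductivity calculation actually yields is $\varepsilon_0(\mathrm{As}(\varphi))^2$, leaving a square-root sign ambiguity that is not confined to the odd singleton but is intrinsic to the whole decomposition. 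You flag the sign as ``the main obstacle'' and propose either a Frobenius analysis on the tensor-induction module or a bootstrap from $n=1$, but neither is carried out, and the bootstrap from $n=1$ alone cannot determine a sign of the form $(-1)^n$ in every case. The paper's route avoids all of this: by computing $\mathrm{As}(\varphi)|_{I_F}$ directly, it never needs to know how $\mathrm{Ind}_{W_E}^{W_F}(\varphi\otimes\varphi^\sigma)$ splits between $\mathrm{As}(\varphi)$ and its unramified twist, and the Gauss-sum formula of \cite{YZ21}*{Theorem 2.4} produces the $(-1)^n$ automatically. To repair your argument you would effectively need to re-derive the $I_F$-level information anyway, at which point the inductivity machinery becomes superfluous.
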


\begin{proof}
It is worthwhile to point out that the local class field theory gives
\[
  I_E / P_E \cong \varprojlim  \multiplicativegroup{\fFiniteField_{q^n}},
\]
where the transition maps are given by the norm maps $({\rm Nr}_{n:d})_{d\, |\, n}$ as seen before. Henceforth we may consider $\alpha$ as a character of $I_E / P_E$.
With respect to a suitably chosen basis (cf. \cite[Theorem 2.4]{YZ21}), we have 
\[
 \varphi \restriction_{I_E}(i_E)={\rm diag}(\alpha(i_E),\alpha^{q^2}(i_E),\dotsm,\alpha^{q^{2n-2}}(i_E)) \in {\rm GL}_n(\mathbb{C})
\]
for $i_E \in I_E$, which induces that
\[
 {\rm As}  ({\varphi}) \restriction_{I_F}(i_E)=\left( \begin{pmatrix} \alpha(i_E) &&&\\ &\alpha^{q^2}(i_E)&& \\ &&\ddots& \\ &&& \alpha^{q^{2n-2}}(i_E) \end{pmatrix},
 \begin{pmatrix} \alpha^{q}(i_E) &&&\\ &\alpha^{q^3}(i_E)&& \\ &&\ddots& \\ &&& \alpha^{q^{2n-1}}(i_E) \end{pmatrix} 
 \right).
\]
The element belongs to ${\rm GL}_n(\mathbb{C}) \times {\rm GL}_n(\mathbb{C})$, because $i_E \in I_E$ (so there is no Frobenius). The reason why we have a second matrix
is that ${\rm Fr}_F \cdot x \cdot {\rm Fr}_F^{-1} \equiv x^{q}  \pmod{P_F}$ for $x \in I_F$. This means that $\alpha({\rm Fr}_F \cdot i_E \cdot {\rm Fr}_F^{-1})=\alpha^q(i_E)$.
We index the standard basis of $\mathbb{C}^n$ by $e_i$, where $i=0,1,\dotsm,n-1$.
Putting all together, we obtain
\[
 {\rm As}  ({\varphi}) \restriction_{I_F}(i_E)(e_i \otimes e_j)=\alpha^{q^{2i}+q^{2j+1}}(i_E)(e_i \otimes e_j).
\]
The matrix representing $ {\rm As} ( \varphi)  \restriction_{I_F}$ is indexed by $(i,j)$, where $0 \leq i,j \leq n-1$. Furthermore, the eigenvalue $\alpha^{q^{2i}+q^{2j+1}}$
lies in the Galois orbit of $\alpha^{1+q^{2(j-i)+1}}$ as $( \alpha^{1+q^{2(j-i)+1}})^{q^{2i}}=\alpha^{q^{2i}+q^{2j+1}}$.
Therefore, the Galois orbits indexed by integers $0 \leq d \leq \lfloor \frac{n-1}{2}\rfloor$ are given by
\[
\mathcal{O}(\alpha^{1+q^{2d+1}})=\{ (\alpha^{1+q^{2d+1}})^{q^k} \,|\, 0 \leq k \leq 2n-1 \},
\]
whereas $\mathcal{O}(\alpha^{1+q^{2m+1}})$ looks a bit different for $n=2m+1$ odd:
\[
 \mathcal{O}(\alpha^{1+q^{2m+1}})=\{ (\alpha^{1+q^{2m+1}})^{q^k} \,|\, 0 \leq k \leq n-1 \}.
\]
We emphasize that $\mathcal{O}(\alpha^{1+q^{2d+1}})$'s should be thought of as (multi-)sets possibly with duplicated elements, so they are not exactly Galois orbits.
Each $\mathcal{O}(\alpha^{1+q^{2d+1}})$ consists of multiple copies of the same Galois orbit. Then \cite[Theorem 2.4]{YZ21}
gives us the desired formula for the $\varepsilon_0$-factor.
\end{proof}

Let $ \lambda_{\eLocalField \slash \fLocalField}(\fieldCharacter_\fLocalField)$ be the {\it Langlands constant}
defined in \cite[(30.4)]{BH06}. Appealing to \cite[Proposition 34.3-(1)]{BH06}, the Langlands constant $ \lambda_{\eLocalField \slash \fLocalField}(\fieldCharacter_\fLocalField)$ is given by $ \lambda_{\eLocalField \slash \fLocalField}(\fieldCharacter_\fLocalField)=-1$. We apologize for the double usage of $``\lambda"$, but we hope that the reader can separate the meaning from the context.

\begin{proposition}
\label{Artin-Asai-Flicker}
 Let $\pi(\varphi)$ be an irreducible cuspidal representations of $\GL_n(\eFiniteField)$ associated to $n$-dimensional tamely representation $\varphi$ of $W_E$ via Macdonald correspondence. Then we have
 \[
  \gamma(\pi(\varphi),{\rm As},\fieldCharacter)=\omega^{n-1}_{\pi}(\Delta) \lambda_{\eLocalField \slash \fLocalField}(\fieldCharacter_\fLocalField)^{-\frac{n(n-1)}{2}}   \varepsilon_0( {\rm As}  ({\varphi}),\fieldCharacter_\fLocalField).
 \]
 \end{proposition}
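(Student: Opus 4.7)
The plan mirrors the strategy underlying \Cref{TensorProduct-GaussProductFormula}: first transport the finite-field Asai gamma factor to the $p$-adic Asai gamma factor of an associated level-zero supercuspidal representation $\rho(\varphi)$ of $\GL_n(\eLocalField)$ via \Cref{levelzero-Asai}, then pass to the Artin Asai gamma factor through local Langlands, and finally invoke Deligne's relation \eqref{rel-epsilon-epsilon0} to convert $\varepsilon$ into $\varepsilon_0$. Concretely, let $\rho(\varphi)$ denote the level-zero supercuspidal of $\GL_n(\eLocalField)$ corresponding to $\pi(\varphi)$ under \eqref{level-zero-corresp} with the normalization $\omega_\rho(\varpi_\eLocalField)=1$, so that $\omega_\rho$ agrees with $\omega_\pi$ on units; in particular $\omega_\rho(\delta)=\omega_\pi(\Delta)$ for $\delta\in\integersRing_\eLocalField^{\times}$ a trace-zero lift of $\Delta$.

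The key analytic-to-arithmetic comparison to be cited is the matching of Flicker's integral Asai gamma factor with Shahidi's Langlands--Shahidi Asai gamma factor (due in various forms to Anandavardhanan--Kable--Tandon and Matringe) combined with the local Langlands correspondence for Asai representations, which for level-zero supercuspidals factors through \Cref{commu-LLC}. Together these yield
\[
\AsaiLocalRhoGammaFactor \;=\; \omega_\rho^{n-1}(\delta)\, \lambda_{\eLocalField/\fLocalField}(\fieldCharacter_\fLocalField)^{-\frac{n(n-1)}{2}}\, \Gamma(s, \mathrm{As}(\varphi), \fieldCharacter_\fLocalField).
\]
Substituting Deligne's identity \eqref{rel-epsilon-epsilon0} with $r=\mathrm{As}$ expands the right-hand side as
\[
\omega_\rho^{n-1}(\delta)\, \lambda_{\eLocalField/\fLocalField}(\fieldCharacter_\fLocalField)^{-\frac{n(n-1)}{2}}\, \varepsilon_0(\mathrm{As}(\varphi),\fieldCharacter_\fLocalField)\,\det(-\mathrm{Fr},\mathrm{As}(V)^{I_\fLocalField})^{-1} q^{(\dim \mathrm{As}(V)^{I_\fLocalField})s}\,\frac{L(1-s,\mathrm{As}(\check\varphi))}{L(s,\mathrm{As}(\varphi))}.
\]

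We match this expression against \Cref{levelzero-Asai}. When $\pi$ does not admit a Flicker--Rallis vector, Prasad's distinction criterion \cite{Pra20} applied to the explicit $I_\fLocalField$-eigenvalues of $\mathrm{As}(\varphi)$ computed in the proof of \Cref{Asai-Gauss} forces $\mathrm{As}(V)^{I_\fLocalField}=0$; consequently the $L$-factors and Frobenius determinant reduce to $1$, and \Cref{levelzero-Asai}(1) identifies $\gamma(\pi(\varphi),\mathrm{As},\fieldCharacter)$ with $\AsaiLocalRhoGammaFactor$, yielding the desired formula. In the complementary case $n=2m+1$ with $\pi$ Flicker--Rallis, \Cref{levelzero-Asai}(2) supplies an explicit rational function in $q^{-s}$ that exactly absorbs the non-constant piece $\det(-\mathrm{Fr})^{-1} q^{(\dim)s} L(1-s,\mathrm{As}(\check\varphi))/L(s,\mathrm{As}(\varphi))$; after cancellation we are reduced to a scalar identity which is confirmed by \Cref{Asai-gamma-distinction} (giving $-q^{-n/2}$) together with $\lambda_{\eLocalField/\fLocalField}(\fieldCharacter_\fLocalField)=-1$ \cite[Proposition 34.3-(1)]{BH06} and a direct Gauss-sum evaluation from \Cref{Asai-Gauss}.

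The principal obstacle is the analytic-to-arithmetic matching itself: although the shape of the correction constant $\omega_\rho^{n-1}(\delta)\lambda_{\eLocalField/\fLocalField}(\fieldCharacter_\fLocalField)^{-n(n-1)/2}$ is classical for tamely ramified Asai representations, confirming it with the author's precise normalizations (the sign convention noted after \eqref{local-asai-func}, the self-dual Haar measure, and the choice of additive character $\fieldCharacter_\fLocalField$) demands careful bookkeeping. A secondary technical point is translating the failure of Flicker--Rallis distinction on the finite side into the vanishing of $\mathrm{As}(V)^{I_\fLocalField}$ on the Weil side via the regular character $\alpha$; this amounts to identifying the fixed weights $\alpha^{1+q^{2(j-i)+1}}=1$ arising in the proof of \Cref{Asai-Gauss} with Prasad's distinction criterion.
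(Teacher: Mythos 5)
Your overall strategy is the same as the paper's: transport the finite-field Asai gamma factor to $\AsaiLocalRhoGammaFactor$ via \Cref{levelzero-Asai}, match the integral Asai gamma factor with the Artin Asai gamma factor via \cite[Theorem 1.3]{AKMSS21}, and then pass to $\varepsilon_0$ via \eqref{rel-epsilon-epsilon0}. Your first case (no Flicker--Rallis vector) is essentially the paper's argument; one small difference is how you establish $\mathrm{As}(V)^{I_F}=0$. The paper deduces it implicitly: since $\AsaiLocalRhoGammaFactor$ is a constant by \Cref{levelzero-Asai}(1), the AKMSS identity forces $\Gamma(s,\mathrm{As}(\varphi),\fieldCharacter_\fLocalField)$ to be constant, and \eqref{rel-epsilon-epsilon0} then forces $\dim\mathrm{As}(V)^{I_F}=0$ — this is what \cite[Corollary 2.7]{YZ21} packages. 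Your appeal to Prasad's distinction criterion to read this off the $I_F$-eigenvalues is plausible, but you would need to spell out the chain from the finite-field vector to the Asai $L$-factor pole to the $I_F$-fixed subspace; the paper's route avoids this.

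For the second case ($n=2m+1$, Flicker--Rallis distinguished), your proposed cancellation route does not actually prove the statement. Equating the AKMSS expansion of $\AsaiLocalRhoGammaFactor$ with the explicit rational function from \Cref{levelzero-Asai}(2) and cancelling the $q^{-s}$-dependent pieces yields a scalar identity between $\varepsilon_0(\mathrm{As}(\varphi),\fieldCharacter_\fLocalField)$ and quantities built from $\omega_\rho(\varpi)$. But $\gamma(\pi(\varphi),\mathrm{As},\fieldCharacter)$ appears nowhere in this comparison — in the distinguished case \Cref{levelzero-Asai}(2) expresses $\AsaiLocalRhoGammaFactor$ purely in terms of central-character $L$-factors, not in terms of the finite-field gamma factor — so the cancellation gives you a consistency check on AKMSS, not the identity you want. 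The actual argument you need is the one you mention at the end as mere ``confirmation'': compute $\gamma(\pi(\varphi),\mathrm{As},\fieldCharacter)=-q^{-n/2}$ from \Cref{Asai-gamma-distinction}, note $\omega_\pi^{n-1}(\Delta)=1$ because $\Delta$ has trace zero and $\omega_\pi\restriction_{\multiplicativegroup{\fFiniteField}}$ is trivial, evaluate $\varepsilon_0(\mathrm{As}(\varphi),\fieldCharacter_\fLocalField)$ via \Cref{Asai-Gauss} and \cite[Proposition 2.6]{YZ21}, and use $\lambda_{\eLocalField/\fLocalField}(\fieldCharacter_\fLocalField)=-1$. That direct scalar verification is exactly what the paper does, and it is the substance of the case; the AKMSS-and-cancel detour is superfluous and should be cut.
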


\begin{proof} We divide it into two cases. We assume that $\pi$ does not admits a Flicker-Rallis vector. 
We use \Cref{levelzero-Asai} in conjunction with \cite[Theorem 1.3]{AKMSS21} and \cite[Corollary 2.7]{YZ21}  in order to see that
\begin{multline*}
   \gamma(\pi(\varphi),{\rm As},\fieldCharacter)=\Gamma(s,\rho(\varphi),{\rm As},\fieldCharacter_\fLocalField)\\
   =\omega^{n-1}_{\rho}(\delta) \lambda_{\eLocalField \slash \fLocalField}(\fieldCharacter_\fLocalField)^{-\frac{n(n-1)}{2}} \varepsilon(s,{\rm As}  ({\varphi}),\fieldCharacter_\fLocalField)
   =\omega^{n-1}_{\pi}(\Delta) \lambda_{\eLocalField \slash \fLocalField}(\fieldCharacter_\fLocalField)^{-\frac{n(n-1)}{2}}   \varepsilon_0( {\rm As}  ({\varphi}),\fieldCharacter_\fLocalField).
\end{multline*}

\par
It remains to deal with the case when $n=2m+1$ and $\pi$ admits a Flicker--Rallis vector. Since $\Delta$ is an element of trace zero, $\Delta^2=-\Delta\overline{\Delta}$ belongs to 
$\multiplicativegroup{\fFiniteField}$. The central character restricted to $\multiplicativegroup{\fFiniteField}$, $\omega_{\pi}\restriction_{\multiplicativegroup{\fFiniteField}}=\alpha\restriction_{\multiplicativegroup{\fFiniteField}}$, becomes trivial so that $\alpha^{1+q^{2m+1}}=\TrivialRepresentation$. Using $\omega^{n-1}_{\pi}(\Delta)=\omega_{\pi}^m(\Delta^2)=1$,
this reduces the problem to confirm that
  \[
  \gamma(\pi(\varphi),{\rm As},\fieldCharacter)=(-1)^m \varepsilon_0( {\rm As}  ({\varphi}),\fieldCharacter_\fLocalField). 
 \]
 Now, $\left(\alpha^{1+q^{2i+1}}\right)^{1+q^{2m+1}}=\TrivialRepresentation$ and $\alpha^{1+q^{2i+1}}$ is not trivial for $0 \leq i \leq m-1$. \cite[Proposition 2.6]{YZ21} gives us that
 $\tau(\alpha^{1+q^{2i+1}},\psi_{2(2m+1)})=-q^{2m+1} \alpha(x^{1+q^{2i+1}})$ for which $x \in \multiplicativegroup{\fFiniteField}$. Thus $x^{1+q^{2i+1}} \in  \multiplicativegroup{\fFiniteField}$,
 so $\alpha(x^{1+q^{2i+1}})=1$. Collecting all these together, and then using  the fact that $\tau(\TrivialRepresentation,\psi_{2m+1})=1$, \Cref{Asai-Gauss} tells us that
\begin{multline*}
  \varepsilon_0( {\rm As}  ({\varphi}),\fieldCharacter_\fLocalField)=(-1)^{2m+1} q^{-\frac{(2m+1)^2}{2}}  \tau(\alpha^{1+q^{2m+1}},\psi_{2m+1}) \prod_{i=0}^{m-1}  \tau(\alpha^{1+q^{2i+1}},\psi_{2(2m+1)})\\
  =(-1)^{2m+1} q^{-\frac{(2m+1)^2}{2}}(-q^{2m+1})^m \tau(\TrivialRepresentation,\psi_{2m+1})=(-1)^{m-1}q^{-\frac{2m+1}{2}},
\end{multline*}
which agrees with $(-1)^m \gamma(\pi(\varphi),{\rm As},\fieldCharacter)$  in \Cref{Asai-gamma-distinction}, as required.
\end{proof}

We are in a position to state a main product formula for $ \gamma(\pi,{\rm As},\fieldCharacter)$ with regard to Gauss sums.

\begin{theorem}[Gauss sum]  
\label{Asai-Gamma-Gauss}
Let $\pi$ be an irreducible cuspidal representations of $\GL_n(\eFiniteField)$.  We let $\alpha \in  \multiplicativegroup{\widehat{\fFiniteField}_{q^{2n}}}$ be a regular character 
corresponding to $\pi$ via Green's parametrization 
and $m=\lfloor \frac{n-1}{2} \rfloor$. Then we have
\[
 \gamma(\pi,{\rm As},\fieldCharacter)=\omega^{n-1}_{\pi}(\Delta) \cdot (-1)^{-\frac{n(n+1)}{2}} q^{-\frac{n^2}{2}}  \tau(\alpha^{1+q^{2m+1}},\psi_d) \prod_{i=0}^{m-1}  \tau(\alpha^{1+q^{2i+1}},\psi_{2n}),
\]
where $d=n$ if $n$ is odd, and $d=2n$ if $n$ is even.
\end{theorem}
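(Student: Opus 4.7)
The plan is to combine the two prior results in Section \ref{sec-Asai-Gauss}. By the Macdonald correspondence, there is a unique (up to $I_F$-equivalence) tamely ramified $n$-dimensional representation $\varphi$ of $W_E$ such that $\pi = \pi(\varphi)$, and under this identification $\alpha$ is the regular character of $\multiplicativegroup{\widehat{\fFiniteField}_{q^{2n}}}$ parametrizing $\varphi$. Hence the theorem amounts to substituting \Cref{Asai-Gauss} into \Cref{Artin-Asai-Flicker} and then bookkeeping the signs.

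First I would invoke \Cref{Artin-Asai-Flicker} to write
\[
\gamma(\pi,{\rm As},\fieldCharacter) \;=\; \omega^{n-1}_{\pi}(\Delta)\, \lambda_{\eLocalField \slash \fLocalField}(\fieldCharacter_\fLocalField)^{-\frac{n(n-1)}{2}}\, \varepsilon_0({\rm As}(\varphi),\fieldCharacter_\fLocalField),
\]
and then apply \Cref{Asai-Gauss} to replace the right-hand $\varepsilon_0$-factor by
\[
(-1)^{n} q^{-\frac{n^2}{2}} \tau(\alpha^{1+q^{2m+1}},\psi_d) \prod_{i=0}^{m-1} \tau(\alpha^{1+q^{2i+1}},\psi_{2n}),
\]
with $d=n$ if $n$ is odd and $d=2n$ if $n$ is even.

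The remaining task is to simplify the sign. Using \cite[Proposition 34.3-(1)]{BH06}, the Langlands constant equals $\lambda_{\eLocalField \slash \fLocalField}(\fieldCharacter_\fLocalField)=-1$, so the sign contribution becomes
\[
(-1)^{-\frac{n(n-1)}{2}} \cdot (-1)^{n} \;=\; (-1)^{\frac{n(n-1)}{2}+n} \;=\; (-1)^{\frac{n(n+1)}{2}} \;=\; (-1)^{-\frac{n(n+1)}{2}},
\]
which recovers precisely the sign in the statement. Assembling everything yields the desired product formula.

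I do not expect any real obstacle, since both \Cref{Artin-Asai-Flicker} and \Cref{Asai-Gauss} have already been established in the preceding subsection; the only delicate point is ensuring the conventions on $\Delta$, $\delta$, and the additive characters $\qfFieldCharacter$ and $\qFieldCharacter$ used in those two results agree, but this compatibility was arranged in the choice $\delta = \projection^{-1}(\Delta)$ made right before \Cref{levelzero-Asai}, so $\omega_{\rho}(\delta) = \omega_{\pi}(\Delta)$ and the substitution is legitimate.
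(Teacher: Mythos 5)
The proposal is correct and matches the paper's (implicit) argument: Theorem \ref{Asai-Gamma-Gauss} is stated in the paper without proof precisely because it follows by substituting Theorem \ref{Asai-Gauss} into Proposition \ref{Artin-Asai-Flicker}, using $\lambda_{\eLocalField \slash \fLocalField}(\fieldCharacter_\fLocalField)=-1$, and simplifying $(-1)^{-n(n-1)/2}(-1)^n=(-1)^{n(n+1)/2}$. Your remark on the compatibility $\delta=\projection^{-1}(\Delta)$, giving $\omega_{\rho}(\delta)=\omega_{\pi}(\Delta)$, is exactly the convention chosen in the paper and correctly justifies the substitution.
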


\section{The Exterior Square Gamma Factor}
\label{Exterior factor}

\subsection{Jacquet--Shalika sums and periods}
We let $\mathcal{M}_{n}$ be $n \times n$ matrices, $\mathcal{N}_n$ the subspace of upper triangular matrices of $\mathcal{M}_n$.
We let $\sigma_{2m}$ be a permutation matrix given by
\[
\sigma_{2m}=\begin{pmatrix} 1 & 2 & \dotsm & m & | & m+1 & m+2 & \dotsm & 2m \\ 
                                                1 & 3 & \dotsm & 2m-1 & | &  2 & 4 & \dotsm &2m \\ 
                                                \end{pmatrix}
\]
and let $\sigma_{2m+1}$ denote
\[
 \sigma_{2m+1}=\begin{pmatrix} 1 & 2 & \dotsm & m & | & m+1 & m+2 & \dotsm & 2m & 2m+1 \\ 
                                                1 & 3 & \dotsm & 2m-1 & | &  2 & 4 & \dotsm &2m & 2m+1 \\ 
                                                \end{pmatrix}.
\]
Let $\pi$ be an irreducible cuspidal representation of $\GL_n(\fFiniteField)$ with its associated Whittaker model $\whittaker(\pi,\psi)$. 
For all $W_{\pi} \in \mathcal{W}(\pi,\psi)$ and $\phi \in  \Schwartz_0(\fFiniteField^m)$,
there exists a complex number $ \gamma(\pi,\wedge^2,\psi)   \in \mathbb{C}^{\times}$ such that
\begin{multline}
\label{Jacquet-Shalika-even}
 \gamma(\pi,\wedge^2,\psi) \sum_{g \in \UnipotentSubgroup_m(\fFiniteField) \backslash \GL_m(\fFiniteField) } \sum_{X \in \mathcal{N}_m(\fFiniteField) \backslash \mathcal{M}_{m}(\fFiniteField)} W_{\pi} \left( \sigma_{2m} \begin{pmatrix} 1_m & X \\ & 1_m  \end{pmatrix}
\begin{pmatrix} g & \\ & g \end{pmatrix} \right)  \psi^{-1}(\Trace X) \phi(e_mg)  \\
 =\sum_{g \in \UnipotentSubgroup_m(\fFiniteField) \backslash \GL_m(\fFiniteField) } \sum_{X \in \mathcal{N}_m(\fFiniteField) \backslash \mathcal{M}_{m}(\fFiniteField)} W_{\pi} \left( \sigma_{2m} \begin{pmatrix} 1_m & X \\ & 1_m  \end{pmatrix} \begin{pmatrix} g & \\ & g \end{pmatrix} \right)  \psi^{-1}( \Trace X)  \fourierTransform(\phi)(e_1\prescript{t}{}{g}^{-1})
\end{multline}
in the even case $n=2m$
\begin{multline}
\label{Jacquet-Shalika-odd}
 \gamma(\pi,\wedge^2,\psi)  
 \sum_{g  } \sum_{X } \sum_{Z}
 W_{\pi} \left( \sigma_{2m+1} \begin{pmatrix} 1_m &X& \\ &1_m& \\ &&1 \end{pmatrix} \begin{pmatrix} g && \\ &g& \\ &&1 \end{pmatrix} 
 \begin{pmatrix} 1_m && \\ &1_m& \\ &Z&1 \end{pmatrix} \right) \psi^{-1}(\Trace X)  \phi(Z) \\
=\sum_{g}  \sum_{X } 
 \sum_{Z} 
 W_{\pi} \left( \begin{pmatrix} & 1 \\ 1_{2m} & \end{pmatrix} \sigma_{2m+1} \begin{pmatrix} 1_m &X& \\ &1_m& \\ &&1 \end{pmatrix} \begin{pmatrix} g && \\ &g& \\ &&1 \end{pmatrix} \begin{pmatrix} 1_m &&-^tZ \\ &1_m& \\ &&1 \end{pmatrix} \right) \\  \psi^{-1}(\Trace X)   \fourierTransform(\phi)(Z)
\end{multline}
in the odd case $n=2m+1$, where the summation domain of $g$, $X$, and $Z$ are taken over $\UnipotentSubgroup_m(\fFiniteField) \backslash \GL_m(\fFiniteField)$,
$\mathcal{N}_m(\fFiniteField) \backslash \mathcal{M}_{m}(\fFiniteField)$, and $\fFiniteField^m$, respectively. We express $\gamma(\pi,\wedge^2,\fieldCharacter)$ in terms of the Bessel functions associated with $\pi$.

\begin{proposition} \cite[Remark 2.20]{YZ20} 
\label{conj-exterior}
Let $\pi$ be an irreducible cuspidal representation of $\GL_{2m}(\fFiniteField)$. Then we have
\begin{multline*}
 \gamma(\pi,\wedge^2,\fieldCharacter)=
 q^{-\frac{m}{2}}\sum_{g \in \UnipotentSubgroup_m(\fFiniteField) \backslash \GL_m(\fFiniteField)} \sum_{X \in \nilpotentMatrices_m(\fFiniteField) \backslash \matrixRing_m(\fFiniteField)} 
  \repBesselFunction{\pi}
  \left( \sigma_{2m} \begin{pmatrix} 1_m & X \\ & 1_m \end{pmatrix} \begin{pmatrix} g & \\ & g \end{pmatrix}\sigma^{-1}_{2m} \right) \\
  \times \fieldCharacter^{-1}({\Trace}\, X)    \fieldCharacter(e_1 {^tg^{-1}}\;{^te_m}).
\end{multline*}
In particular, we have $\gamma(\contragredient{\pi},\wedge^2,\fieldCharacter^{-1})=\complexConjugate{ \gamma(\pi,\wedge^2,\fieldCharacter) }$.
\end{proposition}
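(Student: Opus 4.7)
My plan is to mirror the proof of \Cref{J-PS-S-Besel}. I would substitute the Bessel function $W_{\pi}=\repBesselFunction{\pi}$ together with the indicator $\phi=\delta_{e_m}\in\Schwartz_0(\fFiniteField^m)$ of $e_m$ into the Jacquet--Shalika functional equation \eqref{Jacquet-Shalika-even}. A direct computation shows
\[
\fourierTransform(\delta_{e_m})(y)=q^{-\frac{m}{2}}\fieldCharacter(e_m\,{^ty}),
\]
so that $\fourierTransform(\delta_{e_m})(e_1\,{^tg^{-1}})=q^{-\frac{m}{2}}\fieldCharacter(e_1\,{^tg^{-1}}\,{^te_m})$, which supplies both the character on the right-hand side of the proposition and the normalizing factor $q^{-m/2}$.

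It remains to verify that the left-hand side of \eqref{Jacquet-Shalika-even} collapses to $\gamma(\pi,\wedge^2,\fieldCharacter)$. Since $\delta_{e_m}(e_mg)\neq 0$ exactly when $g\in\MirabolicSubgroup_m(\fFiniteField)$, this reduces to the Bessel partial-sum identity
\[
\sum_{g\in\UnipotentSubgroup_m\backslash\MirabolicSubgroup_m}\;\sum_{X\in\nilpotentMatrices_m\backslash\matrixRing_m}\repBesselFunction{\pi}\!\left(\sigma_{2m}\begin{pmatrix}1_m&X\\&1_m\end{pmatrix}\begin{pmatrix}g&\\&g\end{pmatrix}\right)\fieldCharacter^{-1}(\Trace X)=1,
\]
which is the Jacquet--Shalika analogue of the standard mirabolic Bessel sum \cite[Lemma 2.7]{Nien19} and is implicit in \cite[Remark 2.20]{YZ20}. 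This is the main obstacle. I would establish it by combining the cuspidality of $\pi$---which forces $\repBesselFunction{\pi}$ to vanish outside the open Bruhat cell associated with the long Weyl element $\weyllong_{2m}$---with Fourier inversion in the variable $X$ against $\fieldCharacter^{-1}(\Trace X)$, thereby killing every contribution except the one at $u(X)\,\mathrm{diag}(g,g)=1_{2m}$, where $\repBesselFunction{\pi}(1_{2m})=1$. The extra conjugation by $\sigma_{2m}^{-1}$ that appears inside the Bessel function in the proposition's statement is absorbed via the change of variables on $(g,X)$ induced by the Weyl element $\sigma_{2m}$, using that this element stabilizes the Shalika summation datum.

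The complex conjugate identity $\gamma(\contragredient{\pi},\wedge^2,\fieldCharacter^{-1})=\complexConjugate{\gamma(\pi,\wedge^2,\fieldCharacter)}$ then follows by taking the complex conjugate of the resulting formula term by term, using property~(3) of the Bessel function recalled after its definition, namely $\complexConjugate{\repBesselFunction{\pi}(g)}=\grepBesselFunction{\contragredient{\pi}}{\fieldCharacter^{-1}}(g)$, together with $\complexConjugate{\fieldCharacter(x)}=\fieldCharacter^{-1}(x)$. The resulting expression is precisely the formula evaluated at the pair $(\contragredient{\pi},\fieldCharacter^{-1})$, which completes the proof.
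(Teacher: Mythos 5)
The paper gives no proof here, citing \cite[Remark 2.20]{YZ20} directly, so the comparison is against what you have actually written. The overall strategy is the right one, but your handling of $\sigma_{2m}^{-1}$ is a genuine gap that actually makes the argument collapse. In \eqref{Jacquet-Shalika-even} the Whittaker function is evaluated at $\sigma_{2m}\left(\begin{smallmatrix}1_m&X\\&1_m\end{smallmatrix}\right)\left(\begin{smallmatrix}g&\\&g\end{smallmatrix}\right)$ with no right conjugator, while the proposition has an extra $\sigma_{2m}^{-1}$ on the right. These cannot be reconciled by a change of variables in $(g,X)$: $\sigma_{2m}$ does not normalize the Shalika subgroup $S_{2m}$, so right translation by the nontrivial Weyl element $\sigma_{2m}^{-1}$ can never be realized as a reparametrization of the Shalika data. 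Moreover, the mirabolic partial sum you assert equals $1$ is in fact $0$ for $m>1$: for $g\in\MirabolicSubgroup_m(\fFiniteField)$ the element $\sigma_{2m}\left(\begin{smallmatrix}1_m&X\\&1_m\end{smallmatrix}\right)\left(\begin{smallmatrix}g&\\&g\end{smallmatrix}\right)$ lies in $\MirabolicSubgroup_{2m}(\fFiniteField)$ but never in $\UnipotentSubgroup_{2m}(\fFiniteField)$, since $\left(\begin{smallmatrix}1_m&X\\&1_m\end{smallmatrix}\right)\left(\begin{smallmatrix}g&\\&g\end{smallmatrix}\right)$ stays inside the Siegel parabolic of type $(m,m)$ whereas $\sigma_{2m}^{-1}\UnipotentSubgroup_{2m}(\fFiniteField)$ is disjoint from that parabolic ($\sigma_{2m}$ interleaves the two blocks and is not block upper triangular once $m>1$). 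Because $\repBesselFunction{\pi}$ vanishes on $\MirabolicSubgroup_{2m}\setminus\UnipotentSubgroup_{2m}$ for a cuspidal $\pi$, every term in your left-hand sum is $0$, and plugging your data into \eqref{Jacquet-Shalika-even} yields the tautology $0=0$, not a formula for $\gamma(\pi,\wedge^2,\fieldCharacter)$.

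The repair is small but essential: take the right translate $W_{\pi}=\pi(\sigma_{2m}^{-1})\repBesselFunction{\pi}\in\whittaker(\pi,\fieldCharacter)$ instead of $\repBesselFunction{\pi}$, still with $\phi=\delta_{e_m}$. Both sides of \eqref{Jacquet-Shalika-even} then automatically carry the conjugated argument $\sigma_{2m}\left(\begin{smallmatrix}1_m&X\\&1_m\end{smallmatrix}\right)\left(\begin{smallmatrix}g&\\&g\end{smallmatrix}\right)\sigma_{2m}^{-1}$, which is what the proposition demands. With this, the mirabolic partial sum on the left does equal $1$ --- by \cite[Lemma 2.7]{Nie19}, exactly as in the calculation already carried out in the proof of \Cref{JS-distinct-Gamma} --- and the right-hand side unwinds, via your correct identity $\fourierTransform(\delta_{e_m})(y)=q^{-m/2}\fieldCharacter(e_m\,{}^ty)$, to the stated formula. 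Your argument for $\gamma(\contragredient{\pi},\wedge^2,\fieldCharacter^{-1})=\complexConjugate{\gamma(\pi,\wedge^2,\fieldCharacter)}$ by term-by-term conjugation is fine once the formula itself is established.
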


We define a Shalika subgroup $S_{2m}$ of ${\rm GL}_{2m}$ by
\[
  S_{2m}=\left\{ \begin{pmatrix} 1_m & X \\ & 1_m \end{pmatrix} \begin{pmatrix} g & \\ & g \end{pmatrix} \,\middle|\, X \in \mathcal{M}_m,\; g \in {\rm GL}_m \right\}.
\]
Let $\Theta$ be a Shalika character of $S_{2m}$ given by
\[
  \Theta \left(  \begin{pmatrix} 1_m & X \\ & 1_m \end{pmatrix} \begin{pmatrix} g & \\ & g \end{pmatrix} \right)=\psi(\Trace X).
\]
A non-zero vector $v \in V_{\pi}$ is called a {\it Jacquet--Shalika vector} if $\pi(s)v=\Theta(s)v$ for every $s \in S_{2m}(\fFiniteField)$.
Over a nonarchimedean local field $F$, we let $G={\rm GL}_{2n}(F)$ and $H=S_{2m}(F)$ in \eqref{distinction}. We say that a representation $\rho$ of $G$ admits a {\it Jacquet--Shalika period}
if $\rho$ is $(S_{2m}(F),\Theta)$-distinguished.  

\begin{proposition}
\label{JS-distinct-Gamma}
Let $\pi$ be an irreducible cuspidal representation of $\GL_{m}(\fFiniteField)$. Suppose that $n=2m$ and $\pi$ admits a Jacquet--Shalika vector.
Then we have
\[
   \gamma(\pi,\wedge^2,\fieldCharacter) =\gamma(\contragredient{\pi},\wedge^2,\fieldCharacter^{-1})=-q^{-\frac{m}{2}}.
\]
\end{proposition}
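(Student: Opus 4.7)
The plan is to transport the strategy of \Cref{Asai-gamma-distinction} to the exterior square setting, with the Shalika subgroup $S_{2m}(\fFiniteField) = \matrixRing_m(\fFiniteField) \rtimes \GL_m(\fFiniteField)$ and its character $\Theta$ playing the role that $(\GL_n(\fFiniteField),\TrivialRepresentation)$ played in the Flicker-Rallis case. First I would start from the Bessel-type formula of \Cref{conj-exterior} and take complex conjugates to obtain
\[
\gamma(\contragredient{\pi},\wedge^2,\fieldCharacter^{-1}) = q^{-\frac{m}{2}} \sum_{g}\sum_{X} \grepBesselFunction{\contragredient{\pi}}{\fieldCharacter^{-1}}\!\left( \sigma_{2m} \begin{pmatrix} 1_m & X \\ & 1_m \end{pmatrix}\begin{pmatrix} g & \\ & g \end{pmatrix}\sigma_{2m}^{-1} \right) \fieldCharacter(\Trace X) \fieldCharacter^{-1}(e_1 {^tg^{-1}}\,{^te_m}),
\]
so that, after the change of variables $g \mapsto g^{-1}$, the desired identity $\gamma(\contragredient{\pi},\wedge^2,\fieldCharacter^{-1}) = -q^{-m/2}$ becomes a statement of precisely the shape produced by \Cref{key-distinction}.

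Next I would apply \Cref{key-distinction} with $G = \GL_{2m}(\fFiniteField)$, $\Pi = \pi$, $L$ the $\sigma_{2m}$-conjugate of $S_{2m}(\fFiniteField)$, $Z$ the $\sigma_{2m}$-conjugate of the unipotent radical $\bigl\{ \bigl( \begin{smallmatrix} 1_m & X \\ & 1_m \end{smallmatrix}\bigr) \bigr\}$, and $\Xi$ the Shalika character $\Theta$ transported through $\sigma_{2m}$. The Whittaker functional $\Lambda(W_\pi) = W_\pi(1_{2m})$ on $\whittaker(\pi,\fieldCharacter)$ and the vector $v_0$ corresponding to $\repBesselFunction{\pi}$ are the natural choices. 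Conditions (i)--(iii) of \Cref{key-distinction} then need to be verified:
\begin{enumerate}[label=$(\mathrm{\roman*})$]
\item $\dim \Hom_{S_{2m}(\fFiniteField)}(\pi,\Theta) = 1$, which is the finite field analogue of the Jacquet-Rallis multiplicity one theorem and can be established along the lines of \cite[Theorem 1.1]{AKT04} for the Shalika model.
\item $\dim \Hom_{Z\rtimes \MirabolicSubgroup_m(\fFiniteField)}(\pi,\Theta) = 1$, which should follow by mimicking the proof of \Cref{mirabolic-Flicker-mulone} via Bernstein-Zelevinsky type derivatives over $\fFiniteField$, reducing the restriction to the Shalika-mirabolic to a sum of pieces on which only the full Shalika functional survives.
\item The normalization $\sum_{p\in \UnipotentSubgroup_m \backslash \MirabolicSubgroup_m}\sum_{X\in \nilpotentMatrices_m \backslash \matrixRing_m} \repBesselFunction{\pi}\!\bigl(\sigma_{2m} \bigl( \begin{smallmatrix} 1_m & X \\ & 1_m \end{smallmatrix}\bigr)\bigl(\begin{smallmatrix} p & \\ & p \end{smallmatrix}\bigr)\sigma_{2m}^{-1}\bigr)\fieldCharacter^{-1}(\Trace X) = 1$, which is a Bessel-function identity in the spirit of \cite[Lemma 2.7]{Nie19}, deduced from the fact that the Bessel function is the normalized matrix coefficient of the Shalika-Whittaker vector, normalized so that $\repBesselFunction{\pi}(1_{2m})=1$.
\end{enumerate}

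Granting (i)--(iii), \Cref{key-distinction} immediately yields $\gamma(\contragredient{\pi},\wedge^2,\fieldCharacter^{-1}) = -q^{-m/2}$. Since this value is real, the complex conjugation identity $\gamma(\pi,\wedge^2,\fieldCharacter) = \overline{\gamma(\contragredient{\pi},\wedge^2,\fieldCharacter^{-1})}$ recorded in \Cref{conj-exterior} produces $\gamma(\pi,\wedge^2,\fieldCharacter) = -q^{-m/2}$ at the same time. The main obstacle I anticipate is hypothesis (ii): unlike (i), which is a standard multiplicity one statement, (ii) demands that the Shalika functional remains unique even after dropping the top row of $\GL_m$, and this is the step where the Bernstein-Zelevinsky machinery over finite fields must be engineered most carefully; the normalization in (iii) is also a nontrivial combinatorial identity for partial Bessel sums over the Shalika-mirabolic domain, though it follows a well-known template once condition (ii) is in place.
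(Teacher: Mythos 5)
Your proposal follows essentially the same route as the paper: embed the Shalika subgroup $S_{2m}(\fFiniteField)$ into $\GL_{2m}(\fFiniteField)$ via conjugation by $\sigma_{2m}$, take $G=\GL_{2m}(\fFiniteField)$, $L$ the conjugated Shalika group, $Z$ the conjugated Shalika unipotent radical, $\Xi=\Theta$, and $v_0$ the Bessel vector, then apply \Cref{key-distinction} and finish by complex conjugation via \Cref{conj-exterior}. The one bookkeeping point where you differ is the Whittaker functional: the paper takes $\Lambda(W_{\pi})=\frac{1}{\sizeof{\nilpotentMatrices_m(\fFiniteField)}}W_{\pi}(1_{2m})$ so that condition $(\mathrm{iii})$ of \Cref{key-distinction}, which sums $z$ over \emph{all} of $Z=\matrixRing_m(\fFiniteField)$, collapses (by $\psi$-invariance of $\repBesselFunction{\pi}$ under $\mathcal{N}_m$-shifts of $X$) exactly to your coset sum over $\nilpotentMatrices_m(\fFiniteField)\backslash\matrixRing_m(\fFiniteField)$ and evaluates to $1$; with your unnormalized $\Lambda(W_\pi)=W_\pi(1_{2m})$ the hypothesis would instead evaluate to $\sizeof{\nilpotentMatrices_m(\fFiniteField)}$, so you need to reinstate that factor. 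Your concern about multiplicity conditions $(\mathrm{i})$ and $(\mathrm{ii})$ is legitimate; these are implicit in the paper's invocation of \Cref{key-distinction} and rest on the Shalika-model and mirabolic-Shalika uniqueness results of the type proved in \cite[Theorem 2.22]{YZ20}.
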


\begin{proof}
We embed the Shalika subgroup $S_{2m}(\fFiniteField)$ to $G=\GL_{2m}(\fFiniteField)$ via the conjugation by $\sigma_{2m}$.
We define a linear functional $\Lambda \in \Hom_{\UnipotentSubgroup_m(\fFiniteField)}(\pi\restriction_{\UnipotentSubgroup_m(\fFiniteField)},\TrivialRepresentation_{\UnipotentSubgroup_m(\fFiniteField)})$ on 
$\whittaker(\pi,\fieldCharacter)$ by 
\[
\Lambda(W_{\pi})=\frac{1}{\sizeof{\nilpotentMatrices_m(\fFiniteField)}}W_{\pi}(1_{2m}).
\]
We choose $W_{\pi}$ to be $\repBesselFunction{\pi}$. Upon using \Cref{key-distinction} with $G=\GL_{2m}(\fFiniteField)$, $L=S_{2m}(\fFiniteField)$ a Shalika subgroup, and $\Xi=\Theta$ a Shalika character, we find from \cite[Lemma 2.7]{Nie19} that
\begin{multline*}
  \sum_{p \in  \UnipotentSubgroup_m(\fFiniteField) \backslash \MirabolicSubgroup_m(\fFiniteField)  } \sum_{X \in \matrixRing_m(\fFiniteField)} \Lambda\left(\pi\left(\sigma_{2m} \begin{pmatrix} p & \\ & p \end{pmatrix} 
  \begin{pmatrix} 1_m & X \\ & 1_m \end{pmatrix}
  \sigma^{-1}_{2m}\right)\repBesselFunction{\pi}\right)\fieldCharacter^{-1}({\Trace}\, X) \\
  =\sum_{p \in  \UnipotentSubgroup_m(\fFiniteField) \backslash \MirabolicSubgroup_m(\fFiniteField)  } \sum_{X \in \nilpotentMatrices_m(\fFiniteField) \backslash \matrixRing_m(\fFiniteField)}  \repBesselFunction{\pi}\left(\sigma_{2m} \begin{pmatrix} p & \\ & p \end{pmatrix} 
  \begin{pmatrix} 1_m & X \\ & 1_m \end{pmatrix}
  \sigma^{-1}_{2m} \right)\fieldCharacter^{-1}({\Trace}\, X) =1,
\end{multline*}
which gives rise to
\[
 \sum_{g \in  \UnipotentSubgroup_m(\fFiniteField) \backslash \GL_m(\fFiniteField)  } \sum_{X \in \matrixRing_m(\fFiniteField)} \Lambda\left(\pi\left(\sigma_{2m} \begin{pmatrix} g & \\ & g \end{pmatrix} 
  \begin{pmatrix} 1_m & X \\ & 1_m \end{pmatrix}
  \sigma^{-1}_{2m}\right)\repBesselFunction{\pi}\right) \fieldCharacter^{-1}({\Trace}\, X)\fieldCharacter^{-1}(e_m g \;{^te_1}) 
    =-1.
\]
  After multiplying both sides by $q^{-\frac{m}{2}}$, and then making the change of variables $g \mapsto g^{-1}$ and $X \mapsto -X$, we arrive at the identity
\begin{multline*}
 \gamma(\contragredient{\pi},\wedge^2,\fieldCharacter^{-1})
 =q^{-\frac{m}{2}}  \sum_{g \in  \UnipotentSubgroup_m(\fFiniteField) \backslash \GL_m(\fFiniteField)  } \sum_{X \in \nilpotentMatrices_m(\fFiniteField) \backslash \matrixRing_m(\fFiniteField)} \grepBesselFunction{\contragredient{\pi}}{\fieldCharacter^{-1}} \left(\sigma_{2m} \begin{pmatrix} g & \\ & g \end{pmatrix} 
  \begin{pmatrix} 1_m & X \\ & 1_m \end{pmatrix}
  \sigma^{-1}_{2m} \right) \\
  \fieldCharacter({\Trace}\, X)    \fieldCharacter^{-1}(e_1 {^tg^{-1}}\;{^te_m})
     =-q^{-\frac{m}{2}}.
   \end{multline*}
All that remains is to take the complex conjugate. In this way, we conclude from \Cref{conj-exterior} that $ \gamma(\pi,\wedge^2,\fieldCharacter) =\complexConjugate{ \gamma(\contragredient{\pi},\wedge^2,\fieldCharacter^{-1})}=-q^{-\frac{m}{2}}$, as desired.
 \end{proof}
 
 We now spell out functional equations for $\gamma(\pi,\wedge^2,\fieldCharacter)$ of an irreducible cuspidal representation $\pi$ of ${\GL}_n(\fFiniteField)$.
 
 \begin{theorem}[Functional equation]
 \label{Exterior-Func}
  Let $\pi$ be an irreducible cuspidal representation of $\GL_{n}(\fFiniteField)$.
 \begin{enumerate}[label=$(\mathrm{\arabic*})$]
\item\label{Exterior-Func1}
If $\pi$ does not admits a Jacquet--Shalika vector, then we have
\[ 
\gamma(\pi,\wedge^2,\fieldCharacter)   \gamma(\contragredient{\pi},\wedge^2,\fieldCharacter^{-1}) =1
\quad \text{and} \quad \abs{\gamma(\pi,\wedge^2,\fieldCharacter)}=1.
  \]
\item\label{Exterior-Func2} If $n=2m$ and $\pi$ admits a Jacquet--Shalika vector, then we have
 \[
 \gamma(\pi,\wedge^2,\fieldCharacter)   \gamma(\contragredient{\pi},\wedge^2,\fieldCharacter^{-1}) =q^{-m} \quad \text{and} \quad
 \abs{\gamma(\pi,\wedge^2,\fieldCharacter)}=q^{-\frac{m}{2}}. 
 \]
 \end{enumerate}
\end{theorem}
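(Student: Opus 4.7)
The plan is to parallel the proof of the analogous Asai statement \Cref{Asai-Func}. The functional equation splits naturally along the dichotomy of whether $\pi$ admits a Jacquet--Shalika vector; the latter can only occur when $n=2m$ is even and has already been handled explicitly by \Cref{JS-distinct-Gamma}. Hence part \ref{Exterior-Func2} is obtained for free by squaring the equality $\gamma(\pi,\wedge^2,\fieldCharacter) = \gamma(\contragredient{\pi},\wedge^2,\fieldCharacter^{-1}) = -q^{-m/2}$ and taking absolute values, so the substantive content lies entirely in part \ref{Exterior-Func1}.

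For part \ref{Exterior-Func1}, the strategy is to establish a double-duality identity for Jacquet--Shalika sums in direct analogy with \Cref{Dual-Flicker-variant} and \cite[Proposition 2.12]{YZ20}. Denoting the left and right sides of \eqref{Jacquet-Shalika-even} by $\Psi_{\wedge^2}(W_{\pi},\phi)$ and $\check{\Psi}_{\wedge^2}(W_{\pi},\phi)$ respectively, I would perform carefully chosen substitutions on the $g$- and $X$-summations (and on the $Z$-summation in the odd case \eqref{Jacquet-Shalika-odd}), the key ones being of the form $g \mapsto \weyllong_m\,{^tg^{-1}}\weyllong_m$ and $X \mapsto \weyllong_m\,{^tX}\weyllong_m$, in order to recast $\check{\Psi}_{\wedge^2}(W_{\pi},\phi)$ as a Jacquet--Shalika sum for the pair $(\contragredient{\pi},\fieldCharacter^{-1})$ evaluated at $(\check{W}_{\pi},\ifourierTransform(\phi))$. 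Iterating the defining functional equation, first for $(\pi,\fieldCharacter,\phi)$ and then for $(\contragredient{\pi},\fieldCharacter^{-1},\fourierTransform(\phi))$, together with the Fourier inversion $\ifourierTransform\circ\fourierTransform = \mathrm{id}$, then collapses to
\[
\gamma(\pi,\wedge^2,\fieldCharacter)\,\gamma(\contragredient{\pi},\wedge^2,\fieldCharacter^{-1}) = 1.
\]
The assertion $\abs{\gamma(\pi,\wedge^2,\fieldCharacter)}=1$ follows by coupling this identity with \Cref{conj-exterior}, which supplies $\gamma(\contragredient{\pi},\wedge^2,\fieldCharacter^{-1}) = \complexConjugate{\gamma(\pi,\wedge^2,\fieldCharacter)}$, whence $\abs{\gamma(\pi,\wedge^2,\fieldCharacter)}^2 = 1$.

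The main obstacle is working out the substitution details in the odd case \eqref{Jacquet-Shalika-odd}: the appearance of the longest Weyl element $\begin{pmatrix} & 1 \\ 1_{2m} & \end{pmatrix}$ on the right-hand side together with the additional Fourier variable $Z \in \fFiniteField^m$ makes the change of variables considerably more delicate than in the even case. One must track carefully how the unipotent block encoded by $X$, the block-diagonal piece $\mathrm{diag}(g,g,1)$, and the $Z$-summation interact under the involution, and verify that the $\fieldCharacter^{-1}(\Trace X)$ factor is preserved while the indicator data transforms according to $\fourierTransform$. The even case should fall out of essentially the same manipulations already used in \cite[Proposition 2.12]{YZ20}, so the bulk of the technical work is confined to the odd case.
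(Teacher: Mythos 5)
Your proposal is correct and follows essentially the same route as the paper: the paper likewise treats part~(2) as an immediate consequence of \Cref{JS-distinct-Gamma}, and for part~(1) it simply cites \cite[Remark 2.20]{YZ20}, whose content is precisely the double-duality identity you sketch together with the complex-conjugation relation from \Cref{conj-exterior}. One small housekeeping point: \Cref{conj-exterior} as stated in the paper is formulated only for $\GL_{2m}(\fFiniteField)$, so when you invoke $\gamma(\contragredient{\pi},\wedge^2,\fieldCharacter^{-1})=\complexConjugate{\gamma(\pi,\wedge^2,\fieldCharacter)}$ to conclude $\abs{\gamma(\pi,\wedge^2,\fieldCharacter)}=1$ in the odd case $n=2m+1$, you are implicitly relying on the fuller version of that conjugation relation recorded in \cite[Remark 2.20]{YZ20} rather than on what is literally displayed in the paper; this is a citation detail rather than a genuine gap.
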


\begin{proof}
Part \ref{Exterior-Func1} has been done in \cite[Remark 2.20]{YZ20}, and Part \ref{Exterior-Func2} is straightforward from \Cref{JS-distinct-Gamma}.
\end{proof}

\subsection{Jacquet--Shalika integrals and close field theory}
Let $\rho$ be level zero supercuspidal representations of $\GL_n(F)$ with its attached Whittaker models $\whittaker(\rho,\psi_F)$.  
Let $\fieldCharacter_\fLocalField^{\flat}$ be a non-trivial additive character of $F$ of level zero, that is trivial on $\mathfrak{o}$ but not on $\mathfrak{p}^{-1}$.
For each $W_{\rho} \in \mathcal{W}(\rho,\fieldCharacter_\fLocalField)$ and $\Phi \in \mathcal{S}(F^m)$, we define Jacquet--Shalika integrals $J(s,W_{\rho},\Phi)$ by
\begin{multline*}
  \int_{N_m(F) \backslash {\rm GL}_m(F)}  \int_{\mathcal{N}_m(F) \backslash \mathcal{M}_{m}(F)} \int_{F^m} W_{\rho} \left( \sigma_{2m+1} \begin{pmatrix} 1_m &X& \\ &1_m& \\ &&1 \end{pmatrix} \begin{pmatrix} g && \\ &g& \\ &&1 \end{pmatrix} \begin{pmatrix} 1_m && \\ &1_m& \\ &z&1 \end{pmatrix} \right) \\
  \psi_\fLocalField^{-1}(\Trace X)  \Phi(z) |\det g|^{s-1}\, dz dX dg
\end{multline*}
in the odd case $n=2m+1$ and 
\[
 \int_{N_m(F) \backslash {\rm GL}_m(F)} \int_{\mathcal{N}_m(F) \backslash \mathcal{M}_{m}(F)} W_{\rho} \left( \sigma_{2m} \begin{pmatrix} 1_m & X \\ & 1_m  \end{pmatrix} \begin{pmatrix} g & \\ & g \end{pmatrix} \right)  \psi_\fLocalField^{-1}(\Trace X) \Phi(e_mg) |\det g|^s \, dX dg
\]
in the even case $n=2m$. These integrals converge absolutely for ${\rm Re}(s)$ sufficiently large, and it defines a rational function in $\mathbb{C}(q^{-s})$.
The exterior square gamma factor $\Gamma(s,\rho,\wedge^2,\fieldCharacter_\fLocalField)$ defined as a proportionality. The exterior square $\gamma$-factor is a rational function in $\mathbb{C}(q^{-s})$
satisfying 
\begin{equation}
\label{JS-local-functional-equ}
J(1-s,\check{\rho}(\tau_n)\check{W}_{\rho},\gfourierTransform{\fieldCharacter_\fLocalField}(\Phi))= \Gamma(s,\rho,\wedge^2,\fieldCharacter_\fLocalField)J(s,W_{\rho},\Phi),
\end{equation}
where $\tau_n$ is the matrix $\displaystyle \begin{pmatrix} & 1_m \\ 1_m & \end{pmatrix}$ if $n=2m$, and the matrix $\displaystyle \begin{pmatrix} & 1_m & \\ 1_m && \\ &&1 \end{pmatrix}$
 if $n=2m+1$. The {\it local exterior square $L$-function} $L(s,\rho,\wedge^2)$ is the generator of the $\mathbb{C}[q^{\pm s}]$-fractional ideal of Jacquet--Shalika integrals $J(s,W_{\rho},\Phi)$
 with $W_{\rho} \in \mathcal{W}(\rho,\fieldCharacter_\fLocalField)$ and $\Phi \in \mathcal{S}(F^m)$ normalized to be of the form $P(q^{-s})$ for some $P(X) \in \mathbb{C}[X]$ satisfying $P(0)=1$.

A principal series representation of the form $\Sigma={\rm Ind}_{B_n(F)}^{{\rm GL}_n(F)}(\mu_1 \otimes \dotsm \otimes \mu_n )$ is said to be {\it spherical} if it has a $K_n$-fixed vector.
It is worthwhile to mention that  $\Sigma$ is a full induced representation from the Borel subgroup $B_n(F)$ of unramified characters  $\mu_i$ of $F^{\times}$.
Here unramified means that each $\mu_i$ is invariant under  the maximal compact subgroup $\mathfrak{o}^{\times}$ of $F^{\times}$.
Such a spherical representation must have a one-dimensional space of Whittaker functionals $\Lambda \in \Hom_{\UnipotentSubgroup_n(\fLocalField)}(\Sigma\restriction_{\UnipotentSubgroup_n(\fLocalField)},\fieldCharacter_\fLocalField^{\flat})$. The map $v \mapsto \Lambda(\Sigma(\cdot)\cdot v)$ a priori need not to be injective, so that the Whittaker model $\mathcal{W}(\Sigma,\fieldCharacter_\fLocalField^{\flat})$
consisting of Whittaker functions on ${\rm GL}_n(F)$ of the form $W_{\Sigma}(g):=\Lambda(\Sigma(g)\cdot v)$ may
only be a model of a quotient of $\Sigma$. However, if $\rho$ is an irreducible generic representation of ${\rm GL}_n(F)$, then $\rho$ is isomorphic to its unique Whittaker model $\mathcal{W}(\rho,\psi_{\fLocalField})$,
 which is the image of $V_{\rho}$ under the map $v \mapsto  \Lambda(\rho(\cdot)\cdot v)$. According to \cite[Theorem 2.2]{Jo24}, the local functional equation \eqref{JS-local-functional-equ} can be extended to
 $\rho$ and $\Sigma$, which is sufficient for applications therein.

\begin{lemma}
\label{spherical-rep-JS}
Let $F$ be a local function field. Let $\rho$ be an irreducible generic subquotient of a spherical representation ${\rm Ind}_{B_n(F)}^{{\rm GL}_n(F)}(\mu_1 \otimes \dotsm \otimes \mu_n )$. Then we have
\[
 \Gamma(s,\rho,\wedge^2,\fieldCharacter^{\flat}_\fLocalField)=\prod_{1\leq j < k \leq n} \Gamma(s,\mu_j \times \mu_k,\fieldCharacter^{\flat}_\fLocalField).
\]
\end{lemma}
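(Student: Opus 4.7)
The plan is to deduce the lemma from the well-known multiplicativity of exterior square $\gamma$-factors under parabolic induction, combined with the triviality of the exterior square on one-dimensional representations. First I would invoke the following multiplicativity statement for Jacquet--Shalika $\gamma$-factors: if $\Sigma = \Ind_{P(\fLocalField)}^{\GL_n(\fLocalField)}(\sigma_1 \otimes \sigma_2)$, where $P$ is the standard parabolic of type $(n_1,n_2)$ with $n_1+n_2=n$ and $\sigma_i$ is an irreducible generic representation of $\GL_{n_i}(\fLocalField)$, then
\[
\Gamma(s,\Sigma,\wedge^2,\fieldCharacter_\fLocalField^{\flat}) = \Gamma(s,\sigma_1,\wedge^2,\fieldCharacter_\fLocalField^{\flat})\,\Gamma(s,\sigma_2,\wedge^2,\fieldCharacter_\fLocalField^{\flat})\,\Gamma(s,\sigma_1\times\sigma_2,\fieldCharacter_\fLocalField^{\flat}).
\]
This is established in the p-adic setting by Matringe (with Cogdell--Matringe completing the even/odd case analysis) by unfolding the Jacquet--Shalika integral against a flat section of the induced representation and analyzing the double coset decomposition of the Shalika support; the whole argument is purely formal and only invokes uniqueness of Whittaker models, Bruhat decomposition, and manipulation of absolutely convergent integrals in a half-plane followed by meromorphic continuation, so it carries over to local function fields without modification.

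Next I would iterate this decomposition along the filtration of $B_n(\fLocalField)$-induction so as to reduce the problem to characters. Since each $\mu_i$ is a character of $\GL_1(\fLocalField)$, the exterior square $\wedge^2 \mu_i$ is the zero representation, so that the Jacquet--Shalika integral degenerates (the integration over $\mathcal{N}_m\backslash\mathcal{M}_m$ and the Shalika data collapse) and one has $\Gamma(s,\mu_i,\wedge^2,\fieldCharacter_\fLocalField^{\flat})=1$ as a convention consistent with the local Langlands parametrisation. Iterating the two-step multiplicativity along the composition series of $B_n$-induction then yields
\[
\Gamma(s,\Sigma,\wedge^2,\fieldCharacter_\fLocalField^{\flat}) = \prod_{1\le j<k\le n} \Gamma(s,\mu_j\times\mu_k,\fieldCharacter_\fLocalField^{\flat}),
\]
where the cross terms $\Gamma(s,\mu_j\times\mu_k,\fieldCharacter_\fLocalField^{\flat})$ are exactly the Rankin--Selberg $\gamma$-factors arising at every induction step.

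Finally, I would invoke the fact cited in the paragraph preceding the lemma: the Jacquet--Shalika integral and its functional equation extend to any irreducible generic representation $\rho$ sharing a Whittaker model with $\Sigma$. Since an irreducible principal series has at most one generic irreducible subquotient (up to isomorphism), and the Whittaker model factors through this subquotient, we obtain $\whittaker(\rho,\fieldCharacter_\fLocalField^{\flat})=\whittaker(\Sigma,\fieldCharacter_\fLocalField^{\flat})$, whence $\Gamma(s,\rho,\wedge^2,\fieldCharacter_\fLocalField^{\flat})=\Gamma(s,\Sigma,\wedge^2,\fieldCharacter_\fLocalField^{\flat})$, yielding the claimed identity.

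The main obstacle is not the final identification step (which is essentially formal) but rather verifying that Matringe's multiplicativity theorem admits a clean positive-characteristic analogue. The subtlety is that the unfolding involves an inner Mellin transform of a section of $\Sigma$ along the Shalika subgroup which must be absolutely convergent in a common right half-plane for all characters $\mu_i$; one needs to argue, uniformly in characteristic, that the inducing data can be twisted by a parameter to lie in this domain and then meromorphically continue the resulting rational function in $\mathbb{C}(q^{-s})$. If one instead prefers to circumvent this issue, an alternative would be to use the Casselman--Shalika formula for the spherical Whittaker function of $\Sigma$ and compute both sides of the asserted equality explicitly in terms of the Satake parameters $\mu_i(\varpi)$; this avoids the induction step but requires an explicit determination of the Jacquet--Shalika integral on a generator of the spherical line.
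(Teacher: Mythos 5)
Your primary route---iterated multiplicativity of exterior square $\gamma$-factors under parabolic induction, combined with $\Gamma(s,\mu_i,\wedge^2,\fieldCharacter^{\flat}_\fLocalField)=1$ for $\GL_1$---is genuinely different from the paper's. The paper does not invoke multiplicativity at all; after the identification $\Gamma(s,\rho,\wedge^2,\fieldCharacter^{\flat}_\fLocalField)=\Gamma(s,\Sigma,\wedge^2,\fieldCharacter^{\flat}_\fLocalField)$ via uniqueness of Whittaker functionals (the same first step as yours), it evaluates both sides of the Jacquet--Shalika functional equation directly on the normalized spherical Whittaker function $W^{\flat}_{\Sigma}$ and the characteristic function $\Phi^{\flat}$ of $\integersRing^m$, reading off from the unramified computation of \cite[\S 2]{JS88} the explicit value $J(s,W^{\flat}_{\Sigma},\Phi^{\flat})=\prod_{i}L(s,\mu_i,\wedge^2)\prod_{j<k}L(s,\mu_j\mu_k)$ and its dual, then taking the ratio; since each $L(s,\mu_i,\wedge^2)=1$, this ratio is precisely $\prod_{j<k}\Gamma(s,\mu_j\times\mu_k,\fieldCharacter^{\flat}_\fLocalField)$. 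That is exactly the ``Casselman--Shalika / direct evaluation on the spherical line'' alternative you sketch in your final paragraph. The advantage of the paper's route is precisely the obstacle you flag yourself: the Matringe and Cogdell--Matringe multiplicativity theorem for Jacquet--Shalika integrals is not cited by the paper in the function-field setting, and the assertion that its proof ``carries over without modification'' in positive characteristic would itself need to be substantiated before it can be used here. The unramified computation is a single, self-contained calculation from \cite{JS88} valid over any nonarchimedean local field, so it is the lighter and safer hypothesis; both routes, once validated, yield the same final formula.
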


\begin{proof}
Let us set $\Sigma={\rm Ind}_{B_n(F)}^{{\rm GL}_n(F)}(\mu_1 \otimes \dotsm \otimes \mu_n )$. 
Let $V_{\rho}$ and $V_{\Sigma}$ denote their underlying space of $\rho$ and $\Sigma$, respectively.
By the uniqueness of Whittaker functionals, 
a non-zero Whittaker functional on $V_{\rho}$ induces a non-zero Whittaker functional on $V_{\Sigma}$. As this representation has a unique Whittaker functional,
this must be it and we conclude that $ \Gamma(s,\rho,\wedge^2,\fieldCharacter^{\flat}_\fLocalField)= \Gamma(s,\Sigma,\wedge^2,\fieldCharacter^{\flat}_\fLocalField)$.
For such a spherical representation $\Sigma$, the subspace of spherical vectors must be one-dimensional and we normalized the spherical Whittaker function $W_{\Sigma}^{\flat}$ in the Whittaker model $\mathcal{W}(\Sigma,,\fieldCharacter^{\flat}_\fLocalField)$
so that $W_{\Sigma}^{\flat}(1_n)=1$. Upon taking $W_{\Sigma}^{\flat} \in \mathcal{W}(\Sigma,\fieldCharacter^{\flat}_\fLocalField)$ and $\Phi^{\flat} \in \mathcal{S}(F^{m})$ of a characteristic function on $\mathfrak{o}^m$ and using \cite[\S 2]{JS88}, we have the identity
\begin{multline*}
  \prod_{i=1}^n L(1-s,\mu^{-1}_i,\wedge^2)\prod_{1\leq j < k \leq n} L(1-s,\mu^{-1}_j \times \mu^{-1}_k)
  =J(1-s,\check{\Sigma}(\tau_n)\check{W}^{\flat}_{\Sigma},\gfourierTransform{\fieldCharacter^{\flat}_\fLocalField}(\Phi^{\flat}))\\
  =\Gamma(s,\Sigma,\wedge^2,\fieldCharacter^{\flat}_\fLocalField)J(s,W_{\Sigma}^{\flat},\Phi^{\flat})=\Gamma(s,\Sigma,\wedge^2,\fieldCharacter^{\flat}_\fLocalField) \prod_{i=1}^n L(s,\mu_i,\wedge^2)\prod_{1\leq j < k \leq n} L(s,\mu_j \times \mu_k)
\end{multline*}
from which the result we seek for follows.
\end{proof}

We let $k$ denote a global function field with field of constant $\mathbb{F}_q$ and ring of ad\`{e}les $\mathbb{A}_k$.
One of the most powerful tool for proving local factors is  the standard globalization due to Lomel\'{i} \cite[Lemma 3.1 and Remark 3.2]{Lom17} (cf. \cite[Lemma 9.28]{AKMSS21}).

\begin{theorem} [Lomel\'{i}]
\label{Lomeli-Globalization}
Let $\rho$ be a level zero unitary supercuspidal representation of ${\rm GL}_n(F)$ over a local function field $F$.
There is a global field $k$ with a set of three places $S=\{ v_0,v_1,v_{\infty}\}$ such that $k_{v_0} \cong F$.
There exists an irreducible cuspidal automorphic representation $\Pi=\otimes'_v \Pi_v$ of ${\rm GL}_n(\mathbb{A}_k)$ satisfying
the following properties:
 \begin{enumerate}[label=$(\mathrm{\roman*})$]
\item $\Pi_{v_0} \cong \rho$;
\item $\Pi_v$ is an irreducible unramified principal series representation at every $v \notin S$;
\item $\Pi_{v_1}$ and $\Pi_{v_{\infty}}$ are irreducible quotients of unramified principal series representations.
\item If $\rho$ is generic, then $\Pi$ is globally generic.
\end{enumerate}
\end{theorem}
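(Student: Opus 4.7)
The plan is to produce $\Pi$ via a Poincar\'e series construction in positive characteristic, which is substantially cleaner than the number field analogue because level zero supercuspidal representations in characteristic $p$ admit simultaneous global control at all but finitely many places.

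First, I would set up the global datum. Take $k$ to be the function field of a smooth projective geometrically connected curve $X/\mathbb{F}_q$ with a distinguished closed point $v_0 \in X$ such that the completion $k_{v_0}$ is isomorphic to $F$. Every local function field over $\mathbb{F}_q$ arises in this way, and by varying $X$ we have ample freedom to arrange two further distinct closed points $v_1, v_\infty \in X$, giving $S = \{v_0, v_1, v_\infty\}$.

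Second, I would form a test function $f = \prod_v f_v$ on $\GL_n(\mathbb{A}_k)$, taking $f_{v_0}$ to be a matrix coefficient of $\rho$ (compactly supported modulo the center by supercuspidality), $f_v$ the normalized characteristic function of $K_{n,v} = \GL_n(\mathfrak{o}_v)$ at each $v \notin S$, and $f_{v_1}, f_{v_\infty}$ Iwahori-level test functions projecting onto prescribed unramified principal series quotients. The associated Poincar\'e series
\[
  P(f)(g) = \sum_{\gamma \in \GL_n(k)} f(\gamma g),
\]
suitably twisted by the central character of $\rho$ so the sum converges modulo the center, defines an automorphic form. Crucially, every constant term along a proper parabolic vanishes because $f_{v_0}$ is a matrix coefficient of a supercuspidal representation, so $P(f)$ is cuspidal, and $P(f) \neq 0$ by a standard inner product computation. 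Decomposing $P(f)$ spectrally and picking an irreducible constituent $\Pi = \otimes'_v \Pi_v$ on which $f$ acts non-trivially forces $\Pi_{v_0} \cong \rho$ (by orthogonality of matrix coefficients), $\Pi_v$ spherical for $v \notin S$ (by $K_{n,v}$-invariance of $f_v$), and $\Pi_{v_1}, \Pi_{v_\infty}$ the prescribed irreducible quotients of unramified principal series.

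For property (iv), if $\rho$ is generic one replaces the matrix coefficient $f_{v_0}$ by a function realizing the Whittaker functional of $\rho$ and computes the Whittaker--Fourier coefficient $W_\Pi$ of $P(f)$ explicitly; its non-vanishing propagates to global genericity of $\Pi$ by the standard Jacquet--Piatetski-Shapiro--Shalika Whittaker-model argument. The main obstacle is calibrating $f_{v_1}$ and $f_{v_\infty}$ so that the resulting local components are genuinely irreducible quotients of \emph{unramified} principal series---rather than arbitrary Jacquet module quotients---while simultaneously preserving cuspidality and (when required) global genericity. Lomel\'i's construction resolves this by working at Iwahori level with Langlands quotients of tempered unramified principal series, exploiting that in positive characteristic the Satake parameters at $v_1$ and $v_\infty$ can be chosen in sufficiently general position to avoid reducibility loci.
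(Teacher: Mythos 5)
The paper does not contain a proof of this statement: it is imported verbatim from Lomel\'{\i} \cite[Lemma 3.1, Remark 3.2]{Lom17} (cf.\ \cite[Lemma 9.28]{AKMSS21}), and the author only cites the source and uses the result downstream in Theorems \ref{exter-positive} and \ref{main-BF-factor}. So there is no ``paper's own proof'' to compare against line by line; what can be assessed is whether your sketch is consistent with the cited source, and it is --- Lomel\'{\i}'s globalization is exactly a Poincar\'e series argument of the shape you describe, with a compactly-supported-mod-center matrix coefficient at the place to be controlled, spherical Hecke projectors away from $S$, and Iwahori-level test functions at the two auxiliary places. The vanishing of constant terms from supercuspidality of $f_{v_0}$, the Schur-orthogonality argument forcing $\Pi_{v_0}\cong\rho$, and the non-vanishing of the Whittaker--Fourier coefficient in the generic case are all correctly identified.

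Two small corrections. First, you do not need to ``vary $X$'': taking $k=\mathbb{F}_q(t)$, the projective line always has at least three degree-one closed points, so one may fix $S=\{v_0,v_1,v_\infty\}\subset\mathbb{P}^1(\mathbb{F}_q)$ from the outset with $k_{v_0}\cong\mathbb{F}_q((t))\cong F$. Second, the obstacle you flag at the end --- calibrating $f_{v_1}$, $f_{v_\infty}$ so that $\Pi_{v_1}$, $\Pi_{v_\infty}$ are genuinely \emph{quotients} of unramified principal series --- is not actually an obstacle. By the Borel--Casselman theorem, $\Pi(f_{v_1})\neq 0$ for an Iwahori-biinvariant $f_{v_1}$ forces $\Pi_{v_1}$ to be Iwahori-spherical, and for $\GL_n$ every irreducible Iwahori-spherical representation is the Langlands quotient of a standard module induced from an Iwahori-spherical tempered representation of a Levi, itself fully induced from an unramified unitary character of the torus; composing, $\Pi_{v_1}$ is automatically a quotient of some unramified principal series, with no need to steer Satake parameters away from reducibility loci. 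The same remark applies at $v_\infty$. With these adjustments your proposal matches the reference's method and is correct.
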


The aforementioned globalization is required to prove a purely local statement, namely, that 
local exterior square $\gamma$-factors $\Gamma(s,\rho,\wedge^2,\fieldCharacter_\fLocalField)$ via Rankin-Selberg methods due to Jacquet and Shalika \cite{JS88}
agrees with those $\Gamma_{\rm LS}(s,\rho,\wedge^2,\fieldCharacter_\fLocalField)$ via Langlands-Shahidi methods \cite{HL11} in positive characteristic at hand.
We eventually generalize the equality to all characteristic, notably, zero in \Cref{equal-zero-exterior}.

\begin{theorem}
\label{exter-positive}
Let $\rho$ be a level zero supercuspidal representation of ${\rm GL}_n(F)$ over a local function field $F$. Then we have
\[
  \Gamma(s,\rho,\wedge^2,\fieldCharacter_\fLocalField)= \Gamma_{\rm LS}(s,\rho,\wedge^2,\fieldCharacter_\fLocalField).
\]
\end{theorem}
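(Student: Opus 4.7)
The plan is to globalize $\rho$ via \Cref{Lomeli-Globalization} and compare the two global functional equations satisfied by the exterior square $L$-function of the resulting automorphic representation, reducing the theorem to a local matching at the unramified and spherical auxiliary places. First, I apply \Cref{Lomeli-Globalization} to produce a global function field $k$, a set $S=\{v_0,v_1,v_\infty\}$ of places with $k_{v_0}\cong \fLocalField$, and a globally generic cuspidal automorphic representation $\Pi=\otimes_v'\Pi_v$ of $\GL_n(\mathbb{A}_k)$ such that $\Pi_{v_0}\cong\rho$, such that $\Pi_v$ is an irreducible unramified principal series for every $v\notin S$, and such that $\Pi_{v_1}$ and $\Pi_{v_\infty}$ are irreducible generic quotients of unramified principal series. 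I fix a non-trivial additive character of $k\backslash\mathbb{A}_k$ whose $v_0$-component is $\fieldCharacter_\fLocalField$.

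Next, both the Jacquet--Shalika integral representation and the Langlands--Shahidi construction attach to $\Pi$ a completed exterior square $L$-function satisfying the global functional equation
\[
L(s,\Pi,\wedge^2)=\varepsilon(s,\Pi,\wedge^2)\,L(1-s,\check{\Pi},\wedge^2),
\]
and the global $\varepsilon$-factor decomposes as a product of local $\gamma$-factors at all places of $k$. Forming the ratio of the two global functional equations reduces the theorem to establishing the local identity $\Gamma(s,\Pi_v,\wedge^2,\fieldCharacter_{k,v})=\Gamma_{\rm LS}(s,\Pi_v,\wedge^2,\fieldCharacter_{k,v})$ for every $v\neq v_0$.

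At unramified places $v\notin S$, both $\gamma$-factors are explicit rational functions in $q_v^{-s}$ computed directly from the Satake parameters of $\Pi_v$ and hence agree. At each of $v\in\{v_1,v_\infty\}$, \Cref{spherical-rep-JS} expresses $\Gamma(s,\Pi_v,\wedge^2,\fieldCharacter_{k,v})$ as a product $\prod_{j<k}\Gamma(s,\mu_{j,v}\times\mu_{k,v},\fieldCharacter_{k,v})$ of Rankin--Selberg $\gamma$-factors of the unramified characters inducing the ambient spherical representation, and Shahidi's multiplicativity of $\gamma$-factors gives the same product expression for $\Gamma_{\rm LS}(s,\Pi_v,\wedge^2,\fieldCharacter_{k,v})$; the matching of Rankin--Selberg and Langlands--Shahidi $\gamma$-factors for pairs of characters is classical. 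Cancelling the common factors from both sides of the global identity isolates the equality at $v_0$, which is the desired statement.

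The principal obstacle I expect is to justify the \emph{quotient} step at the two spherical places $v_1$ and $v_\infty$: the standard multiplicativity statements on both sides a priori are phrased for full induced principal series, whereas $\Pi_{v_1}$ and $\Pi_{v_\infty}$ are irreducible generic quotients of such principal series. \Cref{spherical-rep-JS} handles this for the Jacquet--Shalika $\gamma$-factor through the uniqueness of Whittaker functionals, and the analogous statement for the Langlands--Shahidi $\gamma$-factor is built into its definition via intertwining operators, since an irreducible generic quotient and its ambient principal series share the same Whittaker model. Once this quotient step is secured, the remainder of the argument is a routine cancellation in the Euler product, performed independently of any knowledge at $v_0$ itself.
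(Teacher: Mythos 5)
Your proposal follows essentially the same route as the paper: globalize $\rho$ with Lomel\'{\i}'s theorem, compare the Jacquet--Shalika and Langlands--Shahidi global functional equations, and isolate $v_0$ after matching $\gamma$-factors at the auxiliary places $v_1,v_\infty$ via the spherical identity (\Cref{spherical-rep-JS}) together with Shahidi multiplicativity. The one step you omit is the preliminary reduction to a \emph{unitary} level zero supercuspidal via an unramified twist, which is needed before invoking \Cref{Lomeli-Globalization} since that theorem is stated for unitary representations; this is a routine fix and does not affect the structure of your argument.
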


\begin{proof}
Twisting by an unramified character does not affect the conclusion, so there is no harm to assume that $\pi$ is unitary (cf. \cite[\S 6.6-(vii)]{Lom16}).
Applying Theorem \ref{Lomeli-Globalization} to the level zero supercuspidal representation, there are a global field $k$ with three places $v_0, v_1,$ and $v_{\infty}$
such that $k_{v_0} \cong F$, and an irreducible unitary cuspidal automorphic representation $\Pi$ of ${\rm GL}_n(\mathbb{A}_k)$ with the required properties in \Cref{Lomeli-Globalization}. 
We take a non-trivial additive character $\Psi$ of $\mathbb{A}_k \slash k$, and assume, as we may, that $\Psi_{v_0}=\psi_F$.
The global functional equation via the Langlands-Shahidi method can be read from \cite[\S4-(vi)]{HL11} as
\begin{equation}
\label{LS-global-exterior}
 L^{S}(s,\Pi,\wedge^2)=\Gamma_{\rm LS}(s,\Pi_{v_0},\wedge^2,\Psi_{v_0}) \prod_{v \in S-\{ v_0\}} \Gamma_{\rm LS}(s,\Pi_v,\wedge^2,\Psi_v)  L^{S}(1-s,\check{\Pi},\wedge^2).
\end{equation}
Since for $v \notin S$ we know that $\Pi_v$ and $\Psi_v$ are unramified so that $\varepsilon(s,\Pi_v,\wedge^2,\Psi_v) \equiv 1$,
\cite[Theorem 3.3]{Jo24} is rephrased as
\begin{equation}
\label{RS-global-exterior}
 L^{S}(s,\Pi,\wedge^2)=\Gamma(s,\Pi_{v_0},\wedge^2,\Psi_{v_0}) \prod_{v \in S-\{ v_0\}} \Gamma(s,\Pi_v,\wedge^2,\Psi_v) L^{S}(1-s,\check{\Pi},\wedge^2).
\end{equation}
Applying Lemma \ref{spherical-rep-JS} gives us $\Gamma_{\rm LS}(s,\Pi_v,\wedge^2,\Psi_v)=\Gamma(s,\Pi_v,\wedge^2,\Psi_v)$ for $v \in S-\{ v_0\}$.
In order to do so, the result that we seek for is immediate, once we divide \eqref{LS-global-exterior} by \eqref{RS-global-exterior}.
\end{proof}

Let $\Gamma(s,\wedge^2 ( \varphi),\fieldCharacter_\fLocalField)$ denote the Artin exterior square $\gamma$-factor and $\Gamma(s, \varphi,\fieldCharacter_\fLocalField)$  the Artin standard $\gamma$-factor. We then have a following result.

\begin{proposition}  \cite[Proposition 6.2]{GL15}
\label{Deligne-corres}
For $(F,\varphi,\fieldCharacter_\fLocalField)$ that is {\rm Del}-{\it associated to} $(F',\varphi',\fieldCharacter'_{\fLocalField'})$, we have
\[
  \Gamma(s,\wedge^2 ( \varphi),\psi_F)=\Gamma(s,\wedge^2 ( \varphi'),\fieldCharacter_{\fLocalField'}).
\]
\end{proposition}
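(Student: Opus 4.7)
The plan is to deduce this compatibility from two ingredients implicit in the excerpt: the functoriality of the Deligne correspondence under linear-algebraic operations on representations, and Deligne's own theorem that his correspondence preserves Artin $\gamma$-factors.

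First I would observe that the exterior square preserves the class of representations eligible for the Deligne correspondence. If $\varphi : W_F \to \GL(V)$ is trivial on $P_F$, then $\wedge^2(\varphi) : W_F \to \GL(\wedge^2 V)$ is also trivial on $P_F$, because an element acting as the identity on $V$ certainly acts as the identity on the quotient $V \wedge V$. Frobenius semisimplicity is likewise preserved, since if $\varphi({\rm Fr})$ is diagonalizable then so is $\wedge^2(\varphi)({\rm Fr})$. The same discussion applies to $\varphi'$ over $F'$, so both $\wedge^2(\varphi)$ and $\wedge^2(\varphi')$ lie in the domain of the bijection \eqref{Del-corr}.

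Second I would invoke functoriality of the Deligne correspondence with respect to exterior squares. The bijection ${\rm Del}$ arises from a canonical isomorphism $W_F/P_F \cong W_{F'}/P_{F'}$ of tame quotients, obtained from the $1$-closeness hypothesis together with a compatible choice of Frobenius lift and generator of tame inertia. The induced equivalence between the two categories of Frobenius semisimple representations trivial on wild inertia is thus a symmetric monoidal $\mathbb{C}$-linear equivalence. Since $\wedge^2$ is a Schur functor, intrinsically defined in any such category, one has
\[
  {\rm Del}(\wedge^2(\varphi)) \;=\; \wedge^2({\rm Del}(\varphi)) \;=\; \wedge^2(\varphi').
\]
In particular the triplet $(F,\wedge^2(\varphi),\psi_F)$ is ${\rm Del}$-associated to $(F',\wedge^2(\varphi'),\psi'_{F'})$.

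Third, I would apply the foundational theorem of Deligne recalled in the excerpt (Property (iii) of \S 2.1 of \cite{Gan15}): whenever $(F,\sigma,\psi_F)$ is ${\rm Del}$-associated to $(F',\sigma',\psi'_{F'})$, their Artin $\gamma$-factors coincide, i.e.\ $\Gamma(s,\sigma,\psi_F) = \Gamma(s,\sigma',\psi'_{F'})$. Specializing to $\sigma=\wedge^2(\varphi)$ and $\sigma'=\wedge^2(\varphi')$ gives the desired identity. The main obstacle is the monoidal compatibility in the second step: one needs to verify that the explicit construction of ${\rm Del}$ is genuinely a tensor equivalence, so that every Schur functor (in particular $\wedge^2$) commutes with it. This is essentially built into Deligne's construction via the isomorphism of tame quotients, but it is the only non-formal point; once it is in place, everything else is bookkeeping.
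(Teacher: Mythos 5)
The paper does not give its own proof of this proposition; it is stated with a direct citation to Ganapathy--Lomel\'{\i} \cite[Proposition 6.2]{GL15}, and the surrounding text only invokes it. Your reconstruction is correct and is essentially the argument that underlies the cited result: (i) the Deligne correspondence is obtained by pulling back along the isomorphism of tame quotients $W_F/P_F \cong W_{F'}/P_{F'}$, hence is a $\mathbb{C}$-linear tensor equivalence and commutes with any Schur functor, in particular $\wedge^2$, so ${\rm Del}(\wedge^2(\varphi))=\wedge^2({\rm Del}(\varphi))=\wedge^2(\varphi')$; (ii) Deligne's theorem (Property (iii) of \S 2.1 of \cite{Gan15}, going back to \cite[Proposition 3.7.1]{Del84}) then gives the equality of $\gamma$-factors for the ${\rm Del}$-associated pair $(\wedge^2(\varphi),\wedge^2(\varphi'))$. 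Your flagging of the monoidal compatibility of ${\rm Del}$ as the only non-formal step is accurate, and it holds precisely because ${\rm Del}$ is literally restriction of scalars along a group isomorphism rather than some more indirect correspondence. Nothing is missing; this matches what the cited reference establishes.
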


Let $\rho(\varphi)$ be a level zero supercuspidal representation of ${\rm GL}_n(F)$ obtained from a tamely ramified Weil representation $\varphi$ of $W_F$ of degree $n$ 
via the local Langlands correspondence (LLC). 
The identity 
\begin{equation}
\label{equal-artin}
\Gamma_{\rm LS}(s,\rho(\varphi),\wedge^2,\fieldCharacter_\fLocalField)=\Gamma(s,\wedge^2( \varphi),\fieldCharacter_\fLocalField)
\end{equation}
relating analytic $\gamma$-factors $\Gamma_{\rm LS}(s,\rho,\wedge,\fieldCharacter_\fLocalField)$ with corresponding Artin factors $\Gamma(s,\wedge^2 (\varphi),\fieldCharacter_\fLocalField)$
 has been established for non-archimedean local fields $F$ of characteristic $0$ in \cite{CST17} 
and positive characteristic in \cite{HL11}.

\begin{proposition} 
\label{Kaz-corres}
For $(F,\rho,\fieldCharacter_\fLocalField)$ that is {\rm Kaz}-{\it associated to} $(F',\rho',\fieldCharacter_{\fLocalField'})$, we have
\[
    \Gamma(s,\rho,\wedge^2,\fieldCharacter_\fLocalField)=\Gamma(s,\rho',\wedge^2,\fieldCharacter_{\fLocalField'}) \quad \text{and} \quad
\Gamma_{\rm LS}(s,\rho,\wedge^2,\fieldCharacter_\fLocalField)=\Gamma_{\rm LS}(s,\rho',\wedge^2,\fieldCharacter_{\fLocalField'}).
\]
\end{proposition}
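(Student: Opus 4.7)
\medskip

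\emph{Plan.} The strategy is to mimic the proof of the modified functional equation for Flicker integrals (\Cref{modified-Flicker}) in the Jacquet--Shalika setting, which reduces the local gamma factor to purely finite-field data plus boundary $L$-factor contributions, and then to invoke the local Langlands correspondence for the Langlands--Shahidi side. In particular, I would first take distinguished test data in the Jacquet--Shalika integral: the canonical lift $W^{\circ}_{\rho}$ of a Whittaker function $W_{\pi}$ over $\mathbb{F}$, as recalled in \S 2.2, together with the canonical lift $\Phi_{\circ}$ of some $\phi \in \mathcal{S}(\mathbb{F}^m)$. The key point is that the support property $\Support(W^{\circ}_{\rho}) \subseteq \UnipotentSubgroup_n(F) F^{\times} K_n$, together with the compatibility $\psi_F|_{\mathfrak{o}/\mathfrak{p}} = \psi$, allows me to slice the Jacquet--Shalika integral along $F^{\times}$ just as in \Cref{modified-Flicker} and collapse the Levi integration to a sum over the finite Shalika subgroup.

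After this unwinding, the integral $J(s,W^{\circ}_{\rho},\Phi_{\circ})$ equals the product of a volume constant $\Volume(\UnipotentSubgroup_m(\mathfrak{o})(1_m + \mathcal{M}_m(\mathfrak{p})))$ with a finite-field Jacquet--Shalika sum (the one appearing in \eqref{Jacquet-Shalika-even} or \eqref{Jacquet-Shalika-odd}) plus a correction term of the form $q^{-2ms}\omega_{\rho}(\varpi)\phi(0) L(2ms,\omega_{\rho}|_{F^{\times}})\cdot(\text{sum over }\Phi\equiv 1)$, coming from the $\ell > 0$ tail of the summation over $F^{\times}/\mathfrak{o}^{\times}$. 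A parallel computation applies to the dual side $J(1-s,\check{\rho}(\tau_n)\check{W}^{\circ}_{\rho}, \gfourierTransform{\psi_F}(\Phi_{\circ}))$. Substituting into \eqref{JS-local-functional-equ} yields an expression for $\Gamma(s,\rho,\wedge^2,\psi_F)$ in terms of (i) the finite-field Jacquet--Shalika sums associated with $\pi$ and $\phi$, (ii) the scalar $\omega_{\rho}(\varpi)$, (iii) the value $\psi(\projection(\cdot))$ on $\mathfrak{o}/\mathfrak{p}$, and (iv) ratios of $L$-factors built from $\omega_{\rho}|_{F^{\times}}$. Since $1$-closeness identifies $\mathfrak{o}/\mathfrak{p} \cong \mathfrak{o}'/\mathfrak{p}'$ with matching additive characters, since the Kaz-hypothesis forces $\omega_{\rho}(\varpi) = \omega_{\rho'}(\varpi')$ and identifies the underlying cuspidal representation $\pi$ on both sides, and since the self-dual normalization $\int_{\mathfrak{o}} dx = q^{1/2}$ is preserved, every ingredient matches. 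Solving for $\Gamma(s,\rho,\wedge^2,\psi_F)$ on each side therefore gives the same rational function, proving the first equality.

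For the Langlands--Shahidi version, I would not repeat the integral computation but instead compose three known compatibilities. The identity \eqref{equal-artin}, valid in both characteristics (by \cite{CST17} in characteristic zero and \cite{HL11} in positive characteristic), rewrites $\Gamma_{\rm LS}(s,\rho,\wedge^2,\psi_F) = \Gamma(s,\wedge^2(\varphi),\psi_F)$ where $\varphi$ is the parameter of $\rho$ under LLC. The Langlands--Kazhdan compatibility of \Cref{commu-LLC} identifies the parameter of $\rho' = \mathrm{Kaz}(\rho)$ with $\mathrm{Del}(\varphi)$, and since $\wedge^2$ is a tameness-preserving operation on tamely ramified Weil representations one has $\wedge^2(\mathrm{Del}(\varphi)) = \mathrm{Del}(\wedge^2(\varphi))$. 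Applying \Cref{Deligne-corres} to transport Artin exterior square gamma factors across the Del-correspondence, and running \eqref{equal-artin} backwards on the $F'$-side, yields $\Gamma_{\rm LS}(s,\rho,\wedge^2,\psi_F) = \Gamma_{\rm LS}(s,\rho',\wedge^2,\psi'_{F'})$.

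The main obstacle will be the bookkeeping in the odd case $n = 2m+1$, where the integral carries an extra integration over $z \in F^m$ and the support analysis of $W^{\circ}_{\rho}$ is subtler because of the conjugation by $\sigma_{2m+1}$ and the outer factor $\left(\begin{smallmatrix} & 1 \\ 1_{2m} & \end{smallmatrix}\right)$; here one must verify that the regions where $\Phi_{\circ}(z)$ or $\gfourierTransform{\psi_F}(\Phi_{\circ})(z)$ fail to vanish are precisely those controllable by $K_n$-translates. A second delicate point is the even case when $\pi$ admits a Jacquet--Shalika vector: then $\Gamma(s,\rho,\wedge^2,\psi_F)$ is a genuine rational function of $q^{-s}$ rather than a constant, and the boundary $L$-factor terms cannot be discarded. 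Here I would lean on the finite-field evaluation \Cref{JS-distinct-Gamma} to match the scalar factor $-q^{-m/2}$ on both sides while tracking the $L(2ms,\omega_{\rho}|_{F^{\times}})$ terms, whose equality across Kaz-associated fields follows from $\omega_{\rho}(\varpi) = \omega_{\rho'}(\varpi')$ together with the compatibility of local class field theory with $1$-closeness.
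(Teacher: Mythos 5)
Your proposal is correct and takes essentially the same route as the paper: for the first equality you re-derive from scratch, via the support-slicing/modified-functional-equation argument, the explicit formulas for $\Gamma(s,\rho,\wedge^2,\fieldCharacter_\fLocalField)$ that the paper simply quotes as \cite[Theorems 3.16 and 3.17]{YZ20}, and then match them across the Kazhdan correspondence using $\omega_{\rho}(\varpi_F)=\omega_{\rho'}(\varpi_{F'})$ together with the shared finite cuspidal $\pi$; for the second equality your composition of \eqref{equal-artin}, the commutativity of the Langlands--Kazhdan square in \Cref{commu-LLC}, and \Cref{Deligne-corres} is exactly the paper's argument. One bookkeeping worry you can drop: for $n$ odd there is no Shalika subgroup and hence no Jacquet--Shalika vector, so one always lands in the non-distinguished branch where $\Gamma(s,\rho,\wedge^2,\fieldCharacter_\fLocalField)=\gamma(\pi,\wedge^2,\fieldCharacter)$ is a constant, and the extra $z$-integration caveat you raise for the odd case never arises.
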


\begin{proof}
We consider the first equality. Proposition 3.23 in \cite{YZ20} yields the equivalent condition that
${\rm Hom}_{S_{2m}(F)}(\rho \otimes|\det(\cdot)|^{s/2},\Theta) \neq 0$ for some $s \in \mathbb{C}$ if and only if ${\rm Hom}_{S_{2m}(F')}(\rho' \otimes|\det(\cdot)|^{s'/2},\Theta) \neq 0$ for some $s' \in \mathbb{C}$.
If this is the case, we use \cite[Theorem 3.17]{YZ20} in conjunction with the fact that $\omega_{\rho}(\varpi_F)=\omega_{\rho'}(\varpi_{F'})$ to prove
\begin{equation}
\label{even-exterior-JS}
 \Gamma(s,\rho,\wedge^2,\fieldCharacter_\fLocalField)=\dfrac{q^{m(s-1/2)}}{\omega_{\rho}(\varpi_F)}\cdot \dfrac{L(m(1-s),\omega^{-1}_{\rho})}{L(ms,\omega_{\rho})}
 =\dfrac{q^{m(s-1/2)}}{\omega_{\rho'}(\varpi_{F'})}\cdot \dfrac{L(m(1-s),\omega^{-1}_{\rho'})}{L(ms,\omega_{\rho'})}
 =\Gamma(s,\rho',\wedge^2,\fieldCharacter_{\fLocalField'}).
 \end{equation}
Otherwise, owing to \cite[Theorem 3.16]{YZ20}, we are guided to
\begin{equation}
\label{odd-exterior-JS}
\Gamma(s,\rho,\wedge^2,\fieldCharacter_\fLocalField)= \gamma(\pi,\wedge^2,\psi)=\Gamma(s,\rho',\wedge^2,\fieldCharacter_{\fLocalField'}).
\end{equation}

\par
Next, we deal with the second equality. For $(F,\varphi,\fieldCharacter_\fLocalField)$ that is {\rm Del}-associated to $(F',\varphi',\fieldCharacter_{\fLocalField'})$, a similar notation $\rho'(\varphi')$ applies to $\varphi'$.
We know from Proposition \ref{commu-LLC} that $\rho(\varphi)$ is Kaz-associated to $\rho'(\varphi')$, at which point Proposition \ref{Deligne-corres} together with \eqref{equal-artin} completes the proof.
\end{proof}

It is time to bring Deligne-Kazhdan close field theory and Theorem \ref{exter-positive} back together for good use.

\begin{theorem}
\label{equal-zero-exterior}
Let $\varphi$ be a $n$-dimensional tamely ramified Weil representation of $W_\fLocalField$ corresponding to the level zero supercuspidal representation $\rho(\varphi)$ of $\GL_n(\fLocalField)$ via the Macdonald correspondence.
Then we have
\[
  \Gamma(s,\rho(\varphi),\wedge^2,\fieldCharacter_\fLocalField)=\Gamma(s,\wedge^2( \varphi),\fieldCharacter_\fLocalField).
\]
\end{theorem}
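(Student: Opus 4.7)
The plan is to bootstrap from the positive characteristic case already settled in \Cref{exter-positive} to characteristic zero via the Deligne--Kazhdan machinery built up in Propositions \ref{commu-LLC}, \ref{Deligne-corres}, and \ref{Kaz-corres}. When $F$ has positive characteristic the statement is essentially immediate: \Cref{exter-positive} gives
\[
\Gamma(s,\rho(\varphi),\wedge^2,\fieldCharacter_\fLocalField)=\Gamma_{\rm LS}(s,\rho(\varphi),\wedge^2,\fieldCharacter_\fLocalField),
\]
and the Langlands--Shahidi/Artin matching \eqref{equal-artin} identifies the right-hand side with $\Gamma(s,\wedge^2(\varphi),\fieldCharacter_\fLocalField)$. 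So the real content is in the case when $F$ has characteristic zero (e.g.\ a $p$-adic field), where Langlands--Shahidi is known by \cite{CST17} but the Rankin--Selberg side via Jacquet--Shalika has not been globalized effectively.

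To handle the $p$-adic case, I would proceed as follows. Given $F$ of characteristic zero, pick a local function field $F'$ which is $1$-close to $F$ (for instance, if $F=\mathbb{Q}_p(p^{1/m})$ one may take $F'=\mathbb{F}_p((t))$ when $m=1$; in general one adjusts ramification to ensure $1$-closeness). Choose a character $\fieldCharacter_{\fLocalField'}$ on $F'$ compatible with $\fieldCharacter_\fLocalField$ in the sense of condition \ref{Kaz-1}, and let $\varphi'={\rm Del}(\varphi)$ and $\rho'={\rm Kaz}(\rho(\varphi))$. By \Cref{commu-LLC}, the Kazhdan transfer of $\rho(\varphi)$ coincides with $\rho'(\varphi')$, i.e.\ with the level zero supercuspidal attached to $\varphi'$ via Macdonald. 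This compatibility is the linchpin: without it, the two sides of the desired equality could fly apart when transferred.

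With this set up, I can assemble the identity from three transport results. By \Cref{Kaz-corres} applied to $(F,\rho(\varphi),\fieldCharacter_\fLocalField)$ and $(F',\rho'(\varphi'),\fieldCharacter_{\fLocalField'})$,
\[
\Gamma(s,\rho(\varphi),\wedge^2,\fieldCharacter_\fLocalField)=\Gamma(s,\rho'(\varphi'),\wedge^2,\fieldCharacter_{\fLocalField'}).
\]
Since $F'$ has positive characteristic, \Cref{exter-positive} together with \eqref{equal-artin} yields
\[
\Gamma(s,\rho'(\varphi'),\wedge^2,\fieldCharacter_{\fLocalField'})=\Gamma_{\rm LS}(s,\rho'(\varphi'),\wedge^2,\fieldCharacter_{\fLocalField'})=\Gamma(s,\wedge^2(\varphi'),\fieldCharacter_{\fLocalField'}).
\]
Finally, \Cref{Deligne-corres} transports the Artin $\gamma$-factor back,
\[
\Gamma(s,\wedge^2(\varphi'),\fieldCharacter_{\fLocalField'})=\Gamma(s,\wedge^2(\varphi),\fieldCharacter_\fLocalField),
\]
and concatenating the three equalities gives the theorem.

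The only part of this argument that is not purely formal bookkeeping is the compatibility of the Kazhdan correspondence with the local Langlands reciprocity map at level zero, which is precisely what \Cref{commu-LLC} supplies; once that diagram commutes, the proof reduces to chasing $\gamma$-factors through the three horizontal transports. I expect the main obstacle, in terms of care required, is checking the characters and central-character conventions line up so that the hypotheses of \Cref{Kaz-corres} are genuinely met (in particular the identification of $\omega_{\rho(\varphi)}(\varpi_F)=\omega_{\rho'(\varphi')}(\varpi_{F'})$ via the Kazhdan recipe and the matching of $\fieldCharacter_\fLocalField$ with $\fieldCharacter_{\fLocalField'}$ modulo the maximal ideal); the rest is a direct application of the tools already assembled.
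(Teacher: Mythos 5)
Your proposal is correct and follows essentially the same route as the paper: choose a local function field $F'$ that is $1$-close to $F$, transfer the Jacquet--Shalika $\gamma$-factor across via \Cref{Kaz-corres}, apply \Cref{exter-positive} on the positive-characteristic side, and then move back to $F$. The only cosmetic difference is that you transport the Artin factor back using \Cref{commu-LLC} and \Cref{Deligne-corres}, whereas the paper transports the Langlands--Shahidi factor back using the second equality of \Cref{Kaz-corres} and then applies \eqref{equal-artin} over $F$; since that second equality of \Cref{Kaz-corres} is itself proved from \Cref{commu-LLC}, \Cref{Deligne-corres}, and \eqref{equal-artin}, the two chains are the same argument unwound slightly differently.
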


\begin{proof}
Given a local field $F'$ of characteristic $p$ and an integer $m \geq 1$, there exists a local field $F$ of characteristic $0$ such that $F'$ is $m$-close to $F$ \cite[p.1123]{GL15}. The converse also holds for $m=1$. Specifically, for a field $F$ of characteristic $0$, its residue field $\mathfrak{o}_F/\mathfrak{p}_F$ is isomorphic to $\mathbb{F}_{q}$ with $q=p^k$ for some prime $p$ and integer $k \geq 1$. Then we take $F'$ to be $\mathbb{F}_{q}((t))$ of characteristic $p$. Now for a $p$-adic field $F$, \Cref{exter-positive} and \Cref{Kaz-corres} allow us to deduce $\Gamma(s,\rho,\wedge^2,\fieldCharacter_\fLocalField)= \Gamma_{\rm LS}(s,\rho,\wedge^2,\fieldCharacter_\fLocalField)$. The desired equality then simply follows from \eqref{equal-artin} and \Cref{exter-positive}.
\end{proof}

\section{The Bump--Friedberg Gamma Factor}
\label{BF factor}

\subsection{The Bump--Friedberg sum}

We define the embedding $J: \GL_m \times \GL_m \rightarrow \GL_n$ by
\[
 J(g,g^{\prime})_{k,l}=
  \begin{cases}
  g_{i,j} &  \text{if $k=2i-1$, $l=2j-1$, } \\
  g^{\prime}_{i,j} &  \text{if $k=2i$, $l=2j$,} \\
 0 &  \text{otherwise,} \\
  \end{cases}
\]
for $n=2m$ even
and  $J: \GL_{m+1} \times \GL_m \rightarrow \GL_n$ by
\[
 J(g,g^{\prime})_{k,l}=
  \begin{cases}
  g_{i,j} &  \text{if $k=2i-1$, $l=2j-1$, } \\
  g^{\prime}_{i,j} &  \text{if $k=2i$, $l=2j$,} \\
 0 &  \text{otherwise,} \\
  \end{cases}
\]
for $n=2m+1$ odd. We denote by $M_{m,m}$ the standard Levi subgroup of $\GL_{2m}$ associated with the partition $(m,m)$ of $2m$.
Let $w_{m,m}=\sigma_{2m}$ and then we set $H_{2m}=w_{m,m}M_{m,m}w^{-1}_{m,m}$. Let $w_{m+1,m}=w_{m+1,m+1}|_{{\rm GL}_{2m+1}}$ so that
\[
 w_{m+1,m}=\begin{pmatrix} 1 & 2 & \dotsm & m+1 & | & m+2 & m+3 & \dotsm &2m & 2m+1 \\ 
                                                1 & 3 & \dotsm & 2m+1 & | &  2 & 4 & \dotsm & 2m-2 &2m \\ 
                                                \end{pmatrix}.
\]
In the odd case, $ w_{m+1,m}\neq \sigma_{2m+1} $. We let $M_{m+1,m}$ denote the standard Levi subgroup of ${\rm GL}_{2m+1}$ associated with the partition $(m+1,m)$ of $2m+1$. We set $H_{2m+1}=w_{m+1,m}M_{m+1,m}w^{-1}_{m+1,m}$. The reason for introducing auxiliary elements $w_{m,m}$ and $w_{m+1,m}$ is that $J(g,g')=w_{m,m} {\rm diag}(g,g')w^{-1}_{m,m}$ for ${\rm diag}(g,g') \in M_{m,m}$, and
$J(g,g')=w_{m+1,m} {\rm diag}(g,g')w^{-1}_{m+1,m}$ for ${\rm diag}(g,g') \in M_{m+1,m}$.
We emphasize that $H_n$ is compatible with the intersection in a manner that $H_n \cap {\rm GL}_{n-1}=H_{n-1}$. Let $\pi$ be an irreducible cuspidal representation of $\GL_{n}(\fFiniteField)$. For  $W_{\pi} \in \whittaker(\pi,\fieldCharacter)$ and $\phi \in \Schwartz_0(\fFiniteField^{\lfloor {(n+1)}/2 \rfloor})$, we define the Bump-Friedberg sum as
\begin{equation}
\label{Def-BF-Even-Sum}
  Z(W_{\pi},\phi)
  :=\sum_{g \in \UnipotentSubgroup_m(\fFiniteField) \backslash \GL_m(\fFiniteField) } \sum_{g' \in \UnipotentSubgroup_m(\fFiniteField) \backslash \GL_m(\fFiniteField) } W_{\pi}(J(g,g')) \phi(e_mg'),
\end{equation}
in the even case $n=2m$ and
\begin{equation}
\label{Def-BF-Odd-Sum}
  Z(W_{\pi},\phi)
  :=\sum_{g \in \UnipotentSubgroup_{m+1}(\fFiniteField) \backslash \GL_{m+1}(\fFiniteField) } \sum_{g' \in \UnipotentSubgroup_m(\fFiniteField) \backslash \GL_m(\fFiniteField) } W_{\pi}(J(g,g')) \phi(e_{m+1}g),
\end{equation}
in the odd case $n=2m+1$. Similarly, we define the dual Bump-Friedberg sum as
\[
  \check{Z}(W_{\pi},\phi)
  :=\sum_{g \in \UnipotentSubgroup_m(\fFiniteField) \backslash \GL_m(\fFiniteField) } \sum_{g' \in \UnipotentSubgroup_m(\fFiniteField) \backslash \GL_m(\fFiniteField) } \check{W}_{\pi}(J(g,g')) \fourierTransform(\phi)(e_mg'),
\]
in the even case $n=2m$
\[
  \check{Z}(W_{\pi},\phi)
  :=\sum_{g \in \UnipotentSubgroup_{m+1}(\fFiniteField) \backslash \GL_{m+1}(\fFiniteField) } \sum_{g' \in \UnipotentSubgroup_{m}(\fFiniteField) \backslash \GL_{m}(\fFiniteField) } \check{W}_{\pi}(J(g,g')) \fourierTransform(\phi)(e_{m+1}g),
\]
in the even case $n=2m+1$.

\begin{lemma}
\label{dual-BF}
 Let $\pi$ be an irreducible cuspidal representation of $\GL_{2m}(\fFiniteField)$. Then we have
 \[
   \check{Z}(W_{\pi},\phi)=\sum_{g \in \UnipotentSubgroup_m(\fFiniteField) \backslash \GL_m(\fFiniteField) } \sum_{g' \in \UnipotentSubgroup_m(\fFiniteField) \backslash \GL_m(\fFiniteField) } W_{\pi}\left( \sigma_{2m} \begin{pmatrix} & g' \\ g & \end{pmatrix} \sigma^{-1}_{2m} \right) \fourierTransform(\phi)(e_1 {^tg'^{-1}})
 \]
 for $m=2n$ even and
 \[
   \check{Z}(W_{\pi},\phi)=\sum_{g \in \UnipotentSubgroup_{m+1}(\fFiniteField) \backslash \GL_{m+1}(\fFiniteField) } \sum_{g' \in \UnipotentSubgroup_{m}(\fFiniteField) \backslash \GL_{m}(\fFiniteField) }
   W_{\pi} (J(g,g'))   \fourierTransform(\phi)(e_1 {^tg^{-1}})
 \]
  for $m=2n+1$. 
\end{lemma}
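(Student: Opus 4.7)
The strategy mirrors the proof of \Cref{Dual-Flicker-variant}: apply the definition $\check{W}_{\pi}(g) = W_{\pi}(w_n\,{}^tg^{-1})$, then perform an explicit change of variable so that $w_n\,{}^tJ(g,g')^{-1}$ gets reexpressed in the required form. Using the conjugacy presentation $J(g,g') = \sigma\cdot\mathrm{diag}(g,g')\cdot\sigma^{-1}$ (with $\sigma=\sigma_{2m}$ in the even case and $\sigma=w_{m+1,m}$ in the odd case), and the fact that $\sigma$ is a permutation matrix whose transpose coincides with its inverse, one sees that ${}^tJ(g,g')^{-1} = J({}^tg^{-1},{}^tg'^{-1})$. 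Hence $\check{W}_{\pi}(J(g,g')) = W_{\pi}(w_n\, J({}^tg^{-1},{}^tg'^{-1}))$.

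The heart of the matter is the factorization of the long Weyl element $w_n$ through $J$. In the odd case, a direct check on permutations of $\{1,\dots,2m+1\}$ gives $w_{2m+1} = J(w_{m+1}, w_m)$, since both send $i \mapsto 2m+2-i$. In the even case, one finds instead $w_{2m} = J(w_m, w_m)\cdot s$, where $s$ denotes the product of adjacent transpositions $(1\,2)(3\,4)\cdots(2m-1\,2m)$. Combining these with the multiplicativity $J(a_1,b_1)J(a_2,b_2) = J(a_1a_2,b_1b_2)$ and the conjugation relation $s\,J(a,b) = J(b,a)\,s$, we rewrite $w_n\, J({}^tg^{-1},{}^tg'^{-1})$ as $J(w_{m+1}\,{}^tg^{-1},\, w_m\,{}^tg'^{-1})$ in the odd case, and as $s\cdot J(w_m\,{}^tg^{-1},\, w_m\,{}^tg'^{-1})$ in the even case; the latter is then recognized as $\sigma_{2m}\begin{pmatrix} & w_m\,{}^tg'^{-1} \\ w_m\,{}^tg^{-1} & \end{pmatrix}\sigma_{2m}^{-1}$ via the matrix identity $\sigma_{2m}\begin{pmatrix}0 & b \\ a & 0\end{pmatrix}\sigma_{2m}^{-1} = s\,J(a,b)$.

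Finally, I would make the change of variables $h = w_{m+1}\,{}^tg^{-1}$, $h' = w_m\,{}^tg'^{-1}$ in the odd case, and $h = w_m\,{}^tg^{-1}$, $h' = w_m\,{}^tg'^{-1}$ in the even case. Each such map descends to a bijection of the relevant coset space $\UnipotentSubgroup_r(\fFiniteField)\backslash\GL_r(\fFiniteField)$ because it equals the Chevalley involution $g \mapsto w_r\,{}^tg^{-1}w_r$, which is an automorphism preserving $\UnipotentSubgroup_r$, followed by right multiplication by $w_r$. Since $g = w_r\,{}^th^{-1}$ under the inverse substitution, we obtain $e_{m+1}g = e_{m+1}w_{m+1}\,{}^th^{-1} = e_1\,{}^th^{-1}$ in the odd case and analogously $e_m g' = e_1\,{}^th'^{-1}$ in the even case, which converts the Fourier-transform argument to the desired shape. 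Renaming $(h,h')\mapsto(g,g')$ produces both asserted identities. The main technical obstacle is the permutation-theoretic verification of the factorization $w_{2m} = J(w_m,w_m)\,s$ together with the companion commutation $s\,J(a,b) = J(b,a)\,s$ in the even case; once these are in hand, the rest of the argument is a transparent change of variables.
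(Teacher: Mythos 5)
Your proposal is correct and takes essentially the same route as the paper: insert the dual-Whittaker identity, then change variables via the Chevalley involution on each small block followed by right translation by the small long Weyl element (which is exactly what the compound substitution $h = w_r\,{}^tg^{-1}$ amounts to). The paper's proof is a terse two-line version of this; your explicit factorizations $w_{2m+1}=J(w_{m+1},w_m)$, $w_{2m}=J(w_m,w_m)\,s$ with $sJ(a,b)=J(b,a)s$, and the identity $\sigma_{2m}\left(\begin{smallmatrix}0&b\\a&0\end{smallmatrix}\right)\sigma_{2m}^{-1}=sJ(a,b)$ merely spell out the bookkeeping it leaves implicit.
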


\begin{proof}
We begin with inserting the identity \eqref{dual-Whittaker-def} for $\check{W}_{\pi}$. Subsequently, we make the change of variables $g \mapsto \weyllong_n {^tg^{-1}} \weyllong_n, g' \mapsto \weyllong_n {^t{g'}^{-1}} \weyllong_n$, and then $g \mapsto g\weyllong_n, g' \mapsto g'\weyllong_n$ to arrive at the result. 
\end{proof}

We aim to show that the space of $H_n(\fFiniteField)$-equivariant bilinear form
\[
 B : \whittaker(\pi,\fieldCharacter) \times \Schwartz_0(\fFiniteField^{\lfloor {(n+1)}/2 \rfloor}) \rightarrow  \mathbb{C}
\]
is at most one-dimensional. Since Bump-Friedberg sums $Z(W_{\pi},\phi)$ and dual Bump-Friedberg sums $\check{Z}(W_{\pi},\phi)$
defines non-zero elements of this space (see Proposition \ref{BF-one-side-thm}), there exists a proportionality constant called
the {\it Bump--Friedberg gamma factor} $  \BFpigammaFactor$ of an irreducible cuspidal representation $\pi$ of ${\GL}_n(\fFiniteField)$.
The Bump--Friedberg factor $  \BFpigammaFactor$ satisfies the following functional equation
\[
\BFpigammaFactor  Z(W_{\pi},\phi)= \check{Z}(W_{\pi},\phi)
\]
 for any $W_{\pi} \in \whittaker(\pi,\fieldCharacter)$ and $\phi \in \Schwartz_0(\fFiniteField^{\lfloor {(n+1)}/2 \rfloor})$.

\begin{theorem}
\label{BF-Multiplicity}
Let $\pi$ be an irreducible cuspidal representation of $\GL_{n}(\fFiniteField)$.  For every $W_{\pi} \in \whittaker(\pi,\fieldCharacter)$ and for any $\phi \in \Schwartz_0(\fFiniteField^{\lfloor {(n+1)}/2 \rfloor})$,
 there exists a complex number $\BFpigammaFactor$ such that
\begin{multline*}
 \BFpigammaFactor \sum_{g \in \UnipotentSubgroup_m(\fFiniteField) \backslash \GL_m(\fFiniteField) } \sum_{g' \in \UnipotentSubgroup_m(\fFiniteField) \backslash \GL_m(\fFiniteField) } W_{\pi} \left(\sigma_{2m}\begin{pmatrix} g & \\ & g'\end{pmatrix}\sigma^{-1}_{2m}\right) \phi(e_mg') \\
 =\sum_{g \in \UnipotentSubgroup_m(\fFiniteField) \backslash \GL_m(\fFiniteField) } \sum_{g' \in \UnipotentSubgroup_m(\fFiniteField) \backslash \GL_m(\fFiniteField) } W_{\pi}\left( \sigma_{2m} \begin{pmatrix} & g' \\ g & \end{pmatrix} \sigma^{-1}_{2m} \right) \fourierTransform(\phi)(e_1 {^tg'^{-1}}),
\end{multline*}
in the even case $n=2m$ and
\begin{multline*}
\BFpigammaFactor \sum_{g \in \UnipotentSubgroup_{m+1}(\fFiniteField) \backslash \GL_{m+1}(\fFiniteField) }  \sum_{g' \in \UnipotentSubgroup_m(\fFiniteField) \backslash \GL_m(\fFiniteField) }  W_{\pi} (J(g,g')) \phi(e_{m+1}g) \\
=\sum_{g \in \UnipotentSubgroup_{m+1}(\fFiniteField) \backslash \GL_{m+1}(\fFiniteField) }  \sum_{g' \in \UnipotentSubgroup_m(\fFiniteField) \backslash \GL_m(\fFiniteField) }  W_{\pi} (J(g,g'))   \fourierTransform(\phi)(e_1 {^tg^{-1}}),
\end{multline*}
in the odd case $n=2m+1$. 
\end{theorem}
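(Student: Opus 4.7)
My plan is to follow the template laid out in Theorem \ref{Asai-Multiplicity}. Using Lemma \ref{dual-BF}, I first verify that both bilinear forms $B_1:(W_\pi,\phi)\mapsto Z(W_\pi,\phi)$ and $B_2:(W_\pi,\phi)\mapsto \check Z(W_\pi,\phi)$ define elements of
\[
\Hom_{H_n(\fFiniteField)}\bigl(\pi\otimes \Schwartz_0(\fFiniteField^{\lfloor (n+1)/2\rfloor}),\, \TrivialRepresentation_{H_n(\fFiniteField)}\bigr),
\]
where $H_n(\fFiniteField)$ acts on the Whittaker model of $\pi$ by right translation restricted to $H_n$, and on the Schwartz space through the $\GL$-factor acting on $e_mg'$ (resp.\ $e_{m+1}g$) in the formulas \eqref{Def-BF-Even-Sum}, \eqref{Def-BF-Odd-Sum}. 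Once this Hom space is shown to be at most one-dimensional, $B_1$ and $B_2$ must be proportional, and the scalar $\BFpigammaFactor$ is produced.

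The dimension estimate proceeds via Frobenius reciprocity. In the even case $n=2m$, the orbit identification $\fFiniteField^m-\{0\}\cong \MirabolicSubgroup_m(\fFiniteField)\backslash \GL_m(\fFiniteField)$ gives $\Schwartz_0(\fFiniteField^m)\cong \Ind_{\MirabolicSubgroup_m(\fFiniteField)}^{\GL_m(\fFiniteField)}(\TrivialRepresentation)$. Conjugating by $\sigma_{2m}$ so that $H_{2m}$ corresponds to $\GL_m\times\GL_m$ inside $\GL_{2m}$ and then applying Frobenius reciprocity reduces the estimate to
\[
\dim \Hom_{J(\GL_m(\fFiniteField)\times \MirabolicSubgroup_m(\fFiniteField))}(\pi,\, \TrivialRepresentation)\leq 1.
\]
A parallel reduction in the odd case $n=2m+1$, using $w_{m+1,m}$ in place of $\sigma_{2m}$ and the induced picture on the first $\GL$-factor, yields
\[
\dim \Hom_{J(\MirabolicSubgroup_{m+1}(\fFiniteField)\times \GL_m(\fFiniteField))}(\pi,\, \TrivialRepresentation)\leq 1.
\]

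These inequalities are the finite-field analogues of the Friedberg--Jacquet distinction multiplicity one statements established over nonarchimedean local fields in \cite{Mat17}. The main obstacle I anticipate is to transport that $p$-adic argument to the finite-field setting. The approach is to invoke the theory of Bernstein--Zelevinsky derivatives for $\GL_n(\fFiniteField)$ developed in \cite[\S 4]{GGP12}: taking successive derivatives along the mirabolic direction of the $\MirabolicSubgroup_m$ (resp.\ $\MirabolicSubgroup_{m+1}$) factor produces a filtration whose graded pieces have controlled Hom spaces, and the cuspidality of $\pi$ forces all but the top derivative layer to vanish. Induction on $n$ together with a gluing step mirroring the $p$-adic one then yields the desired bound. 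As a by-product, an analogue of Corollary \ref{mirabolic-Flicker-mulone} at the level of these parabolic-type subgroups drops out, to be used in subsequent arguments.
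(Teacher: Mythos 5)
Your proposal follows the same route as the paper's own proof: both use Lemma \ref{dual-BF} to place $B_1$ and $B_2$ in $\Hom_{H_n(\fFiniteField)}(\pi\otimes\Schwartz_0(\fFiniteField^{\lfloor(n+1)/2\rfloor}),\TrivialRepresentation_{H_n(\fFiniteField)})$, then apply Frobenius reciprocity to reduce the bound to $\dim\Hom_{\MirabolicSubgroup_n(\fFiniteField)\cap H_n(\fFiniteField)}(\pi,\TrivialRepresentation)\leq 1$, with the same identification of $\MirabolicSubgroup_n\cap H_n$ as $J(\GL_m\times\MirabolicSubgroup_m)$ in the even case and $J(\MirabolicSubgroup_{m+1}\times\GL_m)$ in the odd case. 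The only divergence is at the final step: the paper simply cites \cite[Theorems 2.22 and 2.25]{YZ20}, where precisely these finite-field multiplicity-one statements are already proved, whereas you propose to re-derive them from scratch via the Bernstein--Zelevinsky derivative machinery of \cite[\S 4]{GGP12}. Your sketch of that re-derivation is plausible and in the right spirit, but it redoes work that is available in the literature; in the interest of a shorter argument you could instead invoke the Ye--Zelingher results directly, as the paper does.
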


\begin{proof} 
It can be checked from \Cref{dual-BF} that bilinear forms $B_1 : (W_{\pi},\phi) \mapsto Z(W_{\pi},\phi)$ and $B_2 : (W_{\pi},\phi) \mapsto \check{Z}(W_{\pi},\phi)$ belong to
the space $\Hom_{  H_n(\fFiniteField) }(\pi\restriction_{H_n(\fFiniteField)  } \otimes \Schwartz_0(\fFiniteField^{\lfloor {(n+1)}/2 \rfloor}) ,\TrivialRepresentation_{H_n(\fFiniteField)})$. 
Hence it suffices to show that such Bilinear forms $B_1$ and $B_2$ differ only by scalars, that is to say, 
\[
\dim \Hom_{  H_n(\fFiniteField) }(\pi\restriction_{H_n(\fFiniteField)  } \otimes \Schwartz_0(\fFiniteField^{\lfloor {(n+1)}/2 \rfloor}) ,\TrivialRepresentation_{H_n(\fFiniteField)}) \leq 1.
\]
We identify $\MirabolicSubgroup_n(\fFiniteField) \cap H_n(\fFiniteField) \backslash H_n(\fFiniteField) $ with $\fFiniteField^{\lfloor {(n+1)}/2 \rfloor} - \{ 0 \}$, and then use the 
Frobenius reciprocity to deduce that
\[
\begin{split}
  \Hom_{  H_n(\fFiniteField) }(\pi\restriction_{H_n(\fFiniteField)  } \otimes \Schwartz_0(\fFiniteField^{\lfloor {(n+1)}/2 \rfloor}) ,\TrivialRepresentation_{H_n(\fFiniteField)}) 
  &\cong \Hom_{  H_n(\fFiniteField) }(\pi\restriction_{H_n(\fFiniteField)  }  \otimes {\rm Ind}^{H_n(\fFiniteField)}_{\MirabolicSubgroup_n(\fFiniteField) \cap H_n(\fFiniteField)}(\TrivialRepresentation) ,\TrivialRepresentation_{H_n(\fFiniteField)}) \\
  & \cong \Hom_{  \MirabolicSubgroup_n(\fFiniteField) \cap H_n(\fFiniteField) }(\pi\restriction_{\MirabolicSubgroup_n(\fFiniteField) \cap H_n(\fFiniteField)  },\TrivialRepresentation_{\MirabolicSubgroup_n(\fFiniteField) \cap H_n(\fFiniteField)}).
\end{split}
\]
After suitably conjugating $\MirabolicSubgroup_n(\fFiniteField) \cap H_n(\fFiniteField)$ by Weyl elements, the dimension of the last space is at most one dimensional by \cite[Theorem 2.22]{YZ20} for even case $=2m$ and \cite[Theorem 2.25]{YZ20} for odd case $n=2m+1$. 
\end{proof}

In the course of the proof of proceeding theorem, we get the following multiplicity one result as a byproduct, which is used in the proof of \Cref{JR-sum-characterization}.

\begin{corollary} [Multiplicity one result]  
\label{mirabolic-BF-mulone}
Let $\pi$ be an irreducible cuspidal representation of $\GL_{n}(\fFiniteField)$. Then we have
\[
 \dim  \Hom_{  \MirabolicSubgroup_n(\fFiniteField) \cap H_n(\fFiniteField) }(\pi\restriction_{\MirabolicSubgroup_n(\fFiniteField) \cap H_n(\fFiniteField)  },\TrivialRepresentation_{\MirabolicSubgroup_n(\fFiniteField) \cap H_n(\fFiniteField}) \leq 1.
\]
\end{corollary}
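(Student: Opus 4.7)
The plan is to observe that the desired inequality has, in fact, already been established as the pivotal step inside the proof of \Cref{BF-Multiplicity}. There, after identifying the coset space $(\MirabolicSubgroup_n(\fFiniteField) \cap H_n(\fFiniteField)) \backslash H_n(\fFiniteField)$ with $\fFiniteField^{\lfloor(n+1)/2\rfloor}-\{0\}$ and applying Frobenius reciprocity, the bound on the dimension of the space of $H_n(\fFiniteField)$-equivariant bilinear forms collapsed to exactly the bound asserted here. So to write up a clean proof of this corollary, I would simply extract and isolate that intermediate step from the proof of \Cref{BF-Multiplicity}, rather than rerun the full argument.

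More explicitly, I would conjugate $\MirabolicSubgroup_n(\fFiniteField) \cap H_n(\fFiniteField)$ by the Weyl element $w_{m,m}=\sigma_{2m}$ in the even case $n=2m$, and by $w_{m+1,m}$ in the odd case $n=2m+1$. The resulting subgroup then coincides (at the level of matrices) with the mirabolic-intersection of the Friedberg--Jacquet Levi sitting inside $\GL_n(\fFiniteField)$, which is precisely the subgroup considered in \cite[Theorem 2.22]{YZ20} for $n=2m$ and \cite[Theorem 2.25]{YZ20} for $n=2m+1$. Both of these results of Ye--Zelingher supply the at-most-one-dimensional bound for the corresponding $\Hom$-space of invariant linear functionals on an irreducible cuspidal $\pi$, from which the corollary is immediate.

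The only step requiring any attention is checking that conjugation by $\sigma_{2m}$ (respectively $w_{m+1,m}$) produces exactly the subgroup treated in the two Ye--Zelingher theorems above, so that those results apply without modification. Since these Weyl elements are the same ones used throughout \Cref{BF factor} to define $H_n$ from the standard Levi subgroups $M_{m,m}$ and $M_{m+1,m}$, and since $\MirabolicSubgroup_n(\fFiniteField) \cap H_n(\fFiniteField)$ is written down in completely explicit matrix terms, this matching is a routine book-keeping exercise and presents no genuine obstacle.
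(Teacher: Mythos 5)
Your proposal is correct and mirrors the paper's own treatment: the corollary is explicitly stated as a byproduct of the proof of \Cref{BF-Multiplicity}, which after Frobenius reciprocity reduces to the bound on $\dim\Hom_{\MirabolicSubgroup_n(\fFiniteField)\cap H_n(\fFiniteField)}(\pi,\TrivialRepresentation)$ and then, after conjugating by $\sigma_{2m}$ (resp.\ $w_{m+1,m}$), invokes \cite[Theorem 2.22]{YZ20} in the even case and \cite[Theorem 2.25]{YZ20} in the odd case. No genuinely different route is taken.
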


\subsection{The Friedberg--Jacquet period}

Let $\rho$ be an irreducible supercuspidal representation of $\GL_{2m}(\fLocalField)$. As in \eqref{distinction}, the representation $\rho$ is called $H_{2m}(F)$-{\it distinguished} or {\it distinguished with respect to $H_{2m}(F)$} if
$\Hom_{H_{2m}(\fLocalField)}(\rho\restriction_{H_{2m}(\fLocalField)},\TrivialRepresentation_{H_{2m}(\fLocalField)}) \neq 0$.
We also say that $\rho$ admits a {\it Friedberg--Jacquet period} if $\rho$ is $H_{2m}(F)$-distinguished. Let $\ell$ and $\ell'$ the linear form on $\whittaker(\rho,\fieldCharacter_\fLocalField)$
defined by
\[
 \ell : W_{\rho} \mapsto Z_{(0)}(1/2,W_{\rho}):=\int_{\UnipotentSubgroup_{m}(\fLocalField) \backslash \MirabolicSubgroup_m(\fLocalField)} \int_{\UnipotentSubgroup_m(\fLocalField) \backslash \GL_m(\fLocalField)} W_{\rho}(J(g,p')) \,dgdp'
\]
and
\[
 \ell' : W_{\rho} \mapsto Z_{(0)}(1/2,\check{W}_{\rho}):=\int_{\UnipotentSubgroup_m(\fLocalField) \backslash \MirabolicSubgroup_m(\fLocalField)} \int_{\UnipotentSubgroup_m(\fLocalField) \backslash \GL_m(\fLocalField)} \check{W}_{\rho}(J(g,p')) \,dgdp'.
\]

\begin{lemma}
Let $\rho$ be an irreducible supercuspidal representations of $\GL_{2m}(\fLocalField)$ which is distinguished with respect to $H_{2m}(\fLocalField)$. Then there exists 
a non-zero constant $c(\rho) \in \cComplex^{\times}$, which is independent of $\fieldCharacter_\fLocalField$, such that $\ell'=c(\rho)\ell$. 
\end{lemma}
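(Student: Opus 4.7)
The plan is to prove the proportionality via the multiplicity one theorem for $H_{2m}(\fLocalField)$-distinguished representations of $\GL_{2m}(\fLocalField)$ due to Flicker.

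First, I would verify that both $\ell$ and $\ell'$ define $H_{2m}(\fLocalField)$-invariant linear forms on $\whittaker(\rho,\fieldCharacter_\fLocalField)$. For $\ell$, this is the standard Friedberg--Jacquet unfolding at the center of symmetry $s=1/2$: the weighted Bump--Friedberg integral transforms covariantly under right translation by $J(a,a')\in H_{2m}$ via the modulus character $|\det a/\det a'|^{s-1/2}$, which becomes trivial exactly at $s=1/2$, and the mirabolic truncation $Z_{(0)}(1/2,W_\rho)$ matches the convergent part of this invariant functional for supercuspidal $\rho$. For $\ell'$, I would exploit that the involution $g\mapsto \weyllong_{2m}\,{}^tg^{-1}$ stabilizes $H_{2m}$ (permuting its two $\GL_m$-factors), so that $\ell'$ realizes on $\whittaker(\rho,\fieldCharacter_\fLocalField)$ the pullback of the analogous Friedberg--Jacquet functional attached to the distinguished representation $\contragredient{\rho}$.

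Second, I would verify that both functionals are nonzero. Distinction of $\rho$ together with the supercuspidal support argument for the mirabolic integral (in the style of the proof of \Cref{FR-sum-characterization}) yields $\ell \neq 0$. For $\ell'$, the same argument applies to $\contragredient{\rho}$, which is also $H_{2m}(\fLocalField)$-distinguished since distinction is preserved under taking contragredients for the pair $(\GL_{2m},H_{2m})$. Once both functionals are nonzero and $H_{2m}(\fLocalField)$-invariant, Flicker's multiplicity one bound $\dim \Hom_{H_{2m}(\fLocalField)}(\rho,\TrivialRepresentation) \leq 1$ immediately yields a unique nonzero $c(\rho) \in \cComplex^{\times}$ with $\ell' = c(\rho)\ell$.

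Finally, for independence of $\fieldCharacter_\fLocalField$: any other non-trivial additive character of $\fLocalField$ of level zero is of the form $\fieldCharacter_\fLocalField^a(x)=\fieldCharacter_\fLocalField(ax)$ for some $a \in \multiplicativegroup{\integersRing}$, and the corresponding Whittaker models are related by right translation by a diagonal matrix $d_a$ that also normalizes $H_{2m}$. Hence both $\ell$ and $\ell'$ transform by the identical scalar, so their ratio $c(\rho)$ is unaffected by the choice of additive character. The main obstacle I anticipate is the careful bookkeeping of how the involution $g\mapsto \weyllong_{2m}\,{}^tg^{-1}$ interacts with the mirabolic subgroup $P_m$ sitting inside $\ell'$, which requires tracking the permutation of the two $\GL_m$-factors of $H_{2m}$ and verifying that the resulting linear form indeed matches the formula stated in the lemma; once this intertwining is established, the rest of the argument reduces to the standard multiplicity one framework.
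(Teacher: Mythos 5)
Your proposal follows the paper's overall strategy---show that both $\ell$ and $\ell'$ define nonzero $H_{2m}(\fLocalField)$-invariant linear forms and invoke multiplicity one---but there are two substantive gaps. First, you do not justify that the truncated integrals $Z_{(0)}(1/2,W_\rho)$ converge: the paper derives this from the holomorphy of $L(s,\rho,{\rm BF})$ at $s=1/2$, which in turn follows from distinction, and only then concludes that $\ell$ and $\ell'$ are well-defined. Relatedly, your argument that $\ell$ is a \emph{full} $H_{2m}(\fLocalField)$-invariant form (the ``modulus character trivializes at $s=1/2$'' heuristic) is looser than the paper's, which uses the supercuspidal multiplicity one identity $\Hom_{H_{2m}(\fLocalField)}(\rho,\TrivialRepresentation) = \Hom_{P_{2m}(\fLocalField)\cap H_{2m}(\fLocalField)}(\rho,\TrivialRepresentation)$ to upgrade the a priori $(P_{2m}\cap H_{2m})$-invariance of the mirabolic integral to full $H_{2m}$-invariance.

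Second, and more seriously, your independence-of-$\fieldCharacter_\fLocalField$ argument does not go through as written. You restrict the change of character to $a\in\multiplicativegroup{\integersRing}$ (level preserving), but in the very next proposition the paper uses this independence to pass between a level-one character $\fieldCharacter_\fLocalField$ and the level-zero character $\fieldCharacter_\fLocalField^{\flat}$, so one must allow arbitrary $a\in\multiplicativegroup{\fLocalField}$. More substantively, a change of character acts on the Whittaker model by \emph{left} translation by a diagonal $d_a = J(d_1,d_2)$ whose two $\GL_m$-blocks differ by a factor of $a$; since both $\ell$ and $\ell'$ are \emph{right}-invariant for $H_{2m}$, one cannot simply absorb $d_a$, and one must track the Jacobian contributions of ${\rm Ad}(d_1)$ on $N_m$ together with the failure of $N_m\backslash P_m$ to be invariant under left translation by $d_2$ (since $d_2 p'\notin P_m$ when $(d_2)_{m,m}\neq 1$). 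Your sketch asserts without verification that the resulting factors are identical for $\ell$ and $\ell'$; this is exactly the nontrivial point, and the paper avoids it entirely by citing Remark 3 of Offen's paper on local root numbers for this independence.
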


\begin{proof}
We know from \cite[Proposition 5.14]{Jo23} that $L(s,\pi, {\rm BF})$ is holomorphic at $s=1/2$ since $\rho$ is assumed to be distinguished 
with respect to $H_{2m}(\fLocalField)$. As a consequence, all the integrals $Z_{(0)}(s,W_{\rho})$ are holomorphic at $s=1/2$ from which it follows that
the linear forms $\ell$ and $\ell'$ are well-defined. Since $\rho$ is $H_{2m}(\fLocalField)$-distinguished, $\contragredient{\rho}$ is also $H_{2m}(\fLocalField)$-distinguished.
Taking 
\[
\Hom_{H_{2m}(\fLocalField)}(\rho\restriction_{H_{2m}(\fLocalField)},\TrivialRepresentation_{H_{2m}(\fLocalField)})= \Hom_{\MirabolicSubgroup_{2m}(\fLocalField) \cap H_{2m}(\fLocalField)}(\rho\restriction_{\MirabolicSubgroup_{2m}(\fLocalField) \cap H_{2m}(\fLocalField)},\TrivialRepresentation_{\MirabolicSubgroup_{2m}(\fLocalField) \cap H_{2m}(\fLocalField)})
\]
into account, $\ell$ is a $H_{2m}(\fLocalField)$-invariant functional on $\whittaker(\rho,\fieldCharacter_\fLocalField)$ and the integral in the linear form of $\ell'$
is a $H_{2m}(\fLocalField)$-invariant functional on $\whittaker(\contragredient{\rho},\fieldCharacter_\fLocalField^{-1})$. As $\contragredient{\rho} \cong \rho^{\iota}$ where $\rho^{\iota}(g)=\rho({^tg^{-1}})$, 
$\ell'$ gives a $H_{2m}(\fLocalField)$-invariant functional on $\rho$ as well.
Using the multiplicity one result of $H_{2m}(\fLocalField)$-invariant linear functionals accompanied by \cite[Remark 3]{Off11}, this yields that 
two linear forms $\ell$ and $\ell'$ differ by a non-zero scalar  $c(\rho)$ which depends only on the representation $\rho$.
\end{proof}

Let $\varepsilon(s,\rho,\fieldCharacter_\fLocalField)$ be the standard $\varepsilon$-factor defined by Godement and Jacquet \cite{GJ72}.
The local constant takes the form  $\varepsilon(s,\rho,\fieldCharacter_\fLocalField)=\varepsilon(0,\rho,\fieldCharacter_\fLocalField)q^{-f(\rho,\psi_F)s}$,
where $f(\rho,\psi_F)=-n+f(\rho)$, for a non-negative integer $f(\rho)$ regardless of the choice of $\psi_F$ \cite{BHK98}. We shall primarily be interested in the special value of $\varepsilon(s,\rho,\fieldCharacter_\fLocalField)$  at $s=1/2$. Although we narrow it down to $\{ \pm 1 \}$ for distinguished representations $\rho$,
it is challenging to determine the sign of the root number  $\varepsilon(1/2,\rho,\fieldCharacter_\fLocalField)$. This is what is known as {\it distinction problems} \cite{Off11}. It is our belief that $\varepsilon(1/2,\rho,\fieldCharacter_\fLocalField)=1$ for distinguished representations $\rho$, but we leave these out as it will be a digression from the main theorem of this paper.

\begin{lemma}
\label{central-epsilon}
Let $\rho$ be an irreducible supercuspidal representations of $\GL_{2m}(\fLocalField)$ which is distinguished with respect to $H_{2m}(\fLocalField)$.
Then we have $ \varepsilon(1/2,\rho,\fieldCharacter^{\flat}_\fLocalField)=\varepsilon(1/2,\rho,\fieldCharacter_\fLocalField) \in \{ \pm 1 \}$. In particular, if $\rho$ is a level zero supercuspidal representations of $\GL_{2m}(\fLocalField)$, then we get
\[
  \varepsilon(1/2,\rho,\fieldCharacter^{\flat}_\fLocalField)=\varepsilon(1/2,\rho,\fieldCharacter_\fLocalField)=\varepsilon(s,\rho,\fieldCharacter_\fLocalField) \in \{ \pm 1 \}.
\]
\end{lemma}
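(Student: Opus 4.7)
The plan is to extract from the $H_{2m}(\fLocalField)$-distinguishedness two structural facts about $\rho$ — triviality of the central character and self-duality $\rho \cong \contragredient{\rho}$ — and then apply the standard transformation law and functional equation of the Godement--Jacquet $\varepsilon$-factor.

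First I would observe that the center $Z(\GL_{2m})$ sits inside $H_{2m}$, since a scalar matrix $\lambda \cdot 1_{2m}$ is the $w_{m,m}$-conjugate of $\mathrm{diag}(\lambda 1_m, \lambda 1_m) \in M_{m,m}$. For any non-zero $\ell \in \Hom_{H_{2m}(\fLocalField)}(\rho, \TrivialRepresentation_{H_{2m}(\fLocalField)})$ and any $z \in Z(\GL_{2m}(\fLocalField))$ the identity $\omega_{\rho}(z)\ell(v) = \ell(\rho(z)v) = \ell(v)$ then forces $\omega_{\rho} \equiv 1$. Next I would invoke the known result (in the spirit of Matringe and Offen) that an $H_{2m}(\fLocalField)$-distinguished irreducible supercuspidal representation of $\GL_{2m}(\fLocalField)$ is self-dual, i.e.\ $\rho \cong \contragredient{\rho}$; this will be the only non-formal input to the argument.

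With these in hand, the first equality follows from the standard transformation law
\[
  \varepsilon(s, \rho, \fieldCharacter_\fLocalField(a\,\cdot)) = \omega_{\rho}(a)\,|a|^{n(s-1/2)}\,\varepsilon(s, \rho, \fieldCharacter_\fLocalField).
\]
Writing $\fieldCharacter^{\flat}_{\fLocalField}(x) = \fieldCharacter_{\fLocalField}(\varpi^{-1} x)$ (to match the respective conductors $\integersRing$ and $\mathfrak{p}$) and specializing at $s = 1/2$ kills the $|a|$-factor, while $\omega_{\rho} \equiv 1$ disposes of the remaining scalar $\omega_{\rho}(\varpi^{-1})$. To obtain $\varepsilon(1/2, \rho, \fieldCharacter_\fLocalField) \in \{\pm 1\}$, I would combine the Godement--Jacquet functional equation
\[
  \varepsilon(s, \rho, \fieldCharacter_\fLocalField)\,\varepsilon(1 - s, \contragredient{\rho}, \fieldCharacter^{-1}_\fLocalField) = 1
\]
at $s = 1/2$ with the same transformation law applied to $\fieldCharacter^{-1}_\fLocalField(x) = \fieldCharacter_\fLocalField(-x)$ (using $\omega_{\rho}(-1) = 1$ to replace $\fieldCharacter^{-1}_\fLocalField$ by $\fieldCharacter_\fLocalField$), and then apply self-duality to collapse the equation to $\varepsilon(1/2, \rho, \fieldCharacter_\fLocalField)^2 = 1$.

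Finally, for the level zero case a supercuspidal $\rho$ of $\GL_{2m}(\fLocalField)$ has conductor $f(\rho) = n = 2m$, so the formula $f(\rho, \fieldCharacter_\fLocalField) = -n + f(\rho)$ recalled just above the statement gives $f(\rho, \fieldCharacter_\fLocalField) = 0$. Consequently $\varepsilon(s, \rho, \fieldCharacter_\fLocalField) = \varepsilon(0, \rho, \fieldCharacter_\fLocalField)$ is constant in $s$, in particular equals $\varepsilon(1/2, \rho, \fieldCharacter_\fLocalField)$, and the first two parts close the chain of equalities. The only delicate step in this plan is the invocation of $\rho \cong \contragredient{\rho}$ for $H_{2m}(\fLocalField)$-distinguished supercuspidals; everything else is a formal manipulation of $\varepsilon$-factors.
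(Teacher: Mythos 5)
Your argument is essentially the paper's: both establish $\omega_{\rho}\equiv 1$ from distinguishedness, invoke self-duality of $H_{2m}(\fLocalField)$-distinguished supercuspidals, combine the $\varepsilon$-functional equation with self-duality to force $\varepsilon(1/2,\rho,\fieldCharacter_\fLocalField)^2 = 1$, and use vanishing of the $\fLocalField$-level conductor exponent in the level-zero case to kill the $s$-dependence; the only difference is that you unpack the $\fieldCharacter_\fLocalField$-invariance of $\varepsilon(1/2,\cdot)$ directly from the change-of-character formula, whereas the paper outsources it to \cite[Theorem 2.2-(iv)]{HL13}. (Minor slip: with the paper's normalizations $\fieldCharacter^\flat_\fLocalField(x)=\fieldCharacter_\fLocalField(\varpi x)$, not $\fieldCharacter_\fLocalField(\varpi^{-1}x)$, but since $\omega_\rho$ is trivial and you specialize to $s=1/2$ this has no effect.)
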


\begin{proof}
Appealing to \cite[Theorem 2.2-(iv)]{HL13} with an observation that the central character $\omega_{\rho}$ of the distinguished representation $\rho$ is trivial, the central value of epsilon factors $\varepsilon(1/2,\rho,\fieldCharacter_\fLocalField)$ does not depend on the choice of $\fieldCharacter_\fLocalField$.
With the choice of level one additive character $\fieldCharacter_\fLocalField$, we recall from \cite[\S 6.1]{BHK98} that the level zero supercuspidal representations are of 
conductor zero and then their corresponding gamma factors $\varepsilon(s,\rho,\fieldCharacter_\fLocalField)$ are complex numbers instead of rational functions in $q^s$.
In this way, we see that $\varepsilon(1/2,\rho,\fieldCharacter_\fLocalField)=\varepsilon(s,\rho,\fieldCharacter_\fLocalField)$.
\par
 Let us make a straight observation.
The $\varepsilon$-factor satisfies the identity
$\varepsilon(s,\rho,\fieldCharacter_\fLocalField) \varepsilon(1-s,\check{\rho},\fieldCharacter^{-1}_\fLocalField)=1$. 
Since $\rho$ is self-contragredient, that is to say $\rho \cong \check{\rho}$ \cite[Theorems 3.5 and 4.1]{Jo24}, it is clear from the fact $\varepsilon(s,\check{\rho},\fieldCharacter^{-1}_\fLocalField)=\varepsilon(s,\rho,\fieldCharacter_\fLocalField)$ that
\[
\varepsilon(1/2,\rho,\fieldCharacter_\fLocalField )^2=1.
\]
Thereupon we conclude that $\varepsilon(1/2,\rho,\fieldCharacter_\fLocalField ) \in \{ \pm 1\}$, as claimed.
\end{proof}

Let $W^{\rm ess}_{\rho}$ be the essential Whittaker function defined by Jacquet--Piatetski-Shapiro--Shalika and $\rho_{\rm ur}$ a certain 
unramified standard module attached to $\rho$.
 We refer the reader to \cite[\S 2]{Jo23} for precise definitions of these objects. By evaluating the essential Whittaker function $W^{\rm ess}_{\rho}$ in $\whittaker(\rho,\fieldCharacter^{\flat}_\fLocalField)$, we specify the constant $c(\rho)$.

\begin{proposition}
\label{BF-dual-formula}
Let $\rho$ be an irreducible supercuspidal representations of $\GL_{2m}(\fLocalField)$ which is distinguished with respect to $H_{2m}(\fLocalField)$.
Then we have $\ell'=\varepsilon(s,\rho,\fieldCharacter_\fLocalField)\ell$. In particular, if $\rho$ is a level zero supercuspidal representations of $\GL_{2m}(\fLocalField)$, then we get
\[
  Z(\check{W}_{\pi},\TrivialRepresentation_{\fFiniteField^m})=\varepsilon(s,\rho,\fieldCharacter_\fLocalField)Z(W_{\pi},\TrivialRepresentation_{\fFiniteField^m}).
\]
\end{proposition}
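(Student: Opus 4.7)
The plan is to pin down the scalar $c(\rho)$ by evaluating both $\ell$ and $\ell'$ on the essential Whittaker function $W^{\mathrm{ess}}_\rho \in \whittaker(\rho, \fieldCharacter_\fLocalField)$ attached to the unramified standard module $\rho_{\mathrm{ur}}$. According to the new-vector theory recalled in \cite[\S 2]{Jo23}, the Bump--Friedberg integral $Z_{(0)}(s, W^{\mathrm{ess}}_\rho)$ produces the canonical $L$-factor of $\rho_{\mathrm{ur}}$, and under the $H_{2m}(\fLocalField)$-distinction hypothesis, \cite[Proposition 5.14]{Jo23} guarantees that $L(s, \rho, \mathrm{BF})$ is holomorphic and nonvanishing at $s = 1/2$, so $\ell(W^{\mathrm{ess}}_\rho) \neq 0$. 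This is precisely what allows us to solve for the scalar.

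Next, the local functional equation \eqref{Local-BF-Func} for Bump--Friedberg integrals applied at $s = 1/2$ immediately yields
\[
  \ell'(W^{\mathrm{ess}}_\rho) \;=\; \BFLocalRhogammaFactor\Big|_{s = 1/2} \cdot \ell(W^{\mathrm{ess}}_\rho),
\]
so the proposition reduces to the identity $\BFLocalRhogammaFactor|_{s = 1/2} = \varepsilon(s, \rho, \fieldCharacter_\fLocalField)$. Here I invoke the factorization stated in the introduction: the Bump--Friedberg gamma factor splits as a product of a standard $\varepsilon$-factor and a Jacquet--Shalika gamma factor. Specializing the parameters so that the standard argument becomes $1/2$ and the exterior square argument becomes $1/2$, the Jacquet--Shalika factor $\Gamma(1/2, \rho, \wedge^2, \fieldCharacter_\fLocalField)$ collapses to $1$ via \eqref{even-exterior-JS}, since $H_{2m}(\fLocalField)$-distinction forces $\omega_\rho \equiv 1$ (the center of $\GL_{2m}$ is contained in $H_{2m}$). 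The surviving standard factor is $\varepsilon(1/2, \rho, \fieldCharacter_\fLocalField) = \varepsilon(s, \rho, \fieldCharacter_\fLocalField)$ by \Cref{central-epsilon}, completing the proof of the first assertion.

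For the ``in particular'' statement, I specialize the first part to $W_\rho = W^{\circ}_\rho$, the lift of $W_\pi \in \whittaker(\pi, \fieldCharacter)$, together with the lift $\Phi_{\circ}$ of $\TrivialRepresentation_{\fFiniteField^m}$, paralleling the passage from local integrals to finite-field sums carried out in \Cref{modified-Flicker} in the Asai setting. The support condition on $W^{\circ}_\rho$ collapses the $p$-adic integral $Z_{(0)}$ to the finite-field Bump--Friedberg sum $Z(W_\pi, \TrivialRepresentation_{\fFiniteField^m})$ up to a common volume factor; after cancellation, the identity $\ell' = \varepsilon(s, \rho, \fieldCharacter_\fLocalField) \ell$ transports to the stated equality of finite-field sums. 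The main technical obstacle I anticipate is the precise specialization of the (two-parameter) Bump--Friedberg gamma factor so that the exterior square contribution really evaluates to $1$; this hinges on the fact that $H_{2m}(\fLocalField)$-distinction forces $\omega_\rho \equiv 1$ and places us in the regime of \eqref{even-exterior-JS}, where the explicit Jacquet--Shalika formula applies cleanly at $s=1/2$.
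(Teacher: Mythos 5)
Your plan of pinning down $c(\rho)$ by evaluating $\ell$ and $\ell'$ on $W^{\mathrm{ess}}_\rho$ is the right starting point, and the observation that $\ell(W^{\mathrm{ess}}_\rho)\neq 0$ under the distinction hypothesis is correct. The ``in particular'' reduction via the support of $W^{\circ}_\rho$ also matches the paper. However, the central step of your argument---identifying the scalar as $\varepsilon(s,\rho,\psi_F)$ via the Bump--Friedberg factorization $\BFLocalRhogammaFactor=\varepsilon(s+t+1/2,\varphi,\psi_F)\Gamma(2s,\wedge^2(\varphi),\psi_F)$---is circular. That factorization is \Cref{BF-JS equal}, whose proof in this paper depends on \Cref{main-BF-factor}, which depends on \Cref{levelzero-BF}, which depends on the modified functional equation \Cref{BF-modified-functional equation}, which in turn explicitly invokes \Cref{BF-dual-formula}. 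And even setting aside circularity, \Cref{BF-JS equal} is only established for level zero supercuspidals, while the first assertion here concerns arbitrary distinguished supercuspidals.

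There are two further substantive problems. First, $\ell$ and $\ell'$ are $Z_{(0)}$ integrals---over $N_m\backslash P_m\times N_m\backslash\GL_m$ with no Schwartz function---whereas the functional equation \eqref{Local-BF-Func} relates the full integrals $Z(s,t,W_\rho,\Phi)$ against a Fourier-transformed $\Phi$. Saying the functional equation ``immediately yields'' $\ell'(W^{\mathrm{ess}}_\rho)=\BFLocalRhogammaFactor|_{s=1/2}\,\ell(W^{\mathrm{ess}}_\rho)$ skips the nontrivial passage from $Z$ to $Z_{(0)}$. Second, your specialization is incorrect: at $s=1/2$ the exterior square argument is $2s=1$, not $1/2$, and by \eqref{even-exterior-JS} with $\omega_\rho$ trivial, $\Gamma(2s,\rho,\wedge^2,\psi_F)$ has a pole at $s=1/2$ precisely because of distinction, so it does not collapse to $1$. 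The paper instead argues purely locally and avoids the gamma factor altogether: it takes $\psi_F$ level zero, uses the new-form relation $\check{W}^{\mathrm{ess}}_\rho = \varepsilon(1/2,\rho,\psi^\flat_F)^{2m-1}\,\contragredient{\rho}\bigl(\mathrm{diag}(\varpi^{f(\pi)}1_{n-1},1)\bigr)W^{\mathrm{ess}}_{\contragredient{\rho}}$ from \cite[Proposition 4.4]{AM17} together with \Cref{central-epsilon} to bring out the epsilon factor, then applies the explicit formula $\ell(W^{\mathrm{ess}}_\rho)=L(1/2,\rho_{\mathrm{ur}})L(1,\rho_{\mathrm{ur}},\wedge^2)$ from \cite[Theorem 5.15]{Jo23} and self-contragredience of $\rho$ to equate $\ell(W^{\mathrm{ess}}_{\contragredient{\rho}})=\ell(W^{\mathrm{ess}}_\rho)$. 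You would need a comparably self-contained argument to avoid the circularity.
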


\begin{proof}
Since $c(\rho)$ does not depend on the choice of $\fieldCharacter_\fLocalField$, we can take $\fieldCharacter_\fLocalField$ to $\fieldCharacter^{\flat}_\fLocalField$.
Upon using \cite[Proposition 4.4]{AM17} in conjunction with \Cref{central-epsilon}, and then making the change of variables $g \mapsto g \begin{pmatrix} \varpi^{-f(\pi)}1_{n-1} & \\ & 1 \end{pmatrix}$, we are led to
\[
  \ell'(W^{\rm ess}_{\rho})=\ell(\check{W}^{\rm ess}_{\rho})=\varepsilon(1/2,\rho,\fieldCharacter^{\flat}_\fLocalField)^{2m-1}\ell\left(\contragredient{\rho}  \begin{pmatrix} \varpi^{f(\pi)}1_{n-1} & \\ & 1 \end{pmatrix} W^{\rm ess}_{\contragredient{\rho}} \right)
  =\varepsilon(s,\rho,\fieldCharacter_\fLocalField) \ell(W^{\rm ess}_{\contragredient{\rho}} ).
\]
With the help of \cite[Theorem 5.15]{Jo23}, we deduce from the self-contragredient of $\rho$ that
\[
 \ell(W^{\rm ess}_{\contragredient{\rho}} )=L(1/2,\contragredient{\rho}_{\rm ur})L(1,\contragredient{\rho}_{\rm ur},\wedge^2)=L(1/2,\rho_{\rm ur})L(1,\rho_{\rm ur},\wedge^2)=\ell(W^{\rm ess}_{\rho}).
\]
To sum up, we obtain $ \ell'(W^{\rm ess}_{\rho}) =\varepsilon(s,\rho,\fieldCharacter_\fLocalField)\ell(W^{\rm ess}_{\rho})$, from which we conclude that $c(\rho)=\varepsilon(s,\rho,\fieldCharacter_\fLocalField)$.
\par
We assume that $\rho$ is a level zero supercuspidal representation constructed from $\pi$ and choose $W_{\rho}$ to be $W^{\circ}_{\rho}$. Since the support of $W^{\circ}_{\rho}$
is contained in $\UnipotentSubgroup_{2m}(\fLocalField)\multiplicativegroup{\fLocalField}K_{2m}$, we arrive at (cf. \cite[Theorem 3.1]{NZ18})
\begin{multline*}
\ell(W^{\circ}_{\rho})= \int_{\UnipotentSubgroup_{m}(\fLocalField)\cap K_m \backslash K_m} \int_{\UnipotentSubgroup_m(\fLocalField)\cap K_m \backslash K_m} W^{\circ}_{\rho}(J(k,k')) \,dkdk' \\
 =\Volume(\UnipotentSubgroup_m(\integersRing)(1_m+\matrixRing_m(\mathfrak{p})))^2 \sum_{g \in \UnipotentSubgroup_m(\fFiniteField) \backslash \GL_m(\fFiniteField) } \sum_{g' \in \UnipotentSubgroup_m(\fFiniteField) \backslash \GL_m(\fFiniteField) } W_{\pi}(J(g,g'))\\ =\Volume(\UnipotentSubgroup_m(\integersRing)(1_m+\matrixRing_m(\mathfrak{p})))^2 Z(W_{\pi},\TrivialRepresentation_{{\fFiniteField}^m}),
\end{multline*}
and similarly we have $\ell'(W^{\circ}_{\rho})=\Volume(\UnipotentSubgroup_m(\integersRing)(1_m+\matrixRing_m(\mathfrak{p})))^2 Z_{\fieldCharacter}(\contragredient{W}_{\pi},\TrivialRepresentation_{{\fFiniteField}^m})$.
The common volume term is cancelled out, and we are left with $ Z(\contragredient{W}_{\pi},\TrivialRepresentation_{{\fFiniteField}^m})=\varepsilon(s,\rho,\fieldCharacter_\fLocalField)Z(W_{\pi},\TrivialRepresentation_{{\fFiniteField}^m})$, as required.
\end{proof}

A non-zero vector $v \in V_{\pi}$ is called a {\it Friedberg--Jacquet vector} if $\pi(h)v=v$ for every $h \in H_{2m}(\fFiniteField)$.
We now characterize the existence of Friedberg--Jacquet vectors in terms of the non-vanishing sum.

\begin{lemma}  
\label{JR-sum-characterization}
Let $\pi$ be an irreducible cuspidal representation of $\GL_{n}(\fFiniteField)$ with $n=2m$ even. Then $\pi$ admits a Friedberg--Jacquet vector if and only if there exists $W_{\pi}  \in \whittaker(\pi,\fieldCharacter)$ such that
\[
\sum_{g \in \UnipotentSubgroup_m(\fFiniteField) \backslash \GL_m(\fFiniteField) } \sum_{g' \in \UnipotentSubgroup_m(\fFiniteField) \backslash \GL_m(\fFiniteField) } W_{\pi}(J(g,g')) \neq 0.
\]
\end{lemma}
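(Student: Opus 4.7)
The plan is to parallel the two-step template of \Cref{FR-sum-characterization}, with $H_{2m}(\fFiniteField)$ playing the role that $\GL_n(\fFiniteField) \subset \GL_n(\eFiniteField)$ played for the Flicker--Rallis case, and \Cref{mirabolic-BF-mulone} replacing \Cref{mirabolic-Flicker-mulone}. A preliminary remark, used throughout, is that the embedding $J$ carries $\UnipotentSubgroup_m(\fFiniteField) \times \UnipotentSubgroup_m(\fFiniteField)$ into $\UnipotentSubgroup_{2m}(\fFiniteField)$ with $\fieldCharacter(J(n,n'))=1$: each superdiagonal entry $J(n,n')_{i,i+1}$ lies in a mixed-parity position $(2a-1,2a)$ or $(2a,2a+1)$ and therefore vanishes identically. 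Via Whittaker quasi-invariance, this lets one freely switch between sums over $\GL_m \times \GL_m$ and sums over $(\UnipotentSubgroup_m \backslash \GL_m) \times (\UnipotentSubgroup_m \backslash \GL_m)$ at the cost of a factor $\sizeof{\UnipotentSubgroup_m(\fFiniteField)}^{2}$.

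For the reverse implication, given $W_\pi \in \whittaker(\pi,\fieldCharacter)$ with the double sum non-zero, I will set
\[
W^{\sharp}_{\rm FJ}(h) := \frac{1}{\sizeof{\UnipotentSubgroup_m(\fFiniteField)}^{2}} \sum_{h' \in H_{2m}(\fFiniteField)} W_\pi(h h'), \qquad h \in \GL_{2m}(\fFiniteField).
\]
Checking the Whittaker transformation property on the left is immediate (the character is unaffected by right multiplication by $h'$), while right $H_{2m}(\fFiniteField)$-invariance follows by absorbing a right translate of $h$ into the summation variable. Evaluating at $h=1_{2m}$ and using the preliminary remark on $\fieldCharacter \circ J$ gives
\[
W^{\sharp}_{\rm FJ}(1_{2m}) = \sum_{g \in \UnipotentSubgroup_m(\fFiniteField) \backslash \GL_m(\fFiniteField)} \sum_{g' \in \UnipotentSubgroup_m(\fFiniteField) \backslash \GL_m(\fFiniteField)} W_\pi(J(g,g')) \neq 0,
\]
so $W^{\sharp}_{\rm FJ}$ is a non-zero right-$H_{2m}(\fFiniteField)$-invariant Whittaker function, and the corresponding vector in $V_\pi$ is a Friedberg--Jacquet vector.

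For the forward implication, I will endow $\whittaker(\pi,\fieldCharacter)$ with a $\pi$-unitary inner product $(\cdot,\cdot)$ and define
\[
W_{\rm FJ}(g) := \frac{1}{\sizeof{H_{2m}(\fFiniteField)}} \sum_{p \in \MirabolicSubgroup_{2m}(\fFiniteField) \cap H_{2m}(\fFiniteField)} \grepBesselFunction{\pi}{\fieldCharacter}(gp),
\]
which is right-invariant under $\MirabolicSubgroup_{2m}(\fFiniteField) \cap H_{2m}(\fFiniteField)$ by construction. The containment
\[
\Hom_{H_{2m}(\fFiniteField)}(\pi,\TrivialRepresentation) \subseteq \Hom_{\MirabolicSubgroup_{2m}(\fFiniteField) \cap H_{2m}(\fFiniteField)}(\pi,\TrivialRepresentation),
\]
combined with one-dimensionality of the former (Friedberg--Jacquet distinguishedness) and of the latter (\Cref{mirabolic-BF-mulone}), forces equality of these Hom spaces. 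Running the pairing/unitarity argument of \Cref{FR-sum-characterization}, the functional $T_{W_{\rm FJ}}(W') := (W',W_{\rm FJ})$ sits in $\Hom_{H_{2m}(\fFiniteField)}(\pi,\TrivialRepresentation)$, which upgrades $W_{\rm FJ}$ to a right-$H_{2m}(\fFiniteField)$-invariant Whittaker function. In particular $W_{\rm FJ}(J(g,g'))=W_{\rm FJ}(1_{2m})$ for all $g,g' \in \GL_m(\fFiniteField)$, so
\[
\sum_{g,g'} W_{\rm FJ}(J(g,g')) = \sizeof{\UnipotentSubgroup_m(\fFiniteField) \backslash \GL_m(\fFiniteField)}^{2} \cdot W_{\rm FJ}(1_{2m}).
\]
Plugging the explicit description $\MirabolicSubgroup_{2m} \cap H_{2m} = \{J(g,g') : g \in \GL_m,\, g' \in \MirabolicSubgroup_m\}$ into $W_{\rm FJ}(1_{2m})$ and reorganizing via the $(\UnipotentSubgroup_m \times \UnipotentSubgroup_m)$-quasi-invariance noted at the outset, the Bessel-function summation identity \cite[Lemma 2.7]{Nie19} then evaluates $W_{\rm FJ}(1_{2m})$ to a non-zero constant.

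The main obstacle is precisely this non-vanishing of $W_{\rm FJ}(1_{2m})$; it is the Friedberg--Jacquet analog of the Bessel identity $\sum_{p \in \UnipotentSubgroup_n \backslash \MirabolicSubgroup_n} \grepBesselFunction{\pi}{\fieldCharacter}(p)=1$ that powered \Cref{FR-sum-characterization}, and needs to be handled by a Bernstein--Zelevinsky derivative analysis along the mixed stratum $(\UnipotentSubgroup_m \backslash \GL_m) \times (\UnipotentSubgroup_m \backslash \MirabolicSubgroup_m)$ appearing under $J$, in the spirit of \cite[\S 4]{GGP12}. Once this partial Bessel sum is in place, both implications are mechanical.
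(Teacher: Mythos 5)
Your proof is structurally identical to the paper's: the same two averaging constructions ($W^{\sharp}_{\rm FJ}$ for the reverse implication, $W_{\rm FJ}$ for the forward one), the same appeal to the one-dimensionality of $\Hom_{H_{2m}(\fFiniteField)}(\pi,\TrivialRepresentation)$ and of $\Hom_{\MirabolicSubgroup_{2m}(\fFiniteField)\cap H_{2m}(\fFiniteField)}(\pi,\TrivialRepresentation)$ (\Cref{mirabolic-BF-mulone}) to upgrade $W_{\rm FJ}$ to an actual $H_{2m}(\fFiniteField)$-fixed vector, and the same evaluation of $W_{\rm FJ}$ at the identity to check non-degeneracy. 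Your preliminary observation that $J$ carries $\UnipotentSubgroup_m(\fFiniteField)\times\UnipotentSubgroup_m(\fFiniteField)$ into $\UnipotentSubgroup_{2m}(\fFiniteField)$ with $\fieldCharacter\circ J$ trivial is correct and is exactly the fact that makes the Bessel accounting clean.

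The one place where you hedge unnecessarily is the final paragraph. You flag the non-vanishing of $W_{\rm FJ}(1_{2m})$ — equivalently, of the partial Bessel sum $\sum_{p\in\MirabolicSubgroup_{2m}(\fFiniteField)\cap H_{2m}(\fFiniteField)}\grepBesselFunction{\pi}{\fieldCharacter}(p)$ — as an obstacle ``needing Bernstein--Zelevinsky derivative analysis,'' but no such machinery is required. The set $\MirabolicSubgroup_{2m}(\fFiniteField)\cap H_{2m}(\fFiniteField) = J\bigl(\GL_m(\fFiniteField)\times\MirabolicSubgroup_m(\fFiniteField)\bigr)$ is a \emph{subset} of $\MirabolicSubgroup_{2m}(\fFiniteField)$, so the Bessel support property of \cite[Lemma 2.7]{Nie19} (vanishing on $\MirabolicSubgroup_{2m}(\fFiniteField)\setminus\UnipotentSubgroup_{2m}(\fFiniteField)$, equal to $\fieldCharacter$ on $\UnipotentSubgroup_{2m}(\fFiniteField)$) applies to every summand verbatim. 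Since $J(g,g')\in\UnipotentSubgroup_{2m}(\fFiniteField)$ exactly when $g,g'\in\UnipotentSubgroup_m(\fFiniteField)$, and you already verified $\fieldCharacter\circ J\equiv 1$ there, the surviving terms contribute $\sizeof{\UnipotentSubgroup_m(\fFiniteField)}^2 = \sizeof{\UnipotentSubgroup_{2m}(\fFiniteField)\cap H_{2m}(\fFiniteField)}$ copies of $1$, giving a manifestly non-zero constant. This is precisely how the paper concludes; with that, the step you call ``the main obstacle'' is actually immediate and your proof is complete as written.
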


\begin{proof}
We assume that $\pi$ has a non-zero Friedberg--Jacquet vector.  We equip $\whittaker(\pi,\qfFieldCharacter)$ with
an inner product $(\cdot,\cdot)$ in which $\pi$ is unitary. We define $W_{\rm FJ} \in \whittaker(\pi,\fieldCharacter)$ by
\[
 W_{\rm FJ}(g)=\frac{1}{\sizeof{H_{n}(\fFiniteField)}} \sum_{p \in  \MirabolicSubgroup_n(\fFiniteField) \cap H_n(\fFiniteField) }
\grepBesselFunction{\pi}{\fieldCharacter}(gp)
\]
for $g \in H_{n}(\eFiniteField)$. Taking the advantage of the average, we see that $W_{\rm FJ}(gh)=W_{\rm FJ}(g)$
for all $h \in \MirabolicSubgroup_n(\fFiniteField) \cap H_n(\fFiniteField)$.
Using inclusion $\Hom_{H_n(\fFiniteField)}(\pi\restriction_{H_n(\fFiniteField)},\TrivialRepresentation)
\subseteq \Hom_{  \MirabolicSubgroup_n(\fFiniteField) \cap H_n(\fFiniteField) }(\pi\restriction_{\MirabolicSubgroup_n(\fFiniteField) \cap H_n(\fFiniteField)  },\TrivialRepresentation)$,
we deduce the equality  $\Hom_{H_n(\fFiniteField)}(\pi\restriction_{H_n(\fFiniteField)},\TrivialRepresentation)
=\Hom_{  \MirabolicSubgroup_n(\fFiniteField) \cap H_n(\fFiniteField) }(\pi\restriction_{\MirabolicSubgroup_n(\fFiniteField) \cap H_n(\fFiniteField)  },\TrivialRepresentation)$ from the one-dimensionality of both spaces, \Cref{mirabolic-BF-mulone}. In this way, $W_{\rm FJ}$ produces an element $T_{W_{\rm FJ}} \in \Hom_{H_n(\fFiniteField)}(\pi\restriction_{H_n(\fFiniteField)},\TrivialRepresentation)$
stated by $T_{W_{\rm FJ}}(W')=(W',W_{\rm FJ})$ for  $W' \in \whittaker(\pi,\fieldCharacter)$, from which it follows that $W_{\rm FJ}$ is a Friedberg--Jacquet vector.
Furthermore, the given summation is non-trivial, because \cite[Lemma 2.7]{Nie19} yields
\[
 \sum_{g \in \UnipotentSubgroup_m(\fFiniteField) \backslash \GL_m(\fFiniteField) } \sum_{g' \in \UnipotentSubgroup_m(\fFiniteField) \backslash \GL_m(\fFiniteField) } W_{\rm FJ}(J(g,g'))
 =\frac{1}{\sizeof{\UnipotentSubgroup_{n}(\fFiniteField)\cap H_n(\fFiniteField)} } \sum_{p \in  \MirabolicSubgroup_n(\fFiniteField) \cap H_n(\fFiniteField) }
\grepBesselFunction{\pi}{\fieldCharacter}(p)=1.
\]

\par
Conversely, we assume that there exists $W_{\pi}  \in \whittaker(\pi,\fieldCharacter)$ such that
\[
\sum_{g \in \UnipotentSubgroup_m(\fFiniteField) \backslash \GL_m(\fFiniteField) } \sum_{g' \in \UnipotentSubgroup_m(\fFiniteField) \backslash \GL_m(\fFiniteField) } W_{\pi}(J(g,g')) \neq 0.
\]
We define $W^{\sharp}_{\rm FJ}   \in \whittaker(\pi,\fieldCharacter)$ by
\[
W^{\sharp}_{\rm FJ}(h)=\frac{1}{\sizeof{\UnipotentSubgroup_{n}(\fFiniteField)\cap H_n(\fFiniteField)} }\sum_{g \in H_n(\fFiniteField)}  W_{\pi}(hg).
\]
Combining
\[
 W^{\sharp}_{\rm FJ}(1_{n})=\sum_{g \in \UnipotentSubgroup_m(\fFiniteField) \backslash \GL_m(\fFiniteField) } \sum_{g' \in \UnipotentSubgroup_m(\fFiniteField) \backslash \GL_m(\fFiniteField) } W_{\pi}(J(g,g')) \neq 0
\]
along with the quasi-invariance property that $W^{\sharp}_{\rm FJ}(hh')=W^{\sharp}_{\rm FJ}(h)$ for all $h' \in H_n(\fFiniteField)$, $ W^{\sharp}_{\rm FJ}$ is a non-zero 
Friedberg--Jacquet vector that we seek for.
\end{proof}

\subsection{Bump--Freidberg integrals and close field theory}

Let $\rho$ be level zero supercuspidal representations of $\GL_n(F)$
 constructed from irreducible cuspidal representations $\pi$ of  $\GL_n(\fFiniteField)$ 
 with its attached Whittaker models $\whittaker(\rho,\psi_F)$.  
For $W_{\rho} \in \whittaker(\rho,\fieldCharacter_\fLocalField)$ and $\Phi \in \Schwartz(\fLocalField^{\lfloor {(n+1)}/2 \rfloor})$, we define the {\it Bump-Friedberg integral} $Z(s_1,s_2,W_{\rho},\Phi)$ by
\[
\int_{\UnipotentSubgroup_m(\fLocalField) \backslash \GL_m(\fLocalField)} \int_{\UnipotentSubgroup_m(\fLocalField) \backslash \GL_m(\fLocalField)} W_{\rho}(J(g,g')) \Phi(e_mg')   \abs{\det g}^{s_1-1/2} \abs{\det g'}^{1/2+s_2-s_1} \, dg' dg
\]
for $n=2m$ even and
\[
\int_{\UnipotentSubgroup_{m+1}(\fLocalField) \backslash \GL_{m+1}(\fLocalField)} \int_{\UnipotentSubgroup_m(\fLocalField) \backslash \GL_m(\fLocalField)} W_{\rho}(J(g,g')) \Phi(e_{m+1}g) \abs{ \det g }^{s_1}   \abs{\det g'}^{s_2-s_1} \, dg' dg
\]
for $n=2m+1$ odd. For the sake of coherence with \cite{Mat15,Mat17}, we bring further notations. For $t \in \cComplex$ a complex number, we denote by $\delta_t$ the character defined by
\[
  \delta_t : J(g,g') \mapsto \abs{\frac{\det g}{\det g'}}^t.
\]
We denote by $\chi_n$ characters of $H_n(\fLocalField)$:
\[
 \chi_n  : J(g,g') \mapsto 
 \begin{cases}
 \TrivialRepresentation_{H_n(\fLocalField)} &\text{for $n=2m$;} \\
   \abs{\dfrac{\det g}{\det g'}} &\text{for $n=2m+1$.}
 \end{cases}
\]
In particular, we are interested in the case for $s_1=s+t+1/2$ and $s_2=2s$. With regard to $ \delta_r$, $\chi_n$, $s$, and $t$,
These Bump--Friedberg integrals depending on the parity of numbers $n$ can be put in a single integral as
\[
 Z(s,t,W_{\rho},\Phi)=\int_{ (\UnipotentSubgroup_n(\fLocalField) \cap H_n(\fLocalField)) \backslash H_n(\fLocalField)} W_{\rho}(h) \Phi(e_n h) \chi^{1/2}_n(h) \delta_t(h) \abs{\det h}^s  dh.
\]
The integral converges absolutely for ${\rm Re}(s)$ and ${\rm Re}(t)$ sufficiently large \cite[Proposition 3.1]{Mat17}, and it enjoys a meromorphic continuation to $\cComplex \times \cComplex$
as an element of $\cComplex(q^{-s},q^{-t})$. There exists a rational function  $\BFLocalRhogammaFactor$ in $\cComplex(q^{-s},q^{-t})$ such that for every $W_{\rho}$ in  $\whittaker(\rho,\fieldCharacter_\fLocalField)$ and $\Phi$ in $\Schwartz(\fLocalField^{\lfloor {(n+1)}/2 \rfloor})$, we have the following functional equation \cite[Corollary 3.2]{Mat17}:
\begin{equation}
\label{Local-BF-Func}  Z(1/2-s,-1/2-t,\contragredient{W}_{\rho},\gfourierTransform{\fieldCharacter_\fLocalField}(\Phi))=\BFLocalRhogammaFactor Z(s,t,W_{\rho},\Phi).
\end{equation}
For our purpose, it will often be convenient to write $Z(s,W_{\rho},\Phi)$ and $\Gamma(s,\rho,{\rm BF},\fieldCharacter_\fLocalField)$ in place of $Z(s,0,W_{\rho},\Phi)$ and $\Gamma(s,0,\rho,{\rm BF},\fieldCharacter_\fLocalField)$, respectively. The {\it local Bump--Friedberg $L$-function} $L(s,\rho,{\rm BF})$ is the generator of the $\mathbb{C}[q^{\pm s}]$-fractional ideal of Bump--Friedberg integrals $Z(s,W_{\rho},\Phi)$ with $W_{\rho} \in \mathcal{W}(\rho,\psi_F)$ and $\Phi \in \mathcal{S}(F^{\lfloor (n+1)/2 \rfloor})$ normalized to be of the form $P(q^{-s})$ for some $P(X) \in \mathbb{C}[X]$ satisfying $P(0)=1$.

\begin{proposition}[The modified functional equation]
\label{BF-modified-functional equation}
Let $\pi$ be an irreducible cuspidal representation of $\GL_{2m}(\fFiniteField)$. Then for every $W_{\pi} \in \whittaker(\pi,\fieldCharacter)$, $\phi \in \Schwartz(\fFiniteField^m)$, and $s \in \cComplex$,
there exists $\BFLocalRhogammaFactor$ such that
\begin{multline*}
 \contragredient{Z}(W_{\pi},\phi)+q^{-m(1-2s)}\omega^{-1}_{\rho}(\varpi)\fourierTransform(\phi)(0)L(m(1-2s),\omega^{-1}_{\rho})\varepsilon(s,\rho,\fieldCharacter_\fLocalField)Z(W_{\pi},\TrivialRepresentation_{\fFiniteField^m})\\
 = \BFLocalRhogammaFactor(Z(W_{\pi},\phi)+q^{-2ms}\omega_{\rho}(\varpi)\phi(0)L(2ms,\omega_{\rho})Z(W_{\pi},\TrivialRepresentation_{\fFiniteField^m})).
\end{multline*}
\end{proposition}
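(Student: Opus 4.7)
The plan is to follow the blueprint set in Proposition \ref{modified-Flicker}, namely, to insert lifted data $(W^{\circ}_{\rho},\Phi_{\circ})$ into the local Bump--Friedberg integral $Z(s,W_{\rho},\Phi)$ and into its dual counterpart, to decompose each side using the support condition $\Support(W^{\circ}_{\rho})\subseteq \UnipotentSubgroup_{2m}(\fLocalField)\multiplicativegroup{\fLocalField}K_{2m}$, and to reorganize the resulting infinite series into a finite field Bump--Friedberg sum plus a central character correction. Applying the local functional equation \eqref{Local-BF-Func} will then yield the desired identity after cancellation of a common volume factor.

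For the direct side, I would parametrize $h \in (\UnipotentSubgroup_{2m}(\fLocalField) \cap H_{2m}(\fLocalField)) \backslash H_{2m}(\fLocalField)$ by the Iwasawa-type decomposition $h=J(g,g')=J(u,u')J(a,a')J(k,k')$ with the torus parts split as $a=\varpi^{l}x_g\cdot(\text{unit diagonal})$ and similarly for $a'$. The support condition forces the diagonal torus part to be scalar (modulo $K_{2m}$), so we pick up an integer $l$ from $\varpi^{l}\cdot 1_{2m}$ and the $\omega_{\rho}(\varpi^l)$ factor as in \cite[Theorem 3.1]{NZ18}. Since $\Phi_{\circ}$ vanishes outside $\integersRing^m$ and equals $\phi(0)$ on $\varpi^{-l}\integersRing^m \setminus \varpi^{-l+1}\integersRing^m$ for $l \geq 1$, summing the contributions produces
\[
 Z(s,W^{\circ}_{\rho},\Phi_{\circ})=\Volume(\UnipotentSubgroup_m(\integersRing)(1_m+\matrixRing_m(\mathfrak{p})))^{2}\bigl(Z(W_{\pi},\phi)+q^{-2ms}\omega_{\rho}(\varpi)\phi(0)L(2ms,\omega_{\rho})Z(W_{\pi},\TrivialRepresentation_{\fFiniteField^m})\bigr),
\]
where, in the case $\omega_{\rho}$ is ramified, the $L$-factor vanishes whereas $Z(W_{\pi},\TrivialRepresentation_{\fFiniteField^m})=0$ by Lemma \ref{JR-sum-characterization}, so equality persists trivially.

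For the dual side, an analogue of Lemma \ref{dual-BF} rewrites $Z(1/2-s,-1/2,\check{W}^{\circ}_{\rho},\gfourierTransform{\fieldCharacter_\fLocalField}(\Phi_{\circ}))$ as an integral over $(\UnipotentSubgroup_{2m}(\fLocalField)\cap H_{2m}(\fLocalField))\backslash H_{2m}(\fLocalField)$ involving $W^{\circ}_{\rho}(\sigma_{2m}\operatorname{diag}(g',g)\sigma_{2m}^{-1})$. Running the same support/series decomposition, the $l=0$ term contributes $\check{Z}(W_{\pi},\phi)$, while the $l\geq 1$ terms collapse to $q^{-m(1-2s)}\omega^{-1}_{\rho}(\varpi)\fourierTransform(\phi)(0)L(m(1-2s),\omega^{-1}_{\rho})\check{Z}(W_{\pi},\TrivialRepresentation_{\fFiniteField^m})$. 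The crucial input now is Proposition \ref{BF-dual-formula}, which identifies $\check{Z}(W_{\pi},\TrivialRepresentation_{\fFiniteField^m})=\varepsilon(s,\rho,\fieldCharacter_\fLocalField)Z(W_{\pi},\TrivialRepresentation_{\fFiniteField^m})$; this is exactly what produces the $\varepsilon$-factor on the left-hand side of the modified functional equation. Multiplying by $\Gamma(s,\rho,\mathrm{BF},\fieldCharacter_\fLocalField)^{-1}$ via \eqref{Local-BF-Func} and cancelling the volume factor yields the stated identity.

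The main technical obstacle, relative to the Flicker case, lies in the two-factor Iwasawa decomposition on $H_{2m}(\fLocalField)=w_{m,m}M_{m,m}(\fLocalField)w_{m,m}^{-1}$: one must check that the support constraint collapses the two independent torus exponents $v(\det g)$ and $v(\det g')$ to a single common power of $\varpi$, so that only one geometric series appears and the central character $\omega_{\rho}\restriction_{\multiplicativegroup{\fLocalField}}$ (rather than a product of characters on the Levi) controls the $L$-factor. The second delicate point is verifying that Proposition \ref{BF-dual-formula}, proved via the essential Whittaker function at the arbitrary unramified level-zero setting, applies when $W_{\pi}$ is replaced by $\TrivialRepresentation_{\fFiniteField^m}$; the statement, however, holds in $\whittaker(\pi,\fieldCharacter)$ so it does transfer once we match measures via $\Volume(\UnipotentSubgroup_m(\integersRing)(1_m+\matrixRing_m(\mathfrak{p})))$.
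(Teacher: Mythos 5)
Your overall strategy matches the paper's: lift to $(W^{\circ}_{\rho},\Phi_{\circ})$, decompose the Bump--Friedberg integral over the shells $\varpi^{l}\UnipotentSubgroup_{2m}(\fLocalField)K_{2m}$ coming from the support of $W^{\circ}_{\rho}$, collapse the infinite series into a finite sum plus a geometric series governed by $\omega_{\rho}$, invoke Proposition~\ref{BF-dual-formula} to produce the $\varepsilon(s,\rho,\fieldCharacter_\fLocalField)$ term, and feed everything through the local functional equation~\eqref{Local-BF-Func} before cancelling the common volume factor. This is the same architecture as the paper's proof, and your accounting of the two-factor Iwasawa decomposition on $H_{2m}(\fLocalField)$ collapsing to a single power of $\varpi$ is the right observation.

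One genuine gap: you apply Proposition~\ref{BF-dual-formula} without establishing its hypothesis that $\rho$ (or rather $\rho\nu^{s_0}$) is distinguished with respect to $H_{2m}(\fLocalField)$. That lemma is stated only for $H_{2m}(\fLocalField)$-distinguished supercuspidals, so before you can invoke it you must observe that when $\omega_\rho$ is unramified there exists $s_0 \in \cComplex$ such that $\rho\nu^{s_0}$ is $(S_{2m}(\fLocalField),\Theta)$-distinguished, hence $H_{2m}(\fLocalField)$-distinguished. The paper does exactly this by combining \cite[\S 5]{Mat14} with \cite[Theorem 2.30 and Proposition 3.23]{YZ20}. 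Skipping this leaves the application of Proposition~\ref{BF-dual-formula} unjustified precisely in the case you need it.

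Two smaller corrections worth noting. First, the expression that arises from the $l \geq 1$ shells is $Z(\contragredient{W}_{\pi},\TrivialRepresentation_{\fFiniteField^m})$, not $\contragredient{Z}(W_{\pi},\TrivialRepresentation_{\fFiniteField^m})$: by definition $\contragredient{Z}$ Fourier-transforms the Schwartz part and therefore $\contragredient{Z}(W_{\pi},\TrivialRepresentation_{\fFiniteField^m})$ is identically zero (since $\fourierTransform(\TrivialRepresentation_{\fFiniteField^m})=q^{m/2}\delta_0$), whereas $Z(\contragredient{W}_{\pi},\TrivialRepresentation_{\fFiniteField^m})$ is exactly what Proposition~\ref{BF-dual-formula} relates to $\varepsilon(s,\rho,\fieldCharacter_\fLocalField)Z(W_{\pi},\TrivialRepresentation_{\fFiniteField^m})$. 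Second, in the ramified case it is the unit integral $\int_{\multiplicativegroup{\integersRing}}\omega_{\rho}(x)\,\multiplicativeMeasure x$ that vanishes, not the $L$-factor (which equals $1$ when $\omega_\rho\restriction_{\multiplicativegroup{\fLocalField}}$ is ramified). The conclusion is the same because $Z(W_{\pi},\TrivialRepresentation_{\fFiniteField^m})=0$ by Lemma~\ref{JR-sum-characterization}, but the intermediate reasoning as written is incorrect.
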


\begin{proof}
The computation for two sides are quite similar. For this reason, we give a detailed proof only in the dual integral  $Z(1/2-s,-1/2-t,\contragredient{W}^{\circ}_{\rho},\gfourierTransform{\fieldCharacter_\fLocalField}(\Phi_{\circ}))$.
The rational for our choice is that the dual side of modified functional equation is less written in the literature (cf. \cite[Theorem 3.11]{YZ20}, \Cref{modified-Flicker})
and certain additional difficulties arise. Since the support of $W^{\circ}_{\rho}$ lies in $\UnipotentSubgroup_n(\fLocalField)\multiplicativegroup{\fLocalField}K_n=\amalg_{l \in \zIntegers} \varpi^{l} \UnipotentSubgroup_n(\fLocalField)K_n$,
for $\realPart(s) \ll 0$, $\contragredient{Z}(1/2-s,\contragredient{W}^{\circ}_{\rho},\gfourierTransform{\fieldCharacter_\fLocalField}(\Phi))$ can be decomposed as
\begin{multline*}
Z(1/2-s,-1/2-t,\contragredient{W}^{\circ}_{\rho},\gfourierTransform{\fieldCharacter_\fLocalField}(\Phi_{\circ}))
 =\sum_{l \in \zIntegers} q^{-ml(1-2s)}\int_{\multiplicativegroup{\integersRing}}\omega^{-1}_{\rho}(x\varpi^l) \\
\times \int_{\UnipotentSubgroup_m(\fLocalField)\cap K_m \backslash K_m} \int_{\UnipotentSubgroup_m(\fLocalField)\cap K_m \backslash K_m} \contragredient{W}^{\circ}_{\rho}(J(k,k'))\gfourierTransform{\fieldCharacter_\fLocalField}(\Phi_{\circ})(e_mk'x\varpi^{l})\,dkdk'\multiplicativeMeasure x.
\end{multline*}
The Fourier transform $\gfourierTransform{\fieldCharacter_\fLocalField}(\Phi_{\circ})$ is a lift of $\fourierTransform(\phi)$, from which we deduce that $\gfourierTransform{\fieldCharacter_\fLocalField}(\Phi_{\circ})(e_mk'x\varpi^{l})=0$ for $l < 0$, while $\gfourierTransform{\fieldCharacter_\fLocalField}(\Phi_{\circ})(e_mk'x\varpi^{l})=\phi(0)$ for $l > 0$. Upon making the change of variables $k' \mapsto k'x^{-1}$ for $l=0$, the infinite sum can be reduced to
\begin{multline*}
 Z(1/2-s,-1/2-t,\contragredient{W}^{\circ}_{\rho},\gfourierTransform{\fieldCharacter_\fLocalField}(\Phi_{\circ}))\\
 =
 \int_{\UnipotentSubgroup_m(\fLocalField)\cap K_m \backslash K_m} \int_{\UnipotentSubgroup_m(\fLocalField)\cap K_m \backslash K_m}   \contragredient{W}^{\circ}_{\rho}(J(k,k')) \gfourierTransform{\fieldCharacter_\fLocalField}(\Phi_{\circ})(e_mk')\, dk dk'
+ \left( \sum_{l=1}^{\infty} q^{-ml(1-2s)}\omega^{-1}_{\rho}(\varpi^l) \right. \\
\cdot \fourierTransform(\phi)(0) 
\left. \int_{\multiplicativegroup{\integersRing}} \omega^{-1}_{\rho}\,(x) \multiplicativeMeasure x
 \int_{\UnipotentSubgroup_m(\fLocalField)\cap K_m \backslash K_m}  \int_{\UnipotentSubgroup_m(\fLocalField)\cap K_m \backslash K_m} \contragredient{W}^{\circ}_{\rho}(J(k,k')) \, dk dk' \right).
 \end{multline*}
 We rewrite the integration as sums akin to the proof of \cite[Theorem 3.1]{NZ18} by
\begin{multline}
\label{Tate-BF}
 Z(1/2-s,-1/2-t,\contragredient{W}^{\circ}_{\rho},\gfourierTransform{\fieldCharacter_\fLocalField}(\Phi_{\circ}))
 =\Volume(\UnipotentSubgroup_m(\integersRing)(1_m+\matrixRing_m(\mathfrak{p})))^2\\
 \times\left(\contragredient{Z}(W_{\pi},\phi)+ \sum_{l=1}^{\infty} q^{-ml(1-2s)}\omega^{-1}_{\rho}(\varpi^l) 
 \fourierTransform(\phi)(0) \int_{\multiplicativegroup{\integersRing}} \omega^{-1}_{\rho}\,(x) \multiplicativeMeasure x\cdot Z(\contragredient{W}_{\pi},\TrivialRepresentation_{\fFiniteField^m})
\right).
 \end{multline}
To deal with the second term, we assume that $\omega_{\rho}$ is unramified. Combining \cite[\S 5]{Mat14} with \cite[Theorem 2.30 and Proposition 3.23]{YZ20}, $\rho\nu^{s_0}$ is $S_{2m}(\fLocalField)$-distinguished for some $s_0 \in \mathbb{C}$, which is amount to saying that it is $H_{2m}(\fLocalField)$-distinguished.
It follows from \Cref{BF-dual-formula} that the second term is equal to
\begin{multline*}
q^{-m(1-2s)}\omega^{-1}_{\rho}(\varpi)\fourierTransform(\phi)(0)L(m(1-2s),\omega^{-1}_{\rho}) Z(\contragredient{W}_{\pi},\TrivialRepresentation_{{\fFiniteField}^m})\\
=q^{-m(1-2s)}\omega^{-1}_{\rho}(\varpi)\fourierTransform(\phi)(0)L(m(1-2s),\omega^{-1}_{\rho})\varepsilon(s,\rho,\fieldCharacter_\fLocalField)Z(W_{\pi},\TrivialRepresentation_{\fFiniteField^m}).
\end{multline*}
On the other hand, if $\omega_{\rho}$ is ramified,  $\pi$ does not admit a non-zero Friedberg--Jacquet vector in that $\omega_{\pi}$ is non-trivial.
This in turn implies that the second term vanishes, as
\[
\int_{\multiplicativegroup{\integersRing}} \omega^{-1}_{\rho}\,(x) \multiplicativeMeasure x=0=Z(W_{\pi},\TrivialRepresentation_{\fFiniteField^m}),
\]
 thanks to \Cref{JR-sum-characterization}. The analogous argument for $Z(s,t,W^{\circ}_{\rho},\Phi_{\circ})$ goes through, and it guides us to
\[
 Z(s,t,W^{\circ}_{\rho},\Phi_{\circ})
 =\Volume(\UnipotentSubgroup_m(\integersRing)(1_m+\matrixRing_m(\mathfrak{p})))^2
 (Z(W_{\pi},\phi)+q^{-2ms}\omega_{\rho}(\varpi)\phi(0)L(2ms,\omega_{\rho})Z(W_{\pi},\TrivialRepresentation_{\fFiniteField^m})).
\]
All that remains is to apply the functional equation \eqref{Local-BF-Func} and then to cancel out the common volume term.
\end{proof}

In contrast to the exterior square local factor $\Gamma(s,\rho,\wedge^2,\fieldCharacter_\fLocalField)$, the Bump and Friedberg local factor $\BFLocalRhogammaFactor$ possesses two parameters $s$ and $t$. For this reason, $\Gamma(s,t,\rho,{\rm BF},\fieldCharacter_\fLocalField)$ is not really defined as a proportionality, but rather the functional equation for the $\varepsilon$-factor, $\varepsilon(s,t,\rho,{\rm BF},\fieldCharacter_\fLocalField)$, need to be established beforehand.
Thankfully, we will see the following theorem, \Cref{levelzero-BF}, that $\BFLocalRhogammaFactor$ is independent of $t$ in the class of level zero supercuspidal representations $\rho$.
Therefore there is no harm to assign $t=0$ to define $Z(s,W_{\rho},\Phi)$ and $\Gamma(s,\rho,{\rm BF},\fieldCharacter_\fLocalField)$.

\begin{theorem}
\label{levelzero-BF} 
Let $\rho$ be a level zero supercuspidal representation of $\GL_{n}(\fLocalField)$.
 \begin{enumerate}[label=$(\mathrm{\arabic*})$]
 \item If $\pi$ does not admit a Friedberg--Jacquet vector, then we have
 \[
 \BFLocalRhogammaFactor = \BFpigammaFactor.
 \]
 \item If $n=2m$ and $\pi$ admits a Friedberg--Jacquet vector, then we have
 \begin{equation}
 \label{BF-levelzero-poles}
 \BFLocalRhogammaFactor=\varepsilon(s,\rho,\fieldCharacter_\fLocalField)q^{m\left(2s-\frac{1}{2}\right)}\omega^{-1}_{\rho}(\varpi) \frac{L(m(1-2s),\omega^{-1}_{\rho})}{L(2ms,\omega_{\rho})}.
 \end{equation}
\end{enumerate}
\end{theorem}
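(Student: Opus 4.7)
The plan is to read off $\BFLocalRhogammaFactor$ directly from the modified functional equation of Proposition \ref{BF-modified-functional equation} by choosing the test data $(W_{\pi},\phi)$ adaptively in each case, mirroring the Asai argument of \Cref{levelzero-Asai}.

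For case (1), suppose $\pi$ has no Friedberg--Jacquet vector. When $n=2m+1$ is odd, no such vector exists to begin with, and one first establishes the odd-case analogue of Proposition \ref{BF-modified-functional equation} by running the same unfolding that led to \eqref{Tate-BF}; because $\omega_\pi$ need not be trivial and the Fourier boundary term has no distinction obstruction, the extra polar contributions simply do not appear. By Lemma \ref{JR-sum-characterization}, $Z(W_{\pi},\TrivialRepresentation_{\fFiniteField^m})=0$ for every $W_{\pi} \in \whittaker(\pi,\fieldCharacter)$, so all boundary terms in Proposition \ref{BF-modified-functional equation} vanish and the identity reduces to
\[
\contragredient{Z}(W_{\pi},\phi) = \BFLocalRhogammaFactor \cdot Z(W_{\pi},\phi)
\]
for every $W_{\pi}$ and every $\phi \in \Schwartz_0(\fFiniteField^m)$. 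Comparing with the finite-field functional equation defining $\BFpigammaFactor$ in Theorem \ref{BF-Multiplicity}, we conclude $\BFLocalRhogammaFactor = \BFpigammaFactor$.

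For case (2), take $\phi = \TrivialRepresentation_{\fFiniteField^m}$. Since $\fourierTransform(\TrivialRepresentation_{\fFiniteField^m}) = q^{m/2}\delta_0$ is supported at the origin while the dual sum evaluates it at $e_m g' \ne 0$, the term $\contragredient{Z}(W_{\pi},\TrivialRepresentation_{\fFiniteField^m})$ vanishes. Because $H_{2m}(\fFiniteField)$ contains the scalar matrices, the existence of a Friedberg--Jacquet vector forces $\omega_\pi \equiv 1$, whence $\omega_\rho$ is unramified; Lemma \ref{JR-sum-characterization} then lets us further choose $W_{\pi}$ with $Z(W_{\pi},\TrivialRepresentation_{\fFiniteField^m})=1$. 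Substituting into Proposition \ref{BF-modified-functional equation} gives the scalar identity
\[
q^{m(2s-1/2)}\omega^{-1}_{\rho}(\varpi) L(m(1-2s),\omega^{-1}_{\rho}) \varepsilon(s,\rho,\fieldCharacter_\fLocalField) = \BFLocalRhogammaFactor \cdot \bigl(1 + q^{-2ms}\omega_{\rho}(\varpi) L(2ms,\omega_{\rho})\bigr).
\]
Invoking the unramified expansion $L(2ms,\omega_{\rho}) = (1-q^{-2ms}\omega_{\rho}(\varpi))^{-1}$, the parenthesized factor telescopes to $L(2ms,\omega_{\rho})$; solving for $\BFLocalRhogammaFactor$ yields exactly \eqref{BF-levelzero-poles}.

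The principal obstacle is preparatory rather than computational: the asymmetry between $Z$ and $\contragredient{Z}$ forces us to track the Godement--Jacquet $\varepsilon$-factor entering the modified functional equation through Proposition \ref{BF-dual-formula}, which itself rests on the uniqueness of the Friedberg--Jacquet functional and on Lemma \ref{central-epsilon}. Once that $\varepsilon(s,\rho,\fieldCharacter_\fLocalField)$ has been correctly installed on the dual side of Proposition \ref{BF-modified-functional equation}, the derivation of both cases is routine algebra. A secondary subtlety is the odd-case analogue for (1): one must verify that the Tate-type decomposition in \eqref{Tate-BF} goes through with $\phi \in \Schwartz(\fFiniteField^{m+1})$ and that no analogue of the Friedberg--Jacquet pole can arise, but this is a direct adaptation of the proof we already have for the even case.
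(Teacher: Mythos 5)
Your treatment of the even case (both with and without a Friedberg--Jacquet vector) is essentially the paper's argument and is correct; in particular, choosing $\phi = \TrivialRepresentation_{\fFiniteField^m}$ in case (2), killing $\contragredient{Z}$, normalizing $Z(W_{\pi},\TrivialRepresentation_{\fFiniteField^m})=1$, and telescoping the geometric series against $L(2ms,\omega_{\rho})$ is exactly the route taken. The role you assign to Proposition \ref{BF-dual-formula} and Lemma \ref{central-epsilon} in getting $\varepsilon(s,\rho,\fieldCharacter_\fLocalField)$ into the dual boundary term is also accurate.

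The genuine gap is in the odd case. You claim that after unfolding as in \eqref{Tate-BF} ``the extra polar contributions simply do not appear,'' and later label this a ``direct adaptation of the proof we already have for the even case.'' Neither claim holds up. Lemma \ref{JR-sum-characterization} is stated only for $n=2m$; there is no odd analogue identifying when $Z(W_{\pi},\TrivialRepresentation_{\fFiniteField^{m+1}})$ vanishes, and when $\omega_{\pi}$ is trivial (which certainly occurs in odd rank) the inner integral $\int_{\multiplicativegroup{\integersRing}}\omega_{\rho}^{-1}$ does not vanish either. Concretely: taking $\phi=\delta_{e_{m+1}}$ makes $\phi(0)=0$, so the non-dual side collapses to $\operatorname{Vol}(\cdot)\cdot Z(\repBesselFunction{\pi},\delta_{e_{m+1}})$, but $\fourierTransform(\delta_{e_{m+1}})(0)=q^{-(m+1)/2}\neq 0$, so the dual side \eqref{BF-dual-side} a priori retains its whole geometric tail. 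The paper closes this by a completely different mechanism: since $L(s,\rho,\mathrm{BF})\equiv 1$ for a level zero supercuspidal $\rho$ in odd rank, the two-variable factor $\Gamma(s,t,\rho,\mathrm{BF},\fieldCharacter_\fLocalField)$ is a \emph{unit} in $\cComplex[q^{\pm s},q^{\pm t}]$, i.e.\ a monomial $\alpha q^{-\beta s}q^{-\eta t}$. Because each summand in the dual side carries a distinct $(s,t)$-exponent, matching a monomial forces all but one term to vanish, and the surviving $l=0$ term is $\contragredient{Z}(\repBesselFunction{\pi},\delta_{e_{m+1}})=\BFpigammaFactor$, which is nonzero by Theorem \ref{BF-Func}. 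Your proposal never invokes the monomial (unit) property and the second variable $t$, so it has no way to force that cancellation; you would need to supply either this argument or an independent proof that $Z(W_{\pi},\TrivialRepresentation_{\fFiniteField^{m+1}})=0$ for all $W_\pi$ in odd rank.
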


\begin{proof}
\par
We first look into the odd case $m=2n+1$. Just like \eqref{Tate-BF}, we decompose the domain of integrations $\UnipotentSubgroup_n(\fLocalField)\multiplicativegroup{\fLocalField}K_n$ as shells $\varpi^{l} \UnipotentSubgroup_n(\fLocalField)K_n$ to see that
\begin{multline}
\label{BF-one-side}
  Z(s,t,W^{\circ}_{\rho},\Phi_{\circ})=\Volume(\UnipotentSubgroup_m(\integersRing)(1_m+\matrixRing_m(\mathfrak{p})))\Volume(\UnipotentSubgroup_{m+1}(\integersRing)(1_{m+1}+\matrixRing_{m+1}(\mathfrak{p})))\\
   \times\left(Z(W_{\pi},\phi)+ \sum_{l=1}^{\infty} q^{-l((2m+1)s+t+1/2)}\omega_{\rho}(\varpi^l) 
\phi(0) \int_{\multiplicativegroup{\integersRing}} \omega_{\rho}\,(x) \multiplicativeMeasure x\cdot Z(W_{\pi},\TrivialRepresentation_{\fFiniteField^m})
\right),
\end{multline}
and
\begin{multline}
\label{BF-dual-side}
  Z(1/2-s,-1/2-t,\contragredient{W}^{\circ}_{\rho},\gfourierTransform{\fieldCharacter_\fLocalField}(\Phi_{\circ}))\\
  =\Volume(\UnipotentSubgroup_m(\integersRing)(1_m+\matrixRing_m(\mathfrak{p})))\Volume(\UnipotentSubgroup_{m+1}(\integersRing)(1_{m+1}+\matrixRing_{m+1}(\mathfrak{p}))) \left(\contragredient{Z}(W_{\pi},\phi) \right. \\
   \left.+ \sum_{l=1}^{\infty} q^{-l((2m+1)(1/2-s)-t)}\omega^{-1}_{\rho}(\varpi^l) 
 \fourierTransform(\phi)(0) \int_{\multiplicativegroup{\integersRing}} \omega^{-1}_{\rho}\,(x) \multiplicativeMeasure x\cdot Z(\contragredient{W}_{\pi},\TrivialRepresentation_{\fFiniteField^m})
\right).
\end{multline}
We take $W_{\pi}=\repBesselFunction{\pi}$ and $\phi=\delta_{e_{m+1}}$. In light of \eqref{BF-one-side} along with \Cref{key-distinction}, we are left with
\begin{multline*}
Z(s,t,\liftedBesselFunction{\pi}{\fieldCharacter_\fLocalField},\Phi_{\circ})=\Volume(\UnipotentSubgroup_m(\integersRing)(1_m+\matrixRing_m(\mathfrak{p})))\Volume(\UnipotentSubgroup_{m+1}(\integersRing)(1_{m+1}+\matrixRing_{m+1}(\mathfrak{p}))) Z(\repBesselFunction{\pi},\delta_{e_{m+1}})\\
=\Volume(\UnipotentSubgroup_m(\integersRing)(1_m+\matrixRing_m(\mathfrak{p})))\Volume(\UnipotentSubgroup_{m+1}(\integersRing)(1_{m+1}+\matrixRing_{m+1}(\mathfrak{p}))),
\end{multline*}
which is a non-zero constant. Appealing to \cite[Corollary 3.2]{Mat17} along with \cite[Theorem 3.6-(ii)]{Jo20} and \cite[Theorem 4.5]{Jo24}, the local factor $\BFLocalRhogammaFactor \in \multiplicativegroup{\cComplex[q^{\pm s},q^{\pm t}]}$ is a unit in $q^{-s}$ and $q^{-t}$, that is, a monomial of the form $\BFLocalRhogammaFactor=\alpha q^{-\beta s} q^{-\eta t}$, with $\alpha \in \cComplex$ and $\beta, \eta \in \mathbb{Z}$. This forces that all but one of summands in \eqref{BF-dual-side} must vanish. Among them, the only term which survives is
\[
 \contragredient{Z}(\repBesselFunction{\pi},\delta_{e_{m+1}})=\BFpigammaFactor Z(\repBesselFunction{\pi},\delta_{e_{m+1}})=\BFpigammaFactor.
\]
Combining all these calculations, we find that $ \BFLocalRhogammaFactor= \BFpigammaFactor$, as needed.

\par
We turn our attention to the case when $n=2m$ and $\pi$ does not have Friedberg-Jacquet vector. 
By taking the advantage of \Cref{JR-sum-characterization}, $Z(W_{\pi},\TrivialRepresentation_{\fFiniteField^m})=0$ for all $W_{\pi} \in \whittaker(\pi,\fieldCharacter)$. As before, \Cref{BF-modified-functional equation} simply turns into
\begin{equation}
\label{BF-reduction}
  \BFpigammaFactor Z(W_{\pi},\phi)=\contragredient{Z}(W_{\pi},\phi)=\BFLocalRhogammaFactor Z(W_{\pi},\phi).
\end{equation}
All that remains is to choose $W_{\pi}=\repBesselFunction{\pi}$ and $\phi=\delta_{e_m}$. In doing so, \Cref{key-distinction} guarantees that $Z(\repBesselFunction{\pi},\delta_{e_m})$ is precisely $1$, from which the equality $\BFpigammaFactor=\BFLocalRhogammaFactor$ follows.
\par
Suppose that $n=2m$ and $\pi$ admits a Friedberg--Jacquet vector.
Upon choosing $\phi=\TrivialRepresentation_{\fFiniteField^m}$, the relation $\fourierTransform(\TrivialRepresentation_{\fFiniteField^m})=q^{\frac{m}{2}}\delta_{0}$ implies that $Z(\contragredient{W}_{\pi},1_{\fFiniteField^m})=0$.
With aid of \Cref{JR-sum-characterization}, we take $W_{\pi} \in \whittaker(\pi,\fieldCharacter_\fLocalField)$ satisfying $Z(W_{\pi},\TrivialRepresentation_{\fFiniteField^m})=1$. In this way, we reduce \Cref{BF-modified-functional equation} to
\begin{multline*}
q^{-m(1-2s)+\frac{m}{2}}\omega^{-1}_{\rho}(\varpi)L(m(1-2s),\omega^{-1}_{\rho})\varepsilon(s,\rho,\fieldCharacter_\fLocalField)\\
 =\BFLocalRhogammaFactor(1+q^{-2ms}\omega_{\rho}(\varpi)L(2ms,\omega_{\rho}))=\BFLocalRhogammaFactor L(2ms,\omega_{\rho})
\end{multline*}
from which the required equality holds.
\end{proof}

We accomplish the following nice expression of Bump--Friedberg gamma factors $\BFpigammaFactor$ in terms of their Bessel functions.

\begin{proposition}
\label{BF-one-side-thm}
 Let $\pi$ be an irreducible cuspidal representation of $\GL_{n}(\fFiniteField)$. Then we have
\begin{equation}
\label{BF-one-side-thm-even}
 \BFpigammaFactor
 =q^{-\frac{m}{2}} \sum_{g \in \UnipotentSubgroup_m(\fFiniteField) \backslash \GL_m(\fFiniteField) } \sum_{g' \in \UnipotentSubgroup_m(\fFiniteField) \backslash \GL_m(\fFiniteField) } \repBesselFunction{\pi} \left( \sigma_{2m} \begin{pmatrix} & g' \\ g & \end{pmatrix} \sigma^{-1}_{2m} \right)    \fieldCharacter(e_1 {^tg'^{-1}}\;{^te_m})
\end{equation}
in the even case $n=2m$
  \[
 \BFpigammaFactor
 =q^{-\frac{m+1}{2}} \sum_{g \in \UnipotentSubgroup_{m+1}(\fFiniteField) \backslash \GL_{m+1}(\fFiniteField) } \sum_{g' \in \UnipotentSubgroup_{m}(\fFiniteField) \backslash \GL_{m}(\fFiniteField) } 
  \repBesselFunction{\pi}  (J(g,g'))  \fieldCharacter(e_1 {^tg^{-1}}\;{^te_{m+1}})
\]
in the odd case $n=2m+1$. In particular, we have $\BFDualpigammaFactor=\complexConjugate{ \BFpigammaFactor }$.
\end{proposition}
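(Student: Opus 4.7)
The plan is to mirror the proof of Theorem \ref{flicker-one-side-thm} in the Asai case: one specializes the functional equation of Theorem \ref{BF-Multiplicity} at an explicit test pair and evaluates both sides directly. Concretely, I would take $W_{\pi}=\repBesselFunction{\pi}$ and $\phi=\delta_{e_m}$ when $n=2m$ (respectively $\phi=\delta_{e_{m+1}}$ when $n=2m+1$). The elementary identity $\fourierTransform(\delta_{e_m})(y)=q^{-m/2}\fieldCharacter(e_m\cdot{}^{t}y)$ applied inside the formula from Lemma \ref{dual-BF} immediately transforms $\check{Z}(\repBesselFunction{\pi},\delta_{e_m})$ into the exact right-hand side of the claimed formula (and symmetrically in the odd case). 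Consequently, the proposition reduces to establishing the normalization $Z(\repBesselFunction{\pi},\delta_{e_m})=1$.

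This normalization is the principal obstacle. In the even case, it amounts to the Bessel-sum identity
\begin{equation*}
\sum_{g \in \UnipotentSubgroup_m(\fFiniteField) \backslash \GL_m(\fFiniteField)} \sum_{g' \in \UnipotentSubgroup_m(\fFiniteField) \backslash \MirabolicSubgroup_m(\fFiniteField)} \repBesselFunction{\pi}(J(g,g'))=1,
\end{equation*}
with the analogous statement in the odd case. This is the Bump--Friedberg counterpart of \cite[Lemma 2.7]{Nie19}. My approach would start by noting $J(\UnipotentSubgroup_m,\UnipotentSubgroup_m) \subset \UnipotentSubgroup_{2m}$ with $\fieldCharacter\restriction_{J(\UnipotentSubgroup_m,\UnipotentSubgroup_m)}\equiv 1$, since each $(k,k+1)$-entry of $J(n_1,n_2)$ vanishes because the two parities fail to match; thus the summand is genuinely left $J(\UnipotentSubgroup_m,\UnipotentSubgroup_m)$-invariant and the double-coset sum is well-defined. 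Because $J(\GL_m,\MirabolicSubgroup_m)=H_{2m}\cap \MirabolicSubgroup_{2m}$ is a proper subset of $\MirabolicSubgroup_{2m}$, Nien's identity cannot be cited verbatim; instead, I would induct on $m$, successively integrating over the root subgroups of $H_{2m}\cap\MirabolicSubgroup_{2m}$, using the cuspidality of $\pi$ to kill all contributions except the one from the identity coset, and then invoking the multiplicity-one statement of Corollary \ref{mirabolic-BF-mulone} to pin down the surviving normalization as $1$.

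Finally, the conjugate identity $\BFDualpigammaFactor=\overline{\BFpigammaFactor}$ will follow formally from the explicit formula. Taking complex conjugates and applying property (3) of Bessel functions, $\overline{\repBesselFunction{\pi}(g)}=\grepBesselFunction{\contragredient{\pi}}{\fieldCharacter^{-1}}(g)$, converts the right-hand side into the corresponding expression for $\contragredient{\pi}$ with respect to $\fieldCharacter^{-1}$, which by the same formula equals $\BFDualpigammaFactor$.
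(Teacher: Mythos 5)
Your outer skeleton matches the paper's proof exactly: specialize \Cref{BF-Multiplicity} at $W_{\pi}=\repBesselFunction{\pi}$ and $\phi=\delta_{e_m}$ (resp.\ $\delta_{e_{m+1}}$), substitute $\fourierTransform(\delta_{e_m})(y)=q^{-m/2}\fieldCharacter(e_m{}^{t}y)$ into the formula from \Cref{dual-BF} to obtain the right-hand side, and obtain $\BFDualpigammaFactor=\overline{\BFpigammaFactor}$ formally by conjugating the resulting explicit expression. Those bookends are correct.

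The middle step is where you go astray. You treat the normalization $Z(\repBesselFunction{\pi},\delta_{e_m})=1$ as a new obstacle on the grounds that $H_{2m}\cap\MirabolicSubgroup_{2m}=J(\GL_m,\MirabolicSubgroup_m)$ is a proper subgroup of $\MirabolicSubgroup_{2m}$, hence ``Nien's identity cannot be cited verbatim,'' and you propose a root-subgroup induction in its place. That premise is mistaken. What is needed from \cite[Lemma 2.7]{Nie19} is not a sum identity over the full mirabolic but a \emph{pointwise} support statement: $\repBesselFunction{\pi}$ vanishes on $\MirabolicSubgroup_{2m}(\fFiniteField)\setminus\UnipotentSubgroup_{2m}(\fFiniteField)$. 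A pointwise vanishing restricts to every subset, in particular to $J(\GL_m,\MirabolicSubgroup_m)$. Since $J(g,g')\in\MirabolicSubgroup_{2m}$ precisely when $g'\in\MirabolicSubgroup_m$, and $J(g,g')\in\UnipotentSubgroup_{2m}$ forces both $g\in\UnipotentSubgroup_m$ and $g'\in\UnipotentSubgroup_m$, the only coset surviving in the double sum is the trivial one, which contributes $\repBesselFunction{\pi}(1_{2m})=1$; the odd case is identical with the constraint $\delta_{e_{m+1}}(e_{m+1}g)$ placing $g$ in $\MirabolicSubgroup_{m+1}$ and $J(g,g')\in\MirabolicSubgroup_{2m+1}\Leftrightarrow g\in\MirabolicSubgroup_{m+1}$. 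This is exactly how the paper applies Nien's lemma. Your proposed induction is therefore an unnecessary detour, and as sketched it is also not convincing on its own terms: \Cref{mirabolic-BF-mulone} gives at most a one-dimensional space of $\MirabolicSubgroup_n\cap H_n$-invariant functionals and carries no normalization information, so it cannot ``pin down the surviving normalization as $1$'' without a separate non-vanishing computation --- which is precisely the pointwise Bessel statement you were trying to avoid.
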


\begin{proof}
Just as in the proof of \Cref{flicker-one-side-thm}, we take $W_{\pi}=\grepBesselFunction{\pi}{\fieldCharacter}$ and $\phi$ to be an indicator function $\delta_{e_m}$ in the even case $n=2m$
and $\delta_{e_{m+1}}$ in the odd case $n=2m+1$, at which point \cite[Lemma 2.7]{Nie19} ensures that $Z(\grepBesselFunction{\pi}{\fieldCharacter},\delta_{e_m})=Z(\grepBesselFunction{\pi}{\fieldCharacter},\delta_{e_{m+1}})=1$. It remains to note that  $\fourierTransform(\delta_{e_m})(y)=q^{-\frac{m}{2}}\fieldCharacter(e_m {^ty})$ and 
$\fourierTransform(\delta_{e_{m+1}})(y)=q^{-\frac{m+1}{2}}\fieldCharacter(e_{m+1} {^ty})$.
\end{proof}

We precisely evaluate the sum \eqref{BF-one-side-thm-even}, when $\pi$ has the Friedberg--Jacquet vector.

\begin{proposition}
\label{BF-distinct-Gamma}
Let $\pi$ be an irreducible cuspidal representation of $\GL_{n}(\fFiniteField)$. Suppose that $n=2m$ and $\pi$ admits a Friedberg--Jacquet vector.
Then we have
\[
   \BFpigammaFactor = \BFDualpigammaFactor =-\varepsilon(s,\rho,\fieldCharacter_\fLocalField)q^{-\frac{m}{2}}.
\]
\end{proposition}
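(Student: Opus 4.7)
The plan is to adapt Ye's method [cf.~Corollary~4.3 of~\cite{Ye19}] used in the Rankin--Selberg setting: relate $\BFpigammaFactor$ to the rational function $\BFLocalRhogammaFactor$ through the modified functional equation of Proposition~\ref{BF-modified-functional equation}, then extract $\BFpigammaFactor$ as the limiting value of the explicit formula~\eqref{BF-levelzero-poles} as $q^{-s}\to\infty$. Since $\pi$ admits a Friedberg--Jacquet vector, its central character $\omega_{\pi}$ is trivial on $\multiplicativegroup{\fFiniteField}$ (the center of $H_{2m}(\fFiniteField)$ acts trivially), so $\omega_{\rho}$ is unramified; one can then choose the value $\omega_{\rho}(\varpi)$ so that the corresponding level zero supercuspidal $\rho$ of $\GL_{2m}(\fLocalField)$ is itself $H_{2m}(\fLocalField)$-distinguished, following the argument cited from~\cite{Mat14, YZ20} in the proof of Proposition~\ref{BF-modified-functional equation}, so that Theorem~\ref{levelzero-BF}(2) applies.

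Next, I specialize the modified functional equation at the pair $W_{\pi}=\repBesselFunction{\pi}$ and $\phi=\delta_{e_m}\in\Schwartz_0(\fFiniteField^m)$. By \cite[Lemma~2.7]{Nie19} (as used in Proposition~\ref{BF-one-side-thm}), $Z(\repBesselFunction{\pi},\delta_{e_m})=1$; moreover $\fourierTransform(\delta_{e_m})(0)=q^{-m/2}$ and $\delta_{e_m}(0)=0$. Since $\delta_{e_m}\in\Schwartz_0(\fFiniteField^m)$, the defining functional equation of Theorem~\ref{BF-Multiplicity} gives $\contragredient{Z}(\repBesselFunction{\pi},\delta_{e_m})=\BFpigammaFactor$, and substituting into Proposition~\ref{BF-modified-functional equation} yields the key identity
\[
\BFpigammaFactor + q^{-m(1-2s)-m/2}\,\omega^{-1}_{\rho}(\varpi)\,L(m(1-2s),\omega^{-1}_{\rho})\,\varepsilon(s,\rho,\fieldCharacter_\fLocalField)\,Z(\repBesselFunction{\pi},\TrivialRepresentation_{\fFiniteField^m}) = \BFLocalRhogammaFactor,
\]
viewed as an equality of rational functions in $q^{-s}$.

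I then substitute the explicit formula~\eqref{BF-levelzero-poles} for $\BFLocalRhogammaFactor$ and pass to the limit $q^{-s}\to\infty$. The correction term vanishes since $q^{-m(1-2s)-m/2}=q^{-3m/2}q^{2ms}\to 0$ while the remaining factors stay bounded. On the right, the asymptotics $L(m(1-2s),\omega^{-1}_{\rho})\to 1$, $L(2ms,\omega_{\rho})\sim -\omega^{-1}_{\rho}(\varpi)q^{2ms}$, and $q^{m(2s-1/2)}=q^{-m/2}q^{2ms}$ combine to give $\BFLocalRhogammaFactor\to -\varepsilon(s,\rho,\fieldCharacter_\fLocalField)q^{-m/2}$. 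Since $\BFpigammaFactor$ is constant in $s$, we conclude $\BFpigammaFactor=-\varepsilon(s,\rho,\fieldCharacter_\fLocalField)q^{-m/2}$. The one technical point is that the right-hand side genuinely makes sense as a constant; this requires $\varepsilon(s,\rho,\fieldCharacter_\fLocalField)$ to be independent of $s$, which is exactly the content of Lemma~\ref{central-epsilon} applied to our distinguished $\rho$, where moreover $\varepsilon(s,\rho,\fieldCharacter_\fLocalField)\in\{\pm 1\}$. The dual assertion $\BFDualpigammaFactor=-\varepsilon(s,\rho,\fieldCharacter_\fLocalField)q^{-m/2}$ follows immediately from $\BFDualpigammaFactor=\complexConjugate{\BFpigammaFactor}$ (Proposition~\ref{BF-one-side-thm}) together with the reality of $\varepsilon(s,\rho,\fieldCharacter_\fLocalField)$.
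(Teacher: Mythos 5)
Your argument is essentially correct and follows the same overall strategy as the paper: you use the same test data $(W_{\pi},\phi)=(\repBesselFunction{\pi},\delta_{e_m})$, the same modified functional equation (Proposition~\ref{BF-modified-functional equation}), and the same explicit formula~\eqref{BF-levelzero-poles}, landing on the same rational-function identity in $q^{-s}$. Where you diverge from the paper is only in the extraction step: the paper follows Ye's Rankin--Selberg blueprint, clearing the denominator $L(2ms,\omega_\rho)$ by multiplying through by $L(m(1-2s),\omega_\rho^{-1})^{-1}$, and then compares coefficients of $q^{2ms}$-degree $0$ and $1$ to set up a $2\times 2$ linear system whose first equation is already $\BFpigammaFactor=-\varepsilon(s,\rho,\fieldCharacter_\fLocalField)q^{-m/2}$. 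You instead take the limit $q^{-s}\to\infty$ in the rational identity; this is a shorter, somewhat slicker endgame that reads off the same answer, at the small cost of not producing $Z(\repBesselFunction{\pi},\TrivialRepresentation_{\fFiniteField^m})$ as a byproduct (which the coefficient comparison does, though that value is not needed for the statement). Both finishes rely on the same two facts that you correctly flag: $\BFpigammaFactor$ is $s$-independent by construction, and $\varepsilon(s,\rho,\fieldCharacter_\fLocalField)$ is $s$-independent and lies in $\{\pm 1\}$ by Lemma~\ref{central-epsilon}, so that the dual statement follows from $\BFDualpigammaFactor=\complexConjugate{\BFpigammaFactor}$ via Proposition~\ref{BF-one-side-thm}.

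One cosmetic remark: your preliminary paragraph about choosing $\omega_\rho(\varpi)$ to make a suitable twist of $\rho$ genuinely $H_{2m}(\fLocalField)$-distinguished is not actually a hypothesis you need to impose yourself — it is already built into the proof of Proposition~\ref{BF-modified-functional equation} (via the twist $\rho\nu^{s_0}$ and the invocation of~\cite{Mat14,YZ20} in the unramified-$\omega_\rho$ case), and Theorem~\ref{levelzero-BF}(2) is stated for any level zero supercuspidal $\rho$ whose $\pi$ admits a Friedberg--Jacquet vector. So the step is harmless but can be absorbed into the citation of the modified functional equation rather than carried out afresh.
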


\begin{proof}
We insert the identity \eqref{BF-levelzero-poles} for $\BFLocalRhogammaFactor$. Next, we select $W_{\pi}=\repBesselFunction{\pi}$ and $\phi=\delta_{e_m}$ an indicator function on $e_m$, so that its Fourier transform
is given by $\fourierTransform(\delta_{e_m})(y)=q^{-\frac{m}{2}}\fieldCharacter(e_m {^ty})$. In this way, \Cref{BF-modified-functional equation} becomes
\begin{multline*}
 \BFpigammaFactor+q^{-m(1-2s)-\frac{m}{2}}\omega^{-1}_{\rho}(\varpi)L(m(1-2s),\omega^{-1}_{\rho})\varepsilon(s,\rho,\fieldCharacter_\fLocalField)Z(\repBesselFunction{\pi},\TrivialRepresentation_{\fFiniteField^m})\\
 =\varepsilon(s,\rho,\fieldCharacter_\fLocalField)q^{m\left(2s-\frac{1}{2}\right)}\omega^{-1}_{\rho}(\varpi) \frac{L(m(1-2s),\omega^{-1}_{\rho})}{L(2ms,\omega_{\rho})}.
 \end{multline*}
 We clear the denominator to express it as
\begin{multline*}
 \BFpigammaFactor-\omega^{-1}_{\rho}(\varpi)q^{-m(1-2s)}\BFpigammaFactor
 +q^{-m(1-2s)-\frac{m}{2}}\omega^{-1}_{\rho}(\varpi)\varepsilon(s,\rho,\fieldCharacter_\fLocalField)Z(\repBesselFunction{\pi},\TrivialRepresentation_{\fFiniteField^m})\\
 =\varepsilon(s,\rho,\fieldCharacter_\fLocalField)q^{m\left(2s-\frac{1}{2}\right)}\omega^{-1}_{\rho}(\varpi)-\varepsilon(s,\rho,\fieldCharacter_\fLocalField)q^{-\frac{m}{2}}.
 \end{multline*}
We compare the coefficients of constant terms and $q^{2ms}$ terms individually. In doing so, we arrive at a system of linear equations
 \[
  \begin{cases}
  \BFpigammaFactor=-\varepsilon(s,\rho,\fieldCharacter_\fLocalField)q^{-\frac{m}{2}} \\
  -\omega^{-1}_{\rho}(\varpi)q^{-m}\BFpigammaFactor+q^{-m-\frac{m}{2}}\omega^{-1}_{\rho}(\varpi)\varepsilon(s,\rho,\fieldCharacter_\fLocalField)Z(\repBesselFunction{\pi},\TrivialRepresentation_{\fFiniteField^m})
  =\varepsilon(s,\rho,\fieldCharacter_\fLocalField)q^{-\frac{m}{2}}\omega^{-1}_{\rho}(\varpi),
  \end{cases}
 \]
treating $\BFpigammaFactor$ and $Z(\repBesselFunction{\pi},\TrivialRepresentation_{\fFiniteField^m})$ as unknown variables, from which the equality $\BFpigammaFactor=-\varepsilon(s,\rho,\fieldCharacter_\fLocalField)q^{-\frac{m}{2}}$ shall follow. Having \Cref{BF-one-side-thm} in mind, all we have to do is to take the complex conjugate. We end up with 
 \[
 \BFDualpigammaFactor=\complexConjugate{ \BFpigammaFactor}=-\varepsilon(s,\rho,\fieldCharacter_\fLocalField)q^{-\frac{m}{2}}. \qedhere
\]
\end{proof}

We will shift our focus to the coincidence of arithmetic and analytic local factors, but beforehand we state functional equations for $\BFpigammaFactor$ over finite fields.

\begin{theorem}[Functional equation] 
\label{BF-Func}
Let $\pi$ be an irreducible cuspidal representation of $\GL_n(\fFiniteField)$.
 \begin{enumerate}[label=$(\mathrm{\arabic*})$]
\item\label{BF-Func1}
If $\pi$ does not admits a Friedberg--Jacquet vector, then we have
\[ 
\BFpigammaFactor \BFDualpigammaFactor = 1
\quad \text{and} \quad \abs{\BFpigammaFactor}=1.
  \]
\item\label{BF-Func2} If $n=2m$ and $\pi$ admits a Friedberg--Jacquet vector, then we have
 \[
 \BFpigammaFactor \BFDualpigammaFactor  =q^{-m} \quad \text{and} \quad
 \abs{\BFpigammaFactor}=q^{-\frac{m}{2}}. 
 \]
 \end{enumerate}
\end{theorem}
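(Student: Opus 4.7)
The plan is to follow the same template used for the Rankin--Selberg functional equation (Theorem \ref{RS-Func}) and the Asai functional equation (Theorem \ref{Asai-Func}), splitting into the distinguished and non-distinguished regimes and using the double-duality argument for the former and the explicit computation from Proposition \ref{BF-distinct-Gamma} for the latter.

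For Part \ref{BF-Func1}, the key observation I will exploit is the identity $\check{Z}(W_\pi,\phi) = Z(\check{W}_\pi, \fourierTransform(\phi))$. This is immediate from unfolding the definitions: $\check{Z}(W_\pi,\phi)$ sums $\check{W}_\pi(J(g,g'))$ against $\fourierTransform(\phi)(e_m g')$ (or the odd analogue), which is exactly a $Z$-sum for the representation $(\check{\pi},\fieldCharacter^{-1})$ evaluated on $\fourierTransform(\phi)$. Combining with the defining functional equation for $\BFpigammaFactor$ from Theorem \ref{BF-Multiplicity} then gives
\[
\BFpigammaFactor\, Z(W_\pi,\phi) = Z(\check{W}_\pi, \fourierTransform(\phi)).
\]
I then apply the functional equation for $\BFDualpigammaFactor$ to the pair $(\check{W}_\pi, \fourierTransform(\phi)) \in \whittaker(\check{\pi},\fieldCharacter^{-1}) \times \Schwartz_0(\fFiniteField^{\lfloor (n+1)/2 \rfloor})$. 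Since the dual Fourier transform for the pair $(\check{\pi},\fieldCharacter^{-1})$ is $\ifourierTransform$, and because $\ifourierTransform \circ \fourierTransform = \mathrm{id}$ together with $(\check{W}_\pi)^\vee = W_\pi$, the result is $\BFDualpigammaFactor\, Z(\check{W}_\pi, \fourierTransform(\phi)) = Z(W_\pi,\phi)$. Composing the two relations yields $\BFpigammaFactor \BFDualpigammaFactor\, Z(W_\pi,\phi) = Z(W_\pi,\phi)$, and selecting $W_\pi = \grepBesselFunction{\pi}{\fieldCharacter}$ together with an indicator function $\phi$ as in the proof of Proposition \ref{BF-one-side-thm} (so that $Z(W_\pi,\phi) \neq 0$) produces the desired identity $\BFpigammaFactor \BFDualpigammaFactor = 1$. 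The absolute-value claim $\abs{\BFpigammaFactor} = 1$ then follows immediately from Proposition \ref{BF-one-side-thm}, which identifies $\BFDualpigammaFactor$ as the complex conjugate of $\BFpigammaFactor$.

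For Part \ref{BF-Func2}, the assertion is essentially a repackaging of Proposition \ref{BF-distinct-Gamma}, which produces the closed form $\BFpigammaFactor = \BFDualpigammaFactor = -\varepsilon(s,\rho,\fieldCharacter_\fLocalField)\, q^{-m/2}$. Multiplying gives $\BFpigammaFactor \BFDualpigammaFactor = \varepsilon(s,\rho,\fieldCharacter_\fLocalField)^2\, q^{-m}$. By Lemma \ref{central-epsilon}, for a level-zero supercuspidal $\rho$ that is $H_{2m}(\fLocalField)$-distinguished the central epsilon value lies in $\{\pm 1\}$, so its square equals $1$, yielding $\BFpigammaFactor \BFDualpigammaFactor = q^{-m}$. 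The absolute value assertion is transparent from the same explicit formula, since $\abs{-\varepsilon(s,\rho,\fieldCharacter_\fLocalField)\, q^{-m/2}} = q^{-m/2}$.

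The only delicate point is the bookkeeping for the Fourier-transform convention when passing between the $(\pi,\fieldCharacter)$ side and the $(\check{\pi},\fieldCharacter^{-1})$ side; this boils down to the identity $\ifourierTransform \circ \fourierTransform = \mathrm{id}$, which makes the double-duality argument mechanical. Beyond that, the argument is parity-insensitive and treats the even $n=2m$ and odd $n=2m+1$ cases uniformly, since both the formulation of Theorem \ref{BF-Multiplicity} and the Bessel-function expression in Proposition \ref{BF-one-side-thm} are designed to unify them.
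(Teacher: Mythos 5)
Your proposal follows the same two-track strategy as the paper: double-duality for the non-distinguished case and the explicit Bessel computation (Proposition~\ref{BF-distinct-Gamma} plus Lemma~\ref{central-epsilon}) for the distinguished case, and in Part~\ref{BF-Func2} the argument is exactly the paper's. However, in Part~\ref{BF-Func1} there is one assertion that does not hold as written: you claim that for $\phi \in \Schwartz_0(\fFiniteField^{\lfloor(n+1)/2\rfloor})$ the pair $(\check{W}_\pi, \fourierTransform(\phi))$ again lies in $\whittaker(\check{\pi},\fieldCharacter^{-1}) \times \Schwartz_0(\fFiniteField^{\lfloor(n+1)/2\rfloor})$, but $\Schwartz_0$ is defined by vanishing at the origin, and $\fourierTransform(\phi)(0) = q^{-\lfloor(n+1)/2\rfloor/2}\sum_x \phi(x)$ is not forced to be zero by $\phi(0)=0$. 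In particular, the very test function you invoke to conclude, namely $\phi=\delta_{e_m}$ (or $\delta_{e_{m+1}}$), has $\fourierTransform(\phi)(0)=q^{-m/2}\neq 0$, so the functional equation defining $\BFDualpigammaFactor$ cannot be applied to $(\check{W}_\pi,\fourierTransform(\phi))$ on the nose.

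The repair is standard and is what underlies the citation of \cite[Proposition 2.12]{YZ20} in the paper: when $\pi$ (equivalently $\check{\pi}$, since the Friedberg--Jacquet condition is governed by triviality of the central character) has no Friedberg--Jacquet vector, Lemma~\ref{JR-sum-characterization} gives $Z(W,\TrivialRepresentation_{\fFiniteField^m})=0$ for all $W$, and since both $Z(W,\phi)$ and $\check{Z}(W,\phi)$ depend only on $\phi - \phi(0)\TrivialRepresentation_{\fFiniteField^m}$ (the Dirac mass at $0$ never meets $e_m g'\neq 0$ after Fourier inversion), the defining functional equation for $\BFDualpigammaFactor$ extends automatically from $\Schwartz_0$ to all of $\Schwartz$. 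With that observation in place, your double-duality chain $\BFpigammaFactor Z(W_\pi,\phi)=Z(\check{W}_\pi,\fourierTransform(\phi))$ and $\BFDualpigammaFactor Z(\check{W}_\pi,\fourierTransform(\phi))=Z(W_\pi,\phi)$ closes up exactly as you intend; the rest of your argument, including the use of Proposition~\ref{BF-one-side-thm} for $\BFDualpigammaFactor=\complexConjugate{\BFpigammaFactor}$, the nonvanishing $Z(\repBesselFunction{\pi},\delta_{e_m})=1$, and the parity-uniform treatment of the even and odd cases, is correct and coincides with the paper's proof.
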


\begin{proof}
Upon invoking \Cref{dual-BF}, the functional equation in \ref{BF-Func1} can be seen from 
the double-duality
\[
 \contragredient{Z}( \contragredient{W}_{\pi},\ifourierTransform(\phi))=Z(W_{\pi},\phi).
\]
just as in \cite[Proposition 2.12]{YZ20}. In view of \Cref{central-epsilon}, we combine \Cref{BF-one-side-thm} and \Cref{BF-distinct-Gamma} to see the rest of results, and this ends the proof.
\end{proof}
In practice, \cite[Proposition 4.1]{Mat15} allows us to generalize the local function equation \eqref{Local-BF-Func} to
an irreducible generic representation $\rho$ and a spherical representation ${\rm Ind}_{B_n(F)}^{{\rm GL}_n(F)}(\mu_1 \otimes \dotsm \otimes \mu_n )$ at least for the Bump--Friedberg $\gamma$-factor with one variable $s$ $(t=0)$. We do not strive for maximal generality, so this hypothesis might be redundant, but which holds in all our applications. 

\begin{lemma}
\label{spherical-rep-BF}
Let $F$ be a local function field. Let $\rho$ be an irreducible subquotient of a spherical representation ${\rm Ind}_{B_n(F)}^{{\rm GL}_n(F)}(\mu_1 \otimes \dotsm \otimes \mu_n )$. Then we have
\begin{multline*}
\Gamma(s,\rho,{\rm BF},\fieldCharacter^{\flat}_\fLocalField)=\Gamma(s+1/2,\rho,\fieldCharacter^{\flat}_\fLocalField)\Gamma(2s,\rho,\wedge^2,\fieldCharacter^{\flat}_\fLocalField) \\
=\prod_{1 \leq i \leq n} \Gamma(s+1/2,\mu_i,\fieldCharacter^{\flat}_\fLocalField)\prod_{1\leq j < k \leq n} \Gamma(2s,\mu_j \times \mu_k,\fieldCharacter^{\flat}_\fLocalField).
\end{multline*}
\end{lemma}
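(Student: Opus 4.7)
The plan is to adapt the argument of \Cref{spherical-rep-JS} essentially verbatim, with the Jacquet--Shalika integral replaced by the Bump--Friedberg integral of \cite{BF89}. Put $\Sigma={\rm Ind}_{B_n(F)}^{{\rm GL}_n(F)}(\mu_1 \otimes \dotsm \otimes \mu_n)$. By uniqueness of Whittaker functionals for a principal series, any nonzero Whittaker functional on $V_{\rho}$ extends uniquely to one on $V_{\Sigma}$, so the Bump--Friedberg integrals $Z(s,W_{\rho},\Phi)$ and $Z(s,W_{\Sigma},\Phi)$ are carried by the same space of functions on ${\rm GL}_n(F)$. As in the proof of \Cref{spherical-rep-JS}, invoking \cite[Theorem 2.2]{Jo24} to extend the local functional equation to $\Sigma$ gives
\[
\Gamma(s,\rho,{\rm BF},\fieldCharacter^{\flat}_\fLocalField)=\Gamma(s,\Sigma,{\rm BF},\fieldCharacter^{\flat}_\fLocalField),
\]
and likewise for $\Gamma(s+1/2,\rho,\fieldCharacter^{\flat}_\fLocalField)$ and $\Gamma(2s,\rho,\wedge^2,\fieldCharacter^{\flat}_\fLocalField)$. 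This reduces the claim to the case $\rho=\Sigma$.

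Next, normalize the spherical Whittaker function $W_{\Sigma}^{\flat}\in\mathcal{W}(\Sigma,\fieldCharacter^{\flat}_\fLocalField)$ by $W_{\Sigma}^{\flat}(1_n)=1$, and take $\Phi^{\flat}$ to be the characteristic function of $\mathfrak{o}^{\lfloor (n+1)/2\rfloor}$. The unramified computation of Bump and Friedberg \cite{BF89} yields
\[
Z(s,W_{\Sigma}^{\flat},\Phi^{\flat})=\prod_{1\le i\le n} L(s+1/2,\mu_i)\prod_{1\le j<k\le n} L(2s,\mu_j\times\mu_k).
\]
Because $\check W_{\Sigma}^{\flat}$ is, up to the fixed constant dictated by the Iwahori decomposition, the normalized spherical Whittaker function of $\check\Sigma={\rm Ind}_{B_n(F)}^{{\rm GL}_n(F)}(\mu_n^{-1}\otimes\dotsm\otimes\mu_1^{-1})$ for $(\fieldCharacter^{\flat}_\fLocalField)^{-1}$, and $\gfourierTransform{\fieldCharacter^{\flat}_\fLocalField}(\Phi^{\flat})=\Phi^{\flat}$, the very same computation on the dual side delivers
\[
Z(1/2-s,\check W_{\Sigma}^{\flat},\gfourierTransform{\fieldCharacter^{\flat}_\fLocalField}(\Phi^{\flat}))=\prod_{i} L(1/2-s,\mu_i^{-1})\prod_{j<k} L(1-2s,\mu_j^{-1}\times\mu_k^{-1}).
\]

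Third, substituting into the local functional equation \eqref{Local-BF-Func} at $t=0$ gives
\[
\Gamma(s,\Sigma,{\rm BF},\fieldCharacter^{\flat}_\fLocalField)=\prod_{i}\frac{L(1/2-s,\mu_i^{-1})}{L(s+1/2,\mu_i)}\cdot\prod_{j<k}\frac{L(1-2s,\mu_j^{-1}\times\mu_k^{-1})}{L(2s,\mu_j\times\mu_k)}.
\]
Since $\fieldCharacter^{\flat}_\fLocalField$ is of level zero and each $\mu_i$ is unramified, the Tate and Rankin--Selberg epsilon factors $\varepsilon(s+1/2,\mu_i,\fieldCharacter^{\flat}_\fLocalField)$ and $\varepsilon(2s,\mu_j\times\mu_k,\fieldCharacter^{\flat}_\fLocalField)$ are all identically $1$; the ratio above therefore equals
\[
\prod_{1\le i\le n}\Gamma(s+1/2,\mu_i,\fieldCharacter^{\flat}_\fLocalField)\prod_{1\le j<k\le n}\Gamma(2s,\mu_j\times\mu_k,\fieldCharacter^{\flat}_\fLocalField),
\]
which is the second equality of the claim. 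The first equality then follows by invoking the multiplicativity of the Godement--Jacquet gamma factor, $\Gamma(s+1/2,\Sigma,\fieldCharacter^{\flat}_\fLocalField)=\prod_i\Gamma(s+1/2,\mu_i,\fieldCharacter^{\flat}_\fLocalField)$, together with \Cref{spherical-rep-JS} applied to $\Sigma$.

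The only genuinely nontrivial step is the unramified spherical Bump--Friedberg integral, but this was the principal computation of \cite{BF89} and is by now standard; the rest of the argument is bookkeeping, with the extension of the functional equation from $\rho$ to $\Sigma$ handled exactly as in \Cref{spherical-rep-JS}.
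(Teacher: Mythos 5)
Your proposal is correct and takes essentially the same approach as the paper, which simply notes that the proof of Lemma \ref{spherical-rep-JS} carries over verbatim with the unramified Jacquet--Shalika computation of \cite[\S 2]{JS88} replaced by the unramified Bump--Friedberg computation of \cite[Theorem 3]{BF89}. One small point: for the reduction from $\rho$ to $\Sigma$ in the Bump--Friedberg setting, the paper points to \cite[Proposition 4.1]{Mat15} (at $t=0$) to extend the functional equation \eqref{Local-BF-Func}, rather than \cite[Theorem 2.2]{Jo24}, which is the corresponding ingredient only in the Jacquet--Shalika case.
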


\begin{proof}
The proof of Lemma \ref{spherical-rep-JS} literally goes through word by word except that we use the unramified computation of 
Bump and Friedberg \cite[Theorem 3]{BF89} in place of \cite[\S 2]{JS88} (cf. The reader may be referred to \cite[Proposition 1.2]{Ish18} for an alternative proof of unramified computations).
\end{proof}

As an intermediate step, we establish that Bump--Friedberg $\gamma$-factors agree with the counterpart Langlands--Shahidi gamma factors in positive characteristics. 
We extend the coincidence of two local factors to all characteristic, notably, zero in \Cref{BF-JS equal}.

\begin{theorem}
\label{main-BF-factor}
Let $\rho$ be a level zero supercuspidal representation of ${\rm GL}_n(F)$ over  a local function field $F$. Then we have
\[
  \Gamma(s,t,\rho,{\rm BF},\fieldCharacter_\fLocalField)=\Gamma(s+t+1/2,\rho,\fieldCharacter_\fLocalField)\Gamma_{\rm LS}(2s,\rho,\wedge^2,\fieldCharacter_\fLocalField)=\Gamma(s+t+1/2,\rho,\fieldCharacter_\fLocalField)\Gamma(2s,\rho,\wedge^2,\fieldCharacter_\fLocalField).
\]
\end{theorem}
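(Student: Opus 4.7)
The plan is to mimic the proof of Theorem \ref{exter-positive}. I would globalize the level zero supercuspidal $\rho$ via Lomelí, compare global functional equations of Bump--Friedberg type and Langlands--Shahidi type, and extract a purely local identity at one chosen place $v_0$. The second equality is then immediate from Theorem \ref{exter-positive}.

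\textbf{Step 1 (Setup).} Twisting $\rho$ by an unramified character does not affect the identity, so assume $\rho$ is unitary. Apply Theorem \ref{Lomeli-Globalization} to obtain a global function field $k$, a set of places $S=\{v_0,v_1,v_\infty\}$ with $k_{v_0}\cong F$, and a globally generic cuspidal automorphic representation $\Pi=\otimes'_v\Pi_v$ of $\GL_n(\mathbb{A}_k)$ with $\Pi_{v_0}\cong\rho$, $\Pi_v$ unramified for $v\notin S$, and $\Pi_{v_1},\Pi_{v_\infty}$ irreducible generic quotients of unramified principal series. Fix a nontrivial additive character $\Psi$ of $\mathbb{A}_k/k$ with $\Psi_{v_0}=\psi_F$.

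\textbf{Step 2 (Two global functional equations).} The global Bump--Friedberg integral (Matringe, building on \cite{BF89}), together with the unramified factorization $L(s,\Pi_v,{\rm BF})=L(s+1/2,\Pi_v)L(2s,\Pi_v,\wedge^2)$ for $v\notin S$, yields after cancelling the partial unramified $L$-functions the identity
\[
\prod_{v\in S}\Gamma(s,t,\Pi_v,{\rm BF},\Psi_v)=\prod_{v\in S}\Gamma(s+t+1/2,\Pi_v,\Psi_v)\,\Gamma_{\rm LS}(2s,\Pi_v,\wedge^2,\Psi_v),
\]
the right-hand side coming from the global functional equations of Langlands--Shahidi for the standard and exterior-square $L$-functions of $\Pi$ (cf.\ \cite[\S4-(vi)]{HL11}).

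\textbf{Step 3 (Cancellation at the auxiliary places).} For $v\in\{v_1,v_\infty\}$, Lemma \ref{spherical-rep-BF} expresses $\Gamma(s,0,\Pi_v,{\rm BF},\Psi_v)$ as a product of pairwise standard and Rankin--Selberg $\gamma$-factors of the unramified characters of the principal series, and Lemma \ref{spherical-rep-JS} combined with the identity $\Gamma_{\rm LS}=\Gamma$ for generic subquotients of spherical representations (argued as in the proof of Theorem \ref{exter-positive}) yields the same expression for $\Gamma(s+1/2,\Pi_v,\Psi_v)\Gamma_{\rm LS}(2s,\Pi_v,\wedge^2,\Psi_v)$. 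Hence the factors at $v_1,v_\infty$ match, and the cancellation in Step 2 leaves the desired equality at $v_0$:
\[
\Gamma(s,0,\rho,{\rm BF},\psi_F)=\Gamma(s+1/2,\rho,\psi_F)\,\Gamma_{\rm LS}(2s,\rho,\wedge^2,\psi_F).
\]

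\textbf{Step 4 (Two-variable dependence and the second equality).} The global Bump--Friedberg integral is naturally two-variable, and the spherical computation of Lemma \ref{spherical-rep-BF} extends to two variables by the same unramified calculation of \cite{BF89} (the $t$-parameter corresponding to an unramified twist of the Levi which shifts only the standard-gamma argument, while $2s$ remains the exterior-square argument). Running Steps 2--3 with general $(s,t)$ produces the full first equality. Theorem \ref{exter-positive} then replaces $\Gamma_{\rm LS}(2s,\rho,\wedge^2,\psi_F)$ by $\Gamma(2s,\rho,\wedge^2,\psi_F)$, giving the second equality.

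\textbf{Main obstacle.} The hardest step is the clean handling of the two-variable spherical identity matching the right-hand side at $v_1,v_\infty$, since Lemma \ref{spherical-rep-BF} is stated only for the one-variable specialisation $t=0$. One must either establish a direct two-variable unramified computation (accessible via Ishii's formula and \cite{BF89}) or reduce the two-variable statement to $t=0$ by recognising the $t$-dependence as an unramified twist of the Levi. A secondary bookkeeping point is the normalisation difference between the level-zero character $\psi_F^\flat$ used in the spherical calculations and the level-one $\psi_F$ fixed in the statement, which is controlled by $\varepsilon$-factors of unramified characters and does not contribute obstructions.
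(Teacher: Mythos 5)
Your plan is essentially the paper's: globalize via Lomel\'{\i}, divide the Bump--Friedberg global functional equation by the product of the Langlands--Shahidi exterior-square and Godement--Jacquet global functional equations, cancel the auxiliary places by the spherical identity, and extract the identity at $v_0$; then invoke \Cref{exter-positive} for the second equality.

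The one place where your proposal diverges — and where it complicates things unnecessarily — is Step~4. You flag the two-variable extension of \Cref{spherical-rep-BF} as the hardest step and propose either a fresh two-variable unramified computation or a reduction to $t=0$ via an unramified twist. The paper avoids both by noting, \emph{before} globalization, that for a level zero supercuspidal $\rho$ the quantity $\Gamma(s,t,\rho,{\rm BF},\psi_F)$ is independent of $t$ by \Cref{levelzero-BF} (its two branches are exactly $\gamma(\pi,{\rm BF},\psi)$, a constant, or an explicit expression involving only $q^{-2ms}$), and likewise $\Gamma(s+t+1/2,\rho,\psi_F)=\varepsilon(s,\rho,\psi_F)$ is already a complex number independent of $s$ and $t$ by \cite[\S 6.1]{BHK98}. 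So both sides of the asserted identity are $t$-independent, and one may simply set $t=0$ at the outset and run the one-variable global argument you describe in Steps 2--3. No two-variable spherical identity is needed, and your "main obstacle" dissolves. This is a clean reduction you overlooked; the rest of your argument is correct and aligns with the paper's proof.
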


\begin{proof} Putting together \Cref{levelzero-BF} and  \cite[\S 6.1]{BHK98}, $ \Gamma(s,t,\rho,{\rm BF},\fieldCharacter_\fLocalField)$ and $\Gamma(s+t+1/2,\rho,\fieldCharacter_\fLocalField)$ are independent of $t$,
so that we take $t$ to be $0$. With the help of \cite[\S 6.6-(vii)]{Lom16}, twists by unramified characters do not affect on the first equality. For this reason,
we may assume that $\pi$ is unitary without loss of generality. 
Applying Theorem \ref{Lomeli-Globalization} to the level zero supercuspidal representation, there are a global field $k$ with three places $v_0, v_1,$ and $v_{\infty}$
such that $k_{v_0} \cong F$, and an irreducible unitary cuspidal automorphic representation $\Pi$ of ${\rm GL}_n(\mathbb{A}_k)$ with the required properties in \Cref{Lomeli-Globalization}. 
We choose a non-trivial additive character $\Psi$ of $\mathbb{A}_k \slash k$, and assume, as we may, that $\Psi_{v_0}=\fieldCharacter_\fLocalField$.
The global functional equation for exterior square $L$-functions via the Langlands-Shahidi method can be read from \cite[\S4-(vi)]{HL11} as
\eqref{LS-global-exterior}, while that for standard $L$-functions due to Godement and Jacquet is extracted from \cite{GJ72} as
\begin{equation}
\label{partial-GJ}
  L^{S}(s,\Pi)=\Gamma(s,\Pi_{v_0},\Psi_{v_0}) \prod_{v \in S-\{ v_0\}} \Gamma(s,\Pi_v,\Psi_v)  L^{S}(1-s,\check{\Pi}).
\end{equation}
In the meantime, taking into account local functional equations \eqref{Local-BF-Func},
the global functional equation for Bump-Friedberg $L$-functions in \cite[Theorem 6]{BF89} (cf. \cite[(1.1)]{Ish18}) takes
the following explicit form:
\begin{multline}
\label{partial-BF}
  L^S(s+1/2,\Pi,\psi)L^S(2s,\Pi,\wedge^2)\\
  =\Gamma(s,\Pi_{v_0},{\rm BF},\Psi_{v_0}) \prod_{v \in S-\{ v_0\}} \Gamma(s,\Pi_v,{\rm BF},\Psi_v) L^S(1/2-s,\check{\Pi}) L^{S}(1-2s,\check{\Pi},\wedge^2).
\end{multline}
In accordance with Lemma \ref{spherical-rep-BF}, each places $v$ in $S-\{ v_0\}$ can be controlled in such a way that
\[
\Gamma(s,\Pi_v,{\rm BF},\Psi_v)=\Gamma(s+1/2,\Pi_v,\Psi_v)\Gamma_{\rm LS}(2s,\Pi_v,\wedge^2,\Psi_v).
\]
After plugging $s+1/2$ for $s$ in \eqref{partial-GJ}, the case for positive characteristics is at least done by dividing \eqref{partial-BF} by the product of \eqref{LS-global-exterior} and \eqref{partial-GJ}.
\end{proof}

The Bump--Friedberg gamma factor $ \Gamma(s,t,\rho,{\rm BF},\fieldCharacter_\fLocalField)$ is comparable with Kazhdan close field theory.
The proof is nearly identical to that of \Cref{Kaz-corres}, and so we shall be brief.

\begin{proposition} 
\label{close-field-BF}
For $(F,\rho,\psi)$ that is {\rm Kaz}-{\it associated to} $(F',\rho',\psi')$, we have
\[
    \Gamma(s,t,\rho,{\rm BF},\fieldCharacter_\fLocalField)=\Gamma(s,t,\rho',{\rm BF},\fieldCharacter_{\fLocalField'}).
    \]
\end{proposition}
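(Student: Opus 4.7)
The plan mirrors the strategy employed in the proof of Proposition \ref{Kaz-corres}. I would first split the argument into the two cases of Theorem \ref{levelzero-BF}, according to whether or not the common underlying finite-field cuspidal $\pi$ admits a Friedberg--Jacquet vector. This dichotomy is intrinsic to $\pi$ and is therefore shared by $\rho$ and $\rho'$ simultaneously, by virtue of condition (ii) of the Kaz-association.

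In the non-distinguished case, Theorem \ref{levelzero-BF}(1) identifies $\Gamma(s,t,\rho,{\rm BF},\psi_F)$ with the finite-field gamma factor $\BFpigammaFactor$, which depends only on $\pi$ and on the character $\psi$ induced on $\mathfrak{o}_F/\mathfrak{p}_F \cong \mathfrak{o}_{F'}/\mathfrak{p}_{F'}$. Both data are shared by the Kaz-triples $(F,\rho,\psi_F)$ and $(F',\rho',\psi_{F'})$, so equality is immediate in this case and in particular independent of $t$.

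In the distinguished case $n=2m$, I would compare each ingredient of the explicit formula of Theorem \ref{levelzero-BF}(2) between $\rho$ and $\rho'$ separately. The prefactor $q^{m(2s-1/2)}$ depends only on the common residue cardinality. Condition (i) of the Kazhdan correspondence furnishes $\omega_\rho(\varpi_F)=\omega_{\rho'}(\varpi_{F'})$, and since $\omega_\rho\!\restriction_{\mathfrak{o}_F^\times}$ factors through the common residual central character $\omega_\pi$, the characters $\omega_\rho$ and $\omega_{\rho'}$ are simultaneously ramified or unramified; in the latter situation the Tate $L$-factors $L(2ms,\omega_\rho)$ and $L(m(1-2s),\omega_\rho^{-1})$ coincide with $L(2ms,\omega_{\rho'})$ and $L(m(1-2s),\omega_{\rho'}^{-1})$ respectively as rational functions of $q^{-s}$, being parametrized by the same unit $\omega_\rho(\varpi_F)$.

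The only genuinely nontrivial step remaining, and where the main obstacle lies, is the matching $\varepsilon(s,\rho,\psi_F)=\varepsilon(s,\rho',\psi_{F'})$ of Godement--Jacquet standard $\varepsilon$-factors. To handle it I would invoke Proposition \ref{commu-LLC}: if $\varphi$ denotes the tamely ramified Langlands parameter of $\rho$, then $\varphi'={\rm Del}(\varphi)$ is the Langlands parameter of $\rho'$. Using the compatibility of the local Langlands correspondence for $\GL_n$ with $\varepsilon$-factors, the claim reduces to the corresponding identity for Artin $\varepsilon$-factors of tamely ramified Weil representations, which is Deligne's preservation of arithmetic local constants across $1$-close fields as recorded in Property (iii) of \S 2.1 of \cite{Gan15}. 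Assembling these four matchings through the formula of Theorem \ref{levelzero-BF} delivers the asserted equality of Bump--Friedberg gamma factors.
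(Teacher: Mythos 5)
Your proof is correct and follows the same overall structure as the paper's: both reduce the claim to the explicit level-zero formulas of \Cref{levelzero-BF} and then match each ingredient across the $1$-close fields using $q_F=q_{F'}$, the shared residual cuspidal $\pi$ (condition~(ii) of the Kaz-association), and $\omega_\rho(\varpi_F)=\omega_{\rho'}(\varpi_{F'})$ (condition~(i)). The only place you diverge is in the matching of the Godement--Jacquet standard $\varepsilon$-factors. The paper obtains $\varepsilon(s,\rho,\psi_F)=\varepsilon(s,\rho',\psi'_{F'})$ directly as the $\GL_n\times\GL_1$ case of \Cref{level-zero-RS} (the trivial character of $\multiplicativegroup{\fLocalField}$ is a level-zero supercuspidal of $\GL_1$, and for supercuspidal $\rho$ on $\GL_n$ with $n\geq 2$ the twisting $L$-factors are trivial, so the Rankin--Selberg $\gamma$-factor against $\TrivialRepresentation_{\multiplicativegroup{\fLocalField}}$ is the Godement--Jacquet $\varepsilon$-factor). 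You instead invoke \Cref{commu-LLC}, compatibility of LLC with standard $\varepsilon$-factors, and Deligne's preservation of Artin local constants across $1$-close fields. Both routes are valid, but note that \Cref{commu-LLC} is itself proved using \Cref{level-zero-RS}, so yours is a somewhat longer path back to the same underlying input; the paper's is more economical. One minor remark: you correctly observe that in the distinguished case $\omega_\rho$ is necessarily unramified (since $\omega_\pi$ is trivial), so the worry about ``simultaneously ramified or unramified'' is moot in that case --- the Tate $L$-factors are always the unramified ones and are determined by the shared value $\omega_\rho(\varpi_F)$.
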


\begin{proof}
As a consequence of \Cref{level-zero-RS}, we know that $\varepsilon(s,\rho,\fieldCharacter_\fLocalField)=\varepsilon(s,\rho',\fieldCharacter_{\fLocalField'})$. With ${\rm Vol}(\mathfrak{p}_F)={\rm Vol}(\mathfrak{p}_{F'})=q^{-1/2}$ and $\omega_{\rho}(\varpi_F)=\omega_{\rho'}(\varpi_{F'})$ in hand, \Cref{levelzero-BF} readily implies our assertion.
\end{proof}

We are now in a position to formulate the main factorization formula conjectured by Bump and Friedberg \cite[Conjecture 4]{BF89}.

\begin{theorem}
\label{BF-JS equal}
Let $\varphi$ be a $n$-dimensional tamely ramified representation of $W_\fLocalField$ corresponding to the level zero supercuspidal representation $\rho(\varphi)$ of $\GL_n(\fLocalField)$ via Macdonald correspondence. Then we have
\[
\BFlocalgammaFactor{s,t}{\rho(\varphi)}{\fieldCharacter_\fLocalField} =\varepsilon(s+t+1/2,\varphi,\fieldCharacter_\fLocalField)\Gamma(2s,\wedge^2( \varphi),\fieldCharacter_\fLocalField).
\]
\end{theorem}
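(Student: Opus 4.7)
The plan is to assemble this theorem from pieces already established in the paper, using the factorization in positive characteristic and then transporting it to characteristic zero via Deligne--Kazhdan close field theory. There are three steps.

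First, I will convert the Godement--Jacquet $\Gamma$-factor appearing in \Cref{main-BF-factor} into an Artin $\varepsilon$-factor. For a level zero supercuspidal $\rho(\varphi)$ of $\GL_n(\fLocalField)$ with $n\geq 2$, we have $L(s,\rho(\varphi))\equiv 1$ since $\rho(\varphi)$ is supercuspidal; on the Galois side, $\varphi$ is an irreducible $W_\fLocalField$-representation of dimension $n\geq 2$ (as LLC sends supercuspidals to irreducibles), so $V^{I_\fLocalField}=0$ and $L(s,\varphi)\equiv 1$ as well. Consequently $\Gamma(s,\rho(\varphi),\fieldCharacter_\fLocalField)=\varepsilon(s,\rho(\varphi),\fieldCharacter_\fLocalField)$, and the LLC-compatibility of standard local constants (which is the defining property on the $\GL_n$ side and classical for $\GL_1$ via local class field theory) yields $\varepsilon(s,\rho(\varphi),\fieldCharacter_\fLocalField)=\varepsilon(s,\varphi,\fieldCharacter_\fLocalField)$.

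Second, I treat the positive characteristic case. Assume $\fLocalField$ is a local function field. Theorem \ref{main-BF-factor} gives
\[
\BFlocalgammaFactor{s,t}{\rho(\varphi)}{\fieldCharacter_\fLocalField}=\Gamma(s+t+1/2,\rho(\varphi),\fieldCharacter_\fLocalField)\Gamma(2s,\rho(\varphi),\wedge^2,\fieldCharacter_\fLocalField).
\]
Substituting $\Gamma(s+t+1/2,\rho(\varphi),\fieldCharacter_\fLocalField)=\varepsilon(s+t+1/2,\varphi,\fieldCharacter_\fLocalField)$ from Step 1 and $\Gamma(2s,\rho(\varphi),\wedge^2,\fieldCharacter_\fLocalField)=\Gamma(2s,\wedge^2(\varphi),\fieldCharacter_\fLocalField)$ from \Cref{equal-zero-exterior} gives the desired identity in positive characteristic.

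Third, I extend to characteristic zero by the close field method of \S\ref{RS factor}. Given a $p$-adic field $\fLocalField$, choose a local function field $\fLocalField'$ that is $1$-close to $\fLocalField$, and pick $(\fLocalField',\rho'(\varphi'),\fieldCharacter_{\fLocalField'})$ Kaz-associated to $(\fLocalField,\rho(\varphi),\fieldCharacter_\fLocalField)$; by \Cref{commu-LLC}, the Langlands parameter $\varphi'$ is then Del-associated to $\varphi$. Proposition \ref{close-field-BF} preserves the Bump--Friedberg gamma factor:
\[
 \BFlocalgammaFactor{s,t}{\rho(\varphi)}{\fieldCharacter_\fLocalField}=\BFlocalgammaFactor{s,t}{\rho'(\varphi')}{\fieldCharacter_{\fLocalField'}}.
\]
On the Artin side, the standard $\varepsilon$-factor is Deligne-invariant by property (iii) of \S 2.1 in \cite{Gan15} (together with Step 1 applied to both sides to eliminate $L$-factors), and the exterior square gamma factor is preserved by \Cref{Deligne-corres}. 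Applying Step 2 to $\fLocalField'$ and transporting the identity back yields the theorem over $\fLocalField$.

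The conceptual work and the hard analytic input have all been done: the Langlands--Shahidi / Rankin--Selberg matching in positive characteristic (\Cref{main-BF-factor}), the coincidence with Artin factors for the exterior square (\Cref{equal-zero-exterior}), and the Kazhdan invariance of the Bump--Friedberg factor (\Cref{close-field-BF}). The only minor obstacle in the argument is the bookkeeping in Step 1, namely checking that the passage from $\Gamma$ to $\varepsilon$ is justified uniformly for level zero supercuspidals, which follows from supercuspidality ($L(s,\rho)\equiv 1$) and from tame irreducibility of $\varphi$ ($L(s,\varphi)\equiv 1$); no genuinely new ingredient beyond the preceding sections is required.
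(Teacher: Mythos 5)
Your proposal is correct and follows essentially the same route the paper indicates for \Cref{BF-JS equal}. The paper's own proof is terse, deferring to ``one may simply mimic the argument'' from \Cref{equal-zero-exterior} and citing \Cref{equal-zero-exterior}, \Cref{main-BF-factor}, \Cref{close-field-BF}, and item (b) of \cite[Theorem 7.1]{Gan15}; your three steps are precisely the fleshed-out version of that recipe, with the only genuine added content being the explicit bookkeeping in Step 1 showing $\Gamma(s,\rho(\varphi),\fieldCharacter_\fLocalField)=\varepsilon(s,\varphi,\fieldCharacter_\fLocalField)$ via supercuspidality and tame irreducibility (which the paper also tacitly uses in \Cref{standard-epsilon-eps0} and in citing \cite[\S 6.1]{BHK98}).
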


\begin{proof}
Let $F$ be a non-archimedean local field of characteristic $0$ with its residue field $\mathfrak{o}_F / \mathfrak{p}_F $ isomorphic to $\fFiniteField_q$. Let $F'=\fFiniteField_q((t))$ so that $F$ and $F'$ are $1$-close.
At this point, we have given the detailed argument before in the proof of \Cref{equal-zero-exterior}. One may simply mimic the argument there, resting on \Cref{equal-zero-exterior}, \Cref{main-BF-factor}, \Cref{close-field-BF}, and a part of local Langlands correspondence \cite[$({\rm b})$ of Theorem 7.1]{Gan15}.
\end{proof}


\subsection{The Bump--Friedberg epsilon factor and the Gauss sum}

The following elementary lemma illustrates how the standard $\varepsilon$-factors and $ \varepsilon_0$-factors are related by, but it does not seem to be recorded elsewhere. 
We take the occasion to provide a proof for completeness. 

\begin{lemma}
Let $\varphi$ be a $n$-dimensional tamely ramified representation of $W_\fLocalField$. Then we have
\label{standard-epsilon-eps0}
\[
  \varepsilon(s,\varphi,\fieldCharacter_\fLocalField)=\varepsilon_0(\varphi,\fieldCharacter_\fLocalField).
\]
\end{lemma}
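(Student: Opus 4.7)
The plan is to derive the identity directly from the general relation~\eqref{rel-epsilon-epsilon0}. Specialized to the identity operation $r$, that relation reads
\[
  \varepsilon(s,\varphi,\fieldCharacter_\fLocalField)=\varepsilon_0(\varphi,\fieldCharacter_\fLocalField)\cdot\det(-{\rm Fr},V^{I_F})^{-1}\cdot q^{(\dim V^{I_F})s},
\]
so the claim reduces to checking that the correction factor on the right is trivial. This in turn is equivalent to the vanishing $V^{I_F}=0$: in that case the determinant over the zero space is an empty product equal to $1$, and the monomial $q^{0\cdot s}$ degenerates to $1$.

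The remaining task is therefore to verify $V^{I_F}=0$. In the setting of this paper, $\varphi$ is the Langlands parameter of a level zero supercuspidal representation $\rho(\varphi)$ of $\GL_n(F)$ under the Macdonald correspondence, hence an irreducible $n$-dimensional tamely ramified Weil representation of the form $\Ind_{W_{F_n}}^{W_F}\chi$, where $F_n$ is the unramified extension of $F$ of degree $n$ and $\chi$ is a regular character of $W_{F_n}$ attached to a regular $\alpha\in\widehat{\fFiniteField}^{\times}_{q^n}$ via Green's parametrization. Since $F_n/F$ is unramified we have $I_{F_n}=I_F$, so by the Mackey formula the restriction $\varphi\restriction_{I_F}$ decomposes as the direct sum of the Frobenius conjugates $\chi^{q^k}\restriction_{I_F}$ for $0\le k\le n-1$. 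Regularity forces $\alpha$, and hence $\chi\restriction_{I_F}$, to be nontrivial; otherwise $\alpha=\alpha^{q}=\cdots=\alpha^{q^{n-1}}$ would contradict the size-$n$ condition in the definition of a regular character. Consequently every summand $\chi^{q^k}\restriction_{I_F}$ is a nontrivial character, so the $I_F$-fixed subspace $V^{I_F}$ must vanish, as desired.

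No serious obstacle is anticipated here: the argument is purely formal and consistent with the paper's billing of the result as an elementary lemma. Its function in the paper is to convert statements about the arithmetic factor $\varepsilon_0$, which enjoys clean multiplicativity together with the explicit Gauss-sum formulas of Theorems~\ref{Rankin-Selberg-Gauss} and~\ref{Asai-Gauss}, into statements about the analytic $\varepsilon$-factor that appears in the Godement--Jacquet, Rankin--Selberg, Bump--Friedberg, and Jacquet--Shalika functional equations, so that formulas such as the one in Theorem~\ref{BF-JS equal} can be read indifferently in either normalization.
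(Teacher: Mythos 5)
Your proof is correct and reaches the same intermediate target as the paper, namely $V^{I_F}=0$, but you get there by a genuinely different route. The paper's proof is citation-driven: it first invokes \cite[\S~6.1]{BHK98} (through \Cref{central-epsilon}) to assert that $\varepsilon(s,\rho(\varphi),\fieldCharacter_\fLocalField)$ is a \emph{complex number} (independent of $s$), and then reads off $V^{I_F}=\{0\}$ from \eqref{rel-epsilon-epsilon0} because the exponent of $q^{s}$ must vanish; the final step is outsourced to \cite[Corollary 2.7]{YZ21}. You instead establish $V^{I_F}=0$ directly, using that an irreducible tamely ramified parameter in $\mathcal{G}^t(\GL_n(F))$ is $\Ind_{W_{F_n}}^{W_F}\chi$ with $F_n/F$ unramified (so $I_{F_n}=I_F$) and $\chi$ a regular character, so by Mackey $\varphi\restriction_{I_F}$ is a sum of Frobenius twists $\chi^{q^k}\restriction_{I_F}$ each of which is nontrivial by regularity. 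This is a more self-contained and structurally transparent argument: it avoids the conductor computation for level zero supercuspidals and exposes the actual mechanism behind $V^{I_F}=0$ rather than inferring it from the shape of the analytic $\varepsilon$-factor.

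One small caveat (which your proof shares with the paper's, so it is not a flaw you introduced): for $n=1$ the regularity condition is vacuous, the trivial character qualifies, and then $\varphi$ is unramified with $V^{I_F}=V\ne 0$, so the lemma fails as stated. Your sentence ``Regularity forces $\alpha$\dots to be nontrivial; otherwise $\alpha=\alpha^q=\cdots$ would contradict the size-$n$ condition'' is only correct for $n\ge 2$. The paper's cited constancy of $\varepsilon(s,\rho(\varphi),\fieldCharacter_\fLocalField)$ likewise fails for an unramified character of $\GL_1(F)$. In practice the lemma is only applied in genuinely ramified situations (e.g.\ inside \Cref{Deligne-factorazation} where $\alpha$ is a regular character of $\fFiniteField_{q^n}^\times$ with $n\ge 2$), so the gap is harmless, but if you wanted to be scrupulous you would add the hypothesis that $\varphi$ is ramified, or equivalently $n\ge 2$ or $\alpha\ne\TrivialRepresentation$.
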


\begin{proof}
As mentioned in the proof of \Cref{central-epsilon}, \cite[\S 6.1]{BHK98} asserts that $ \varepsilon(s,\varphi,\fieldCharacter_\fLocalField)=\varepsilon(s,\rho(\varphi),\fieldCharacter_\fLocalField)$
is a complex number. The identity \eqref{rel-epsilon-epsilon0} enforces that $V^{I_F}=\{ 0\}$. The result then follows from \cite[Corollary 2.7]{YZ21}.
\end{proof}

Our next task is to deduce the decomposition of Bump--Friedberg $\gamma$-factors, which we may think of as being the finite field analogue of \Cref{BF-JS equal}.

\begin{proposition} 
\label{BF-ext decomposition}
Let $\pi(\varphi)$ be an irreducible cuspidal representation of $\GL_{n}(\fFiniteField)$ associated to tamely ramified representation $\varphi$ of $W_F$ of degree $n$ via Macdonald correspondence. Then we have
\[
 \gamma(\pi(\varphi),{\rm BF},\psi)=\varepsilon_0(\varphi,\fieldCharacter_\fLocalField)\gamma(\pi(\varphi),\wedge^2,\fieldCharacter).
\]
\end{proposition}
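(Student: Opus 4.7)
The plan is to deduce the proposition from the product formula of Theorem~\ref{BF-JS equal}, namely
\[
\BFlocalgammaFactor{s,t}{\rho(\varphi)}{\fieldCharacter_\fLocalField} = \varepsilon(s+t+1/2,\varphi,\fieldCharacter_\fLocalField)\Gamma(2s,\wedge^2(\varphi),\fieldCharacter_\fLocalField),
\]
and then specialize all local factors to their finite field counterparts. Since $\varphi$ is tamely ramified, Lemma~\ref{standard-epsilon-eps0} collapses $\varepsilon(s+t+1/2,\varphi,\fieldCharacter_\fLocalField)$ to the constant $\varepsilon_0(\varphi,\fieldCharacter_\fLocalField)$, and Theorem~\ref{equal-zero-exterior} rewrites $\Gamma(2s,\wedge^2(\varphi),\fieldCharacter_\fLocalField)$ as $\Gamma(2s,\rho(\varphi),\wedge^2,\fieldCharacter_\fLocalField)$. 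Thus
\[
\BFlocalgammaFactor{s,t}{\rho(\varphi)}{\fieldCharacter_\fLocalField} = \varepsilon_0(\varphi,\fieldCharacter_\fLocalField)\,\Gamma(2s,\rho(\varphi),\wedge^2,\fieldCharacter_\fLocalField).
\]

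I would then split according to the dichotomy in Theorem~\ref{levelzero-BF}. In the first case, where $\pi$ does not admit a Friedberg--Jacquet vector, the LHS is the constant $\BFpigammaFactor$, so the RHS must also be constant in $s$. Invoking the level-zero formulas \eqref{even-exterior-JS}--\eqref{odd-exterior-JS} for the exterior square gamma factor, this rules out the existence of a Jacquet--Shalika vector for $\pi$, and hence $\Gamma(2s,\rho(\varphi),\wedge^2,\fieldCharacter_\fLocalField)=\gamma(\pi(\varphi),\wedge^2,\fieldCharacter)$. The desired identity $\BFpigammaFactor=\varepsilon_0(\varphi,\fieldCharacter_\fLocalField)\gamma(\pi(\varphi),\wedge^2,\fieldCharacter)$ drops out immediately.

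In the second case, $n=2m$ with $\pi$ admitting a Friedberg--Jacquet vector, I would verify the identity directly on the finite field side: Proposition~\ref{BF-distinct-Gamma} gives $\BFpigammaFactor=-\varepsilon(s,\rho(\varphi),\fieldCharacter_\fLocalField)q^{-m/2}$, while Proposition~\ref{JS-distinct-Gamma} gives $\gamma(\pi(\varphi),\wedge^2,\fieldCharacter)=-q^{-m/2}$. Since local Langlands preserves standard $\varepsilon$-factors and Lemma~\ref{standard-epsilon-eps0} yields $\varepsilon(s,\rho(\varphi),\fieldCharacter_\fLocalField)=\varepsilon_0(\varphi,\fieldCharacter_\fLocalField)$, comparing these two expressions gives exactly the factorization $\BFpigammaFactor=\varepsilon_0(\varphi,\fieldCharacter_\fLocalField)\gamma(\pi(\varphi),\wedge^2,\fieldCharacter)$.

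The main subtle point is ensuring that, for the level zero supercuspidal $\rho(\varphi)$ built from $\pi$, the existence of a Friedberg--Jacquet vector on the finite field side is compatible with the existence of a Jacquet--Shalika vector, so that the two case dichotomies for the Bump--Friedberg and exterior square local factors line up. In the strategy above this compatibility is extracted for free from the analyticity behavior of both sides of the displayed identity, but it can alternatively be read off from the distinction statements for level zero supercuspidals already used in the proofs of Propositions~\ref{Kaz-corres} and~\ref{close-field-BF}.
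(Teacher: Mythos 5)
Your proof is correct and follows essentially the same architecture as the paper's: the same case split on whether $\pi$ has a Friedberg--Jacquet vector, Theorem~\ref{BF-JS equal} together with Lemma~\ref{standard-epsilon-eps0} and Theorem~\ref{equal-zero-exterior} driving the non-distinguished case, and Propositions~\ref{JS-distinct-Gamma}, \ref{BF-distinct-Gamma} together with Lemma~\ref{standard-epsilon-eps0} handling the distinguished even case purely locally. The one place you differ is in how the compatibility of the two dichotomies (FJ vector versus JS vector) is secured: the paper simply defers to Corollary~\ref{FJ-JS-Equiv}, whose proof rests on external citations, whereas you extract both implications internally by reading off the constant-versus-nonconstant dichotomy on each side of the identity $\Gamma(s,t,\rho(\varphi),\mathrm{BF},\psi_F)=\varepsilon_0(\varphi,\psi_F)\,\Gamma(2s,\rho(\varphi),\wedge^2,\psi_F)$, using Theorem~\ref{levelzero-BF} on the left and the level-zero formulas \eqref{even-exterior-JS}--\eqref{odd-exterior-JS} on the right. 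That is a genuine, and slightly more self-contained, way of getting the same fact; just make the "FJ vector $\Rightarrow$ JS vector" direction explicit in your Case~2, since as written you invoke Proposition~\ref{JS-distinct-Gamma} under the hypothesis that $\pi$ has a Friedberg--Jacquet vector, and the reader is left to supply that the non-constancy of the left-hand side forces non-constancy of $\Gamma(2s,\rho(\varphi),\wedge^2,\psi_F)$ and hence a Jacquet--Shalika vector.
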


\begin{proof}
We break it down into two cases. 
It is worth noting the equivalent statement that $\pi$ admits a Jacquet-Shalika vector if and only if 
it admits a Friedberg--Jacquet vector, which we defer to the next section (see \Cref{FJ-JS-Equiv}).
Suppose that $\pi$ does not admit a Friedberg--Jacquet vector. Owing to \Cref{levelzero-BF}, \Cref{BF-JS equal}, \Cref{standard-epsilon-eps0} together with \cite[Theorem 3.16]{YZ20}, 
we are guided to
\begin{multline*}
 \gamma(\pi(\varphi),{\rm BF},\psi)=\BFlocalgammaFactor{s}{\rho(\varphi)}{\fieldCharacter_\fLocalField}\\
 =\varepsilon(s+t+1/2,\varphi,\fieldCharacter_\fLocalField)\Gamma(2s,\wedge^2( \varphi),\fieldCharacter_\fLocalField)
 =\varepsilon_0(\varphi,\fieldCharacter_\fLocalField)\gamma(\pi(\varphi),\wedge^2,\fieldCharacter).
 \end{multline*}
 \par
It remains to deal with the case when $n=2m$ is even and $\pi$ admits a Friedberg--Jacquet vector. 
This case only requires a purely local approach avoiding globalization, as
the result is immediate from combining \Cref{JS-distinct-Gamma} and \Cref{BF-distinct-Gamma} with \Cref{standard-epsilon-eps0}.
\end{proof}

The following expression for $\varepsilon_0(\varphi,\fieldCharacter_\fLocalField)\varepsilon_0(\wedge^2 ( \varphi),\fieldCharacter_\fLocalField)$ in terms of Gauss sums is thought of as the Bump--Friedberg analogue of \Cref{Rankin-Selberg-Gauss} and \Cref{Asai-Gauss}.

\begin{theorem} 
Let $\pi$ be an irreducible cuspidal representation of $\GL_n(\fFiniteField)$. We let $\alpha \in  \multiplicativegroup{\widehat{\fFiniteField}_{q^{n}}}$ be a regular character 
corresponding to $\pi$ via Green's parametrization and $m=\lfloor \frac{n}{2} \rfloor$. Then we have
\[
\varepsilon_0(\varphi,\fieldCharacter_\fLocalField)\varepsilon_0(\wedge^2 ( \varphi),\fieldCharacter_\fLocalField)
=(-1)^{n+{n \choose 2}}q^{-\frac{1}{2}\left( n+{n \choose 2} \right)}\tau(\alpha,\psi_n)\tau(\alpha^{1+q^m},\psi_d)
\prod_{i=1}^{m-1}  \tau(\alpha^{1+q^{i}},\psi_{n}),
\]
where $d=n$ if $n$ is odd, and $d=m$ if $n$ is even.
\end{theorem}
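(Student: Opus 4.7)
The approach is to apply the general Gauss-sum formula for Deligne arithmetic $\varepsilon_0$-factors of tamely ramified Weil representations (the same device that underlies Theorem \ref{Rankin-Selberg-Gauss} and Theorem \ref{Asai-Gauss}) separately to $\varphi$ and to $\wedge^2\varphi$, and to multiply the two outputs. In its general form the formula asserts that if a tamely ramified representation $\sigma$ of $W_\fLocalField$ of dimension $N$ has $\sigma|_{I_\fLocalField}$ decomposing into Frobenius orbits $\mathcal{O}_1,\dotsc,\mathcal{O}_s$ of sizes $n_1,\dotsc,n_s$ with representatives $\beta_1,\dotsc,\beta_s$, then
\[
\varepsilon_0(\sigma,\fieldCharacter_\fLocalField)=(-1)^{N}q^{-N/2}\prod_{i=1}^{s}\tau(\beta_i,\fieldCharacter_{n_i}).
\]

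For $\varphi$ itself, the regularity of $\alpha\in\multiplicativegroup{\widehat{\fFiniteField}_{q^n}}$ ensures that $\{\alpha,\alpha^q,\dotsc,\alpha^{q^{n-1}}\}$ is a single Frobenius orbit of size $n$, so immediately
\[
\varepsilon_0(\varphi,\fieldCharacter_\fLocalField)=(-1)^{n}q^{-n/2}\tau(\alpha,\fieldCharacter_n).
\]

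For $\wedge^2\varphi$, diagonalize $\varphi|_{I_\fLocalField}$ with basis $\{e_0,\dotsc,e_{n-1}\}$ so that $I_\fLocalField$ acts on $e_i\wedge e_j$ (for $i<j$) by the character $\alpha^{q^i+q^j}=(\alpha^{1+q^{j-i}})^{q^i}$, while Frobenius cycles the unordered pair $\{i,j\}$ modulo $n$. Indexing by the minimal difference $k=\min(j-i,\,n-(j-i))\in\{1,\dotsc,m\}$, the $k$-strip and the $(n-k)$-strip fuse into a single Frobenius orbit of size $n$ whenever $k\neq n-k$; the only exception is the self-paired case $k=m$ when $n=2m$ is even, where the identity $(\alpha^{1+q^m})^{q^m}=\alpha^{q^m+q^{2m}}=\alpha^{1+q^m}$ forces $\alpha^{1+q^m}$ to factor through $\multiplicativegroup{\fFiniteField_{q^m}}$ and the orbit collapses to size $m$. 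The orbit sizes therefore sum to $(m-1)n+m=\binom{n}{2}$ for $n$ even and to $mn=\binom{n}{2}$ for $n$ odd, as required, and the general formula delivers
\[
\varepsilon_0(\wedge^2\varphi,\fieldCharacter_\fLocalField)=(-1)^{\binom{n}{2}}q^{-\binom{n}{2}/2}\tau(\alpha^{1+q^m},\fieldCharacter_d)\prod_{i=1}^{m-1}\tau(\alpha^{1+q^i},\fieldCharacter_n),
\]
with $d=n$ when $n$ is odd (the $i=m$ term being absorbed into the product) and $d=m$ when $n$ is even.

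Multiplying the two displayed expressions yields exactly the claimed identity. The main delicacy of the argument lies in the orbit combinatorics for $\wedge^2\varphi|_{I_\fLocalField}$: one must use the wrap-around $q^{n-k}\equiv q^{-k}$ in the exponent of $\alpha$ to see that the $k$- and $(n-k)$-strips genuinely weld into a single Frobenius orbit, and then carefully handle the exceptional collapse to size $m$ in the even case $k=m=n/2$. Everything else reduces to bookkeeping of signs and powers of $q$.
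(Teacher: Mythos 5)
Your derivation is correct, but it proceeds along a genuinely different route from the paper. The paper disposes of this theorem by pure citation: it notes that $\varepsilon_0(\varphi\otimes\TrivialRepresentation_{W_F},\fieldCharacter_\fLocalField)=\varepsilon_0(\varphi,\fieldCharacter_\fLocalField)$ (via the degenerate Rankin--Selberg case $\pi(\varphi)\times\TrivialRepresentation_{\multiplicativegroup{\fLocalField}}$ together with \Cref{standard-epsilon-eps0}) and then simply cites Corollary~4.5 and Theorem~5.2 of Ye--Zelingher for the Gauss-sum expressions of $\varepsilon_0(\varphi,\fieldCharacter_\fLocalField)$ and $\varepsilon_0(\wedge^2(\varphi),\fieldCharacter_\fLocalField)$ respectively, leaving the product formula as an immediate consequence. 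You instead re-derive both constituent formulas from the general orbit-decomposition result ([YZ21, Theorem~2.4], the same device the paper uses in the proofs of \Cref{Rankin-Selberg-Gauss} and \Cref{Asai-Gauss}), performing the Frobenius-orbit bookkeeping for $\wedge^2\varphi\restriction_{I_\fLocalField}$ explicitly: the unordered pairs $\{i,j\}$ fall into orbits indexed by $k=\min(j-i,n-(j-i))\in\{1,\dotsc,m\}$, the generic orbit has size $n$, and the special case $k=m$ when $n=2m$ collapses to size $m$ because $(\alpha^{1+q^m})^{q^m}=\alpha^{1+q^m}$ (using $\alpha^{q^n}=\alpha$). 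Your orbit-size tally ($mn=\binom{n}{2}$ for $n$ odd, $(m-1)n+m=\binom{n}{2}$ for $n$ even) and the sign convention $(-1)^{\dim}$ both check out, so your two individual formulas multiply to give precisely the claimed expression. Your route is essentially a self-contained reproof of [YZ21, Theorem~5.2] in the exterior-square case and is the analogue of what the paper does for the Asai case in \Cref{Asai-Gauss}; it buys transparency about where each Gauss sum comes from, at the cost of redoing computations that the paper delegates to the cited reference.
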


\begin{proof}
When the second representation is the trivial one of $\GL_{1}(\fLocalField)$, $\TrivialRepresentation_{\multiplicativegroup{\fLocalField}}$, Rankin--Selberg $\gamma$-factor $\gamma(s,\pi(\varphi) \times \TrivialRepresentation_{\multiplicativegroup{\fLocalField}},\fieldCharacter_\fLocalField)$ degenerates into Godement--Jacquet $\varepsilon$-factor $\varepsilon(s,\pi(\varphi), \fieldCharacter_\fLocalField)$.
Now, \cite[Proposition 2.6]{YZ21} along with \cite[Theorems 4.3, 4.4]{YZ21} and \Cref{standard-epsilon-eps0} ensure that 
\[
\varepsilon_0(\varphi \otimes \TrivialRepresentation_{W_F},\fieldCharacter_\fLocalField)=\gamma(s,\pi(\varphi) \times \TrivialRepresentation_{\multiplicativegroup{\fLocalField}},\fieldCharacter_\fLocalField)=\varepsilon(s,\varphi, \fieldCharacter_\fLocalField)=\varepsilon_0(\varphi,\fieldCharacter_\fLocalField).
\]
Our claim is a direct consequence of \cite[Corollary 4.5]{YZ21} and \cite[Theorem 5.2]{YZ21}.
\end{proof}

The proof of \Cref{Artin-Asai-Flicker} should be compared with that of \Cref{Deligne-factorazation} below. Unlike the equality
$\Gamma(s,\rho(\varphi),{\rm As},\fieldCharacter_\fLocalField)
   =\omega^{n-1}_{\rho}(\delta) \lambda_{\eLocalField \slash \fLocalField}(\fieldCharacter_\fLocalField)^{-\frac{n(n-1)}{2}} \varepsilon(s,{\rm As}  ({\varphi}),\fieldCharacter_\fLocalField)$,
   which is independently settled in \cite{AKMSS21} and \cite{BP21} for irreducible supercuspidal representations $\rho$, the corresponding equality for exterior square  $\gamma$-factors 
   has been less developed. We use Deligne--Kazhdan close field theory to deduce the required identity for level zero supercuspidal representations $\rho$, which is enough for applications.

\begin{proposition} 
\label{Deligne-factorazation}
Let $\pi(\varphi)$ be an irreducible cuspidal representation of $\GL_{n}(\fFiniteField)$ associated to tamely ramified representation $\varphi$ of $W_F$ of degree $n$ via Macdonald correspondence.  Then we have
\[
\gamma(\pi(\varphi),\wedge^2,\fieldCharacter)=\varepsilon_0(\wedge^2 (\varphi),\fieldCharacter_\fLocalField)\quad \text{and} \quad \gamma(\pi(\varphi),{\rm BF},\psi)=\varepsilon_0(\varphi,\fieldCharacter_\fLocalField)\varepsilon_0(\wedge^2 ( \varphi),\fieldCharacter_\fLocalField).
\]
\end{proposition}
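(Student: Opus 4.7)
The second identity $\gamma(\pi(\varphi),{\rm BF},\psi)=\varepsilon_0(\varphi,\fieldCharacter_\fLocalField)\varepsilon_0(\wedge^2(\varphi),\fieldCharacter_\fLocalField)$ will follow immediately from the first by \Cref{BF-ext decomposition}, since $\varepsilon_0(\varphi,\fieldCharacter_\fLocalField) \neq 0$. My plan is therefore to establish
\[
\gamma(\pi(\varphi),\wedge^2,\fieldCharacter) \;=\; \varepsilon_0(\wedge^2(\varphi),\fieldCharacter_\fLocalField),
\]
splitting the argument according to whether $\pi$ carries a non-zero Jacquet--Shalika vector.

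If $\pi$ admits no Jacquet--Shalika vector (which includes every odd $n$), then equation \eqref{odd-exterior-JS} identifies $\Gamma(s,\rho(\varphi),\wedge^2,\fieldCharacter_\fLocalField) = \gamma(\pi(\varphi),\wedge^2,\fieldCharacter)$, and \Cref{equal-zero-exterior} further identifies this with $\Gamma(s,\wedge^2(\varphi),\fieldCharacter_\fLocalField)$. Translating non-distinction across local Langlands (via \cite[Proposition 3.23]{YZ20}), the inertia invariants $(\wedge^2 V)^{I_\fLocalField}$ vanish, so that $L(s,\wedge^2(\varphi))=L(1-s,\wedge^2(\check{\varphi}))=1$, and \eqref{rel-epsilon-epsilon0} degenerates to $\Gamma(s,\wedge^2(\varphi),\fieldCharacter_\fLocalField)=\varepsilon(s,\wedge^2(\varphi),\fieldCharacter_\fLocalField)=\varepsilon_0(\wedge^2(\varphi),\fieldCharacter_\fLocalField)$, exactly in the spirit of \Cref{standard-epsilon-eps0} applied to the tamely ramified representation $\wedge^2(\varphi)$.

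If $n=2m$ and $\pi$ carries a Jacquet--Shalika vector, then \Cref{JS-distinct-Gamma} already pins down the left-hand side as $\gamma(\pi(\varphi),\wedge^2,\fieldCharacter) = -q^{-m/2}$, so it suffices to verify $\varepsilon_0(\wedge^2(\varphi),\fieldCharacter_\fLocalField) = -q^{-m/2}$. I shall extract this by comparing two descriptions of the Bump--Friedberg factor. On one hand, \Cref{BF-JS equal} gives $\Gamma(s,t,\rho(\varphi),{\rm BF},\fieldCharacter_\fLocalField)=\varepsilon(s+t+1/2,\varphi,\fieldCharacter_\fLocalField)\Gamma(2s,\wedge^2(\varphi),\fieldCharacter_\fLocalField)$; on the other, the distinguished branch of \Cref{levelzero-BF} gives $\varepsilon(s,\rho(\varphi),\fieldCharacter_\fLocalField)\,q^{m(2s-1/2)}\omega_{\rho}^{-1}(\varpi)\,L(m(1-2s),\omega_{\rho}^{-1})/L(2ms,\omega_{\rho})$. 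Since \Cref{standard-epsilon-eps0} together with the LLC-compatibility of Godement--Jacquet $\varepsilon$-factors yields the constant $\varepsilon(s+t+1/2,\varphi,\fieldCharacter_\fLocalField)=\varepsilon(s,\rho(\varphi),\fieldCharacter_\fLocalField)=\varepsilon_0(\varphi,\fieldCharacter_\fLocalField)$, these scalars cancel and I obtain an explicit formula for $\Gamma(s,\wedge^2(\varphi),\fieldCharacter_\fLocalField)$. Using the LLC-match of $L$-factors $L(s,\wedge^2(\varphi))=L(s,\rho(\varphi),\wedge^2)=L(ms,\omega_{\rho})$ (the Jacquet--Shalika computation in the distinguished case), this unfolds to $\varepsilon(s,\wedge^2(\varphi),\fieldCharacter_\fLocalField)=q^{m(s-1/2)}\omega_{\rho}^{-1}(\varpi)$. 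Reading off the Green parameter of $\rho(\varphi)$ one checks that $(\wedge^2 V)^{I_\fLocalField}$ is one-dimensional with Frobenius eigenvalue $\omega_{\rho}(\varpi)$, so \eqref{rel-epsilon-epsilon0} produces $\varepsilon_0(\wedge^2(\varphi),\fieldCharacter_\fLocalField)=-q^{-m/2}$, as desired.

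The chief obstacle is the bookkeeping in the second case: one must verify that $(\wedge^2 V)^{I_\fLocalField}$ is precisely one-dimensional and determine the exact Frobenius action on it, so that the passage from $\Gamma$ to $\varepsilon_0$ via \eqref{rel-epsilon-epsilon0} is unambiguous. This is a direct inspection of the tamely ramified parameter attached to $\rho(\varphi)$ through Green's parametrization, and it is entirely consistent with (indeed could alternatively be read off from) the Gauss-sum product formula for $\gamma(\pi,\wedge^2,\fieldCharacter)$ proved in \Cref{Exterior-Gamma-Gauss} combined with \cite[Theorem 2.4]{YZ21}.
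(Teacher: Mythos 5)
Your split into two cases and the handling of the first case (no Jacquet--Shalika vector) essentially reproduce the paper's argument: once $\Gamma(s,\rho(\varphi),\wedge^2,\fieldCharacter_\fLocalField)$ is a constant, triviality of the $L$-factors forces $\Gamma = \varepsilon = \varepsilon_0$.

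In the second case your overall strategy is genuinely different from the paper's — you compare the two expressions for the Bump--Friedberg factor from \Cref{BF-JS equal} and \Cref{levelzero-BF}, extract $\varepsilon(s,\wedge^2(\varphi),\fieldCharacter_\fLocalField)$, and then invoke \eqref{rel-epsilon-epsilon0}; the paper instead evaluates $\varepsilon_0(\wedge^2(\varphi),\fieldCharacter_\fLocalField)$ directly via the Gauss-sum product formula (using that $\alpha^{1+q^m}=\TrivialRepresentation$ and $\tau(\alpha^{1+q^i},\fieldCharacter_n)=-q^m$ for $1\le i\le m-1$) and matches it against $\gamma(\pi(\varphi),\wedge^2,\fieldCharacter)=-q^{-m/2}$ from \Cref{JS-distinct-Gamma}. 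Your route is attractive because it sidesteps the Gauss-sum arithmetic, but it carries a concrete error in the bookkeeping step: you assert that $(\wedge^2 V)^{I_F}$ is one-dimensional with Frobenius eigenvalue $\omega_{\rho}(\varpi)$. This is false for $m>1$. With respect to the basis in which $\varphi\restriction_{I_F}$ is $\mathrm{diag}(\alpha,\alpha^q,\dots,\alpha^{q^{2m-1}})$, the inertia-fixed lines in $\wedge^2 V$ are spanned by $e_i\wedge e_{i+m}$ for $0\le i\le m-1$, since $\alpha^{q^i+q^j}=\TrivialRepresentation$ precisely when $j-i=m$. Hence $\dim(\wedge^2 V)^{I_F}=m$, on which the Frobenius acts by a cyclic companion matrix with $\det(\mathrm{Fr})=(-1)^{m-1}\omega_\rho(\varpi)$, giving $\det(-\mathrm{Fr},(\wedge^2 V)^{I_F})=-\omega_\rho(\varpi)$ and $L(s,\wedge^2(\varphi))=L(ms,\omega_\rho)$. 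With that correction the final computation does land on $\varepsilon_0(\wedge^2(\varphi),\fieldCharacter_\fLocalField)=-q^{-m/2}$, but as written the one-dimensionality claim contradicts the displayed $L$-factor $L(ms,\omega_\rho)$ (which has degree $m$ in $q^{-s}$), so the step as stated would not survive inspection for $m>1$.
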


\begin{proof}
We separate it into two cases. Suppose that $\pi$ does not admits a Jacquet--Shalika vector. 
We use \Cref{equal-zero-exterior} in conjunction with \cite[Theorem 3.16]{YZ20} and \cite[Corollary 2.7]{YZ21} in order to see that
\[
\gamma(\pi(\varphi),\wedge^2,\fieldCharacter)=\Gamma(s,\rho(\varphi),\wedge^2,\fieldCharacter_\fLocalField)=\varepsilon(s,\wedge^2( \varphi),\fieldCharacter_\fLocalField)=
\varepsilon_0(\wedge^2 (\varphi),\fieldCharacter_\fLocalField).
\]
We now handle the remaining case when $n=2m$ is even and $\pi$ admits a Jacquet-Shalika vector. The central character $\omega_{\pi}=\alpha\restriction_{\multiplicativegroup{\fFiniteField}}$
becomes trivial so that $\alpha^{1+q^{m}}=\TrivialRepresentation$. Since $\left(\alpha^{1+q^{i}}\right)^{1+q^m}=\TrivialRepresentation$ and $\alpha^{1+q^i}$ is not trivial for $0 \leq i \leq m-1$, we invoke \cite[Proposition A.2]{Ze22} to get $ \tau(\alpha^{1+q^{i}},\psi_{n})=-q^{m}$. In light of \cite[Proposition 2.6]{YZ21}, we conclude that
\[
\varepsilon_0(\wedge^2 ( \varphi),\fieldCharacter_\fLocalField)=(-1)^{{2m \choose 2}}q^{-\frac{1}{2}{2m \choose 2}}\tau(\alpha^{1+q^m},\psi_{m})
\prod_{i=1}^{m-1}  \tau(\alpha^{1+q^{i}},\psi_{2m})=-q^{-\frac{1}{2}{2m \choose 2}}\cdot q^{m(m-1)} \tau(\TrivialRepresentation,\psi_{m}),
\]
which agrees with $\gamma(\pi(\varphi),\wedge^2,\fieldCharacter)=-q^{-\frac{m}{2}}$ in \Cref{JS-distinct-Gamma}, having used the fact that $\tau(\TrivialRepresentation,\psi_{m})=1$.
Then the second equality can be justified from \Cref{BF-ext decomposition}.  
\end{proof}

We are prepared to gain a product formula for $ \gamma(\pi,\wedge^2,\fieldCharacter)$ and $\gamma(\pi,{\rm BF},\psi)$ with regard to Gauss sums.

\begin{theorem}[Gauss sum]
\label{Exterior-Gamma-Gauss}
Let $\pi$ be an irreducible cuspidal representation of $\GL_n(\fFiniteField)$. We let $\alpha \in  \multiplicativegroup{\widehat{\fFiniteField}_{q^{n}}}$ be a regular character
corresponding to $\pi$ via Green's parametrization and $m=\lfloor \frac{n}{2} \rfloor$. Then we have
\[
 \gamma(\pi,\wedge^2,\fieldCharacter)=(-1)^{{n \choose 2}}q^{-\frac{1}{2}{n \choose 2}} \tau(\alpha,\psi_n)\tau(\alpha^{1+q^m},\psi_d)
\prod_{i=1}^{m-1}  \tau(\alpha^{1+q^{i}},\psi_{n})
\]
and 
\[
 \gamma(\pi,{\rm BF},\psi)=(-1)^{n+{n \choose 2}}q^{-\frac{1}{2}\left( n+{n \choose 2} \right)}\tau(\alpha,\psi_n)\tau(\alpha^{1+q^m},\psi_d)
\prod_{i=1}^{m-1}  \tau(\alpha^{1+q^{i}},\psi_{n}),
\]
where $d=n$ if $n$ is odd, and $d=m$ if $n$ is even.
\end{theorem}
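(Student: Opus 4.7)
The plan is to derive both Gauss sum formulas as immediate consequences of \Cref{Deligne-factorazation}, which identifies the analytic exterior square and Bump--Friedberg $\gamma$-factors with corresponding arithmetic $\varepsilon_0$-factors, combined with the product decomposition of $\varepsilon_0(\varphi,\fieldCharacter_\fLocalField)\varepsilon_0(\wedge^2(\varphi),\fieldCharacter_\fLocalField)$ established in the theorem immediately preceding this one. Concretely, \Cref{Deligne-factorazation} yields
\[
\gamma(\pi,\wedge^2,\fieldCharacter)=\varepsilon_0(\wedge^2(\varphi),\fieldCharacter_\fLocalField)\quad\text{and}\quad\gamma(\pi,{\rm BF},\psi)=\varepsilon_0(\varphi,\fieldCharacter_\fLocalField)\varepsilon_0(\wedge^2(\varphi),\fieldCharacter_\fLocalField),
\]
where $\varphi$ is the tamely ramified Weil representation attached to $\pi$ via the Macdonald correspondence, parametrized by the regular character $\alpha\in\widehat{\fFiniteField}_{q^{n}}^{\times}$ through Green's parametrization.

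For the Bump--Friedberg identity, the proof is then nothing more than quoting the preceding product formula for $\varepsilon_0(\varphi,\fieldCharacter_\fLocalField)\varepsilon_0(\wedge^2(\varphi),\fieldCharacter_\fLocalField)$ in terms of Gauss sums, so no additional computation is needed. For the exterior square identity, I would compute $\varepsilon_0(\wedge^2(\varphi),\fieldCharacter_\fLocalField)$ directly by a Galois-orbit analysis modeled on \Cref{Asai-Gauss}: in a suitable basis the weights of $\wedge^2(\varphi)\restriction_{I_F}$ are $\alpha^{q^i+q^j}=(\alpha^{1+q^{j-i}})^{q^i}$ indexed by pairs $0\le i<j\le n-1$, organized into $\mathrm{Fr}$-orbits under the symmetry $d\leftrightarrow n-d$ of the difference $d=j-i$. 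An application of \cite[Theorem 2.4]{YZ21} then assembles the Gauss sums $\tau(\alpha^{1+q^{d}},\psi_n)$ for $1\le d\le m-1$ together with the exceptional factor $\tau(\alpha^{1+q^m},\psi_d)$ into the claimed product. Alternatively, and perhaps more cleanly, one can just divide the Bump--Friedberg identity through by $\varepsilon_0(\varphi,\fieldCharacter_\fLocalField)$, using \Cref{standard-epsilon-eps0} together with \cite[Proposition 2.6]{YZ21} to identify the standard tame $\varepsilon_0$-factor with the elementary Gauss-sum expression involving $\tau(\alpha,\psi_n)$.

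The main delicate point will be the orbit bookkeeping in the even case $n=2m$: the weight $\alpha^{1+q^m}$ satisfies $(\alpha^{1+q^m})^{q^m}=\alpha^{1+q^m}$, so its $\mathrm{Fr}$-orbit has cardinality $m$ rather than $n$, which is precisely what forces the auxiliary integer $d=m$ in the exceptional factor $\tau(\alpha^{1+q^m},\psi_d)$ and shifts the normalizing power of $q$. Once this exceptional orbit is correctly isolated and the remaining orbits are paired off via $d\leftrightarrow n-d$ exactly as in the Asai computation of \Cref{Asai-Gauss}, the combinatorial assembly is routine, and both product formulas fall out simultaneously.
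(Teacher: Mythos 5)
Your strategy is essentially the intended one: the theorem is stated in the paper without a proof precisely because it is meant to follow by combining \Cref{Deligne-factorazation} with the immediately preceding product formula for $\varepsilon_0(\varphi,\fieldCharacter_\fLocalField)\varepsilon_0(\wedge^2(\varphi),\fieldCharacter_\fLocalField)$, and both of the routes you outline for the exterior square identity (direct Galois-orbit analysis of $\varepsilon_0(\wedge^2(\varphi),\fieldCharacter_\fLocalField)$, or dividing the Bump--Friedberg identity through by $\varepsilon_0(\varphi,\fieldCharacter_\fLocalField)$) are sound in principle. The Bump--Friedberg half of your argument is correct and complete.

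However, there is a genuine gap in your execution of the exterior square half. Your orbit analysis gives exactly the Gauss sums you list — $\tau(\alpha^{1+q^{i}},\psi_n)$ for $1\le i\le m-1$ together with the exceptional factor $\tau(\alpha^{1+q^m},\psi_d)$ — and \emph{nothing more}, because every inertial weight of $\wedge^2(\varphi)$ has the form $\alpha^{q^i+q^j}=\bigl(\alpha^{1+q^{j-i}}\bigr)^{q^i}$ with $j-i\ge 1$, so no orbit contributes $\tau(\alpha,\psi_n)$. The division route says the same thing: $\varepsilon_0(\varphi,\fieldCharacter_\fLocalField)$ carries precisely one factor of $\tau(\alpha,\psi_n)$ (plus a power of $q$ and a sign), which cancels the $\tau(\alpha,\psi_n)$ already present in the Bump--Friedberg expression. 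Either way, what your argument actually produces is
\[
\gamma(\pi,\wedge^2,\fieldCharacter)=(-1)^{\binom{n}{2}}q^{-\frac{1}{2}\binom{n}{2}}\,\tau(\alpha^{1+q^m},\psi_d)\prod_{i=1}^{m-1}\tau(\alpha^{1+q^{i}},\psi_{n}),
\]
with no $\tau(\alpha,\psi_n)$. Yet you assert that the orbit computation ``assembles \ldots into the claimed product,'' which retains $\tau(\alpha,\psi_n)$. It does not, and the difference is not cosmetic: since $|\tau(\alpha,\psi_n)|=q^{n/2}$, the expression in the statement has absolute value $q^{n/2}$ in the non-exceptional even case (and $q^{m/2}$ when $\pi$ admits a Jacquet--Shalika vector), whereas \Cref{Exterior-Func} forces $|\gamma(\pi,\wedge^2,\fieldCharacter)|\in\{1,q^{-m/2}\}$. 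You should have noticed that what your two methods compute does not coincide with the expression you are asked to prove, and flagged the discrepancy rather than asserting agreement; instead the proposal silently conflates the two, leaving the argument with an unacknowledged and unbridgeable gap.
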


\section{Period Vectors and Distinctions}
\label{period}

In this section, we study the period vectors and integrals for four pairs of groups $(G,L)$: Jacquet--Piatetski-Shapiro--Shalika period, Flicker--Rallis period, Friedberg--Jacquet period, and 
 Jacquet--Shalika period. Let $\sigma$ be a level zero supercuspidal representation of $G(E)$ coming from an irreducible cuspidal representation $\Pi$ of $G(\eFiniteField)$.
The group $G$ and its closed subgroups $L$ and $N$, as well as its representations $\Pi$ and $\sigma$, are given by the following table:

\begin{center}
\begin{tabular}{ |c|c|c|c|c|c|c|c|c|c| } 
 \hline 
 Period Vectors & $G(\eFiniteField)$ & $G(E)$ & $L$  & $U$ & $\Pi$ & $\sigma$ & $r$     \\ 
  \hline   \hline 
 \makecell{Jacquet\\--Piatetski-Shapiro\\ --Shalika} & $ \makecell{ {\rm GL}_n(\fFiniteField) \\  \times {\rm GL}_n(\fFiniteField)}$  & $ \makecell{ {\rm GL}_n(F) \\ \times {\rm GL}_n(F)}$ &${\rm GL}_n$ & $\UnipotentSubgroup_n $ & $\pi_1 \times \pi_2$  & $\rho_1 \times \rho_2$ & - \\ 
  \hline
 Flicker--Rallis & ${\rm GL}_{2m+1}(\eFiniteField)$ & ${\rm GL}_{2m+1}(E)$ &${\rm GL}_{2m+1}$& $N_{2m+1}$ & $\pi$  & $\rho$& As \\ 
 \hline
 Friedberg--Jacquet & ${\rm GL}_{2m}(\fFiniteField)$  & ${\rm GL}_{2m}(F)$ & $H_{2m}$  & $\UnipotentSubgroup_m \times \UnipotentSubgroup_m$ & $\pi$ & $\rho $ & BF \\ 
 \hline
  Jacquet--Shalika & ${\rm GL}_{2m}(\fFiniteField)$ &${\rm GL}_{2m}(F)$ &$S_{2m}$ & $\UnipotentSubgroup_m \times \nilpotentMatrices_m$ & $\pi$ & $\rho$ & $\wedge^2$  \\ 
 \hline
\end{tabular}
\end{center}

Given a pair of representations $\sigma$ and $\Pi$, their corresponding central characters $\omega_{\Pi}$ and $\omega_{\sigma}$, and their associated Whittaker models $\whittaker(\Pi,\qfFieldCharacter)$ and $\whittaker(\sigma,\qFieldCharacter)$, in addition to the character $\Xi$ of $L$, are highlighted in the following table:
\begin{center}
\begin{tabular}{ |c|c|c|c|c|c|c|c|c|c|c|c| } 
 \hline 
  Period Vectors & $\omega_{\Pi}$ & $\omega_{\sigma}$  & $\Xi$  & $\nu^{\alpha s_0}$ &  $\whittaker(\Pi,\qfFieldCharacter)$  & $\whittaker(\sigma,\qFieldCharacter)$ &  $q^{-\beta}$  \\ 
  \hline   \hline 
 \makecell{Jacquet\\--Piatetski-Shapiro\\ --Shalika} 
 &  $\omega_{\pi_1}\omega_{\pi_2}$ & $\omega_{\rho_1}\omega_{\rho_2}$ &   $\TrivialRepresentation_L$ &$\nu^{s_0}$  & \makecell{$\whittaker(\pi_1,\fieldCharacter^{-1})$ \\ $\otimes \whittaker(\pi_2,\fieldCharacter)$} &\makecell{$\whittaker(\rho_1,\fieldCharacter_\fLocalField^{-1})$ \\ $\otimes \whittaker(\rho_2,\fieldCharacter_\fLocalField)$}& $q^{-n/2}$ \\ 
  \hline
 Flicker--Rallis & $\omega_{\pi}$ & $\omega_{\rho}$ &  $\TrivialRepresentation_L$&$\nu^{s_0}$   & $\whittaker(\pi,\qfFieldCharacter)$  & $\whittaker(\rho,\qFieldCharacter)$ &$q^{-n/2}$  \\ 
 \hline
 Friedberg--Jacquet & $\omega_{\pi}$ &$\omega_{\rho}$  & $\TrivialRepresentation_L$ & $\nu^{s_0}$   & $\whittaker(\pi,\fieldCharacter)$  &$\whittaker(\rho,\fieldCharacter_\fLocalField)$& $q^{-m/2}$     \\ 
 \hline
  Jacquet--Shalika & $\omega_{\pi}$ & $\omega_{\rho}$ & $\Theta$ & $\nu^{s_0/2}$  & $\whittaker(\pi,\fieldCharacter)$ &$\whittaker(\rho,\fieldCharacter_\fLocalField)$ &$q^{-m/2}$   \\ 
 \hline
\end{tabular}
\end{center}

For each four period vectors, we prove a relation between the period integrals and sums, and the $L$-factors $L(s,\sigma,r)$ and $\gamma$-factors $\gamma(\Pi,r,\fieldCharacter)$.
The local factors that show up in this section include the Rankin--Selberg factors $L(s,\rho_1 \times \rho_2)$ and $\gamma^{\star}(\pi_1\times \pi_2,\fieldCharacter)$, the Asai factors 
$L(s,\rho,{\rm As})$ and $\AsaiPiGammaFactor$, the Bump--Friedberg factors $L(s,\rho,{\rm BF})$ and $\BFpigammaFactor$, and the exterior square factors $L(s,\rho,\wedge^2)$ and $\gamma(\pi,\wedge^2,\fieldCharacter)$. 

\begin{theorem}[Finite period vectors and integrals] 
\label{Period-Vector-Integral}
When $\Pi=\pi_1 \times \pi_2$,  we let $\gamma(\Pi,r,\fieldCharacter)$ denote $\gamma^{\star}(\pi_1\times \pi_2,\fieldCharacter)$. With the above datum, the following statements are equivalent:
\begin{enumerate}[label=$(\mathrm{\arabic*})$]
\item\label{CharPeriod1} $\Pi$ admits a non-zero period vector,  i.e., there exists a non-zero vector $v \in V_{\Pi}$
such that $\Pi(g)v=v$ for all $g \in L$.
\item\label{CharPeriod2} $\omega_{\Pi}\restriction_{\multiplicativegroup{\fFiniteField}}= \TrivialRepresentation_{\multiplicativegroup{\fFiniteField}}$.
\item\label{CharPeriod3} There exists $W_{\Pi}  \in \whittaker(\Pi,\qfFieldCharacter)$ such that
\[
 \sum_{g \in U(\fFiniteField) \backslash L(\fFiniteField) } W_{\Pi}(g) \neq 0.
\]
\item\label{CharPeriod4}  $\abs{\gamma(\Pi,r,\fieldCharacter)}=q^{-\beta}$.
\item\label{CharPeriod5}  $\sigma$ admits a nontrivial twisted period, i.e.,  $\Hom_{L(\fLocalField)}(\sigma\nu^{\alpha s_0}\restriction_{L(\fLocalField)}, \Xi) \neq 0$ for some $s_0 \in \mathbb{C}$.
\item\label{CharPeriod6} $\omega_{\sigma}\restriction_{\multiplicativegroup{\fLocalField}}=\nu^{-\alpha s_0}\restriction_{\multiplicativegroup{\fLocalField}}$.
\item\label{CharPeriod7} There exists $W_{\sigma}  \in \whittaker(\sigma,\qFieldCharacter)$ such that the integral
\begin{equation}
\label{residual-integrals}
  \int_{\multiplicativegroup{\fLocalField}U(\fLocalField) \backslash L(\fLocalField)} W_{\sigma}(g) \nu^{\alpha s_0}(g) \, dg 
\end{equation}
is well-defined and non-vanishing for some $s_0 \in \cComplex$.
\item\label{CharPeriod8}  $L(s,\sigma,r)$ has a pole at $s=s_0$.
\end{enumerate}
\end{theorem}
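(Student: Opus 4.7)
The plan is to treat the four pairs $(G,L)$ in parallel, since each case has essentially the same skeleton of implications; the only things that change are the specific multiplicity-one lemma, the functional equation, and the formula for $\Gamma(s,\sigma,r,\fieldCharacter_\fLocalField)$ on the level zero side.

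\textbf{The finite field block $(1) \Leftrightarrow (2) \Leftrightarrow (3) \Leftrightarrow (4)$.}
I will first establish the cycle on the finite field side. The implication $(1) \Rightarrow (2)$ is automatic by restricting the $L$-invariance of a period vector to the central subgroup $Z(G(\fFiniteField)) \cap L(\fFiniteField)$. For $(2) \Rightarrow (1)$, the tactic is the same in each of the four cases as in the first direction of \Cref{FR-sum-characterization} and \Cref{JR-sum-characterization}: average the Bessel function $\repBesselFunction{\Pi}$ against a set of coset representatives of $\UnipotentSubgroup(\fFiniteField)\cap L(\fFiniteField) \backslash \MirabolicSubgroup \cap L(\fFiniteField)$, twisted by $\Xi^{-1}$, and then use the multiplicity-one results \Cref{mirabolic-Flicker-mulone} and \Cref{mirabolic-BF-mulone} (and their evident Jacquet--Shalika and Rankin--Selberg analogues) to upgrade invariance under $\MirabolicSubgroup \cap L$ to invariance under all of $L$. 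This simultaneously proves $(1) \Leftrightarrow (3)$, since the non-vanishing of the summation is precisely what drops out of the construction. Finally, $(1) \Leftrightarrow (4)$ is exactly the content of Theorems \ref{RS-Func}, \ref{Asai-Func}, \ref{Exterior-Func}, and \ref{BF-Func}: the Bessel-function formulas Theorems \ref{J-PS-S-Besel}, \ref{flicker-one-side-thm}, \Cref{conj-exterior}, and \ref{BF-one-side-thm} force $\abs{\gamma(\Pi,r,\fieldCharacter)}=1$ in the non-distinguished case, while Propositions \ref{self-contra-comp}, \ref{Asai-gamma-distinction}, \ref{JS-distinct-Gamma}, and \ref{BF-distinct-Gamma} give the exact value $\abs{\gamma(\Pi,r,\fieldCharacter)}=q^{-\beta}$ in the distinguished case.

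\textbf{The level zero block $(5) \Leftrightarrow (6) \Leftrightarrow (7) \Leftrightarrow (8)$.} The equivalence $(5) \Leftrightarrow (6)$ follows from restricting an $L(\fLocalField)$-twisted invariant functional to $\multiplicativegroup{\fLocalField}$ inside $L(\fLocalField)$; since $\omega_\sigma$ is a character of depth zero, a single equality of characters on $\multiplicativegroup{\fLocalField}$ locates the shift $s_0$. For $(7) \Leftrightarrow (8)$, I will use the level zero computation as in the proof of \Cref{BF-dual-formula} and \Cref{modified-Flicker}: after taking $W_{\sigma} = W^{\circ}_{\sigma}$ a lift of an appropriate finite-field Whittaker function, the integral \eqref{residual-integrals} decomposes as a geometric series whose sum is a constant multiple of $L(\alpha s_0,\omega_{\sigma}\restriction_{\multiplicativegroup{\fLocalField}}\nu^{\alpha s_0})$ times the finite-field integral $\sum_{g \in U(\fFiniteField) \backslash L(\fFiniteField) } W_{\Pi}(g)$. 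The integral is then well-defined and non-zero precisely when the $L$-factor of $\sigma$ at $s_0$ picks up a pole, which matches the explicit formulas for $L(s,\sigma,r)$ in \cite{Mat15,Mat17,Jo23}. The equivalence $(5) \Leftrightarrow (8)$ is the well-known characterization of poles of local $L$-factors in terms of distinguished quotients (\cite{Mat14,Mat15,Mat17} and \cite[Proposition 3.23]{YZ20}, combined with \cite[Theorem 6.4]{LMS22} in the Asai case).

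\textbf{Bridging the two blocks: $(1) \Leftrightarrow (5)$.} This is the step that glues the finite field and local pictures together, and the crux of the entire theorem. Here I would use the following fact, already invoked piecemeal in the earlier sections: for a level zero supercuspidal $\sigma$ constructed from $\Pi$, the restriction of a vector $f \in V_{\sigma}$ to $K_n$ records an element of $V_{\Pi}$, and the $(L(\fLocalField),\Xi\cdot\nu^{\alpha s_0})$-invariance on $\sigma$ translates, after evaluation at $1$ and reduction modulo $\mathfrak{p}$, to the $(L(\fFiniteField),\Xi)$-invariance on $\Pi$, provided the central character matches (which is $(2)\Leftrightarrow(6)$). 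In the opposite direction, one constructs a twisted period on $\sigma$ out of a finite field period by compact induction, using the explicit model of $\sigma$ as ${\rm c\text{-}Ind}^{G(\fLocalField)}_{\multiplicativegroup{\fLocalField}G(\integersRing)}\widetilde{\mu}$. This is precisely the content of the relevant distinction criteria for level zero types (see \cite[Theorem 3.17]{YZ20} for $(\wedge^2,S_{2m})$, the analogous statement for Friedberg--Jacquet from \cite[\S 5]{Mat14}, \cite[Theorem 6.4]{LMS22} for Flicker--Rallis, and the elementary statement for the diagonal pair in the Rankin--Selberg case).

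\textbf{Main obstacle.} The hardest of these pieces is the Jacquet--Shalika equivalence $(1) \Leftrightarrow (5)$, because the character $\Theta$ is non-trivial on $\SiegelUnipotent$ and the descent from $S_{2m}(\fLocalField)$ to $S_{2m}(\fFiniteField)$ must be compatible with $\Theta$ under reduction modulo $\mathfrak{p}$. Fortunately this has been carried out in \cite[Theorem 3.17]{YZ20}, and I will invoke it. Once all four cases are in hand, the theorem follows by chaining $(1) \Leftrightarrow (2) \Leftrightarrow (3) \Leftrightarrow (4)$, $(5) \Leftrightarrow (6) \Leftrightarrow (7) \Leftrightarrow (8)$, and $(2) \Leftrightarrow (6)$ and $(1) \Leftrightarrow (5)$ to close the loop.
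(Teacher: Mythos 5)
Your overall architecture differs from the paper's: you partition the eight conditions into an entirely finite block $(1)\Leftrightarrow(2)\Leftrightarrow(3)\Leftrightarrow(4)$ and an entirely local block $(5)\Leftrightarrow(6)\Leftrightarrow(7)\Leftrightarrow(8)$, then bridge them directly via $(1)\Leftrightarrow(5)$ and $(2)\Leftrightarrow(6)$. The paper instead makes condition $(8)$ (the pole of $L(s,\sigma,r)$) the bridge: it proves $(1)\Leftrightarrow(3)$, $(1)\Leftrightarrow(4)$, and $(1)\Leftrightarrow(8)$ on one side, and $(2)\Leftrightarrow(6)$, $(5)\Leftrightarrow(8)$, $(6)\Leftrightarrow(8)$, $(7)\Leftrightarrow(8)$ on the other, with the level zero gamma factor formulas (Theorems \ref{levelzero-RS}, \ref{levelzero-Asai}, \ref{levelzero-BF}) doing the real bridging work. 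Your route through $(1)\Leftrightarrow(5)$, using the distinction criteria for level zero types ([YZ20, Proposition 3.23] is the actual statement you want rather than Theorem 3.17, which is the gamma formula, plus [Mat14, \S 5] and [LMS22, Theorem 6.4]), is a legitimate alternative that makes the "period lifts to period" mechanism explicit rather than routing it through $L$-function poles.

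However, your proposed finite-side block is not self-contained: the step $(2)\Rightarrow(1)$ is not established by the argument you describe. In Lemmas \ref{FR-sum-characterization} and \ref{JR-sum-characterization}, averaging the Bessel function over $\MirabolicSubgroup_n(\fFiniteField)\cap L(\fFiniteField)$ and citing the multiplicity-one result only produces a $\MirabolicSubgroup\cap L$-invariant vector; the upgrade to full $L$-invariance is obtained there by first assuming that $\Hom_L(\pi,\Xi)\neq 0$ (which is precisely statement $(1)$), so that the two one-dimensional spaces $\Hom_{L}$ and $\Hom_{\MirabolicSubgroup\cap L}$ must coincide. Starting only from the central-character condition $(2)$, you have no a priori control on $\Hom_L(\pi,\Xi)$, and the inclusion $\Hom_L\subseteq\Hom_{\MirabolicSubgroup\cap L}$ could be strict. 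The same issue afflicts your sketch of $(6)\Rightarrow(5)$: knowing that $\omega_\sigma$ is unramified does not by itself produce a twisted period, and your restriction argument only runs in the direction $(5)\Rightarrow(6)$. As a result your chain $(2)\Rightarrow(6)\Rightarrow\cdots$ dead-ends at $(6)$; to close the loop from $(2)$ or $(6)$ back to the remaining conditions you need the paper's route via the explicit gamma factor computations (which force the gamma factor to degenerate from a constant to a rational function with poles exactly when a period vector exists), or some other genuine input beyond averaging.
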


In order to keep our exposition of the proof neat and concise, we separate it into two parts.
We deal with the equivalent assertions for finite field cases. 

\begin{proof}[Proof of the equivalence of \ref{CharPeriod1},\ref{CharPeriod3}, \ref{CharPeriod4}, and \ref{CharPeriod8}]
The equivalence of \ref{CharPeriod1} $\&$ \ref{CharPeriod3} has been explained in \Cref{FR-sum-characterization},  \Cref{JR-sum-characterization}, \cite[Lemma 4.2]{Ye19}, and \cite[Theorem 2.30]{YZ20}. 
The equivalence of \ref{CharPeriod1} $\&$ \ref{CharPeriod4} is a direct consequence of functional equations: \Cref{RS-Func}, \Cref{Asai-Func}, \Cref{Exterior-Func}, and \Cref{BF-Func}.
The equivalence of \ref{CharPeriod1} $\&$ \ref{CharPeriod8} is just a summary of \Cref{levelzero-RS}, \Cref{levelzero-Asai}, \Cref{levelzero-BF}, and \cite[Theorems 3.16 and 3.17]{YZ20}.
\end{proof}

We treat the parallel statements for non-archimedean local field cases. 

\begin{proof}[Proof of the equivalence of \ref{CharPeriod2},\ref{CharPeriod5},\ref{CharPeriod6}, \ref{CharPeriod7}, and  \ref{CharPeriod8}]
The equivalent statement about central characters \ref{CharPeriod2} $\&$ \ref{CharPeriod6} is clear from the fact that  $\omega_{\Pi}\restriction_{\multiplicativegroup{ \integersRing}}=
\omega_{\sigma} \circ \projection \restriction_{\multiplicativegroup{ \integersRing}}$. The equivalence of \ref{CharPeriod5} $\&$ \ref{CharPeriod8} is simply a restatement of \cite[Proposition 3.4]{Jo20}, \cite[Theorem 2.1]{Mat10}, and \cite[Proposition 4.6, Corollary 4.3]{Mat15}.
Since $L(s,\sigma,r)$ and $L(1-s,\contragredient{\sigma},r)$ does not share any common poles,
the equivalence of \ref{CharPeriod6} $\&$ \ref{CharPeriod8} can be seen from \Cref{levelzero-RS}, \Cref{levelzero-Asai}, \Cref{levelzero-BF}, and \cite[Theorems 3.16 and 3.17]{YZ20}.

\par
The proof of equivalent statements  \ref{CharPeriod7} $\&$ \ref{CharPeriod8} needs to be managed more carefully. The absolute convergence of the integral \eqref{residual-integrals} can be justified from \cite[Proposition 2.4]{Mat15}. We omit the detail, since its proof is standard \cite[Lemmata 4.1 and 4.2]{Kew11}. Taking care of the characterization of poles in terms of residual integrals  \eqref{residual-integrals}, the proof of \cite[Theorem 4.3]{Kew11} which is originated from \cite[\S 2.6.1]{CPS17}  carries over verbatim to the setting of the Flicker--Rallis and Friedberg--Jacquet periods.
The main point is to think of \eqref{residual-integrals} as a constant multiple of the leading coefficient in the Laurent expansion of Jacquet--Piatetski-Shapiro--Shalika integral $\Psi(s,W_{\rho_1},W_{\rho_2},\Phi)$, Flicker integral $I(s,W_{\rho},\Phi)$,  
Friedberg--Jacquet integral $Z(s,W_{\rho},\Phi)$, and Jacquet--Shalika integrals $J(s,W_{\rho},\Phi)$ at $s=s_0$, respectively.
\end{proof}

When $\Pi=\pi_1 \times \pi_2$, our result can be regarded as a special case of \cite[Theorem 1.3]{SZ23} for $d_{\pi}(\sigma)=1$.
A Friedberg--Jacquet vector and a Jacquet--Shalika vector are related in the following way:

\begin{corollary}[Equivalent Periods] 
\label{FJ-JS-Equiv}
Let $\rho$ be level zero supercuspidal representations of $\GL_{2m}(F)$
 constructed from irreducible cuspidal representations $\pi$ of  $\GL_{2m}(\fFiniteField)$.
 The following statements are equivalent:
\begin{enumerate}[label=$(\mathrm{\arabic*})$]
\item $\pi$ admits a Friedberg--Jacquet vector.
\item $\pi$ admits a Jacquet--Shalika vector.
\item $\rho\nu^{s_0}$ is $H_{2m}(\fLocalField)$-distinguished for some $s_0 \in \mathbb{C}$.
\item $\rho\nu^{s_0}$ is $(S_{2m}(F),\Theta)$-distinguished for some $s_0 \in \mathbb{C}$.
\end{enumerate}
\end{corollary}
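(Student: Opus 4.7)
The plan is to deduce this corollary directly from Theorem~\ref{Period-Vector-Integral}, applied in turn to the Friedberg--Jacquet row and the Jacquet--Shalika row of the datum tables. In both rows one has $\omega_{\Pi} = \omega_{\pi}$, so condition~(2) of the theorem becomes, in either case, the single central character statement
\[
 \omega_{\pi}\restriction_{\multiplicativegroup{\fFiniteField}} = \TrivialRepresentation_{\multiplicativegroup{\fFiniteField}}.
\]
This is the common hinge through which all four statements in the corollary will be pivoted.

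First I would apply Theorem~\ref{Period-Vector-Integral} to the Friedberg--Jacquet row with $\Pi = \pi$, $\sigma = \rho$, $L = H_{2m}$, $\Xi = \TrivialRepresentation_L$, and $\alpha = 1$. The equivalence (1)$\Leftrightarrow$(2) there identifies the existence of a Friedberg--Jacquet vector for $\pi$ with the triviality of $\omega_{\pi}\restriction_{\multiplicativegroup{\fFiniteField}}$, while the equivalence (2)$\Leftrightarrow$(5)$\Leftrightarrow$(6), combined with the level zero compatibility $\omega_{\rho} \circ \projection \restriction_{\multiplicativegroup{\integersRing}} = \omega_{\pi}\restriction_{\multiplicativegroup{\fFiniteField}}$, identifies that same triviality condition with the existence of some $s_0 \in \cComplex$ for which $\rho\nu^{s_0}$ is $H_{2m}(\fLocalField)$-distinguished. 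Chaining these yields (1)$\Leftrightarrow$(3).

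Second, I would rerun the same argument on the Jacquet--Shalika row, with $L = S_{2m}$, $\Xi = \Theta$, and $\alpha = 1/2$. The equivalences (1)$\Leftrightarrow$(2)$\Leftrightarrow$(5)$\Leftrightarrow$(6) there yield (2)$\Leftrightarrow$(4) via exactly the same central character condition $\omega_{\pi}\restriction_{\multiplicativegroup{\fFiniteField}} = \TrivialRepresentation_{\multiplicativegroup{\fFiniteField}}$. Splicing the two chains of equivalences through this common condition closes the loop: all four statements (1)--(4) are equivalent to the single condition that $\omega_{\pi}$ be trivial on $\multiplicativegroup{\fFiniteField}$.

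There is no real obstacle once Theorem~\ref{Period-Vector-Integral} is at hand; the only point requiring any attention is the cosmetic difference between the twists $\nu^{s_0}$ in (3) and $\nu^{s_0}$ in (4), which via the tabulated scaling factor $\alpha$ would naturally appear as $\nu^{s_0}$ and $\nu^{s_0/2}$ respectively. This is immaterial because $s_0$ ranges over all of $\cComplex$ in either statement, so both (3) and (4) parametrize the same family of unramified twists and both collapse to the same central character condition on $\omega_{\rho}\restriction_{\multiplicativegroup{\fLocalField}}$.
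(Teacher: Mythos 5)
Your proposal takes a genuinely different route from the paper's. The paper bridges the Friedberg--Jacquet and Jacquet--Shalika rows through an external input, namely the equivalence of (i$'$)--(iv$'$) from \cite[Theorem~4.1]{Jo24} (going back to Matringe), which simultaneously identifies the poles of $L(2s,\rho,\wedge^2)$ and $L(s,\rho,{\rm BF})$ with the $(S_{2m},\Theta)$- and $H_{2m}$-distinctions of unramified twists of $\rho$. It then descends from that local equivalence to the finite-field statements using the within-row chain $(1)\Leftrightarrow(8)\Leftrightarrow(5)$ of Theorem~\ref{Period-Vector-Integral} -- never touching condition (2). You instead make the whole corollary symmetric by pivoting through condition (2), triviality of $\omega_\pi$ on $\multiplicativegroup{\fFiniteField}$, as the common hinge of the two rows.

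The caution is precisely in the implication $(2)\Rightarrow(1)$ that your pivot requires. For an irreducible cuspidal $\pi$ of $\GL_{2m}(\fFiniteField_q)$ attached to a regular character $\alpha$ of $\multiplicativegroup{\fFiniteField_{q^{2m}}}$, the existence of a Jacquet--Shalika (equivalently Friedberg--Jacquet) vector forces $\alpha^{1+q^{m}}=\TrivialRepresentation$, i.e., $\alpha$ trivial on the whole of $\multiplicativegroup{\fFiniteField_{q^{m}}}$ (this is the condition invoked explicitly in the proof of Proposition~\ref{Deligne-factorazation}), whereas triviality of $\omega_\pi=\alpha\restriction_{\multiplicativegroup{\fFiniteField_q}}$ only constrains $\alpha$ on the strictly smaller subgroup $\multiplicativegroup{\fFiniteField_q}$ once $m\geq 2$. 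So $(2)$ is a priori strictly weaker than $(1)$ for $m\geq 2$, and collapsing the corollary onto condition (2) over-identifies the two families of period vectors. This is exactly why the paper's own proof avoids the $(1)\Leftrightarrow(2)$ link and instead runs through the $L$-function poles and the Matringe-type equivalence; the content of the corollary lies in the cross-row comparison, and the central-character pivot short-circuits that content rather than supplying it.
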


\begin{proof}
The following equivalent statements can be read from the proof of \cite[Theorem 4.1]{Jo24}, which has its origin in the work of Matringe \cite[Proposition 2.1]{Mat17}:
\begin{enumerate}[label=$(\mathrm{\roman*})$]
\item $L(2s,\rho,\wedge^2)$ has a pole at $s=s_0$;
\item $L(s,\rho,{\rm BF})$ has a pole at $s=s_0$;
\item $\rho\nu^{s_0}$ is $(S_{2m}(F),\Theta)$-distinguished;
\item $\rho\nu^{s_0}$ is $H_{2m}(F)$-distinguished.
\end{enumerate}
All we need to do at this point is to look back on \Cref{Period-Vector-Integral} for Friedberg--Jacquet and Jacquet--Shalika periods.
\end{proof}

\begin{acknowledgements} The author is deeply indebted to Elad Zelingher for sending a proof of \Cref{Asai-Gauss} to us and kindly allowing us to reproduce it here. 
We would like to thank Rongqing Ye for drawing the attention to the equality of exterior square gamma factors in author's thesis, and Andrew Knightly and Gilbert Moss for many fruitful discussions. 
Thanks are also owed to David Schwein for elaborating the close field theory in Harish-Chandra learning seminar, where the author first learned the topic. 
Finally the author would like to thank to anonymous referees for a number of constructive comments, which improved the exposition of this paper.
This work was supported by the National Research Foundation of Korea (NRF) grant 
funded by the Korea government (No. RS-2023-00209992). 
\end{acknowledgements}

\begin{data availability}
This manuscript has no associated data.
\end{data availability}

\begin{conflicts of interest}
The author states that there is no conflict of interest.
\end{conflicts of interest}

 \bibliographystyle{amsplain}
 \bibliography{references}

\end{document}